\title[Discrete nonlocal quantum diffusion]{The spatially discrete to continuous limit in the nonlocal quantum diffusion equation}
\author{Daniel Matthes}
\address{Daniel Matthes -- Zentrum Mathematik, TU München, Boltzmannstrasse 3, D-85748 Garching, Germany}
\email{matthes@ma.tum.de}
\author{Eva-Maria Rott}
\address {Eva-Maria Rott -- Zentrum Mathematik, TU München, Boltzmannstrasse 3, D-85748 Garching, Germany}
\email{eva-maria.rott@tum.de}
\newcommand{\setR}{{\mathbb R}}
\newcommand{\setRp}{{\mathbb R}_{>0}}
\newcommand{\setRnn}{{\mathbb R}_{\ge0}}
\newcommand{\setC}{{\mathbb C}}
\newcommand{\setZ}{{\mathbb Z}}
\newcommand{\dd}{\,\mathrm{d}}
\newcommand{\dn}{\mathrm{d}}
\newcommand{\dq}{\mathrm{D}}
\newcommand{\crc}{{\mathbb T}} 
\newcommand{\crcN}{{\mathbb{T}_\delta}} 
\newcommand{\dent}{{\mathbb H}}
\newcommand{\loc}{\text{loc}}
\newcommand{\admA}{{\mathcal A}}
\newcommand{\one}{\mathbb{I}}
\newcommand{\eps}{\varepsilon}
\newcommand{\tr}[1]{\mathrm{Tr}\left\{#1\right\}}
\newcommand{\mm}{\mathcal{M}}
\newcommand{\dens}{\mathfrak{D}_\delta}
\newcommand{\densp}{\mathfrak{D}^+_\delta}
\newcommand{\densr}{\mathfrak{D}_\delta^\mathbb{R}}
\newcommand{\denspr}{\mathfrak{D}_\delta^{\mathbb{R},+}}
\newcommand{\commu}[2]{\left[#1,#2\right]}
\newcommand{\Lp}[1]{L^{#1}(\mathbb{T}_\delta)}
\newcommand{\cmp}[2]{\left\{#1\right\}_{#2}}
\newcommand{\eng}{\mathcal{E}_\delta}
\newcommand{\boltz}{{\mathcal H}}
\newcommand{\htkrnl}{{\mathfrak K}}
\newcommand{\heat}{{\mathcal K}}
\newcommand{\bnrm}[1]{\left\{\kern-0.25ex\left\{#1\right\}\kern-0.25ex\right\}}
\newcommand{\tnrm}[1]{{\left\vert\kern-0.25ex\left\vert\kern-0.25ex\left\vert #1 
    \right\vert\kern-0.25ex\right\vert\kern-0.25ex\right\vert}}
\newcommand{\adm}{{\mathcal A}}
\newcommand{\nop}{{\mathbf n}}
\newcommand{\dprb}{{\mathcal P}({\mathbb T}_\delta)}
\newcommand{\dprbp}{{\mathcal P}_{>0}({\mathbb T}_\delta)}
\newcommand{\wto}{\rightharpoonup}
\newtheorem{theorem}{Theorem}[section]
\newtheorem{proposition}[theorem]{Proposition}
\newtheorem{lemma}[theorem]{Lemma}
\newtheorem{corollary}[theorem]{Corollary}
\newtheorem{definition}[theorem]{Definition}
\newtheorem{remark}[theorem]{Remark}
\begin{document}
	
\begin{abstract}
  We propose and analyse a spatial discretization of the non-local Quantum Drift Diffusion (nlQDD) model by Degond, M\`{e}hats and Ringhofer in one space dimension.
  With our approach, that uses consistently matrices on $\setC^N$ instead of operators on $L^2$,
  we circumvent a variety of analytical subtleties in the analysis of the original nlQDD equation,
  e.g. related to positivity of densities or to the quantum exponential function.

  Our starting point is spatially discretized quantum Boltzmann equation with a BGK-type collision kernel,
  from which we derive the discretized nlQDD model in the diffusive limit.
  Then we verify that solutions dissipate the von-Neumann entropy, which is a known key property of the original nlQDD,
  and prove global existence of positive solutions, which seems to be a particular feature of the discretization.
  Our main result concerns convergence of the scheme:
  discrete solutions converge --- locally uniformly with respect to space and time --- to classical solutions of the the original nlQDD model on any time interval $[0,T)$ on which the latter remain positive.
  In particular, this extends the existence theory for nlQDD, that has been established only for initial data close to equilibrium so far.
\end{abstract}

\thanks{\emph{Acknowledgement:} This research has been supported by the DFG Collaborative Research Center TRR 109, ``Discretization in Geometry and Dynamics.''}

\maketitle

	
\section{Introduction}
\subsection{The nlQDD model in brief}
\label{sct:brief}
Quantum Drift Diffusion (QDD) equations have been derived in the context of semi-conductor modeling and plasma physics as improvement over purely classical drift-diffusion equations for the charge carriers by addition of quantum mechanical corrections, see e.g. \cite{MRS} for an overview. Several such quantum corrected models are available in the literature, here we are concerned with the particular one proposed by Degond, M\`{e}hats, and Ringhofer in \cite{DeMeRi05}; see Sections \ref{sct:moments} below for a brief review of its derivation. A characteristic feature of this model is the non-locality in space of the governing PDE; we shall refer to it as \emph{non-local} Quantum Drift Diffusion (nlQDD) for definiteness.

Specifically, we study nlQDD --- and its spatial discretization --- on the one-dimensional torus $\crc$, i.e., subject to periodic boundary conditions, without external or self-consistent electric potentials. The sought time-dependent density $n\in L^1_{\ge0}(\crc)$ of the quantum particles satisfies 
\begin{align}
    \label{eq:QDD}
  \partial_t n = \partial_x (n \,\partial_x A)
  \quad \text{subject to}\ n=\nop[A].
\end{align}
To determine the potential $A:\crc\to\setR$ from the constitutive relation $n=\nop[A]$ involves the solution of the following inverse problem: find $A$ such that the operator exponential of the Schr\"odinger operator $-\hbar^2\Delta-A$ on $L^2(\crc)$ has the density $n$ on its diagonal. More precisely, if $\mathfrak K_A\in C(\crc\times\crc)$ denotes the integral kernel for the trace class operator $\exp(\hbar^2\Delta+A)$, then the relation $n=\nop[A]$ means that $n(x)=\mathfrak K_A(x,x)$ for all $x\in\crc$. The operator $\mm[n]=\exp(\hbar^2\Delta+A)$ itself is referred to as \emph{local quantum Maxwellian}, see Section \ref{sct:moments} below for more details.
In the (semi-classical) limit $\hbar\searrow0$, the condition $n=\nop[A]$ simplifies to the pointwise relation $n(x)=e^{A(x)}$, and \eqref{eq:QDD} collapses to the linear heat equation,
\begin{align*}
  \partial_t n = \partial_x(n\,\partial_x\log n) = \partial_{xx}n.
\end{align*}
However, for fixed $\hbar>0$, which is of interest in the paper at hand, the operator $\nop$ is non-local in space, which makes \eqref{eq:QDD} both non-linear and non-local.
In this paper, we shall make a conceptually novel approach to the solution of \eqref{eq:QDD} via spatial discretization, thereby extending the available existence theory, which is currently limited to initial conditions close to equilibrium, see Section \ref{sct:history} for more details.

\subsection{Spatial discretization and main results}
For spatial discretization of \eqref{eq:QDD}, we consider a uniform mesh $\crcN$ with $N$ cells, each of width $\delta=1/N$. With $\dq_\delta^\pm$ being the forward/backward difference quotients and $\Delta_\delta=\dq_\delta^+\dq_\delta^-$ the symmetric discrete Laplacian --- see Section \ref{sct:basics} for their definitions --- the discrete nlQDD equation for a piecewise constant w.r.t. $\crcN$ density $n_\delta$ is given by
\begin{align}
  \label{eq:DQDD0}\tag{$\text{nlQDD}_\delta$}
  \dot n_\delta = \dq_\delta^-\big( \nu_\delta^+\,\dq_\delta^+A_\delta\big),
  \quad \text{subject to}\ n_\delta=\nop_\delta[A_\delta].
\end{align}
The constitutive relation $n_\delta=\nop_\delta[A_\delta]$ is analogous to the one in \eqref{eq:QDD}: any $A_\delta:\crcN\to\setR$ gives rise to a discrete Schr\"odinger operator $-\hbar^2\Delta_\delta-A_\delta$, i.e., a symmetric matrix in $\setR^{N\times N}$. The inverse problem amounts to determine $A_\delta$ such that the diagonal entries of the matrix exponential $M=\exp(\hbar^2\Delta_\delta+A_\delta)$ are $n_\delta$, i.e., $\delta n(j\delta)=M_{jj}$; the values $\nu_\delta^+$ in \eqref{eq:DQDD0} are given by the next-to-digonal entries, $\delta\nu_\delta^+(j\delta)=M_{j,j+1}$. The matrix exponential $M$ itself, which we call \emph{discrete quantum Maxwellian} $\mm_\delta[n_\delta]$, serves as a spatially discrete surrogate for $\mm[n]$ above.

We shall prove three main results about \eqref{eq:DQDD0} and its implications on \eqref{eq:QDD}, which we summarize below. We use the following notations: $\dprbp$ is the set of \emph{positive probability densities} $n:\crcN\to\setRp$, i.e., $\delta\sum_{\xi\in\crcN}n(\xi)=1$, the set $\densp$ of \emph{postitive density matrices} contains the self-adjoint positive definite $R\in\setC^{N\times N}$ with unit trace. Each $R\in\densp$ gives rise to a $n\in\dprbp$ via
\begin{align}
  \label{eq:R2n}
  \delta\,n(j\delta)=R_{jj} \qquad \text{for $j=1,\ldots,N$}.  
\end{align}
Our first main result concerns the derivation of \eqref{eq:DQDD0} from a spatially discrete quantum kinetic model, which parallels \cite{DeRi03,DeMeRi05}. Specifically, we consider the rescaled \emph{discrete collisional quantum Liouville equation}
\begin{align}
  \label{eq:DQL0}
  \eps \dot R^\eps = i\hbar\,\commu{\Delta_\delta}{R^\eps} - 2\frac{R^\eps-\mm_\delta[n^\eps]}\eps,
\end{align}
which is the straight-forward discretization of the original quantum Liouville equation \eqref{eq:qLiouville} that is recalled in Section \ref{sct:qLiouville} below. The following theorem summarizes Proposition \ref{prop:DQD}, and further elements of Section \ref{sct:QL}, see in particular
\begin{theorem}
  \label{thm:1}
  Fix a discretization $\delta=1/N>0$ and an initial density matrix $R^\text{in}\in\densp$. For each $\eps>0$, let $R^\eps:\setRnn\to\densp$ be the (global unique) solution to \eqref{eq:DQL0} with $R^\eps(0)=R^\text{in}$, and let $n^\eps:\setRnn\to\dprbp$ be the density according to \eqref{eq:R2n}. 

  Then $n^\eps$ converges locally uniformly in time to a solution $n_\delta$ of \eqref{eq:DQDD0},  as $\eps\searrow0$. The initial data $n^\text{in}=n_\delta(0)$ and $R^\text{in}$ are related by \eqref{eq:R2n}.
\end{theorem}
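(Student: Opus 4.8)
The plan is to carry out a diffusive (Chapman--Enskog) limit; most of the analytic substance sits in the preparatory results of Section~\ref{sct:QL}, and I describe the structure and the estimate that makes the passage work. For each fixed $\eps$ the theorem grants a global solution $R^\eps\colon\setRnn\to\densp$ of \eqref{eq:DQL0}; the trace is conserved (both $R^\eps$ and $\mm_\delta[n^\eps]$ have unit trace) and $R^\eps(t)$ stays positive with operator norm at most $1$. Two structural observations do the work. First, since $\mm_\delta[n^\eps]$ shares its diagonal with $R^\eps$, the matrix $R^\eps-\mm_\delta[n^\eps]$ has vanishing diagonal. Second, since $\Delta_\delta$ is real symmetric, $\commu{\Delta_\delta}{M}_{jj}=0$ for every real symmetric $M$, in particular for $M=\mm_\delta[n^\eps]$. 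Combining the two, the diagonal part of \eqref{eq:DQL0} becomes
\begin{equation}
  \label{eq:ce1}
  \delta\,\dot n^\eps \;=\; \frac{\i\hbar}{\eps}\,\operatorname{diag}\commu{\Delta_\delta}{\,R^\eps-\mm_\delta[n^\eps]\,},
\end{equation}
so the na\"ively $O(1/\eps)$ transport term is in fact governed by $R^\eps-\mm_\delta[n^\eps]$; the whole proof rests on showing this difference is $O(\eps)$ in a usable norm.

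The estimate I would establish for this is free-energy dissipation. Put $\mathcal F(R):=\tr{R\log R}-\hbar^2\tr{\Delta_\delta R}$; since $-\hbar^2\Delta_\delta\succeq0$ and $-\tr{R\log R}$ is the von Neumann entropy, $\mathcal F\ge-\log N$ on density matrices. Recall that $\mm_\delta[n]=\exp(\hbar^2\Delta_\delta+A)$ with the real diagonal matrix $A=\nop_\delta^{-1}[n]$ is exactly the constrained minimiser of $\mathcal F$ among density matrices with diagonal $\delta n$ (the variational characterisation of the discrete quantum Maxwellian, Section~\ref{sct:moments}), so that $\log\mm_\delta[n]-\hbar^2\Delta_\delta=A$ is diagonal. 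Differentiating along \eqref{eq:DQL0}, the von Neumann part contributes nothing (cyclicity of the trace, using $[\log R^\eps,R^\eps]=0$ and $\tr{\dot R^\eps}=0$), and in the collision part I write $\log R^\eps-\hbar^2\Delta_\delta=(\log R^\eps-\log\mm_\delta[n^\eps])+A^\eps$ with $A^\eps=\nop_\delta^{-1}[n^\eps]$ diagonal; since $R^\eps-\mm_\delta[n^\eps]$ has zero diagonal, the $A^\eps$-term drops out and
\begin{equation}
  \label{eq:ce2}
  \frac{\dn}{\dn t}\mathcal F(R^\eps) \;=\; -\frac{2}{\eps^2}\,\tr{\big(R^\eps-\mm_\delta[n^\eps]\big)\big(\log R^\eps-\log\mm_\delta[n^\eps]\big)} \;\le\; 0 .
\end{equation}
Using the integral representation $\log X-\log Y=\int_0^\infty(s+X)^{-1}(X-Y)(s+Y)^{-1}\dd s$ together with $\lVert R^\eps\rVert,\lVert\mm_\delta[n^\eps]\rVert\le1$ gives the coercivity $\tr{(X-Y)(\log X-\log Y)}\ge\lVert X-Y\rVert_{\mathrm{HS}}^2$, so \eqref{eq:ce2} integrates --- uniformly in the time horizon --- to $\int_0^\infty\lVert R^\eps-\mm_\delta[n^\eps]\rVert_{\mathrm{HS}}^2\dd t\le C\eps^2$ with $C$ depending only on $R^\text{in}$ and $N$.

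For the limit I would then argue as follows. Inserting the last bound into \eqref{eq:ce1} bounds $\lVert\dot n^\eps\rVert_{L^2(0,T)}$ uniformly in $\eps$ and $T$; with $0\le n^\eps\le\delta^{-1}$ this makes $(n^\eps)$ equibounded and uniformly $\tfrac12$-H\"older in time, so by Arzel\`a--Ascoli and a diagonal argument a subsequence $n^{\eps_k}$ converges locally uniformly to some $n_\delta\in C(\setRnn;\dprb)$ with $n_\delta(0)=n^\text{in}$. Let $[0,\tau)$ be the maximal interval on which $n_\delta>0$, so $\tau>0$ by continuity. On each $[0,T]$ with $T<\tau$ the curves $n^{\eps_k}$ eventually take values in a fixed compact subset of $\dprbp$, hence $\mm_\delta[n^{\eps_k}]\to\mm_\delta[n_\delta]=:M_\delta$ locally uniformly (continuity of $\mm_\delta$) and therefore $R^{\eps_k}\to M_\delta$ in $L^2(0,T;\,\mathrm{HS})$. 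Integrating \eqref{eq:DQL0} in time, dividing by $\eps_k$, and letting $k\to\infty$ (the $\eps_k$-prefactored boundary term vanishing) identifies $\lim_k\eps_k^{-1}\int_0^t(R^{\eps_k}-\mm_\delta[n^{\eps_k}])\dd s=\tfrac{\i\hbar}{2}\int_0^t\commu{\Delta_\delta}{M_\delta(s)}\dd s$; inserting this into the time integral of \eqref{eq:ce1} yields, for all $t<\tau$,
\begin{equation}
  \label{eq:ce3}
  \delta\,n_\delta(t) \;=\; \delta\,n^\text{in}\;-\;\frac{\hbar^2}{2}\int_0^t\operatorname{diag}\commu{\Delta_\delta}{\commu{\Delta_\delta}{M_\delta(s)}}\dd s .
\end{equation}
By the algebraic identity underlying the moment derivation (Section~\ref{sct:moments}, cf. Proposition~\ref{prop:DQD}), the integrand equals $\delta\,\dq_\delta^-\!\big(\nu_\delta^+\dq_\delta^+A_\delta\big)$ with $A_\delta=\nop_\delta^{-1}[n_\delta]$ and $\delta\nu_\delta^+(j\delta)=(M_\delta)_{j,j+1}$, so $n_\delta$ solves \eqref{eq:DQDD0} on $[0,\tau)$.

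It remains to close positivity and upgrade the convergence. By global existence of positive solutions of \eqref{eq:DQDD0} (Proposition~\ref{prop:DQD}) together with uniqueness --- immediate, since \eqref{eq:DQDD0} is an ODE with locally Lipschitz right-hand side on $\dprbp$ --- $n_\delta$ coincides on $[0,\tau)$ with the global positive solution issuing from $n^\text{in}$; this forces $\tau=\infty$, and the same uniqueness shows every subsequence of $(n^\eps)$ has a further subsequence converging to this same $n_\delta$, so the whole family converges locally uniformly, which is the claim (the relation of the initial data being built in, as $n^\eps(0)\equiv n^\text{in}$). The step I expect to be the crux is the dissipation identity \eqref{eq:ce2}: it is precisely the cancellation $\commu{\Delta_\delta}{\mm_\delta[n^\eps]}_{jj}=0$ in the transport term combined with the variational characterisation of $\mm_\delta$ (which kills the cross term with $A^\eps$) that turns \eqref{eq:DQL0} into a closed, sign-definite entropy structure and delivers the $O(\eps)$ control of $R^\eps-\mm_\delta[n^\eps]$; a secondary difficulty is that positivity of the limit is not available a priori, which is why \eqref{eq:ce3} must first be obtained on the maximal positivity interval and then closed using the existence theory for \eqref{eq:DQDD0} itself.
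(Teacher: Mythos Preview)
Your proposal is correct and follows essentially the same route as the paper: free-energy dissipation \eqref{eq:ce2} yields the $O(\eps)$ bound on $R^\eps-\mm_\delta[n^\eps]$, equi-H\"older continuity of $n^\eps$ follows via \eqref{eq:ce1}, Arzel\`a--Ascoli gives a uniform limit, and integration of \eqref{eq:DQL0} identifies the correction term and produces \eqref{eq:ce3}; positivity is then closed using the global existence theory for \eqref{eq:DQDD0}. Two minor remarks: the coercivity $\tr{(X-Y)(\log X-\log Y)}\ge\lVert X-Y\rVert_{\mathrm{HS}}^2$ that you obtain from the resolvent integral is proved in the paper instead via Klein's inequality (Corollary~\ref{lem:kleincor}); and the global positivity result you invoke to close the argument is Proposition~\ref{prp:positivity}, not Proposition~\ref{prop:DQD} (the latter \emph{is} the diffusive limit).
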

The theorem above essentially justifies to consider \eqref{eq:DQDD0} as ``the right'' discretization of \eqref{eq:QDD}.
Our second main result is about qualitative properties of solutions to \eqref{eq:DQDD0}. The following theorem summarizes the two propositions from Section \ref{sct:nlQDD}.
\begin{theorem}
  \label{thm:2}
  Fix a discretization $\delta=1/N>0$ and an initial density $n_\delta^0\in\dprbp$

  Then there exists a unique and global positive solution $n_\delta:\setRnn\to\dprbp$ to \eqref{eq:DQDD0}. Moreover, the following entropy functional is monotone in time:
  \begin{align}
    \label{eq:dent0}
    \dent(n_\delta) = \delta\sum_{\xi\in\crcN}n_\delta(\xi)A_\delta(\xi).
  \end{align}
\end{theorem}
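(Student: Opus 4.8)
The plan is to read \eqref{eq:DQDD0} as an ordinary differential equation on the open set $\dprbp\subset\setR^N$ and to combine standard ODE theory with two structural ingredients: a convex‑analytic description of the constitutive relation and the positivity of the discrete heat kernel. \emph{First}, I would settle the constitutive relation $n_\delta=\nop_\delta[A_\delta]$ (if this is not already done in Section \ref{sct:nlQDD}). Set $\Phi(A):=\tr{\exp(\hbar^2\Delta_\delta+\mathrm{diag}(A))}$ for $A\in\setR^N$; Duhamel's formula gives $\nabla\Phi(A)_j=\exp(\hbar^2\Delta_\delta+\mathrm{diag}(A))_{jj}$ and, in the eigenbasis of $\hbar^2\Delta_\delta+\mathrm{diag}(A)$ with eigenvalues $\mu_k$,
\begin{align*}
 \nabla^2\Phi(A)[E,E]=\sum_{k,l}|E_{kl}|^2\,\tfrac{e^{\mu_k}-e^{\mu_l}}{\mu_k-\mu_l}>0\qquad(E\neq0),
\end{align*}
so $\Phi$ is strictly convex. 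For fixed $n_\delta\in\dprbp$ the map $A\mapsto\Phi(A)-\delta\sum_\xi n_\delta(\xi)A(\xi)$ is coercive: since $n_\delta$ is strictly positive componentwise, it tends to $+\infty$ both when some component $A_j\to+\infty$ (use $\Phi(A)\ge e^{-2\hbar^2/\delta^2}\sum_j e^{A_j}$, which follows from operator Jensen) and when some $A_j\to-\infty$ (through the linear term). Hence there is a unique minimizer $A_\delta=A_\delta[n_\delta]$, characterized by $\nabla\Phi(A_\delta)=\delta n_\delta$, i.e.\ $n_\delta=\nop_\delta[A_\delta]$; because $\delta\sum_\xi n_\delta(\xi)=1$, the matrix $\mm_\delta[n_\delta]=\exp(\hbar^2\Delta_\delta+\mathrm{diag}(A_\delta))$ lies in $\densp$. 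By the implicit function theorem $n_\delta\mapsto A_\delta[n_\delta]$, and hence $n_\delta\mapsto\nu_\delta^+[n_\delta]$ and the whole right‑hand side of \eqref{eq:DQDD0}, is real‑analytic on $\dprbp$. Picard–Lindelöf therefore gives a unique maximal solution $n_\delta:[0,T^*)\to\dprbp$, and summing \eqref{eq:DQDD0} over $\crcN$ telescopes to $0$, so $\delta\sum_\xi n_\delta(\xi,t)\equiv1$ and the trajectory stays in the bounded simplex.

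\emph{Second}, the entropy monotonicity. Writing $\dent(n_\delta)=\langle\delta n_\delta,A_\delta\rangle$ and differentiating in time, the contribution coming from $\dot A_\delta$ is $\langle\delta n_\delta,\dot A_\delta\rangle=\langle\nabla\Phi(A_\delta),\dot A_\delta\rangle=\tfrac{\mathrm d}{\mathrm dt}\Phi(A_\delta)=\tfrac{\mathrm d}{\mathrm dt}\tr{\mm_\delta[n_\delta]}=0$, since the trace is constantly $1$; the remaining term is treated by discrete integration by parts on the torus, giving
\begin{align*}
 \tfrac{\mathrm d}{\mathrm dt}\dent(n_\delta)=\delta\sum_\xi A_\delta(\xi)\,\dq_\delta^-\!\big(\nu_\delta^+\dq_\delta^+A_\delta\big)(\xi)=-\,\delta\sum_\xi \nu_\delta^+(\xi)\,\big|\dq_\delta^+A_\delta(\xi)\big|^2.
\end{align*}
Finally $\nu_\delta^+>0$, because $\hbar^2\Delta_\delta+\mathrm{diag}(A_\delta)$ has nonnegative off‑diagonal entries and, for $N\ge2$, is irreducible; shifting it by a multiple of $\one$ turns it into a nonnegative irreducible matrix, whose exponential — and hence $\mm_\delta[n_\delta]$ — has strictly positive entries. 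This proves $\tfrac{\mathrm d}{\mathrm dt}\dent(n_\delta)\le0$.

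\emph{Third}, and this is the heart of the matter, global existence with positivity. By the previous steps it suffices to bound $\varphi(t):=\min_{\xi\in\crcN}n_\delta(\xi,t)$ away from $0$ on $[0,T^*)$, for then $n_\delta$ stays in a compact subset of $\dprbp$ and the maximal solution is global. The mechanism is a discrete comparison principle. Rewriting \eqref{eq:DQDD0} at an index $j_0$ that realizes $\varphi(t)$,
\begin{align*}
 \dot n_\delta(j_0\delta)=\tfrac1{\delta^2}\Big[\nu_\delta^+(j_0\delta)\big(A_\delta((j_0{+}1)\delta)-A_\delta(j_0\delta)\big)+\nu_\delta^+((j_0{-}1)\delta)\big(A_\delta((j_0{-}1)\delta)-A_\delta(j_0\delta)\big)\Big],
\end{align*}
and since $\nu_\delta^+>0$ this is $\ge0$ as soon as $j_0$ is a local minimum of $A_\delta$. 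The key lemma is therefore a monotonicity statement for the constitutive map: if $n_\delta(j_0\delta)=\min_\xi n_\delta(\xi)$ then $A_\delta[n_\delta]$ attains its minimum at $j_0$ as well. Granting it, $\varphi$ is Lipschitz with $\dot\varphi\ge0$ a.e.\ (Danskin), so $\varphi(t)\ge\varphi(0)>0$ and we are done. I expect the proof of this lemma to be the main obstacle: it is a "does‑not‑move‑the‑minimum" property of the map $A\mapsto\mathrm{diag}\,\exp(\hbar^2\Delta_\delta+\mathrm{diag}(A))$, which I would attack by a discrete moving‑plane/reflection argument. For a reflection $\sigma$ of the cycle (a graph automorphism, so $P_\sigma^{\mathsf T}\Delta_\delta P_\sigma=\Delta_\delta$) one has $\exp(B^\sigma)_{j_0j_0}=\exp(B)_{\sigma(j_0)\sigma(j_0)}$ with $B=\hbar^2\Delta_\delta+\mathrm{diag}(A)$, $B^\sigma=\hbar^2\Delta_\delta+\mathrm{diag}(A\circ\sigma)$, and Duhamel gives
\begin{align*}
 \big\langle e_{j_0},(\exp B^\sigma-\exp B)\,e_{j_0}\big\rangle=\int_0^1\sum_k (A_{\sigma(k)}-A_k)\,(e^{sB^\sigma})_{j_0k}\,(e^{(1-s)B})_{kj_0}\,\mathrm ds ,
\end{align*}
whose terms have a definite sign exactly because the entries of $e^{sB^\sigma}$ and $e^{(1-s)B}$ are nonnegative; balancing the $A_{\sigma(k)}>A_k$ against the $A_{\sigma(k)}<A_k$ contributions through the pairing $k\leftrightarrow\sigma(k)$ and kernel decay in $k$ should yield the comparison. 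It is worth noting that this positivity of the discrete heat kernel — the same feature that gave $\nu_\delta^+>0$ above — is precisely what is not available in the continuous nlQDD equation, matching the claim that global positivity is special to the discretization. An alternative route to the same lower bound, which I would keep in reserve, is to carry a uniform‑in‑$\eps$, locally‑uniform‑in‑time bound $\lambda_{\min}(R^\eps(t))\ge c>0$ through the limit $\eps\searrow0$ of Theorem \ref{thm:1}.
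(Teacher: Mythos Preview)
Your treatment of the constitutive relation and the entropy dissipation is correct and essentially parallel to the paper; your use of the convex potential $\Phi(A)=\tr{\exp(\hbar^2\Delta_\delta+\mathrm{diag}(A))}$ is the Legendre dual of the paper's constrained minimization, and your observation that the $\dot A_\delta$ contribution to $\tfrac{\mathrm d}{\mathrm dt}\dent$ equals $\tfrac{\mathrm d}{\mathrm dt}\Phi(A_\delta)=\tfrac{\mathrm d}{\mathrm dt}\tr{\mm_\delta}=0$ is a clean way to reach the same dissipation formula.

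The gap is in the positivity argument. Your proof hinges on the lemma ``$n_\delta$ minimal at $j_0$ $\Rightarrow$ $A_\delta$ locally minimal at $j_0$'', which would indeed give $\dot\varphi\ge0$ and a monotone lower bound. But this lemma is not established: the reflection sketch does not yield it, because for a generic $A$ there is no reflection $\sigma$ fixing $j_0$ under which $A\circ\sigma$ dominates or is dominated by $A$ pointwise, so the Duhamel integrand has no sign; and heuristically, $n(j_0)$ depends not only on $A(j_0)$ but on the whole profile of $A$ through the heat kernel, so a neighbour with smaller $A$ can be more than compensated by a large $A$ two steps away. Your fallback route also fails: the bound from the Liouville level is $R^\eps(t)\ge e^{-2t/\eps^2}R^{\mathrm{in}}$ after the diffusive rescaling, which degenerates as $\eps\searrow0$.

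The paper does \emph{not} attempt to show that $\min_\xi n_\delta(\xi,t)$ is nondecreasing. Instead it argues by contradiction at the first blow-down time $T$: by H\"older continuity in $t$, $n_\delta(t)\to n_*$, and one isolates a \emph{maximal} arc $\Xi=\{\underline\xi,\ldots,\overline\xi\}$ on which $n_*$ vanishes but with $n_*(\underline\xi-\delta),\,n_*(\overline\xi+\delta)>0$. The key auxiliary fact (a separate lemma) is the two-sided implication $n_*(\xi)=0\Leftrightarrow A(t;\xi)\to-\infty$, which comes from Jensen's inequality in the spectral representation together with the entropy lower bound. Summing the equation over $\Xi$ telescopes to the two boundary fluxes,
\[
\dot m(t)=\nu^+(\overline\xi)\big(A(\overline\xi+\delta)-A(\overline\xi)\big)+\nu^+(\underline\xi-\delta)\big(A(\underline\xi-\delta)-A(\underline\xi)\big),
\]
and since $m(t)=\delta^2\sum_{\xi\in\Xi}n(t;\xi)\searrow0$ one finds times $t_k\nearrow T$ with $\dot m(t_k)<0$; positivity of $\nu^+$ then forces, say, $A(t_k;\overline\xi+\delta)\le A(t_k;\overline\xi)\to-\infty$, whence $n_*(\overline\xi+\delta)=0$, contradicting maximality of $\Xi$. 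The point is that one never compares $A$ at the minimum of $n$; one only needs the asymptotic behaviour of $A$ where $n$ vanishes, and a telescoped boundary identity.
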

A key point above is that the discrete solutions $n_\delta$ provide a strictly positive approximation.
The third and final result is the convergence of the discretization. The following theorem summarizes Propositions \ref{prp:nconv}, \ref{prp:abstract} and \ref{prp:theendofQDD} from Section \ref{sct:convergence}.
\begin{theorem}
  \label{thm:3}
  Fix a positive initial datum $n^\text{in}\in L^1(\crc)$ of unit mass and finite Fisher information. Let a sequence of discretizations $\delta=1/N$ going to zero be given. Define discretized initial data $n_\delta^\text{in}\in\dprbp$ by averaging on cells, and let $n_\delta:\setRnn\to\dprbp$ be the global solutions to \eqref{eq:DQDD0}.

  Then there is some $T_*>0$ such that the $n_\delta$ converge, locally uniformly on $[0,T_*)\times\crc$, to a positive solution $n_*:[0,T_*)\times\crc\to\setRp$ of \eqref{eq:QDD} with $n_*(0)=n^\text{in}$. Moreover, the discrete potentials $A_\delta$ converge in $L^2([0,T_*),H^1(\crc))$ to the corresponding potential $A_*$. Finally, either $T_*=+\infty$, or $\inf n_*(t;\cdot)\to0$ as $t\nearrow T_*$. 
\end{theorem}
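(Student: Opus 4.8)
The plan is to establish Theorem~\ref{thm:3} as a compactness-plus-identification argument, using the uniform bounds furnished by the entropy structure from Theorem~\ref{thm:2}. First I would collect the available a priori estimates that are uniform in $\delta$. The discrete entropy $\dent(n_\delta)$ is monotone along the flow by Theorem~\ref{thm:2}, and its production equals (up to sign) a discrete Dirichlet-type quantity $\delta\sum_\xi \nu_\delta^+(\xi)\big(\dq_\delta^+ A_\delta(\xi)\big)^2$; integrating in time gives a uniform bound on $\int_0^T \delta\sum_\xi \nu_\delta^+ (\dq_\delta^+ A_\delta)^2\dd t$. Since the initial datum has finite Fisher information, the natural initial bound $\dent(n_\delta^\text{in})\le C$ must be shown to pass to the discretized data (this is a consistency check on cell-averaging, using that $\dent$ is dominated by the Fisher information up to lower-order terms, via a discrete Bobkov--Ledoux or log-Sobolev-type comparison). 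From this one extracts, on a time interval where $\inf n_\delta$ stays bounded below, $L^2_t H^1_x$ bounds on $A_\delta$ and $H^{-1}$-type bounds on $\dot n_\delta$, hence (discrete Aubin--Lions) compactness of the piecewise-constant interpolants $n_\delta$ in $C([0,T];L^p)$ or locally uniformly.

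The crux is the \emph{propagation of positivity}: one has to produce a time $T_*>0$, independent of $\delta$, on which $\inf_\xi n_\delta(t;\xi)\ge c>0$ uniformly in $\delta$. I would do this by a continuity/bootstrap argument on the torus: as long as $n_\delta$ stays close in sup-norm to the (smooth, positive) solution $n_*$ of the continuous nlQDD --- whose short-time existence for positive data must itself be argued, presumably via a fixed-point on the constitutive map $n\mapsto A=\nop^{-1}[n]$ combined with parabolic smoothing --- the discrete constitutive map $\nop_\delta$ is a uniformly-in-$\delta$ small perturbation of $\nop$, so the matrix exponential $\mm_\delta[n_\delta]$ has diagonal and off-diagonal entries $\delta^{-1}$-close to those of $\mm[n_*]$; this keeps $\nu_\delta^+$ uniformly positive and keeps the discrete flow uniformly parabolic, closing the estimate on a $\delta$-independent interval. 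The maximal such $T_*$ then either is $+\infty$ or is characterized by $\inf n_*(t;\cdot)\to 0$ as $t\nearrow T_*$, which is exactly the dichotomy claimed.

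With positivity in hand, the identification step is comparatively routine. One passes to the limit in the weak formulation of \eqref{eq:DQDD0}: against a smooth test function $\varphi$ on $\crc$, $\delta\sum_\xi \varphi(\xi)\dot n_\delta(\xi) = -\delta\sum_\xi \dq_\delta^+\varphi(\xi)\,\nu_\delta^+(\xi)\,\dq_\delta^+A_\delta(\xi)$; the difference quotients of $\varphi$ converge to $\partial_x\varphi$, $\dq_\delta^+ A_\delta\wto \partial_x A_*$ in $L^2$, and $\nu_\delta^+\to n_*$ uniformly (again from closeness of the Maxwellians, since the off-diagonal entry of $\exp(\hbar^2\Delta_\delta+A_\delta)$ converges to the diagonal value of $\exp(\hbar^2\Delta+A_*)$ as the mesh refines --- a stability statement for the matrix vs.\ operator exponential). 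The nonlinear product $\nu_\delta^+\dq_\delta^+A_\delta$ then converges to $n_*\partial_x A_*$ by strong $\times$ weak convergence, yielding $\partial_t n_* = \partial_x(n_*\partial_x A_*)$ in the sense of distributions; elliptic/parabolic regularity upgrades this to a classical solution, and the constitutive relation $n_*=\nop[A_*]$ survives the limit by continuity of $\nop_\delta\to\nop$ on the relevant (uniformly positive, uniformly bounded) set of densities. Convergence of $A_\delta$ in $L^2_tH^1_x$ follows once weak convergence is upgraded using the convergence of the associated energies/entropies.

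The main obstacle I anticipate is precisely the uniform-in-$\delta$ control of the constitutive map near a positive profile: one needs quantitative stability of the inverse problem $n\mapsto A$, both for the operator version (to get short-time existence and smoothness of $n_*$, which is not in the literature for data merely positive rather than near-equilibrium) and for the matrix version, together with a discrete-to-continuous comparison $\nop_\delta\approx\nop$ that is uniform on sup-norm balls of positive densities bounded away from $0$. This hinges on a Lipschitz-type estimate for $n\mapsto A$ derived by differentiating the constitutive relation --- the linearization is governed by an operator of the form $L_A h = \operatorname{diag}\big(\text{sandwiched Fréchet derivative of }\exp(\hbar^2\Delta+A)\big)$, which is symmetric and, crucially, invertible with norm controlled in terms of $\inf n$ and $\sup n$ --- plus a resolvent/Duhamel comparison between $\Delta_\delta$ and $\Delta$; getting these bounds independent of $\delta$ is the technical heart of the argument.
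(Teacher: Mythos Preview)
Your overall scaffold --- entropy dissipation $\Rightarrow$ a priori bounds $\Rightarrow$ compactness $\Rightarrow$ identification --- matches the paper, but two load-bearing steps are handled differently there, and your versions have genuine gaps.

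\textbf{Positivity and the order of the argument.} You propose to obtain a $\delta$-uniform lower bound on $n_\delta$ by a bootstrap that keeps $n_\delta$ close to a continuous solution $n_*$ whose short-time existence you would establish separately. This is circular: Theorem~\ref{thm:3} \emph{is} the existence result for the continuous nlQDD, so there is no $n_*$ available a priori to compare against. The paper avoids this entirely by first proving compactness of $n_\delta$ \emph{without} any positivity assumption, then extracting a continuous limit $n_*$, and only \emph{a posteriori} defining $T_*$ as the maximal time on which the limit stays positive. What makes this possible is a $\delta$-uniform, time-uniform $H^1$ bound on $n_\delta$ itself (not on $A_\delta$): from the eigen-decomposition $n_\delta=\sum_k\rho_k\phi_k^2$ of the Maxwellian one gets $\sum_k\rho_k\|\dq_\delta^+\phi_k\|_{L^2}^2\le C\,\dent(n_\delta)$, and hence $\|\dq_\delta^+ n_\delta\|_{L^2}\le C(1+\overline\dent)$ uniformly in $t$ and $\delta$. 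This estimate does not appear in your plan, but it is the engine of the compactness argument (Arzel\`a--Ascoli via a $C^{1/10}$ H\"older bound in space--time); your dissipation bound alone controls only a weighted gradient of $A_\delta$, which is useless for compactness of $n_\delta$ before positivity is known.

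\textbf{Identifying $n_*=\nop[A_*]$.} You write that the constitutive relation ``survives the limit by continuity of $\nop_\delta\to\nop$,'' but $A_\delta\to A_*$ only \emph{weakly} in $L^2$, and $\nop$ is nonlinear, so continuity is not enough. The paper does not invert the map or linearize it (no Lipschitz estimate for $n\mapsto A$ is ever used). Instead it exploits that the \emph{forward} maps $\nop_\delta$ are monotone --- this is just Klein's inequality, $\tr{(\exp B-\exp B')(B-B')}\ge0$ --- and runs a Minty-type argument: test against $A^h=A_*+h(n_*-\nop[A_*])$, pass to the limit using strong convergence of $n_\delta$ and $\nop_\delta[A^h_\delta]$ (the latter via the quantitative kernel comparison you correctly anticipate), and send $h\to0$. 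Your proposed route through stability of the inverse map and Fr\'echet derivatives of the exponential is not needed and would be substantially harder to make $\delta$-uniform.
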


\subsection{Origin of nlQDD: moment hydrodynamics}
\label{sct:moments}
The nlQDD model is a cornerstone of \emph{quantum moment hydrodynamics}. The latter is a framework for the derivation of effective macroscopic evolution equations for interacting many particle quantum systems from first principles. The approach gave rise to quantum mechanical corrected versions e.g. of classical energy transport \cite{DeMeRi05} and hydrodynamical \cite{AnsgarQHD} models, see \cite{jungel12} for a review; nlQDD itself arises as a corrected version of the classical drift diffusion equation.

Quantum moment hydrodynamics parallels the classical moment closure method of hydrodynamics by Levermore, see \cite{Levermore96}, where a closed system of evolution equations for certain moments --- particle number density, energy density etc. --- of solutions to a Boltzmann equation is derived by assuming that the particle distribution $f(x,p)$ on phase space is always close to the manifold of local equilibria. This manifold consists of local Maxwellians that are minimizers of the Boltzmann entropy subject to matching the current moments under consideration.
In the simplest situation in which just the particle number density $n(x)=\int f(x,p)\dd p$ needs to be met, and the relative Boltzmann entropy simplifies to $\mathbf{H}_\text{Boltz}(f)=\int f(x,p)\big(\log f(x,p)+|p|^2\big)\dd p\dd x$, the local Maxwellians have the form $M[n](x,p)=\exp(-|p|^2+a(x))$, with Lagrange multiplier $a(x)=\log(n(x)/N)$ and normalization constant $N$. In the diffusive limit (moving to a time scale and collision frequency of the same large order) of the Boltzmann equation with BGK collisional operator, one obtains the linear diffusion equation $\partial_tn=\Delta n$ as effective model.

The approach in \cite{DeRi03} is a quantum mechanical generalization of this, using density operators in place of densities on phase space, and a quantum Boltzmann equation, see \eqref{eq:qLiouville} below, in place of the classical one. The role of $f(x,p)$ is then played by a density operator $\rho$, or equivalently, by its Hilbert-Schmidt kernel $\mathfrak K$. 
In analogy to the classical case, the manifold of local equilibria consists of local quantum Maxwellians, which are minimizers of the relative von Neumann entropy, keeping the relevant moments fixed.
In the situation of nlQDD, with the only moment condition on the spatial particle density $n(x)=\mathfrak K(x,x)$, the relative von Neumann entropy simplifies to $\mathbf{H}_\text{vN}(\rho)=\tr{\rho(\log\rho-\hbar^2\Delta)}$, and one finds in analogy to the functions $M[n]$ above local quantum Maxwellians as operator exponentials $\mm[n]=\exp(\hbar^2\Delta+A)$. The diffusive limit in the quantum Boltzmann equation with BGK collisions, see \eqref{eq:qLiouville} below, yields the QDD model \eqref{eq:QDD}.

The main difference between the classical and the quantum approach is that the quantum mechanical Lagrange multiplier $A$ is no longer an explicit pointwise expression in terms of $n$, but involves the solution of the inverse problem $n=\nop[A]$ that has been introduced in Section \ref{sct:brief} above. That inverse problem has been addressed in a series of papers: \cite{MePi10,MePi11,MePi17,MePi19}. By a semi-classical expansion, neglecting higher powers of the Planck constant, the problem becomes local in space, but of fourth or even higher order \cite{DeMeRi05,JM-ent}. The non-locality apparently changes the qualitative behavior of solutions, which can be observed numerically, see e.g. \cite{GaMe05}. We emphasize that qunatum mechnically induced non-locality is actually not only a challenge for the analysis, but also has beneficial effects, like providing a time-uniform control of the solution in $H^1$.

\subsection{Derivation of nlQDD: quantum Boltzmann equation}
\label{sct:qLiouville}
As outlined above, the nlQDD equation \eqref{eq:QDD} is obtained in the diffusive limit of the following quantum Boltzmann equation with BGK collision kernel:
\begin{equation}\label{eq:qLiouville}
	i\hbar\partial_t \rho = \commu{-\frac{\hbar^2}2\Delta}{\rho} - \frac{\rho-\mm[\rho]}\tau.
\end{equation}
Above $\rho$ is the density operator for a single-particle state, the reversible part of the dynamics is given by the free Hamiltonian $H_0=-\hbar^2/2\Delta$, and the effect of many-particle interactions are modelled only by means of the BGK relaxation term, with relaxation time $\tau>0$. Here $\mm[\rho]$ is the local quantum Maxwellian that has the same particle density as $\rho$. The evolution equation \eqref{eq:qLiouville}, that combines Schr\"odinger dynamics with entropy minimization, is entirely based on \emph{first principles} of many-particle quantum dyanmics. In \cite{MePi17}, existence of global classical solutions to \eqref{eq:qLiouville} is shown under restrictive assumptions on the initial data. The diffusive limit towards \eqref{eq:QDD} is performed by choosing $\tau=\eps/\hbar$ for a small parameter $\eps>0$ in \eqref{eq:qLiouville}, and rescaling time $t'=2\eps t$ to focus on the long-time dynamics. It has been formally shown in \cite{DeMeRi05} that solutions $\rho^\eps$ to the rescaled equation converge ``on their diagonal'' to solutions of nlQDD in the diffusive limit $\eps \to 0$.

In an analogous manner, we derive our spatially discrete nlQDD model \eqref{eq:DQDD0} from the direct spatial discretization \eqref{eq:DQL0} of \eqref{eq:qLiouville}, obtained by replacing the density operator $\rho$ on $L^2(\crc)$ by a density matrix $R\in\setC^{N\times N}$, the discrete Laplace operator by its discretization by central finite differences, and the local Maxwellian $\mm[\rho]$ by its discrete counterpart $\mm_\delta[R]$, which is the minimizer of the (discrete) relative von Neumann entropy. Thus our discrete quantum Boltzmann equation \eqref{eq:DQL0} results from discretization of the same \emph{first principles} that originally lead to \eqref{eq:qLiouville}. Moreover, Theorem \ref{thm:1} shows that our discrete nlQDD equation \eqref{eq:DQDD0} appears as an effective model in the diffusive limit of \eqref{eq:DQL0}, thus justifying this equation beyond an ad hoc discretization of \eqref{eq:QDD}. In the paper at hand, we thus follow the solid arrows in Figure \ref{fig:diagram}.

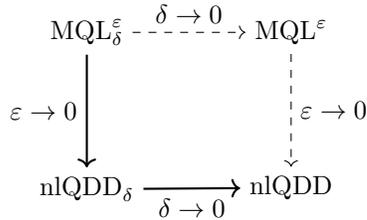
\begin{figure}[h]
  \centering
  \begin{tikzpicture}[node distance=1.5cm, auto]
    \node (Pe) {$\textnormal{MQL}^\eps_\delta$};
    \node (P0) [right=of Pe] {$\textnormal{MQL}^\eps$};
    \node (PeDt) [below=of Pe] {\ref{eq:DQDD0}};
    \node (P0Dt) [below=of P0] {nlQDD};
    \draw[->, dashed] (Pe) -- node[above] {$\delta \to 0$} (P0);
    \draw[->, thick] (PeDt) -- node[below] {$\delta \to 0$} (P0Dt);
    \draw[->, thick] (Pe) -- node[left] {$\varepsilon \to 0$} (PeDt);
    \draw[->, dashed] (P0) -- node[right] {$\varepsilon \to 0$} (P0Dt);
  \end{tikzpicture}
  \caption{Our passage from the discrete collisional quantum Liouville equation to nlQDD: we first perform the diffusive limit, resulting in a spatially discrete nlQDD equation, and then pass to the continuous limit.}
  \label{fig:diagram}
\end{figure}

\subsection{Previous analytical results and novel ideas}
\label{sct:history}
The first and so far only proof for global existence of solutions to \eqref{eq:QDD} has been given by Pinaud in \cite{Pinaud19}, using implicit discretization in time. The result is conditional on the solution being close to equilibrium. Specifically, it is required that the free energy is sufficiently small initially --- and hence, by energy monotonicity in time, also for all subsequent time steps --- to imply a time-uniform positive lower bound $\underline n>0$ on the solution. That positivity then allows to pass to the time-continuous limit. Our result in Theorem \ref{thm:3} is a significant extension of that.

While comparatively little is known about the full non-local model \eqref{eq:QDD}, the semi-classical approximations of \eqref{eq:QDD} of fourth and also sixth order have been studied exhaustively in the literature. The fourth order approximation is the so-called \emph{DLSS equation},
\begin{align}
  \label{eq:DLSS}
  \partial_tn = -\partial_{xx}\big(n\,\partial_{xx}\log n\big).
\end{align}
Equation \eqref{eq:DLSS} first appeared as a scaling limit in the Toom spin model \cite{DLSS1,DLSS2}. Global existence and uniqueness of weak and classical solutions, convergence to equilibrium, and self-similar long-time asymptotics have been studied by various authors, also in higher dimensions, see e.g. \cite{BLS,F,JP1,JP2,CT,JM-DLSS,GST,MMS}; see also \cite{MRSS} for a recent review on numerical methods for \eqref{eq:DLSS}.
The semi-classical approximation of order six in space is given by 
\begin{align}
  \label{eq:QDD6}
  \partial_tn = \partial_x\left[n\,\partial_x\left(\frac{\partial_{xx}(n\,\partial_{xx}\log n)}{n}+\frac12\big(\partial_{xx}\log n\big)^2\right)\right].
\end{align}
Global existence in dimensions up to three has been shown \cite{BJM,MR}; some information on self-similar long-time asymptotics is available as well \cite{MR}.

Apparently, the methods used in the analysis of local approximations are only of limited use for the analysis of the full non-local model \eqref{eq:QDD}.
The two key ingredients in the global existence analysis of both \eqref{eq:DLSS} or \eqref{eq:QDD6} are that, first, the right-hand sides of these equations can be re-written entirely as a sum of products of spatial derivatives of certain roots $\sqrt[r]n$ of the solution, and that, second, a priori estimates on these roots follow from dissipation of Boltzmann's $H$-functional $\boltz[n]=\int n\log n\dd x$ in combination with suitable non-linear functional inequalities. In contrast, for the full non-local model \eqref{eq:QDD}, first, no re-writing of $n\,\partial_xA$ as the product of ($x$-derivatives of) ``simple'' functions in $n$ is known, and second, entropy dissipation does apparently not provide any useful a priori estimates when the solution touches zero.

The novelty of the approach taken here is to perform the entire analysis of \eqref{eq:QDD} --- derivation from a kinetic model, a priori estimates, existence and positivity --- consistently on grounds of the spatial discretization \eqref{eq:DQDD0}. By means of finite dimensionality, we circumvent various subtle functional analytical challenges that arise already at the beginning of the treatment of \eqref{eq:QDD}, first and foremost the questions around unique solvability of the inverse problem $n=\nop[A]$. Indeed, the problem's discrete analogue $n_\delta=\nop_\delta[A_\delta]$ reduces to a convex optimization on $\setR^N$, for which existence and uniqueness of a solution are clear. The eventual passage to the continuum limit $\delta\searrow0$ requires mesh-independent a priori estimates: formally, we use the same entropy dissipation estimates as in the continuous situation \cite{Pinaud19}, but again, the manipulations are more easily justified on the discrete level.

Another conceptual advantage of the discrete approach is the preservation of positivity. The original nlQDD model \eqref{eq:QDD} breaks down after the first time at which the solution loses strict positivity, simply because the rigorous definition of $n=\nop[A]$ is limited to positive left-hand sides. For solutions to \eqref{eq:QDD} with general initial data, loss of positivity could not be excluded so far. On the other hand, we show in Theorem \ref{thm:2} that our discretization \eqref{eq:DQDD0} guarantees globally positive solutions $n_\delta$. Moreover, the $n_\delta$ converge, as $\delta\searrow0$, locally uniformly to a limit curve $n_*:[0,\infty)\to L^1(\crc)$, and $n_*$ satisfies \eqref{eq:QDD} whenever it is positive on $\crc$. Thus the discretization provides a solution concept for \eqref{eq:QDD} that works even beyond the loss of strict positivity.

\subsection{Plan of the paper}
After introducing notations, mainly related to the discretization, in Section \ref{sct:basics}, we prove for the spatially discrete quantum Liouville equation \eqref{eq:DQL0} global existence and positivity of solutions, and convergence to solutions of the discrete nlQDD \eqref{eq:DQDD0} in Section \ref{sct:QL}, resulting in Theorem \ref{thm:1}. Next, in Section \ref{sct:nlQDD}, we establish entropy dissipation and positivity of solutions to \eqref{eq:DQDD0}, thus proving Theorem \ref{thm:2}. In Section \ref{sct:hell}, an auxiliary result on quantitative approximation of quantum Maxwellians by their discrete counterparts is obtained, which is a key technical ingredient for the proof of the main convergence result from Theorem \ref{thm:3} in Section \ref{sct:convergence}.

\section{Notations and Basics}
\label{sct:basics}

\subsection{Functions, spaces, operators}
The physical space for the models under consideration is the one dimensional torus $\crc:=\setR/\setZ\cong(0,1]$.
The function spaces $L^2(\crc)$, $H^1(\crc)$ etc.\ are introduced in the usual way. Hilbert-Schmidt operators $\mathrm{L}$ on $L^2(\crc)$ are identified with their kernels $\mathfrak{K}\in L^2(\crc\times\crc)$ via
\begin{align*}
  \big(\mathrm{L}\psi\big) (x)  = \int_\crc \mathfrak{K}(x,y)\psi(y)\dd y \qquad \text{for all $\psi\in L^2(\crc)$.}
\end{align*}
The periodic Laplacian acting on $L^2(\crc)$ is denoted by $\Delta$, i.e., $\Delta f=\partial_{xx}f$.

\subsection{Spatial discretization}
For discretization of $\crc$ by $N\in\{2,3,4,\ldots\}$ cells of length $\delta:=1/N$, introduce the mesh $\crcN:=\delta(\setZ/N\setZ)\cong\{\delta,2\delta,\ldots,(N-1)\delta,1\}$ consisting of $N$ uniformly distributed sites $\xi\in\crc$. To each $\xi\in\crcN$, we associate the interval $I_\xi:=(\xi-\delta/2,\xi+\delta/2)\subset\crc$ of length $\delta$ centered around $\xi$, modulo translation by integer multiples of $\delta$. Each $f:\crcN\to\setC$ then has a piecewise constant representative $F:\crc\to\setC$, with $F(x)=f(\xi)$ for all $x\in I_\xi$. We define the integral of $f$ by the integral of $F$, and denote it by
\begin{align*}
  \delta\sum_\xi f := \delta\sum_{\xi\in\crcN}f(\xi) = \int_{\crc} F(x)\dd x .
\end{align*}
In general, we distinguish between vectors $v\in\setC^N$ and functions $f:\crcN\to\setC$. For the former, we use the euclidean norm $|v|=(|v_1|^2+\cdots+|v_n|^2)^{1/2}$, for the latter the following spatially discrete $L^p$-norms, which are natural for the eventual passage to the continuuum limit:
\[
  \|f\|_{\Lp p}= \bigg(\delta\sum_{\xi}|f|^p\bigg)^{1/p},
  \quad \text{for $1\le p<\infty$, and} \quad
  \|f\|_{\Lp\infty}=\max_{\xi}|f|.
\]
These definitions are consistent with the respective $L^p(\crc)$-norms of $f$'s piecewise constant representation $F\in L^p(\crc)$, i.e., Note that the spaces $\Lp p$ are all isomorphic to $\setC^N$, they only differ in their norms. $\Lp 2$ becomes a Hilbert with respect to the scalar product
\begin{align*}
  (f,g) = \delta\sum_{\xi}\bar f\, g.
\end{align*}
As a subset of $\Lp 1$, we single out the convex sets of non-negative and of positive unit-mass densities,
\begin{align*}
  \dprb &= \left\{ f:\crcN\to\setRnn\,\middle|\,\delta\,\textstyle{\sum_{\xi\in\crcN}} f(\xi) = 1\right\}, \\
  \dprbp &= \left\{ f\in\dprb\,\middle|\,\text{$f(\xi)>0$ for all $\xi\in\crcN$}\right\}.
\end{align*}

\subsection{Discrete operators, matrices, densities}
Linear operators $\operatorname L:\Lp p\to\Lp p$ are canonically identified with matrices $A\in\setC^{N\times N}$,
\begin{align*}
  \big(\operatorname L f\big)(j\delta) = \sum_{k=1}^NA_{jk}f(k\delta) \qquad \text{for all $f\in\Lp p$.}
\end{align*}
On the other hand, since any  $\operatorname L:\Lp p\to\Lp p$ is automatically Hilbert Schmidt, it possesses an integral kernel $\mathfrak K_\delta:\crcN\times\crcN\to\setC$, i.e.,
\begin{align*}
  \big(\operatorname L f\big)(\xi) = \delta\sum_{\eta\in\crcN}\mathfrak K_\delta(\xi,\eta)f(\eta) \qquad \text{for all $f\in\Lp p$.}
\end{align*}
These two representations of $\operatorname L$ are simply related by
\begin{align}
  \label{eq:AK}
  A_{jk} = \delta\mathfrak K_\delta(j\delta,k\delta).
\end{align}
Unfortunately, we need both representations in the following, depending on the context; we use roman letters for matrices and frakture letters for kernels. The matrix representation will be used when it comes to eigenvalues and functions of $\operatorname L$, e.g., the matrix of the operator exponential $\exp(\operatorname L)$ is simply the matrix exponential $\exp(A)$. The kernel representation is more natural in the analysis of the continuum limit, e.g., in the proof of convergence of the particle density that sits on the diagonal of resolvent operators, $n(\xi)=\mathfrak K_\delta(\xi,\xi)$. 

Further, we introduce the manifold $\dens$ of \emph{discretized density operators} as the set of self-adjoint positive semi-definite matrices with unit scaled trace,
\begin{align}
  \label{eq:densmat}
  \dens := \left\{ R \in \setC^{N \times N}\, \middle|\, R = R^*,\, \tr{R} = 1,\, R \geq 0 \right\}.
\end{align}
The subset of real matrices is denoted by $\densr = \dens \cap \setR^{N \times N}$, and the respective subsets of positive definite matrices by $\densp$ and $\denspr$.

\subsection{Difference operators}
Introduce the right and left difference quotient operators $\dq_\delta^+$ and $\dq_\delta^-$, respectively, by
\begin{align*}
    \dq_\delta^+f(x) = \frac1{\delta}\big(f(x+\delta)-f(x)\big),
    \quad
    \dq_\delta^-f(x) = \frac1{\delta}\big(f(x)-f(x-\delta)\big),
\end{align*}
as well as the central discretized Laplacian $\Delta_\delta$ by
\begin{align*}
  \big(\Delta_\delta f\big)(x) = \dq_\delta^+\dq_\delta^-f(x) = \dq_\delta^-\dq_\delta^+f(x) = \frac1{\delta^2}\big(f(x+\delta)+f(x-\delta)-2f(x)\big),
\end{align*}
Note that $\dq_\delta^\pm$ and $\Delta_\delta$ are well-defined operations both on $f\in L^p(\crc)$ and $f\in \Lp p$.
Further, the different quotient operators are mutually adjoint with respect to $L_\delta^2(\crc)$, i.e., $(\dq^\pm)^T=-\dq^{\mp}$.
%

 
\section{Discrete quantum Liouville equation and its diffusive limit}
\label{sct:QL}
In this section, we shall define a spatially discrete analogue of the modified quantum Liouville equation \eqref{eq:qLiouville} on density matrices $R\in\dens$, see \eqref{eq:densmat}, prove the well-definedness of the initial value problem, and perform the diffusive limit to the spatially discrete nlQDD.

\subsection{Quantum Entropy Functional}
Before introducing the discretized quantum Liouville equation, we define quantum Maxwellians. 
\begin{definition}
  The \emph{discrete free energy} $\eng[R]$ of a density matrix $R \in \dens$ is defined by
  \begin{equation}\label{eq: relenerg}
    \eng[R] = \tr{R(\log R - \hbar^2 \Delta_\delta)}.
  \end{equation}
  A \emph{discrete quantum Maxwellian} $\mm\in\dens$ for a given $n \in\dprbp$ is a solution of the constrained minimization problem
  \begin{equation}\label{eq:minprob}
    \eng[R]\to\min \quad \text{subject to} \quad R \in \dens \ \text{with}\ R_{kk}= \delta\,n(k\delta)\ \text{for all $k=1,2,\ldots,N$}.
  \end{equation}
\end{definition}
The inconvenient factor $\delta$ inside the constraint in \eqref{eq:minprob} is explained from the scaling \eqref{eq:AK}: for consistency with the continuum limit, the values of the particle density $n$ are identified with the diagonal of $R$'s Hilbert-Schmidt kernel, not with $R$'s diagonal.
\begin{proposition}[Discrete quantum Maxwellian]\label{th:quantmax}
  For any $n\in\dprbp$, the problem \eqref{eq:minprob} has a unique solution, given by
  \begin{equation}\label{eq:qM}
    \mm_\delta[n] := \exp (\hbar^2\Delta_\delta+A),
  \end{equation}
  with a suitable Lagrange multiplier $A\in\Lp\infty$, understood as a diagonal matrix in \eqref{eq:qM} above.
  In particular, the Maxwellian is real and positive definite, $\mm_\delta[n]\in\denspr$,
  and the map $\mm_\delta$ is injective from $n\in\dprbp$ to the Lagrange multipliers $A\in\Lp\infty$.
\end{proposition}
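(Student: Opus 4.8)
The plan is to treat \eqref{eq:minprob} as a smooth convex optimization problem over a compact set and invoke the Lagrange multiplier theorem in finite dimensions. First I would establish that the functional $\eng$ is well-defined and strictly convex on the set of positive definite density matrices: the map $R\mapsto\tr{R\log R}$ is strictly convex on $\dens$ (this is the negative von Neumann entropy, a standard fact following from the operator convexity of $t\mapsto t\log t$ and Klein's inequality), while $R\mapsto-\hbar^2\tr{R\Delta_\delta}$ is linear, hence $\eng$ is strictly convex. Next I would address existence: the admissible set $\admA_n=\{R\in\dens: R_{kk}=\delta n(k\delta)\ \forall k\}$ is a nonempty (it contains the diagonal matrix with entries $\delta n(k\delta)$), closed, bounded, convex subset of $\setC^{N\times N}$, hence compact; although $R\mapsto\tr{R\log R}$ is only lower semicontinuous up to the boundary (where eigenvalues hit $0$), it is continuous with values in $[-\log N,0]$ on $\dens$, so a minimizer $\mm_\delta[n]$ exists, and by strict convexity it is unique.

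The key step is identifying the minimizer with the stated exponential form and showing it is positive definite. I would first argue $\mm_\delta[n]$ is positive definite: if it had a kernel vector $v$, one can perturb $\mm_\delta[n]$ toward the strictly positive diagonal matrix $D$ with $D_{kk}=\delta n(k\delta)>0$ along the segment $R_s=(1-s)\mm_\delta[n]+sD$, which stays in $\admA_n$, and compute that $\frac{d}{ds}\tr{R_s\log R_s}\big|_{s=0}=-\infty$ because the derivative of $t\log t$ blows up at $0$; this contradicts minimality. Hence $\mm_\delta[n]\in\denspr$ (realness follows because $\admA_n$ is invariant under $R\mapsto\bar R$ and under $R\mapsto(R+\bar R)/2$, so by uniqueness the minimizer is real). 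With the minimizer in the interior of the positive-definite cone, $\eng$ is differentiable there, and the first-order optimality condition for minimizing over the affine constraints $R_{kk}=\delta n(k\delta)$ reads: there exist real multipliers $A_1,\dots,A_N$ such that $D\eng[\mm_\delta[n]]=\sum_k A_k\,E_{kk}$ in the sense of the trace pairing, where $E_{kk}$ is the $k$-th diagonal unit matrix; since $D\eng[R]=\log R-\hbar^2\Delta_\delta+\one$ (the $\one$ being absorbed into the unit-trace constraint, which I would handle by also introducing a multiplier for $\tr R=1$ or by noting the trace constraint is implied), this yields $\log\mm_\delta[n]=\hbar^2\Delta_\delta+A-c\,\one$ for the diagonal matrix $A=\operatorname{diag}(A_k)$ and a constant $c$, hence $\mm_\delta[n]=e^{-c}\exp(\hbar^2\Delta_\delta+A)$; absorbing $e^{-c}$ into a shift of $A$ (adding $-c$ to every $A_k$) gives exactly \eqref{eq:qM}. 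Conversely, any matrix of the form $\exp(\hbar^2\Delta_\delta+A)$ that lies in $\admA_n$ satisfies the optimality condition, so by convexity it is \emph{the} minimizer; this also shows existence of a suitable $A$.

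Finally, for injectivity of $\mm_\delta$ from $\dprbp$ to Lagrange multipliers $A\in\Lp\infty$: suppose $\exp(\hbar^2\Delta_\delta+A)$ and $\exp(\hbar^2\Delta_\delta+A')$ have the same diagonal (up to the scaling $\delta n$) and the same trace $1$. I would show the map $A\mapsto\exp(\hbar^2\Delta_\delta+A)$ followed by extraction of the diagonal is the gradient of the strictly convex function $A\mapsto\tr{\exp(\hbar^2\Delta_\delta+A)}$ (a log-partition-type functional), whose strict convexity on the hyperplane $\{\tr{\exp(\hbar^2\Delta_\delta+A)}=1\}$ — equivalently modulo the direction $A\mapsto A+c\one$ — forces $A=A'$. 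Strict convexity here follows from the fact that the second derivative of $\tr e^{H+A}$ in direction $B$ is $\int_0^1\tr{e^{(1-s)(H+A)}B\,e^{s(H+A)}B}\dd s>0$ for $B\neq0$ by a Cauchy--Schwarz / spectral argument. The main obstacle I anticipate is the boundary behavior: making rigorous that the minimizer cannot be singular, i.e., controlling $\tr{R\log R}$ and its directional derivative near $\partial\dens$; everything else is standard finite-dimensional convex analysis. An alternative cleaner route avoiding the boundary issue entirely is to \emph{first} prove that $A\mapsto\tr{\exp(\hbar^2\Delta_\delta+A)}-\delta\sum_\xi A(\xi)n(\xi)$ is strictly convex and coercive on $\Lp\infty$ modulo constants, deduce it has a unique critical point $A$, verify by direct computation (differentiating $\tr e^{H+A}$) that $\mm:=\exp(\hbar^2\Delta_\delta+A)/\tr{\cdots}$ satisfies the constraint and lies in $\denspr$, and only then check it solves \eqref{eq:minprob} via the convex-duality/Gibbs variational inequality $\tr{R\log R-R(H+A)}\geq-\log\tr{e^{H+A}}$ with equality iff $R=\mm$; I would likely present this second route.
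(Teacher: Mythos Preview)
Your first route is essentially the paper's proof: compactness and strict convexity give a unique minimizer, perturbation toward the positive diagonal matrix shows the minimizer is positive definite, conjugation symmetry gives realness, and the first-order condition yields the exponential form. The only real difference is cosmetic---the paper phrases optimality by testing against self-adjoint $Z$ with vanishing diagonal rather than introducing explicit multipliers---and for injectivity the paper simply observes that a positive definite real symmetric matrix has a unique real symmetric logarithm, which is shorter than your strict-convexity-of-the-log-partition argument (though yours is correct too).

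Your preferred second route, via the dual functional $A\mapsto\tr{\exp(\hbar^2\Delta_\delta+A)}-\delta\sum_\xi A(\xi)n(\xi)$ and the Gibbs variational inequality, is a genuinely different and equally valid approach. It trades the boundary analysis (ruling out zero eigenvalues of the minimizer) for a coercivity check on an unconstrained smooth convex problem in $A$, which is indeed cleaner; the price is that the connection to the original constrained minimization \eqref{eq:minprob} becomes indirect and has to be recovered at the end via Gibbs. The paper's primal argument keeps the variational meaning front and center and feeds more directly into the subsequent definitions (e.g.\ of $\nop_\delta$), while your dual argument would give, as a byproduct, the strict convexity statement that the paper later proves separately in Lemma~\ref{lem:mmisdifferentiable}. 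One small point: in your dual functional there is no need to work ``modulo constants''---the trace-one condition is automatically enforced by the diagonal constraint $\sum_k\delta n(k\delta)=1$, so coercivity holds on all of $\Lp\infty$ and the critical point is unique without any quotient.
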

Before proceeding to the proof, we fix some notations for later reference.
\begin{definition}
  \label{dfn:nop}
  If $A\in\Lp\infty$ is the Lagrange multiplier for $n\in\dprbp$ in \eqref{eq:minprob}, then we call $n$ the \emph{quantum exponential} of $A$ and write $n=\nop_\delta[A]$.
  Moreover, for $R\in\densp$, we define $R$'s \emph{projection to the discrete quantum Maxwellians} by $\mm_\delta[R]:=\mm_\delta[n^R]$, with $n^R\in\dprbp$ given by $\delta\,n^R(k\delta)=R_{kk}$.
\end{definition}
\begin{remark}
  Clearly, $\nop_\delta$ extends to a map from $\Lp\infty$ to $\Lp 1$ in a straight-forward manner. For general $A\in\Lp\infty$, the image $\nop_\delta[A]\in\Lp 1$ is still positive but might not be normalized.
  From the proof of Proposition \ref{th:quantmax} below, it is easily seen that $\nop_\delta$ maps bijectively onto the positive cone in $\Lp 1$.
\end{remark}
\begin{proof}[Proof of Proposition \ref{th:quantmax}]
  The set
  \[ X := \{ R \in \dens\;| \; R_{kk}= \delta\,n(k\delta) \  \text{for all $k=1,\ldots,N$} \} \]
  is convex and closed. $X$ is not empty since it contains the positive definite diagonal matrix $S^n$ with $S^n_{jj}=\delta\,n(\delta j)$. Further, $X$ is bounded, since semi-definiteness of $R$ implies that $|R_{k\ell}|\le (R_{kk}+R_{\ell\ell})/2\le\max n$. Since the function $r\mapsto r\log r$ is strictly convex on $\setRnn$, so is the functional $\eng$ on $\dens$ by trace convexity, see Lemma \ref{lem:carlen} from the Appendix.  Thus $\eng$ possesses a unique minimum $R^+\in X$, proving unique solvability of \eqref{eq:minprob}. For the rest of the proof, we shall derive the Euler-Lagrange equations for $R^+$ and show that they imply the representation \eqref{eq:qM}.

  First, $R^+$ must be real, since otherwise, $R^+$ and $\overline{R^+}$ --- which have the same eigenvalues and thus have the same entropy value --- would be two different minimizers.
  Second, $R^+$ must be positive definite. Indeed, assume that $R^+ \in \dens$ is optimal in \eqref{eq:minprob} but has a zero eigenvalue. With $S^n\in\densp$ introduced in the beginning of the proof, consider $R^\varepsilon := (1-\varepsilon)R^+ + \varepsilon S$. It is easily seen that $R^\varepsilon\in\densp$ for each $\varepsilon\in[0,1]$, and satisfies the constraint, i.e., $R^\varepsilon_{kk}=\delta\,n(k\delta)$ for all $n=1,\ldots,N$. The eigenvalues of $R^\varepsilon$ vary continuously with $\varepsilon$. Since $R^\varepsilon$ is positive definite for any $\varepsilon>0$ --- recall that $n$ is a positive function --- all eigenvalues are positive for $\varepsilon>0$, but at least one eigenvalue converges to zero as $\varepsilon\searrow0$. Moreover, the function $(0,1)\ni\lambda\mapsto \lambda\log\lambda$ has infinite negative slope at $\lambda=0+$, hence it follows that $\eng[R^\varepsilon]$ is increasing as $\varepsilon\searrow0$. Thus $\eng[R^+]=\lim_{\varepsilon\searrow0}\eng[R^\varepsilon]$ cannot be the minimal value of $\eng$.
    			
    Thus $R^+$ is positive definite and in particular lies in the interior of $\dens$. We may thus consider perturbations $R^\varepsilon=R^++\varepsilon Z$ for self-adjoint $Z\in\setC^{N\times N}$ with vanishing diagonal, assuming $|\varepsilon|$ is sufficiently small. By optimality of $R^+$, we conclude that
    \begin{align}
      \label{eq:EulerLagrange}
      0= \frac{\dd}{\dn\varepsilon}\bigg|_{\varepsilon= 0}\eng[R^++\varepsilon Z]  
      = \tr{(\log R) Z + Z - \hbar^2\Delta_\delta Z} = \tr{[\log R-\hbar^2\Delta_\delta]Z}.
    \end{align}
    For given indices $k\neq\ell$, let $Z$ be the matrix that has zero entries except for a ones at the positions $k\ell$ and $\ell k$.
    The optimality condition \eqref{eq:EulerLagrange} then implies that
    \begin{align*}
      0 = [\log R^+- \hbar^2\Delta_\delta]_{k\ell}.
    \end{align*}
    Since this holds for $k,\ell=1,\ldots,N$ with $k\neq\ell$, we have that $\log R^+= \hbar^2\Delta_\delta$ \emph{except} on the diagonal. That is, there is a diagonal matrix $D\in\setR^{N\times N}$ such that
    \begin{align*}
      \log R^+ = \hbar^2\Delta_\delta + D.
    \end{align*}
    Defining a function $A\in\Lp 2$ by $A(j\delta)=D_{jj}$, we arrive at the representation \eqref{eq:qM}.

    Since the positive definite self-adjoint real matrix $\mm[n]\in\denspr$ possesses a unique self-adjoint real logarithm $L\in\setR^{N\times N}$, i.e., a unique such solution to $\mm[n]=\exp(R)$, the map $\mm$ is actually injective.
  \end{proof}
  \begin{lemma}
    $\eng$ attains its minimal value on $\dens$ at the discrete quantum Maxwellian for the uniform density $\bar n\equiv 1$, that is
    \begin{align}
      \label{eq:Z}
      \mm_\delta[\bar n]=Z_{\hbar^2,\delta}^{-1}\exp(\hbar^2\Delta_\delta),
      \quad\text{with} \quad
      Z_{\hbar^2,\delta} = \sum_{k=1}^{N}e^{-\hbar^2\omega_k},
    \end{align}
    where $\omega_1,\ldots,\omega_N$ are the eigenvalues of the discrete Laplacian $-\Delta_\delta$, see \eqref{eq:DeltaEV}.
    It follows in particular that, uniformly in $\delta$,
    \begin{align}
      \label{eq:dentbelow}
      \min_{R\in\dens}\eng[R] \ge \underline{\dent}_{\hbar^2}:=-\frac{\sqrt\pi}{4\hbar}.
  \end{align}
\end{lemma}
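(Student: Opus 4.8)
The plan is to recognize the first claim as the quantum Gibbs variational principle for the Hamiltonian $H=-\hbar^2\Delta_\delta$. Writing $\eng[R]=\tr{R\log R}+\tr{RH}$ and setting
$R_*:=Z^{-1}\exp(\hbar^2\Delta_\delta)$ with $Z:=\tr{\exp(\hbar^2\Delta_\delta)}$, one has $\log R_*=\hbar^2\Delta_\delta-(\log Z)\one$, so for every $R\in\dens$
\begin{align*}
  \eng[R]-\eng[R_*]
  = \tr{R(\log R-\log R_*)}
  = \tr{R\log R - R\log R_*}\ \ge\ 0,
\end{align*}
by the nonnegativity of the quantum relative entropy (Klein's inequality, which belongs to the same circle of trace-convexity arguments as Lemma~\ref{lem:carlen}), with equality precisely for $R=R_*$. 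Hence $R_*$ is the unique minimizer of $\eng$ over all of $\dens$, with $\eng[R_*]=-\log Z$. Diagonalizing $-\Delta_\delta$, whose eigenvalues $\omega_1,\dots,\omega_N$ are given by $\omega_k=\tfrac4{\delta^2}\sin^2(\pi k\delta)$ (cf.\ \eqref{eq:DeltaEV}), yields $Z=\sum_{k=1}^Ne^{-\hbar^2\omega_k}=Z_{\hbar^2,\delta}$.

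It remains to identify $R_*$ with the quantum Maxwellian of the uniform density. Since $\Delta_\delta$ acts by a translation-invariant stencil on the periodic grid $\crcN$, it is a circulant matrix; hence so are $\exp(\hbar^2\Delta_\delta)$ and $R_*$, and in particular $R_*$ has constant diagonal, which must equal $\tfrac1N\tr{R_*}=\tfrac1N=\delta$. Therefore $R_*$ lies in the feasible set of \eqref{eq:minprob} for $n=\bar n\equiv 1$ (where the constraint reads $R_{kk}=\delta\,\bar n(k\delta)=\delta$). Being the minimizer of $\eng$ over all of $\dens$, it is a fortiori the minimizer over that smaller feasible set, so by the uniqueness in Proposition~\ref{th:quantmax} it coincides with $\mm_\delta[\bar n]$; the associated Lagrange multiplier is the constant $A\equiv-\log Z_{\hbar^2,\delta}$. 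This establishes \eqref{eq:Z}.

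For the uniform lower bound \eqref{eq:dentbelow} it suffices to show $\log Z_{\hbar^2,\delta}\le\tfrac{\sqrt\pi}{4\hbar}$, i.e.\ a $\delta$-independent bound on $Z_{\hbar^2,\delta}$. Using the elementary concavity estimate $\sin(\pi t)\ge 2t$ on $[0,\tfrac12]$ together with $\sin(\pi k\delta)=\sin(\pi(N-k)\delta)$, one gets $\omega_k\ge 16\,\min(k,N-k)^2$ for every $k$. Since the value $\min(k,N-k)=0$ is attained only once and each positive value at most twice,
\begin{align*}
  Z_{\hbar^2,\delta}\ \le\ 1+2\sum_{k\ge1}e^{-16\hbar^2k^2}
  \ \le\ 1+2\int_0^\infty e^{-16\hbar^2 s^2}\dd s
  \ =\ 1+\frac{\sqrt\pi}{4\hbar},
\end{align*}
uniformly in $\delta$, and $\log(1+x)\le x$ gives $\min_{R\in\dens}\eng[R]=-\log Z_{\hbar^2,\delta}\ge-\tfrac{\sqrt\pi}{4\hbar}$.

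I do not expect a serious obstacle: the argument is a textbook Gibbs principle combined with an explicit eigenvalue computation. The only two points needing a little care are (i) checking that the global Gibbs minimizer actually satisfies the diagonal constraint — this is exactly where the translation invariance of the periodic grid enters — so that it can be identified with $\mm_\delta[\bar n]$ via the uniqueness of Proposition~\ref{th:quantmax}; and (ii) tracking the normalization of the eigenvalues $\omega_k$ and the constant in the $\sin$-estimate so that the Gaussian sum bound lands on exactly the constant $\sqrt\pi/(4\hbar)$ claimed in \eqref{eq:dentbelow}.
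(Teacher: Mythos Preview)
Your proof is correct. The main difference from the paper's argument is methodological: the paper rederives the form of the minimizer by repeating the Euler--Lagrange approach of Proposition~\ref{th:quantmax} (compactness for existence, strict convexity for uniqueness, then variations with $\tr Z=0$ to conclude $R^+=e^{-a}\exp(\hbar^2\Delta_\delta)$), whereas you bypass this by writing down the Gibbs state $R_*$ directly and verifying optimality via nonnegativity of the relative entropy $\tr{R(\log R-\log R_*)}$. Your route is shorter and avoids having to argue separately that the minimizer is positive definite; the paper's route has the advantage of being a verbatim specialization of the earlier proof, so it reuses machinery already in place. For the identification with $\mm_\delta[\bar n]$ and for the bound $\log Z_{\hbar^2,\delta}\le\sqrt\pi/(4\hbar)$ the two arguments are essentially the same --- the paper simply cites the estimate \eqref{eq:LT} from the appendix while you reproduce it inline.
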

\begin{proof}
  The proof is analogous to that of Proposition \ref{th:quantmax} above, with the only difference that the constraint simplifies to $\tr R=1$: by compactness of $\dens$, there exists a minimizer $R^+$, which is unique by strict convexity of $\eng$. Uniqueness further implies that $R^+$ is real, and by considering convex combinations of $R^+$ with the unit matrix $\one_N$, one concludes that $R^+$ is positive definite. In the Euler-Lagrange equation \eqref{eq:EulerLagrange} for $R^+$, all self-adjoint variations $Z$ are admissible that have vanishing trace (not necessarily vanishing diagonal, as before). It follows that
  \begin{align*}
    R^+=\exp(\hbar^2\Delta_\delta-a\one_N) = e^{-a}\exp(\hbar^2\Delta_\delta),
  \end{align*}
  with a suitable real Lagrange multiplier $a\in\setR$. The latter is uniquely determined by $\tr{R^+}=1$, i.e.,
  \begin{align*}
    e^a = \tr{\exp(\hbar^2\Delta_\delta)} = \sum_{k=1}^Ne^{-\hbar^2\omega_k}.
  \end{align*}
  This provides the representation \eqref{eq:Z}, with $Z_{\hbar^2,\delta}=e^a$.
  By symmetry, $R^+$ has identical entries on the diagonal, namely $\delta$, because of $\tr{R^+}=1$. Thus, the associated density is $\bar n\equiv1$.
  
  For the minimal value of $\eng$, we find accordingly
  \begin{align*}
    \eng[R^+]
    = \tr{R^+\log R^+-\hbar^2\Delta_\delta R^+}
    = \tr{R^+(\hbar^2\Delta_\delta-a\one_N)-\hbar^2\Delta_\delta R^+}
    = -a\tr{R^+} = -a.
  \end{align*}
  By the estimate \eqref{eq:LT} from Appendix \ref{app:Laplace}, we have that
  \begin{align*}
    a = \log\left(\sum_{k=1}^Ne^{-\hbar^2\omega_k}\right) \le \log\left(1+\frac{\sqrt\pi}{4\hbar}\right) \le \frac{\sqrt\pi}{4\hbar},
  \end{align*}
  uniformly in $\delta$, which proves the lower bound \eqref{eq:dentbelow}.
\end{proof}
\begin{corollary}
  For each $R\in\dens$,
  \begin{align}
    \label{eq:H1byH}
    -\tr{\Delta_\delta R} \le \frac2{\hbar^2} \big(\eng[R]-\underline\dent_{\hbar^2/2}\big).
  \end{align}
\end{corollary}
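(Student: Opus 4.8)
The plan is to deduce \eqref{eq:H1byH} directly from the lower bound \eqref{eq:dentbelow} on the free energy, applied with the Planck constant rescaled. Recalling \eqref{eq: relenerg}, split the free energy into an entropic and a kinetic contribution and move half of the kinetic term into a ``new'' free energy:
\begin{align*}
  \eng[R]
  = \tr{R\log R} - \hbar^2\tr{\Delta_\delta R}
  = \Big(\tr{R\log R} - \tfrac{\hbar^2}{2}\tr{\Delta_\delta R}\Big) - \tfrac{\hbar^2}{2}\tr{\Delta_\delta R}.
\end{align*}
The bracketed expression is precisely the functional from \eqref{eq: relenerg} with $\hbar^2$ replaced by $\hbar^2/2$; denote it by $\widetilde\eng[R]$.

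Nothing in the proof of the preceding lemma uses the particular value of the coefficient in front of $\Delta_\delta$ --- the decisive estimate \eqref{eq:LT} holds for every positive such coefficient --- so the lemma applies verbatim with $\hbar^2$ replaced by $\hbar^2/2$, giving $\widetilde\eng[R] \ge \underline\dent_{\hbar^2/2}$ for every $R\in\dens$. Inserting this into the identity above yields
\begin{align*}
  \eng[R] \ge \underline\dent_{\hbar^2/2} - \tfrac{\hbar^2}{2}\tr{\Delta_\delta R},
\end{align*}
and dividing by $\hbar^2/2>0$ gives exactly \eqref{eq:H1byH}. For completeness one also records that the left-hand side of \eqref{eq:H1byH} is nonnegative: since $\Delta_\delta=\dq_\delta^+\dq_\delta^-$ with $(\dq_\delta^+)^T=-\dq_\delta^-$, one has $-\Delta_\delta\ge0$ as a matrix, hence $\tr{(-\Delta_\delta)R}\ge0$ for $R\ge0$.

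There is essentially no obstacle here; the argument is a one-line manipulation of the already established bound \eqref{eq:dentbelow}. The only point worth a remark is the parametric dependence of the preceding lemma on the Planck constant, which is immediate from its proof; alternatively, one could state that lemma once with a generic positive coefficient in place of $\hbar^2$ and quote it here directly.
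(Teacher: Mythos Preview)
Your proof is correct and is essentially identical to the paper's own argument: both split $\eng[R]$ into the $\hbar^2/2$-free-energy plus the remaining kinetic half, apply the lower bound \eqref{eq:dentbelow} with the rescaled coefficient, and divide by $\hbar^2/2$. Your added remarks on the parametric dependence and on $-\Delta_\delta\ge0$ are fine but not needed for the argument.
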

\begin{proof}
  Due to the lower bound \eqref{eq:dentbelow}, we have
  \begin{align*}
    -\frac{\hbar^2}2\tr{\Delta_\delta R}
    = \tr{R\log R-\hbar^2\Delta_\delta R} - \tr{R\log R-\frac{\hbar^2}2\Delta_\delta R}
    \le\eng[R] - \underline\dent_{\hbar^2/2}.
  \end{align*}
  Division by $\hbar^2/2$ yields \eqref{eq:H1byH}.
\end{proof}
\begin{lemma}
  For any $n=\nop_\delta[A]\in\dprbp$, we have that
  \begin{align}
    \label{eq:Aupdown}
    \min A \le -\log Z_{\hbar^2,\delta} \le \max A,
  \end{align}
  with $Z_{\hbar^2,\delta}$ defined in \eqref{eq:Z}.
\end{lemma}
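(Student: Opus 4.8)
The plan is to use the variational characterisation of the Maxwellian together with the two special reference states already identified: the Maxwellian $\mm_\delta[\bar n]$ for the uniform density, which equals $Z_{\hbar^2,\delta}^{-1}\exp(\hbar^2\Delta_\delta)$ by \eqref{eq:Z}, and the diagonal matrix $S^n$ with $S^n_{jj}=\delta\,n(j\delta)$ that appeared in the proof of Proposition \ref{th:quantmax}. The key identity to exploit is the Euler--Lagrange/Gibbs relation: since $\log\mm_\delta[n]=\hbar^2\Delta_\delta+A$ (with $A$ the diagonal multiplier), we have, for \emph{any} competitor $R\in X$ sharing the same diagonal $R_{jj}=\delta\,n(j\delta)$,
\begin{align*}
  \eng[R] - \eng[\mm_\delta[n]]
  &= \tr{R\log R} - \tr{\mm_\delta[n]\log\mm_\delta[n]} - \hbar^2\tr{\Delta_\delta(R-\mm_\delta[n])}\\
  &= \tr{R(\log R - \log\mm_\delta[n])} \;\ge\; 0,
\end{align*}
where in the second line I used that $\tr{(\log\mm_\delta[n])(R-\mm_\delta[n])}=\tr{(\hbar^2\Delta_\delta+A)(R-\mm_\delta[n])}$ and both $\hbar^2\Delta_\delta$ and the diagonal $A$ pair with $R-\mm_\delta[n]$ to give zero (equal diagonals, equal trace), and the inequality is the nonnegativity of quantum relative entropy (Klein's inequality). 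Equivalently: among all $R$ with the prescribed diagonal, $\mm_\delta[n]$ minimises $\tr{R\log R}-\hbar^2\tr{\Delta_\delta R}$, which is just the defining property \eqref{eq:minprob}.

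For the \textbf{lower bound} $\min A \le -\log Z_{\hbar^2,\delta}$, I argue by contradiction: suppose $A(j\delta) > -\log Z_{\hbar^2,\delta}$ for all $j$. Then the diagonal matrix $A$ (as an operator) satisfies $A > -\log Z_{\hbar^2,\delta}\,\one_N$ in the sense of quadratic forms, hence $\hbar^2\Delta_\delta + A > \hbar^2\Delta_\delta - \log Z_{\hbar^2,\delta}\,\one_N = \log(Z_{\hbar^2,\delta}^{-1}\exp(\hbar^2\Delta_\delta)) = \log\mm_\delta[\bar n]$, and by operator monotonicity of $\exp$ this gives $\mm_\delta[n] = \exp(\hbar^2\Delta_\delta+A) > \mm_\delta[\bar n]$, so $\tr{\mm_\delta[n]} > \tr{\mm_\delta[\bar n]} = 1$, contradicting $\mm_\delta[n]\in\dens$. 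The reverse strict inequality $A < -\log Z_{\hbar^2,\delta}\,\one_N$ gives $\tr{\mm_\delta[n]}<1$ in the same way, which yields the \textbf{upper bound} $\max A \ge -\log Z_{\hbar^2,\delta}$. (One must be slightly careful that $A$ strictly exceeding the constant at every site does give the strict form inequality; this is immediate since $A$ is diagonal with all entries above the threshold, so the difference is a positive definite diagonal matrix.)

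The step I expect to require the most care is justifying the operator-monotonicity move cleanly — specifically, that $\exp$ is operator monotone so that a form inequality between the \emph{logarithms} transfers to a form (hence trace) inequality between the Maxwellian and the reference state. Operator monotonicity of $\exp$ fails in general, so the correct route is the other direction: one should instead compare $\log\mm_\delta[n]$ with $\log\mm_\delta[\bar n]$ via the variational inequality itself, or invoke that $X\mapsto \tr{e^X}$ is monotone with respect to the form order on self-adjoint matrices (which \emph{is} true, by the Peierls--Bogoliubov or Golden--Thompson circle, or simply because $\lambda_{\max}$ and all eigenvalues are monotone under $X\le Y$ via min-max, and $\tr{e^X}=\sum e^{\lambda_k(X)}$). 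I would present it through this eigenvalue/min-max monotonicity of $X\mapsto\tr{e^X}$, which is elementary and avoids any appeal to operator monotonicity of the exponential map, and then the contradiction argument above goes through verbatim.
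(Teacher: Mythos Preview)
Your final argument is correct and is essentially the paper's proof: the key fact is the monotonicity of $X\mapsto\tr{e^X}$ with respect to the form order on self-adjoint matrices (the paper states this as Lemma~\ref{lem:tracemono} with $\phi=\exp$, proved via eigenvalue monotonicity exactly as you suggest). The paper applies it directly rather than by contradiction: from $\underline a\,\one_N\le A\le\overline a\,\one_N$ one gets
\[
  e^{\underline a}Z_{\hbar^2,\delta}
  =\tr{\exp(\underline a\,\one_N+\hbar^2\Delta_\delta)}
  \le \tr{\exp(A+\hbar^2\Delta_\delta)}=1
  \le \tr{\exp(\overline a\,\one_N+\hbar^2\Delta_\delta)}
  = e^{\overline a}Z_{\hbar^2,\delta},
\]
and takes logarithms. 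Your opening paragraph on the variational characterisation and Klein's inequality is never used and can be dropped; and you are right to retract the operator-monotonicity-of-$\exp$ step, which is false in general --- only the trace version is needed, and that is what both you and the paper ultimately use.
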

\begin{proof}
  With $\underline a:=\min A$ and $\overline a:=\max A$,
  trace monotonicity, see Lemma \ref{lem:tracemono} in the appendix, implies that
  \begin{align*}
    \tr{\exp(\underline a\one_N+\hbar^2\Delta_\delta)} \le \tr{\exp(A+\hbar^2\Delta_\delta)} \le \tr{\exp(\overline a\one_N+\hbar^2\Delta_\delta)},
  \end{align*}
  and so, since $n\in\dprbp$,
  \begin{align*}
    e^{\underline a} Z_{\hbar^2,\delta} \le 1 \le e^{\overline a} Z_{\hbar^2,\delta}.
  \end{align*}
  Take the logarithm to arrive at \eqref{eq:Aupdown}.
\end{proof}
In \eqref{eq:dent0}, the discrete entropy functional $\dent$ has been defined at hoc. The relation to the relative von Neumann entropy $\eng$ is the following.
\begin{lemma}
  For any $n=\nop_\delta[A]\in\dprbp$, we have that
  \begin{align}
    \label{eq:nA}
    \dent(n) := \eng\big[\mm_\delta[n]\big] = \delta\sum_\xi nA.
  \end{align}
\end{lemma}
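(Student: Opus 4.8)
The plan is to compute $\eng[\mm_\delta[n]]$ directly using the explicit representation $\mm_\delta[n]=\exp(\hbar^2\Delta_\delta+A)$ from Proposition \ref{th:quantmax}, exactly as in the proof of the lemma computing $\eng[R^+]=-a$ above. First I would write $\log\mm_\delta[n] = \hbar^2\Delta_\delta+A$, where $A$ is understood as the diagonal matrix with entries $A(j\delta)$. Then
\[
  \eng\big[\mm_\delta[n]\big] = \tr{\mm_\delta[n]\big(\log\mm_\delta[n]-\hbar^2\Delta_\delta\big)} = \tr{\mm_\delta[n]\,A},
\]
since the $\hbar^2\Delta_\delta$ terms cancel. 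Because $A$ is diagonal, $\tr{\mm_\delta[n]\,A} = \sum_{j=1}^N (\mm_\delta[n])_{jj}\,A(j\delta)$, and by the defining constraint in \eqref{eq:minprob}, namely $(\mm_\delta[n])_{jj}=\delta\,n(j\delta)$, this equals $\delta\sum_{j=1}^N n(j\delta)A(j\delta) = \delta\sum_\xi nA$, which is the claimed identity and matches the ad hoc definition \eqref{eq:dent0}.

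The only subtlety I would be careful about is the interpretation of the trace $\tr{(\log R)Z}$-type expressions and the cancellation: one must make sure that $\log\mm_\delta[n]$ is genuinely $\hbar^2\Delta_\delta+A$ as a matrix identity (not merely up to the diagonal), which is precisely the content of the representation \eqref{eq:qM} established in Proposition \ref{th:quantmax}; with that in hand the cancellation $\tr{\mm_\delta[n](\log\mm_\delta[n]-\hbar^2\Delta_\delta)} = \tr{\mm_\delta[n]\,A}$ is immediate and exact, with no error terms. I do not anticipate a genuine obstacle here — the statement is essentially a bookkeeping consequence of the Euler–Lagrange characterization of the Maxwellian, the main point being to record that the ad hoc entropy \eqref{eq:dent0} coincides with the relative von Neumann free energy evaluated at the Maxwellian, which will be used later to transport the entropy-dissipation estimates between the two formulations.
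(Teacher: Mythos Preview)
Your proposal is correct and matches the paper's own proof essentially line for line: substitute $\log\mm_\delta[n]=\hbar^2\Delta_\delta+A$, cancel the Laplacian contribution (the paper invokes cyclicity of the trace here, which you implicitly use as well), and then read off $\tr{\mm_\delta[n]\,A}=\delta\sum_\xi nA$ from the fact that $A$ is diagonal and the diagonal of $\mm_\delta[n]$ is $\delta n$.
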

\begin{proof}
  Substitute $\mm_\delta[n]=\exp(\hbar^2\Delta_\delta+A)$ into the definition of the entropy, and use cyclicity of the trace:
  \begin{align*}
    \eng\big[\mm_\delta[n]\big]
    &= \tr{\mm_\delta[n]\log\mm_\delta[n]-\hbar^2\Delta\mm_\delta[n]} \\
    &= \tr{\hbar^2\mm_\delta\Delta_\delta + \mm_\delta[n]A - \hbar^2\Delta\mm_\delta[n]} 
    = \tr{\mm_\delta[n]A}
    = \sum_\xi \delta\, n\,A,
  \end{align*}
  which is \eqref{eq:nA}.
\end{proof}
\begin{lemma}
  For $A$ such that $\nop_\delta[A]=n\in\dprbp$, we have that
  \begin{align}
    \label{eq:Abelow}
    A(\xi) \le 2(\hbar/\delta)^2.
  \end{align}
\end{lemma}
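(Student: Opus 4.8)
The goal is to bound the Lagrange multiplier $A$ from above by $2(\hbar/\delta)^2$ at each site $\xi$. The natural starting point is the characterization $\mm_\delta[n]=\exp(\hbar^2\Delta_\delta+A)$, where $A$ is understood as a diagonal matrix. Since $n(\xi)\le 1/\delta$ is not quite enough on its own, the key is to exploit that the diagonal entries of $\exp(\hbar^2\Delta_\delta+A)$ equal $\delta n(\xi)\le\delta\cdot(\text{something})$; more precisely, since $n\in\dprbp$ has unit mass, $\delta\sum_\xi n(\xi)=1$, so each $\delta n(\xi)< 1$, hence every diagonal entry of $\mm_\delta[n]$ is strictly less than $1$. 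I would therefore reduce the claim to a statement purely about the matrix $B:=\hbar^2\Delta_\delta+A$: if all diagonal entries of $\exp(B)$ are $<1$, then each diagonal entry $B_{jj}=\hbar^2(\Delta_\delta)_{jj}+A(j\delta)$ is bounded above, and since $(\Delta_\delta)_{jj}=-2/\delta^2$, this gives $A(j\delta)\le 2\hbar^2/\delta^2 + (\text{correction})$.

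**Key steps.** First, I would record that for any symmetric real matrix $B$, the diagonal entries of $\exp(B)$ dominate a simple expression in $B_{jj}$. The cleanest route: write $\exp(B)_{jj}=\sum_k e^{\lambda_k}|v_k(j)|^2\ge$ (by Jensen, since $\sum_k|v_k(j)|^2=1$) $\exp\big(\sum_k\lambda_k|v_k(j)|^2\big)=\exp(B_{jj})$. Thus $\exp(B)_{jj}\ge e^{B_{jj}}$ for every $j$. Second, apply this with $B=\hbar^2\Delta_\delta+A$: we get $\delta n(j\delta)=\mm_\delta[n]_{jj}\ge\exp\big(B_{jj}\big)=\exp\big(-2\hbar^2/\delta^2+A(j\delta)\big)$, using $(\Delta_\delta)_{jj}=-2/\delta^2$. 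Third, since $\delta n(j\delta)\le 1$ — indeed $\le \delta\sum_\xi n(\xi)=1$ — taking logarithms gives $A(j\delta)-2\hbar^2/\delta^2\le\log\big(\delta n(j\delta)\big)\le 0$, i.e. $A(j\delta)\le 2\hbar^2/\delta^2=2(\hbar/\delta)^2$, which is exactly \eqref{eq:Abelow}.

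**Main obstacle.** The only delicate point is the Jensen-type inequality $\exp(B)_{jj}\ge e^{B_{jj}}$; it is elementary but must be stated correctly. One should note $\exp(B)_{jj}=(e_j,\exp(B)e_j)=\sum_k e^{\lambda_k}|(e_j,v_k)|^2$ where $\{v_k\}$ is an orthonormal eigenbasis with eigenvalues $\lambda_k$, that the weights $p_k:=|(e_j,v_k)|^2$ are nonnegative and sum to $1$ (because $\{v_k\}$ is orthonormal and $|e_j|=1$), and that $B_{jj}=(e_j,Be_j)=\sum_k\lambda_k p_k$. Convexity of $t\mapsto e^t$ then yields $\sum_k p_k e^{\lambda_k}\ge e^{\sum_k p_k\lambda_k}=e^{B_{jj}}$. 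This is genuinely the heart of the argument; everything else is bookkeeping with the explicit value $(\Delta_\delta)_{jj}=-2/\delta^2$ and the normalization $\delta n(j\delta)\le 1$. I expect no mesh-dependence issues since the bound is stated with an explicit $\delta$ on the right-hand side, and no positivity subtleties arise because $n\in\dprbp$ is already strictly positive by hypothesis.
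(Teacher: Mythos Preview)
Your proof is correct. The paper's own argument is a minor variant: instead of your Jensen step $(\exp B)_{jj}\ge e^{B_{jj}}$, it observes that $\mm_\delta[n]\in\densp$ has unit trace and is positive definite, so its largest eigenvalue $\rho$ is strictly less than one; hence the largest eigenvalue $\log\rho$ of $B=\hbar^2\Delta_\delta+A$ is negative, and therefore $B_{jj}=e_j^T B e_j\le\lambda_{\max}(B)<0$, which gives $A(j\delta)<2\hbar^2/\delta^2$ directly. Both routes pass through the spectral decomposition and the identity $B_{jj}=\sum_k\lambda_k|(e_j,v_k)|^2$; the paper bounds this convex combination by $\lambda_{\max}$, while you apply Jensen to $e^t$ instead. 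Your version has the small bonus of yielding the sharper inequality $A(j\delta)\le 2(\hbar/\delta)^2+\log\big(\delta n(j\delta)\big)$, and the Jensen inequality you isolate is in fact reused verbatim later in the paper (in the proof of Lemma~\ref{lem:n2A}).
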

\begin{proof}
  Since the largest eigenvalue $\rho$ of $\mm_\delta[n]=\exp(\hbar^2\Delta_\delta+A)\in\densp$ is less than one, the largest eigenvalue $\log\rho$ of $\hbar^2\Delta_\delta+A$ is negative. This implies for any $j\in\{1,\ldots,N\}$ that
  \begin{align*}
    0 > \cmp{\hbar^2\Delta_\delta+A}{jj} = -\frac{2\hbar^2}{\delta^2} + A(j\delta),
  \end{align*}
  which implies \eqref{eq:Abelow}.
\end{proof}
The following is important for solution of the discrete quantum Liouville equation below.
\begin{lemma}
  \label{lem:mmisdifferentiable}
  The map $\mm_\delta$ from $\dprbp$ to $\setR^{N\times N}$ is differentiable.
\end{lemma}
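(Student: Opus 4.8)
The plan is to realize $\mm_\delta$ as a composition of maps that are each manifestly smooth, and to invoke the inverse function theorem at the one delicate spot. Recall from Proposition \ref{th:quantmax} that $\mm_\delta[n] = \exp(\hbar^2\Delta_\delta + A)$ where $A$ is the Lagrange multiplier determined by the constraint $\bigl(\exp(\hbar^2\Delta_\delta + A)\bigr)_{kk} = \delta\,n(k\delta)$ for $k=1,\ldots,N$. Since the matrix exponential $\exp:\setR^{N\times N}\to\setR^{N\times N}$ is real-analytic, and $A\mapsto \hbar^2\Delta_\delta + A$ (with $A$ read as a diagonal matrix) is affine, it suffices to show that the map $n\mapsto A$ from $\dprbp\subset\Lp 1$ to $\Lp\infty$ is differentiable; differentiability of $\mm_\delta$ then follows by the chain rule.

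First I would introduce the auxiliary map $\Phi:\setR^N\to\setR^N$ sending a vector $A$ (viewed as the diagonal of a diagonal matrix) to the vector of diagonal entries of $\exp(\hbar^2\Delta_\delta + A)$, i.e. $\Phi(A)_k = \bigl(\exp(\hbar^2\Delta_\delta + A)\bigr)_{kk}$. This $\Phi$ is real-analytic, being a composition of an affine map, the matrix exponential, and the (linear) extraction of the diagonal. The relation $n = \nop_\delta[A]$ is exactly $\Phi(A) = \bigl(\delta\,n(\delta),\ldots,\delta\,n(N\delta)\bigr)$, and Proposition \ref{th:quantmax} tells us $\Phi$ is a bijection from $\setR^N$ onto the positive cone in $\setR^N$, with the inverse restricted to the probability simplex giving $n\mapsto A$. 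To apply the inverse function theorem and conclude that this inverse is differentiable, I need the Jacobian $\dn\Phi(A)$ to be invertible at every $A$.

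The main obstacle is therefore the invertibility of $\dn\Phi(A)$. I would compute it using Duhamel's formula: writing $H_A := \hbar^2\Delta_\delta + A$ and perturbing $A$ by a diagonal matrix $\mathrm{diag}(v)$ with $v\in\setR^N$, one gets
\begin{align*}
  \bigl(\dn\Phi(A)\,v\bigr)_k = \int_0^1 \bigl(e^{sH_A}\,\mathrm{diag}(v)\,e^{(1-s)H_A}\bigr)_{kk}\dd s = \sum_{\ell=1}^N \Bigl(\int_0^1 \bigl(e^{sH_A}\bigr)_{k\ell}\bigl(e^{(1-s)H_A}\bigr)_{\ell k}\dd s\Bigr) v_\ell.
\end{align*}
Since $H_A$ is real symmetric, $e^{sH_A}$ is symmetric, so the matrix $K_{k\ell} := \int_0^1 \bigl(e^{sH_A}\bigr)_{k\ell}\bigl(e^{(1-s)H_A}\bigr)_{\ell k}\dd s = \int_0^1 \bigl(e^{sH_A}\bigr)_{k\ell}^2\dd s\cdot(\text{sign factors aside})$ — more precisely, diagonalizing $H_A = U\Lambda U^{\mathsf T}$ with $\Lambda = \mathrm{diag}(\mu_1,\ldots,\mu_N)$, one finds $K_{k\ell} = \sum_{a,b} U_{ka}U_{\ell a}U_{\ell b}U_{kb}\,\frac{e^{\mu_a}-e^{\mu_b}}{\mu_a-\mu_b}$ (with the divided difference read as $e^{\mu_a}$ when $\mu_a = \mu_b$). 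One then checks $K$ is symmetric positive definite: for $v\neq 0$, $v^{\mathsf T}Kv = \int_0^1 \sum_{k,\ell}\bigl(e^{sH_A}\bigr)_{k\ell}\bigl(e^{(1-s)H_A}\bigr)_{\ell k} v_k v_\ell\dd s$, and since $e^{sH_A}$ and $e^{(1-s)H_A}$ are positive definite, the Schur/Hadamard-type positivity — or more directly, the fact that $\dn\Phi(A) = \dn^2\bigl(\text{a strictly convex function}\bigr)$ coming from the strict convexity of $\eng$ established in Proposition \ref{th:quantmax} (restricted to the diagonal directions, this is the Legendre dual relationship between $A$ and $n$) — gives $v^{\mathsf T}Kv > 0$. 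Hence $\dn\Phi(A)$ is invertible for every $A$, the inverse function theorem applies, $n\mapsto A$ is (real-analytically, in fact) differentiable on the positive cone and in particular on $\dprbp$, and composing with $\exp(\hbar^2\Delta_\delta + \cdot)$ yields the differentiability of $\mm_\delta$.
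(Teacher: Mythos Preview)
Your proposal is correct and follows essentially the same route as the paper: both reduce to showing that the Jacobian of the map $A\mapsto\text{diag}\bigl(\exp(\hbar^2\Delta_\delta+A)\bigr)$ is invertible, then invoke the inverse/implicit function theorem, and both arrive at the same divided-difference (logarithmic mean) coefficients $\frac{e^{\mu_a}-e^{\mu_b}}{\mu_a-\mu_b}$ via spectral decomposition. The only cosmetic differences are that you obtain the derivative from Duhamel's formula where the paper differentiates the exponential series term by term, and you offer the strict-convexity interpretation (the Jacobian is the Hessian of $A\mapsto\tr{\exp(\hbar^2\Delta_\delta+A)}$) as an alternative to the direct quadratic-form computation $\tr{Q\,Df(A)[Q]}>0$ that the paper carries out.
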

\begin{proof}
  Since images of $\mm_\delta$ have the representation $\mm_\delta[n]=\exp(\hbar^2\Delta_\delta+A)$ with a unique Lagrange multiplier $A$, and since the matrix exponential map is clearly differentiable, it suffices to prove differentiability of the map from $n$ to $A$. This will be done by means of the implicit function theorem below.

  Define the map $\phi:\setR^N\to\setR^N$ from the Lagrange multiplier $A$, here considered as a real $N$-vector, to the real $N$-vector of diagonal elements of the matrix
  \begin{align*}
    f(A) := \exp(\hbar^2\Delta_\delta+A).
  \end{align*}
  Fix some $A_0\in\setR^N$, and consider a correponding spectral decomposition
  \begin{align*}
    \hbar^2\Delta_\delta + A_0 = \sum_{k=1}^N \lambda_kv_kv_k^*
  \end{align*}
  with an orthonormal basis of real eigenvectors $v_k\in\setR^N$. Now let $Q\in\setR^{N}$, $s\in\setR$. By differentiation of the series representation
  \begin{align*}
    f(A_0+sQ)
    = \sum_{n=0}^\infty \frac1{n!}(\hbar^2\Delta_\delta+A_0+sQ)^n
  \end{align*}
  with respect to $s$ at $s=0$, we obtain
  \begin{align*}
    Df(A_0)[Q]
    = \sum_{n=1}^\infty \frac1{n!}\sum_{p=1}^n(\hbar^2\Delta_\delta+A_0)^{p-1}Q(\hbar^2\Delta_\delta+A_0)^{n-p}
    = \sum_{n=1}^\infty \frac1{n!}\sum_{k,\ell=1}^N\sum_{p=1}^n\lambda_k^{p-1}\lambda_\ell^{n-p}v_kv_k^*Qv_\ell v_\ell^*.
  \end{align*}
  Invertibility of $D\phi(A_0)$ follows if can show that the diagonal of $Df(A_0)[Q]$ vanishes only for $Q=0$.
  To prove this, multiply the derivative by $Q=Q^*$ and take the trace,
  \begin{align*}
    \tr{Q\,Df(A)[Q]}
    &= \sum_{n=1}^\infty \frac1{n!}\sum_{k,\ell=1}^N\sum_{p=1}^n\lambda_k^{p-1}\lambda_\ell^{n-p}\tr{Q^*v_kv_k^*Qv_\ell v_\ell^*} \\
    &= \sum_{k,\ell=1}^N \left[\sum_{n=1}^\infty\sum_{p=1}^n\frac{\lambda_k^{p-1}\lambda_\ell^{n-p}}{n!}\right] |v_k^*Qv_\ell|^2.
  \end{align*}
  The expression in the square bracket above is easily seen to be equal to
  \begin{align*}
    \sum_{n=1}^\infty \frac{\lambda_\ell^{n-1}}{n!} \sum_{p=1}^n\left(\frac{\lambda_k}{\lambda_\ell}\right)^{p-1} 
    = \sum_{n=1}^\infty \frac{\lambda_\ell^n}{n!} \frac{(\lambda_k/\lambda_\ell)^n-1}{\lambda_k/\lambda_\ell-1} 
    = \frac1{\lambda_k-\lambda_\ell}\sum_{n=1}^\infty \left(\frac{\lambda_k^n}{n!}-\frac{\lambda_\ell^n}{n!}\right)
    = \frac{e^{\lambda_k}-e^{\lambda_\ell}}{\lambda_k-\lambda_\ell},
  \end{align*}
  the logarithmic mean of $e^{\lambda_k}$ and $e^{\lambda_\ell}$, and is in particular positive for any values of $\lambda_k$ and $\lambda_\ell$. Thus,
  \begin{align*}
    \tr{Q\,Df(A)[Q]}
    \ge \sum_{k=1}^N e^{\lambda_k}|v_k^*Qv_k|^2,
  \end{align*}
  which is zero if and only if $Q=0$.
\end{proof}
Finally, we introduce the next-to-diagonal entries in the quantum Maxwellian by
\begin{align}
  \label{eq:nu}
  \nu^+(k\delta) = \delta^{-1}\cmp{\mm_\delta[n]}{k,k+1},
  \quad
  \nu^-(k\delta) = \delta^{-1}\cmp{\mm_\delta[n]}{k,k-1}.
\end{align}
\begin{lemma}
  \label{lem:nu}
  For any $n\in\dprbp$, the coefficients $\nu^+$ and $\nu^-$ are positive, and
  \begin{align}
    \label{eq:nusum}
    \delta\sum_\xi \nu^+ \le1,
    \qquad
    \delta\sum_\xi \nu^- \le1.
  \end{align}
\end{lemma}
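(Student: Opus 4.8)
\textbf{Proof plan for Lemma \ref{lem:nu}.}

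The plan is to exploit that the quantum Maxwellian $M=\mm_\delta[n]=\exp(\hbar^2\Delta_\delta+A)$ is a positive definite matrix with off-diagonal structure governed by the nearest-neighbour sign pattern of $\Delta_\delta$. First I would establish positivity of $\nu^\pm$. Since $M_{k,k\pm1}=\delta\nu^\pm(k\delta)$, it suffices to show that the next-to-diagonal entries of $M$ are strictly positive. The matrix $H:=\hbar^2\Delta_\delta+A$ has the property that its off-diagonal entries are all nonnegative --- indeed only the nearest-neighbour entries $H_{k,k\pm1}=\hbar^2/\delta^2$ are nonzero off the diagonal (on the torus the cyclic wrap-around entries $H_{1,N}$ and $H_{N,1}$ are likewise $\hbar^2/\delta^2>0$), and moreover $H$ is irreducible because the associated graph (the cycle $\crcN$) is connected. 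Writing $M=e^{H}=\lim_{m\to\infty}(\one_N+H/m)^m$, or more cleanly $M=e^{-c}e^{cH}$ with $c>0$ chosen so that $c\one_N+H$ has nonnegative entries (possible since only the diagonal of $H$ can be negative), one sees that $e^{cH}$ is a nonnegative, irreducible matrix; by Perron--Frobenius theory for exponentials of essentially nonnegative irreducible matrices, $M$ has strictly positive entries everywhere, in particular $M_{k,k\pm1}>0$. Hence $\nu^\pm>0$.

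Next, the bound $\delta\sum_\xi\nu^+\le1$. Summing the definition \eqref{eq:nu} gives $\delta\sum_\xi\nu^+=\sum_{k=1}^N M_{k,k+1}$, the sum of all next-to-diagonal entries of $M$ (indices mod $N$). I would bound each such entry using positive semidefiniteness of $M$: for any pair of indices the inequality $|M_{k\ell}|\le\tfrac12(M_{kk}+M_{\ell\ell})$ holds (this was already used in the proof of Proposition \ref{th:quantmax}). Applying it to $\ell=k+1$ and summing over $k$,
\begin{align*}
  \delta\sum_\xi\nu^+ = \sum_{k=1}^N M_{k,k+1} \le \frac12\sum_{k=1}^N\big(M_{kk}+M_{k+1,k+1}\big) = \sum_{k=1}^N M_{kk} = \tr M = 1,
\end{align*}
where the cyclic relabelling $\sum_k M_{k+1,k+1}=\sum_k M_{kk}$ is used. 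The same argument with $\ell=k-1$ yields $\delta\sum_\xi\nu^-\le1$. (One could also obtain this via the stronger Cauchy--Schwarz bound $M_{k,k+1}\le\sqrt{M_{kk}M_{k+1,k+1}}$ followed by AM--GM, but the crude estimate already suffices.)

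The only genuinely delicate point is the strict positivity of the off-diagonal entries of $M=e^H$, i.e.\ making precise the Perron--Frobenius-type statement for the matrix exponential on the torus; everything else is a one-line trace computation. I would handle this either by the shift-and-exponentiate trick above combined with the series expansion of $e^{c H}$ (each power $(cH+c\one_N)^m$ for large $m$ has all entries positive by irreducibility/connectedness of the cycle graph, since any two sites are joined by a path of length $\le N$), or by citing the standard fact that the semigroup generated by an irreducible essentially nonnegative matrix is positivity-improving. Note this also re-proves $\mm_\delta[n]\in\denspr$ with \emph{strictly} positive entries, consistent with Proposition \ref{th:quantmax}.
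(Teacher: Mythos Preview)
Your proof is correct and follows essentially the same approach as the paper: the shift-and-exponentiate trick (with the series expansion showing that the next-to-diagonal entries of $e^{c\one_N+H}$ dominate the corresponding entries of $c\one_N+H>0$) for positivity, and the positive-definiteness bound $|M_{k,k+1}|\le\tfrac12(M_{kk}+M_{k+1,k+1})$ summed over $k$ for \eqref{eq:nusum}. Note only a typo: where you write $M=e^{-c}e^{cH}$ you mean $M=e^{-c}e^{c\one_N+H}$.
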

\begin{proof}
  The proof of positivity rests on the fact that for a real matrix $M\in\setR^{N\times N}$ with non-negative entries, its matrix exponential $\exp(M)$ also has non-negative entries, and if additionally $M_{k\ell}>0$ for some $k,\ell\in\{1,\ldots,N\}$, then $\cmp{\exp(M)}{k\ell}\ge M_{k\ell}>0$. This follows immediately from the series representation, specifically from
  \begin{align*}
    \cmp{\exp(M)}{k\ell} = \sum_{n=0}^\infty \frac{\cmp{M^n}{k\ell}}{n!} \ge \frac{\cmp{M^1}{k\ell}}{1!} = M_{k\ell}.
  \end{align*}
  Fix $n\in\dprbp$, and let $a:=\min_\xi A(\xi)$. By definition, the matrix $\Delta_\delta+2\delta^{-2}\one_N$ is zero except for the constants $\delta^{-2}$ on the off-diagonals, and has in particular only non-negative entries. Thus,
  \begin{align*}
    \delta \nu^+(k\delta)
    = e^{a-2\hbar^2/\delta^2}\big[\exp\big((A-a\one_N) + \hbar^2(\Delta_\delta+2\delta^{-2}\one_N)\big)\big]_{k,k+1}
    \geq e^{a-2(\hbar/\delta)^2} (\hbar/\delta)^2 > 0.
  \end{align*}
  For proving the bound \eqref{eq:nusum}, it suffices to observe that, since $R=\mm_\delta[n]$ is real and positive definite, $|R_{j,j+1}|\le (R_{j,j}+R_{j+1,j+1})/2$, and consequently that
  \begin{align*}
    \nu^+(\xi) \le \frac12\big(n(\xi)+n(\xi+\delta)\big).
  \end{align*}
  Now sum over $\xi\in\crcN$ and use that $n\in\dprbp$ is normalized to one.
\end{proof}
%

\subsection{The discrete quantum Liouville equation}
We introduce the discrete collisional quantum Liouville equation as follows:
\begin{equation}
  \label{eq:DMQL} 
  i\hbar \dot R = \left[-\frac{\hbar^2}2\Delta_\delta, R\right] + \frac{i}\tau\left( \mm_\delta[R] - R \right),
\end{equation}
where $\mm_\delta[R]$ is the projection of $R\in\dens$ onto the manifold of discrete quantum Maxwellians, see Definition \ref{dfn:nop}, and the relaxation time $\tau>0$ is a positive parameter.
%
\begin{lemma}
  For any initial condition $R_\text{in}\in\densp$, there exists a unique global solution $R\in C^1(\setRnn;\densp)$ to \eqref{eq:DMQL}.
\end{lemma}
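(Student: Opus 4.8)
The plan is to read \eqref{eq:DMQL} as an autonomous ODE on the relatively open set
\[
  \mathcal U := \big\{\, R=R^*\in\setC^{N\times N}\ :\ \tr{R}=1,\ R_{kk}>0\ \text{for all }k=1,\dots,N\,\big\}
\]
of the affine hyperplane of Hermitian unit-trace matrices, to obtain a local-in-time solution via Picard--Lindel\"of, and then to globalise it through an a priori lower bound on $\det R$. First I would verify that the right-hand side
\[
  F(R) := \frac1{i\hbar}\commu{-\tfrac{\hbar^2}2\Delta_\delta}{R}+\frac1{\hbar\tau}\big(\mm_\delta[R]-R\big)
\]
is a $C^1$ --- in fact smooth --- vector field on $\mathcal U$: for $R\in\mathcal U$ the diagonal entries define a density $n^R\in\dprbp$ through $\delta\,n^R(k\delta)=R_{kk}$, so $\mm_\delta[R]=\mm_\delta[n^R]$ is well defined, and by Lemma \ref{lem:mmisdifferentiable} together with the implicit function theorem (which in addition makes the Lagrange multiplier a smooth function of $n$) the map $R\mapsto\mm_\delta[R]$ is smooth. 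One checks at once that $F(R)$ is Hermitian whenever $R$ is --- the commutator is skew-Hermitian, so dividing by $i\hbar$ gives a Hermitian matrix, and $\mm_\delta[R]\in\denspr$ --- and that $\tr{F(R)}=\tfrac1{\hbar\tau}\big(\tr{\mm_\delta[R]}-\tr{R}\big)=0$ on $\{\tr{R}=1\}$, because commutators are traceless and $\mm_\delta[R]\in\dens$; hence $F$ is tangent to $\mathcal U$. Picard--Lindel\"of then yields a unique maximal solution $R\in C^1([0,T_{\max});\mathcal U)$ with $R(0)=R_{\mathrm{in}}\in\densp\subset\mathcal U$ and $\tr{R(t)}\equiv1$.

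The crucial a priori estimate is a lower bound for $\det R$. On the maximal subinterval $[0,T_1)\subseteq[0,T_{\max})$ on which $R(t)$ is positive definite --- nonempty by continuity, since $R_{\mathrm{in}}\in\densp$ --- the matrix $R(t)$ is invertible and Jacobi's formula gives
\[
  \frac{\dd}{\dn t}\log\det R = \tr{R^{-1}\dot R} = \frac1{i\hbar}\tr{R^{-1}\commu{-\tfrac{\hbar^2}2\Delta_\delta}{R}}+\frac1{\hbar\tau}\Big(\tr{R^{-1}\mm_\delta[R]}-N\Big).
\]
The first trace vanishes by cyclicity, $\tr{R^{-1}(XR-RX)}=\tr{X}-\tr{X}=0$, and $\tr{R^{-1}\mm_\delta[R]}=\tr{R^{-1/2}\mm_\delta[R]R^{-1/2}}>0$ since $R^{-1/2}\mm_\delta[R]R^{-1/2}$ is positive definite (using $\mm_\delta[R]\in\denspr$, Proposition \ref{th:quantmax}). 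Hence $\tfrac{\dd}{\dn t}\log\det R\ge -N/(\hbar\tau)$ and therefore $\det R(t)\ge\det R_{\mathrm{in}}\,e^{-Nt/(\hbar\tau)}>0$ on $[0,T_1)$; if $T_1<T_{\max}$ this would contradict $\det R(T_1)=0$, so $T_1=T_{\max}$ and $R(t)\in\densp$ on all of $[0,T_{\max})$. Globality is then the usual escape-time argument: $R(t)\in\dens$ gives $\|R(t)\|\le\tr{R(t)}=1$, and if $T_{\max}<\infty$ then $\det R(t)\ge c:=\det R_{\mathrm{in}}\,e^{-NT_{\max}/(\hbar\tau)}>0$ for $t<T_{\max}$, so $R([0,T_{\max}))$ stays in $K:=\{R\in\dens:\det R\ge c\}$; since $\det R\ge c>0$ forces $R$ to be positive definite, $K$ is a compact subset of $\densp\subset\mathcal U$, and a maximal solution that remains in a compact subset of the open domain must be global, so $T_{\max}=\infty$. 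Thus $R\in C^1(\setRnn;\densp)$, and uniqueness is inherited from Picard--Lindel\"of.

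I expect the only genuine difficulty to be this last point, namely keeping the flow inside the domain of $F$: $\mm_\delta[R]$ degenerates exactly when the diagonal of $R$ --- equivalently, for $R\ge0$, the whole matrix --- loses strict positivity, so everything hinges on the determinant estimate above. A more hands-on alternative is an eigenvector-touching argument: at a hypothetical first time a zero eigenvalue appears, with unit eigenvector $v$ satisfying $Rv=0$, both the commutator term and the $-R/\tau$ term contribute $0$ to $\tfrac{\dd}{\dn t}\langle v,Rv\rangle$, whereas relaxation toward the positive definite $\mm_\delta[R]$ contributes $\tfrac1{\hbar\tau}\langle v,\mm_\delta[R]v\rangle>0$, contradicting that $\langle v,Rv\rangle$ must decrease to $0$. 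That route is slightly more delicate, since it presupposes $\mm_\delta[R]$ is still defined at the touching time --- i.e.\ the diagonal of $R$ must be controlled separately --- whereas the determinant argument settles the diagonal and the eigenvalues simultaneously; that is the route I would take.
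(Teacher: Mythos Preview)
Your argument is correct. The local existence part is essentially the same as in the paper --- both invoke Lemma~\ref{lem:mmisdifferentiable} to get a $C^1$ vector field tangent to the unit-trace Hermitian matrices --- but your globalisation is genuinely different. The paper passes to an ``interaction picture'': it sets
\[
  S(t) = \exp\!\left(-\tfrac{i\hbar}{2}t\Delta_\delta\right) e^{t/(\hbar\tau)} R(t)\, \exp\!\left(\tfrac{i\hbar}{2}t\Delta_\delta\right),
\]
computes $\dot S(t) = \tfrac{1}{\hbar\tau} e^{t/(\hbar\tau)} U(t)^*\mm_\delta[R(t)]U(t) \ge 0$, and reads off the matrix inequality $R(t)\ge e^{-t/(\hbar\tau)}U(t)R(0)U(t)^*$. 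Your route via Jacobi's formula and $\tr{R^{-1}\mm_\delta[R]}>0$ is more elementary (no auxiliary curve to guess) and suffices for the qualitative statement; the paper's approach, on the other hand, yields a sharper operator-level lower bound on $R(t)$ --- control of the smallest eigenvalue rather than just the determinant --- which can be useful if one later needs quantitative positivity. Both arguments exploit the same structural fact: the commutator contributes nothing, and relaxation toward the positive definite Maxwellian prevents degeneration.
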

\begin{proof}
  By Lemma \ref{lem:mmisdifferentiable}, the right-hand side of the ODE \eqref{eq:DMQL} is a differentiable function w.r.t.\ $R$ on $\densp$. Moreover, the right-hand side is tangent to $\densp$, i.e., it is self-adjoint of trace zero for any argument $R\in\densp$. By the existence and uniqueness theorem for ODEs, the initial value problem possesses a unique maximal local solution $R:[0,T)\to\densp$. To conclude that actually $T=+\infty$, i.e., that $R$ is global, it suffices to assume $T\in(0,\infty)$, and prove that $R(t)$ stays away from the boundaries of $\densp$, uniformly on $[0,T)$. The boundary of the manifold $\densp$ in the space $\setC^{N\times N}$ is given by the unit-trace  self-adjoint matrices that are positive semi-definite but \emph{not positive definite}.

  To verify the preservation of positive definiteness, we adapt an idea from \cite{DeMeRi05}: consider the auxiliary curve $S:[0,T)\to\setC^{N\times N}$ given by
  \begin{align*}
    S(t) = \exp\left(-\frac{i\hbar}2 t\Delta_\delta\right) e^{t/(\hbar\tau)} R(t)\exp\left(\frac{i\hbar}2 t\Delta_\delta\right).
  \end{align*}
  Notice that $S(0)=R(0)$ is positive definite.
  Using that $\frac{d}{dt}\exp(tA)=A\exp(tA)=\exp(tA)A$ for any square matrix $A$ and real $t$, we conclude that $S$'s time derivative is given by
  \begin{align*}
    \hbar\dot S(t)
    &= - i\frac{\hbar^2} 2[\Delta_\delta,S(t)] + \frac1\tau S(t) + \exp\left(-\frac{i\hbar}2 t\Delta_\delta\right) e^{t/(\hbar\tau)}\hbar\dot R(t)\exp\left(\frac{i\hbar}2 t\Delta_\delta\right) \\
    &= \exp\left(-\frac{i\hbar}2 t\Delta_\delta\right) e^{t/(\hbar\tau)}\left(\frac{\mm_\delta[R(t)]-R(t)}{\tau}+\frac{R(t)}{\tau}\right)\exp\left(\frac{i\hbar}2 t\Delta_\delta\right) \\
    &= \frac1\tau\exp\left(-\frac{i\hbar}2 t\Delta_\delta\right) e^{t/(\hbar\tau)}\mm_\delta[R(t)]\exp\left(\frac{i\hbar}2 t\Delta_\delta\right).                
  \end{align*}
  Since $\exp(\pm (i\hbar t)/2\,\Delta_\delta)$ are unitary matrices, and since quantum Maxwellians are positive semi-definite, the right-hand side above is positive semi-definite as well. Hence $S(t)\ge S(0)=R(0)$ for all times $t\in[0,T)$. By construction of $S$, this implies the desired positive lower bound $R(t)\ge e^{-t/(\hbar\tau)}R(0) \ge e^{-T/(\hbar\tau)}R(0)$,  uniformly on $[0,T)$.
\end{proof}
  
\subsection{Diffusive limit}
To study the diffusive limit of the dynamics given by \eqref{eq:DMQL}, we assume that the relaxation time $\tau:=\eps/\hbar>0$ is small, and we consider the rescaled solutions $R^\eps(t,x)=R\big(2t/\eps,x\big)$, i.e., we study large times. Substitute this into \eqref{eq:DMQL} and divide by $i\hbar/2$ to obtain:
\begin{equation}
  \label{eq:DMQLeps} 
  \eps \dot R^\eps = i\hbar\commu{\Delta_\delta}{R^\eps} + \frac{2}{\eps}(\mm_\delta[R^\eps] - R^\eps).
\end{equation}
\begin{proposition}[Diffusive Limit]\label{prop:DQD}
  For a given initial datum $R^\text{in} \in \densp$, consider the family $(R^\eps)_{\eps\in(0,1)}$ of corresponding solutions $R^\eps:[0,\infty)\to\densp$ to \eqref{eq:DMQLeps}. Then there exist a differentiable curve $R^0:[0,\infty)\to\densp$ in the manifold of discrete quantum Maxwellians, i.e., $R^0(t)=\mm_\delta[R^0(t)]$ for all $t\ge 0$, such that, along a suitable sequence $\eps\searrow0$, we have that
  \begin{enumerate}
  \item $R^\eps\wto R^0$ in $L^2_\text{loc}([0,\infty);\setC^{N\times N})$,
  \item $R^\eps_{kk}(t)\to R^0_{kk}(t)$ for $k=1,\ldots,N$, locally uniformly with respect to $t\ge0$.
  \end{enumerate}
  Finally, the diagonal entries $R^0_{kk}$ satisfy the ODEs
  \begin{align}
    \label{eq:preDQDD}
    \dot R^0_{kk} = -\frac{\hbar^2}2\,\commu{\Delta_\delta}{\commu{\Delta_\delta}{R^0}}
  \end{align}
  for all $k=1,\ldots,N$, with respective initial conditions $R^0_{kk}(0)=R^\text{in}_{kk}$.
\end{proposition}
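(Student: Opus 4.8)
The plan is to follow the classical BGK-to-diffusion argument, adapted to the finite-dimensional setting. First I would establish the a priori bounds that are uniform in $\eps$. Testing \eqref{eq:DMQLeps} against $\log R^\eps - \hbar^2\Delta_\delta$ (i.e., differentiating the free energy $\eng[R^\eps]$ along the flow) makes the Hamiltonian commutator term drop out by cyclicity, and the relaxation term contributes $-\tfrac2{\eps^2}\tr{(\mm_\delta[R^\eps]-R^\eps)(\log R^\eps-\hbar^2\Delta_\delta)}$. Since $\mm_\delta[R^\eps]$ is the minimizer of $\eng$ over matrices with the same diagonal as $R^\eps$, and $\log R^\eps-\hbar^2\Delta_\delta$ is (up to a diagonal Lagrange multiplier that pairs trivially with the trace-zero, diagonal-free difference $\mm_\delta[R^\eps]-R^\eps$) the gradient of $\eng$, this term is nonpositive; more quantitatively, convexity of $\eng$ gives
\begin{align*}
  \frac{\dd}{\dd t}\eng[R^\eps] \le -\frac{c}{\eps^2}\,\big\|\mm_\delta[R^\eps]-R^\eps\big\|^2
\end{align*}
for some fixed $c>0$ (using that on the compact set $\dens$ the Hessian of $\eng$ is bounded below). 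Integrating from $0$ to $T$ and using the uniform lower bound \eqref{eq:dentbelow} on $\eng$ yields $\int_0^T\|\mm_\delta[R^\eps]-R^\eps\|^2\dd t \le C\eps^2$, hence $\mm_\delta[R^\eps]-R^\eps\to 0$ in $L^2_\text{loc}$, in fact at rate $\eps$.

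Next I would extract limits. The uniform free-energy bound, together with \eqref{eq:H1byH}, bounds $R^\eps$ in $L^\infty_\text{loc}(\dens)$, so along a subsequence $R^\eps\wto R^0$ in $L^2_\text{loc}$; by the previous step, $R^0$ lies pointwise in the (closed) manifold of discrete quantum Maxwellians, i.e. $R^0=\mm_\delta[R^0]$. For the diagonal, I would read off from \eqref{eq:DMQLeps} that $\dot R^\eps_{kk} = i\hbar\,\commu{\Delta_\delta}{R^\eps}_{kk}$ (the relaxation term is diagonal-free, so it does not affect diagonal entries at all --- a pleasant feature of the discretization). Thus $\dot R^\eps_{kk}$ is bounded uniformly in $\eps$ and the $R^\eps_{kk}$ are equi-Lipschitz; Arzelà–Ascoli gives locally uniform convergence $R^\eps_{kk}\to R^0_{kk}$ along the subsequence, proving (2). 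Initial conditions are preserved because $R^\eps(0)=R^\text{in}$ for all $\eps$, so $R^0_{kk}(0)=R^\text{in}_{kk}$, and differentiability of $R^0$ follows once the limiting ODE is identified.

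Finally I would identify the equation. From $\dot R^\eps_{kk}=i\hbar\,\commu{\Delta_\delta}{R^\eps}_{kk}$ one expects in the limit $\dot R^0_{kk}=i\hbar\,\commu{\Delta_\delta}{R^0}_{kk}$, but this is not yet closed; the extra order of smallness must be harvested from the relaxation term. The standard device: write $R^\eps = \mm_\delta[R^\eps] + \eps\, G^\eps$ with $G^\eps := (R^\eps-\mm_\delta[R^\eps])/\eps$, which by Step 1 is bounded in $L^2_\text{loc}$. Substituting into \eqref{eq:DMQLeps} gives $2 G^\eps = \eps\dot R^\eps - i\hbar\,\commu{\Delta_\delta}{R^\eps} = \eps\dot R^\eps - i\hbar\,\commu{\Delta_\delta}{\mm_\delta[R^\eps]} - i\hbar\eps\,\commu{\Delta_\delta}{G^\eps}$, so to leading order $G^\eps \approx -\tfrac{i\hbar}2\commu{\Delta_\delta}{\mm_\delta[R^\eps]}$. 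Feeding this back, $\dot R^\eps_{kk} = i\hbar\,\commu{\Delta_\delta}{R^\eps}_{kk} = i\hbar\,\commu{\Delta_\delta}{\mm_\delta[R^\eps]}_{kk} + i\hbar\eps\,\commu{\Delta_\delta}{G^\eps}_{kk}$; but $\commu{\Delta_\delta}{\mm_\delta[R^\eps]}$ has vanishing diagonal (it is a commutator of a symmetric matrix with another symmetric matrix --- more precisely, $\commu{\Delta_\delta}{M}_{kk}=0$ whenever $M=M^*$, since $\tr$ of the diagonal piece vanishes entry-wise by symmetry of both factors), so the $O(1)$ term on the diagonal is actually zero, and the genuine contribution is the $O(\eps)$ term $i\hbar\eps\,\commu{\Delta_\delta}{G^\eps}_{kk}\approx \tfrac{\hbar^2\eps}2\,\commu{\Delta_\delta}{\commu{\Delta_\delta}{\mm_\delta[R^\eps]}}_{kk}$. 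Wait --- this still carries a factor $\eps$. The resolution is that one must not divide by $i\hbar/2$ prematurely: rescaling correctly (so that the diffusive term survives at order one) the diagonal ODE becomes $\dot R^0_{kk} = -\tfrac{\hbar^2}2\,\commu{\Delta_\delta}{\commu{\Delta_\delta}{R^0}}_{kk}$ as claimed in \eqref{eq:preDQDD}, with $R^0=\mm_\delta[R^0]$. To make the limit passage rigorous, I would test the diagonal-entry identity against a smooth compactly supported scalar test function in time, integrate by parts once to move the time derivative off $R^\eps_{kk}$, substitute $R^\eps = \mm_\delta[R^\eps] + \eps G^\eps$ and the expression for $G^\eps$, use the weak convergence $R^\eps\wto R^0$ together with continuity of $\mm_\delta$ (Lemma \ref{lem:mmisdifferentiable}) --- upgraded to strong $L^2_\text{loc}$ convergence of $\mm_\delta[R^\eps]$ via the equi-Lipschitz diagonals, $R^0=\mm_\delta[R^0]$, and Step 1 --- and control the error terms by the $L^2_\text{loc}$ bound on $G^\eps$ times $\eps$. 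The main obstacle is exactly this last closure step: showing that the weak limit of the nonlinear quantity $\commu{\Delta_\delta}{\commu{\Delta_\delta}{\mm_\delta[R^\eps]}}$ is $\commu{\Delta_\delta}{\commu{\Delta_\delta}{R^0}}$, which requires promoting weak to strong convergence of $\mm_\delta[R^\eps]$; here finite-dimensionality helps decisively, since $\mm_\delta[R^\eps]$ is a continuous function of the diagonal of $R^\eps$ alone (by the variational characterization), and the diagonals converge strongly by Arzelà–Ascoli, so $\mm_\delta[R^\eps]\to\mm_\delta[R^0]$ strongly and locally uniformly, closing the argument.
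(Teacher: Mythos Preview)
Your overall strategy --- energy dissipation to control $(\mm_\delta[R^\eps]-R^\eps)/\eps$ in $L^2$, compactness for the diagonals, Hilbert expansion to identify the limiting equation --- is essentially the paper's, but there is a genuine algebraic slip that derails the argument as written. From \eqref{eq:DMQLeps} the diagonal equation is
\[
  \dot R^\eps_{kk} \;=\; \frac{i\hbar}{\eps}\,\commu{\Delta_\delta}{R^\eps}_{kk},
\]
not $\dot R^\eps_{kk}=i\hbar\,\commu{\Delta_\delta}{R^\eps}_{kk}$; you have dropped the factor $1/\eps$. With the correct formula, $\dot R^\eps_{kk}$ is \emph{not} obviously bounded in $L^\infty_t$, so your equi-Lipschitz/Arzel\`a--Ascoli step does not go through as stated. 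The paper instead inserts $\mm_\delta[R^\eps]$ (whose commutator with $\Delta_\delta$ has zero diagonal because both matrices are \emph{real} symmetric --- note that $M=M^*$ alone only gives purely imaginary diagonal for the commutator) to rewrite $\dot R^\eps_{kk}=-i\hbar\,\commu{\Delta_\delta}{(\mm_\delta[R^\eps]-R^\eps)/\eps}_{kk}$, and then uses the energy estimate to bound this in $L^2_t$; the diagonals are therefore only bounded in $H^1([0,T])$, which still yields uniform convergence.

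The same missing $1/\eps$ is exactly why your closure step stalls (``Wait --- this still carries a factor $\eps$''). With the correct scaling, writing $R^\eps=\mm_\delta[R^\eps]+\eps G^\eps$ gives directly $\dot R^\eps_{kk}=i\hbar\,\commu{\Delta_\delta}{G^\eps}_{kk}$ with no stray $\eps$; then from $\eps\dot R^\eps=i\hbar\commu{\Delta_\delta}{R^\eps}-2G^\eps$ one integrates in time to identify the weak limit $G^\eps\wto -\tfrac{i\hbar}{2}\commu{\Delta_\delta}{R^0}$, and substitution yields \eqref{eq:preDQDD} --- precisely the paper's route. Finally, your early claim that ``$R^0$ lies pointwise in the (closed) manifold'' from weak convergence alone is not justified, since the Maxwellian manifold is not convex; you do supply the correct fix at the very end (uniform convergence of diagonals plus continuity of $\mm_\delta$ gives strong convergence of $\mm_\delta[R^\eps]$, hence $R^0=\mm_\delta[R^0]$), but that argument needs to precede the identification of $R^0$ as a Maxwellian, not follow it.
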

\begin{remark}
  Two comments on the limiting initial value problem:
  \begin{enumerate}
  \item Because of $R^0=\mm_\delta[R^0]$, equation \eqref{eq:preDQDD} is a actually a closed ODE system for the $N$ diagonal entries of $R^0$; the off-diagonal elements are fully enslaved. The respective ODE system is given more explicitly below in \eqref{eq:DQDD}.
  \item By locally uniform convergence, the diagonal elements $R^\eps_{kk}(t)$ converge to $R^\text{in}_{kk}$ as $\eps\searrow0$ and $t\searrow$, for each $k=1,\ldots,N$. On the other hand, the off-diagonal elements $R^0(t)_{k\ell}$ are implicitly given by the relation $R^0(t)=\mm_\delta[R^0(t)]$, so even if the limit $R^\eps_{k\ell}(t)$ for $\eps\searrow0$ and $t\searrow0$ exists, it will not be consistent with $R^\text{in}_{k\ell}$ in general.  
 \end{enumerate}
\end{remark}
\begin{proof}
  Fix some terminal time $T>0$. We consider $R^\eps$ as a $C^1$-curve  in $\setC^{N\times N}$ over $[0,T]$. Since all $R^\eps(t)$ are density matrices and thus have unit Hilbert-Schmidt norm, there is a sequence $\eps\searrow0$ such that $R^\eps\wto R^0$ in $L^2([0,T];\setC^{N\times N})$. And since $\dens$ is a closed and convex subset of $\setC^{N\times N}$, we have that $R^0(t)\in\dens$ for a.e. $t\in[0,T]$; note that we do not yet know that $R^0(t)\in\densp$. The goal of the following is to identify the weak limit $R^0$.

  We begin by proving the a priori estimate
  \begin{align}
    \label{eq:energyest}
    \eng[R^\eps(T)] + 2\int_0^T \left\|\frac{\mm_\delta[R^\eps(t)]-R^\eps(t)}\eps\right\|^2\dd t \le \eng[R^\eps(0)].
  \end{align}
  For the derivative of the free energy, we find
  \begin{align*}
    -\eps\frac{\dd}{\dn t} \eng[R^\eps]
    &= -\tr{\eps\dot R^\eps(\log R^\eps-\hbar^2\Delta_\delta)}\\
    &= i\hbar \tr{\commu{\Delta_\delta}{R^\eps} (\hbar^2\Delta_\delta-\log R^\eps)}
      +\frac2\eps\tr{\big(\mm_\delta[R^\eps] - R^\eps\big) (\hbar^2\Delta_\delta-\log R^\eps)}.
  \end{align*}
  Using cyclicity of the trace, and that $R^\eps$ commutes with $\log R^\eps$, it follows that the first term in the last line above vanishes.
  To simplify the second term, observe that $\log\mm_\delta[R^\eps]=\hbar^2\Delta_\delta+A$ for a suitable Lagrange multiplier $A$. This allows to replace $\hbar^2\Delta_\delta$ by $\log\mm_\delta[R^\eps]-A$ above. Further observe that the product of the diagonal matrix $A$ with the difference $\mm_\delta[R^\eps]-R^\eps$, which has vanishing diagonal, does not contribute to the trace. Thus, after division by $\eps>0$:
  \begin{align*}
    - \frac{\dd}{\dn t} \eng[R^\eps]
    =\frac2{\eps^2}\tr{(\mm_\delta[R^\eps] - R^\eps) (\log\mm_\delta[R^\eps]-\log R^\eps)}
    \ge 2\left\|\frac{\mm_\delta[R^\eps] - R^\eps}\eps\right\|^2,
  \end{align*}
  where the last estimate above is a consequence of the non-commutative functional inequality from Corollary \ref{lem:kleincor}.
  Integration in time yields \eqref{eq:energyest}.

  There are two immediate consequences of \eqref{eq:energyest}: first, passing to a subsequence if necessary, there is a $Z\in L^2([0,T];\setC^{N\times N})$ such that
  \begin{align}
    \label{eq:identifyZ}
    \frac{\mm_\delta[R^\eps] - R^\eps}\eps \wto Z \qquad \text{in $L^2([0,T];\setC^{N\times N})$;}
  \end{align}
  we shall identify $Z$ further below.
  And second, we have that
  \begin{align}
    \label{eq:identifyMM}
    \mm_\delta[R^\eps]\wto R^0 \qquad \text{in $L^2([0,T];\setC^{N\times N})$},
  \end{align}
  where $R^0$ is the weak limit of $R^\eps$. Below, shall use this fact to show that $R^0(t)=\mm_\delta[R^0(t)]$ for a.e. $t\in[0,T]$, i.e., that $R^0$ is a curve in the manifold of discrete quantum Maxwellians. However, since that manifold is not a convex subset of $\setC^{N\times N}$, the weak convergence of $\mm_\delta[R^\eps]$ is not yet enough to draw that conclusion.
  
  With these goals in mind, we study the evolution of $R^\eps$'s diagonal elements under \eqref{eq:DMQLeps}. Since $\mm_\delta[R^\eps]_{kk}=R^\eps_{kk}$ by definition of the discrete quantum Maxewellian, the relaxation term vanishes on the diagonal. So \eqref{eq:DMQLeps} simplifies to
  \begin{align*}
    \dot R^\eps_{kk} = \frac{i\hbar}\eps\,\commu{\Delta_\delta}{R^\eps}_{kk}
    = -i\hbar\,\commu{\Delta_\delta}{\frac{\mm_\delta[R^\eps]-R^\eps}\eps}_{kk},
  \end{align*}
  where the second inequality above follows since both $\Delta_\delta$ and  $\mm_\delta[R^\eps]$ are real matrices, hence their commutator has vanishing diagonal. By \eqref{eq:energyest}, it thus follows that the diagonal elements $R^\eps_{kk}$ converge weakly to their respective limits $R^0_{kk}$ not only in $L^2([0,T])$ but even in $H^1([0,T])$, and that
  \begin{align}
    \label{eq:identifyEQ}
    \dot R^0_{kk} = -i\hbar\,\commu{\Delta_\delta}{Z}_{kk}.
  \end{align}
  Weak convergence in $H^1$ implies uniform convergence of $R^\eps_{kk}$ to $R^0_{kk}$, and that the latter is continuous. Since $R^\eps_{kk}(0)=R^\text{in}_{kk}$ are all positive, one can choose $T>0$ such that the functions $R^0_{kk}$ are positive on $[0,T]$; we shall prove later that $T>0$ can be chosen arbitrarily. By the differentiability of the map $R\mapsto\mm_\delta[R]$, see Lemma \ref{lem:mmisdifferentiable} above, we conclude that also $\mm_\delta[R^\eps]$ converges uniformly on $[0,T]$ to the respective limit $\mm_\delta[R^0]$. Recalling \eqref{eq:identifyMM} above, we conclude that $R^0(t)=\mm_\delta[R^0(t)]$ is a Maxwellian at a.e. $t\in[0,T]$.

  It remains to identify the weak limit $Z$ from \eqref{eq:identifyZ}, then \eqref{eq:identifyEQ} yields the desired evolution equation. Observe that the $H^{-1}$-norm of the distributional time derivative of a bounded function $f:[0,T]\to\setR$ is controlled in terms of $|f(T)-f(0)|$. Specifically, integrate \eqref{eq:DMQLeps} on some interval $[t_1,t_2]\subseteq[0,T]$ to obtain
  \begin{align*}
    \eps\big(R^\eps(t_2)-R^\eps(t_1)\big) = i\hbar\int_{t_1}^{t_2}\commu{\Delta_\delta}{R^\eps}\dd t + 2\int_{t_1}^{t_2}\frac{\mm_\delta[R^\eps]-R^\eps}\eps\dd t.
  \end{align*}
  In the limit $\eps\searrow0$, the left-hand side converges to zero, and we are left with
  \begin{align*}
    0 = i\hbar\int_{t_1}^{t_2}\commu{\Delta_\delta}{R^0}\dd t + 2\int_{t_1}^{t_2}Z\dd t.
  \end{align*}
  Since this holds for arbitrary $[t_1,t_2]\subseteq[0,T]$, it follows that
  \begin{align*}
    Z = -\frac {i\hbar}2\commu{\Delta_\delta}{R^0} = -\frac {i\hbar}2\commu{\Delta_\delta}{\mm_\delta[R^0]}.
  \end{align*}
  Insert this into \eqref{eq:identifyEQ} to arrive at \eqref{eq:preDQDD}.

  Having established all of the above, it is now easy to conclude that the diagonal elements of $R^0$ are actually a classical solution to \eqref{eq:preDQDD} and in particular are differentiable. To show that the terminal time $T>0$ can be chosen arbitrarily, recall that the only restriction on $T$ has been that the limits $R^0_{kk}$ of the diagonals are positive on $[0,T]$.
  In Lemma \ref{lem:thisisDQDD}, we prove that the $R^0_{kk}$ satisfy the discrete nlQDD system \eqref{eq:DQDD}, and in Proposition \ref{prp:positivity} further below, we show that the unique solution to that ODE system is positive and global. By uniform convergences of the diagonal elements of $R^\eps$ to those of $R^0$ on $[0,T]$, this suffices to conclude that $T>0$ is indeed arbitrary.
\end{proof}
\begin{lemma}
  \label{lem:thisisDQDD}
  Consider a differentiable curve $R^0:[0,T]\to\denspr$ in the manifold of discrete quantum Maxwellians,
  \begin{align*}
    R^0(t)=\mm_\delta[R^0(t)] = \exp\big(\hbar^2\Delta_\delta+A(t)\big)
  \end{align*}
  where $A(t)$ is the Lagrange multiplier for the Maxwellian at time $t$. And let $n:[0,T]\to\dprbp$ be the associated time-dependent density, i.e., $\delta\, n(t;k\delta)=R^0_{kk}(t)$.
  Then $R^0$ satisfies the ODE system \eqref{eq:preDQDD} if and only if $n$ is a solution to the discrete QDD equation
  \begin{align}
    \label{eq:DQDD}
    \dot n = \dq_\delta^-(\nu^+\dq_\delta^+A),
  \end{align}
  where, according to \eqref{eq:nu}, $\delta\,\nu^+(t;k\delta)=\big\{R^0(t)\big\}_{k+1,k}$.
\end{lemma}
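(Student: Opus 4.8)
The plan is to prove the equivalence by comparing the two ODE systems entry by entry, after the identifications $\delta\,n(t;k\delta)=\cmp{R^0(t)}{kk}$ and $\delta\,\nu^+(t;k\delta)=\cmp{R^0(t)}{k+1,k}$; since $R^0(t)$ is real and self-adjoint the latter also equals $\cmp{R^0(t)}{k,k+1}$, and all matrix indices are read modulo $N$. Write $R=R^0(t)$ and let $A$ denote the diagonal matrix whose $(k,k)$-entry is the Lagrange multiplier $A(t;k\delta)$, so that $\log R=\hbar^2\Delta_\delta+A$. The crucial first step is the observation that $R$ commutes with $\log R$, so
\begin{align*}
  \hbar^2\commu{\Delta_\delta}{R}=\commu{\log R}{R}-\commu{A}{R}=-\commu{A}{R},
\end{align*}
and hence the right-hand side of \eqref{eq:preDQDD} simplifies to $\tfrac12\commu{\Delta_\delta}{\commu{A}{R}}$. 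This identity is the discrete substitute for rewriting the flux $n\,\partial_xA$ in the continuous model; it is what ultimately makes the double commutator collapse to a discrete divergence.

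Next I would compute the $(k,k)$-entry of $\commu{\Delta_\delta}{B}$ with $B:=\commu{A}{R}$. Because $A$ is diagonal, $B_{k\ell}=R_{k\ell}(A_k-A_\ell)$, so $B$ is anti-symmetric with vanishing diagonal; and because $\Delta_\delta$ is symmetric and tridiagonal, with $\cmp{\Delta_\delta}{kk}=-2\delta^{-2}$ and $\cmp{\Delta_\delta}{k,k\pm1}=\delta^{-2}$, the diagonal of $\commu{\Delta_\delta}{B}$ reduces to $-2\delta^{-2}\big(B_{k,k-1}+B_{k,k+1}\big)$. Inserting $B_{k,k\pm1}=R_{k,k\pm1}(A_k-A_{k\pm1})$ gives
\begin{align*}
  -\frac{\hbar^2}2\,\cmp{\commu{\Delta_\delta}{\commu{\Delta_\delta}{R}}}{kk}
  =\frac1{\delta^2}\Big(R_{k,k+1}(A_{k+1}-A_k)-R_{k,k-1}(A_k-A_{k-1})\Big).
\end{align*}
On the other hand, unwinding the definitions of $\dq_\delta^\pm$ and $\nu^+$ yields
\begin{align*}
  \dq_\delta^-\big(\nu^+\dq_\delta^+A\big)(k\delta)=\frac1{\delta^3}\Big(R_{k+1,k}(A_{k+1}-A_k)-R_{k,k-1}(A_k-A_{k-1})\Big).
\end{align*}
Since $\dot n(k\delta)=\delta^{-1}\dot R_{kk}$ and $R_{k+1,k}=R_{k,k+1}$, the $k$-th equation of \eqref{eq:preDQDD} and the $k$-th equation of \eqref{eq:DQDD} become, after multiplication by $\delta$, one and the same scalar identity, for every $k=1,\dots,N$. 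As the diagonal correspondence $n\leftrightarrow R^0$ is a bijection and the off-diagonal entries of $R^0$ are enslaved by $R^0=\mm_\delta[R^0]$, this establishes the asserted equivalence.

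I do not anticipate a genuine obstacle: every step is a finite-dimensional algebraic identity, and the only nontrivial input is the commutator relation $\hbar^2\commu{\Delta_\delta}{R}=-\commu{A}{R}$. The points that need a little care are the bookkeeping of the periodic indices and tracking the powers of $\delta$ coming from the scaling \eqref{eq:AK} (i.e.\ from using $\delta\,n(k\delta)=R_{kk}$ and $\delta\,\nu^+(k\delta)=R_{k+1,k}$ rather than the unscaled versions), which is exactly what makes the prefactors on the two sides agree.
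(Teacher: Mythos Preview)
Your proof is correct and follows essentially the same route as the paper: both hinge on the identity $\hbar^2\commu{\Delta_\delta}{R^0}=-\commu{A}{R^0}$ (from $\commu{\log R^0}{R^0}=0$) and then compute the diagonal of the resulting double commutator. The only cosmetic difference is that the paper introduces the shift matrix $L$ with $\Delta_\delta=\delta^{-2}(L+L^T-2\one_N)$ to evaluate $\commu{\Delta_\delta}{S}_{kk}=2\delta^{-2}(S_{k+1,k}-S_{k,k-1})$ for anti-symmetric $S$, whereas you obtain the equivalent expression $-2\delta^{-2}(B_{k,k-1}+B_{k,k+1})$ directly from the tridiagonal structure and the anti-symmetry of $B$; the $\delta$-bookkeeping is handled identically.
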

\begin{proof}
  Rewrite $\Delta_\delta$ as follows:
 \begin{align*}
   \Delta_\delta = \delta^{-2}\big(L+L^T - 2\,\one_N\big),
   \quad\text{with}\quad
   L = \begin{pmatrix}
     0& 1 &  &  &  \\
     &  0& 1 &  &  \\
     &   &  \ddots&  \ddots &  \\
     &  &  &  0& 1 \\
     1 & & &  &0 
   \end{pmatrix}.
 \end{align*}
 $L$ has the property that, for any $M\in\setC^{N\times N}$,
 \begin{align*}
   \{LM\}_{k,\ell} = M_{k+1,\ell}
   \quad\text{and}\quad
   \{ML\}_{k,\ell} = M_{k,\ell-1},
 \end{align*}
 where addition and subtraction of indices is again considered cyclically on $\{1,\ldots,N\}$.
 Now if $S\in\setR^{N\times N}$ is some real anti-symmetric matrix, then
 \begin{align*}
   \commu{\Delta_\delta}{S}_{kk}
   = \delta^{-2}\big(\commu LS_{kk}+\commu{L^T}{S}_{kk}\big)
   &= \delta^{-2}\big(\commu LS_{kk} + \commu {S^T}{L}^T_{kk}\big) \\
   &= 2\delta^{-2}[LS-SL]_{kk}
   = 2\delta^{-2}(S_{k+1,k}-S_{k,k-1}).
 \end{align*}
 Before we substitute $S:=-\hbar^2\commu{\Delta_\delta}{R^0}$ above, we perform some simplifications of $S$. Since $R^0=\exp(\hbar^2\Delta_\delta+A)$ and thus $\commu{\hbar^2\Delta_\delta+A}{R^0}=0$, we obtain
 \begin{align*}
   -\hbar^2\commu{\Delta_\delta}{R^0}
   = -\commu{(\hbar^2\Delta_\delta+A)-A}{R^0}
   = \commu A{R^0}.
 \end{align*}
 Since $A$ is a diagonal matrix, we have that $\{AR^0\}_{k\ell}=A_kR^0_{k\ell}$ and $\{R^0A\}_{k\ell}=A_\ell R^0_{k\ell}$, and therefore,
 \begin{align*}
   -\hbar^2\commu{\Delta_\delta}{R^0}_{k+1,k} = R^0_{k+1,k}\, (A_{k+1}-A_k),
   \quad
   -\hbar^2\commu{\Delta_\delta}{R^0}_{k,k-1} = R^0_{k,k-1}\, (A_{k}-A_{k-1}).
 \end{align*}
 Substitute these expressions above and recall the definition \eqref{eq:nu} of the $\nu$'s: we have shown that
 \begin{align*}
   -\frac{\hbar^2}{2}\commu{\Delta_\delta}{\commu{\Delta_\delta}{R^0}}_{kk} = \delta\, \big[\dq^-_\delta\big(\nu^+\dq_\delta^+A)\big]_k.
 \end{align*}
 To see the equivalence between \eqref{eq:preDQDD} for $R^0$ and \eqref{eq:DQDD} for $n$, it suffices to observe that the respective initial value problems for the ODE systems are uniquely solvable.
\end{proof}


\section{Properties of the discrete nlQDD}
\label{sct:nlQDD}
In this section, we prove several qualitative properties of the discrete nlQDD system \eqref{eq:DQDD} that have already been summarized in Theorem \ref{thm:2}.

\subsection{Entropy dissipation}
Recall that the discrete entropy functional $\dent$ is given by \eqref{eq:nA}, and the lower bound $\underline{\dent}_{\hbar^2}$ defined in \eqref{eq:dentbelow}.
\begin{proposition}
  \label{prp:dentdiss}
  Let $n:[0,T)\to\dprbp$ be a local solution to the discrete nlQDD \eqref{eq:DQDD}.
  Then $t\mapsto\dent(n(t))$ is non-increasing, and moreover,
  \begin{align}
    \label{eq:dentdiss}
    \int_0^T\delta\sum_\xi\nu^+\big(\dq_\delta^+A\big)^2 \le \dent(n(0))-\underline{\dent}_\hbar.
  \end{align}
\end{proposition}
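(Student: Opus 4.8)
The plan is to show that $\frac{\dd}{\dn t}\dent(n(t)) = -\delta\sum_\xi\nu^+\big(\dq_\delta^+A\big)^2$ by differentiating the entropy along the flow, and then integrate. First I would rewrite $\dent(n(t)) = \eng\big[\mm_\delta[n(t)]\big]$ using \eqref{eq:nA}, and set $R(t) := \mm_\delta[n(t)] = \exp(\hbar^2\Delta_\delta + A(t)) \in \denspr$, with $A(t)$ the associated Lagrange multiplier, viewed as a diagonal matrix. Since $n$ solves the ODE \eqref{eq:DQDD} it is $C^1$ in $t$ with values in $\dprbp$, and since $\mm_\delta$ is differentiable there by Lemma \ref{lem:mmisdifferentiable}, the curve $R(\cdot)$ is $C^1$ with values in the positive definite matrices; in particular $\log R(t)$ is smooth along the flow, all quantities ($A(t)$, $\nu^+(t)$) are continuous in $t$, and $\tr{R(t)}\equiv1$ forces $\tr{\dot R(t)}\equiv0$.

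Next I would differentiate $\eng[R] = \tr{R\log R} - \hbar^2\tr{\Delta_\delta R}$ by the trace chain rule $\frac{\dd}{\dn t}\tr{g(R)} = \tr{g'(R)\dot R}$ (valid for $g\in C^1$ near the spectrum, here with $g(r)=r\log r$ and $g=\mathrm{id}$), obtaining $\frac{\dd}{\dn t}\eng[R] = \tr{(\log R + \one - \hbar^2\Delta_\delta)\dot R}$; the $\one$-term drops since $\tr{\dot R}=0$, so $\frac{\dd}{\dn t}\eng[R] = \tr{(\log R - \hbar^2\Delta_\delta)\dot R} = \tr{A\dot R}$, using $\log R - \hbar^2\Delta_\delta = A$. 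As $A$ is diagonal and $\delta\,n(k\delta) = R_{kk}$, this equals $\sum_k A(k\delta)\dot R_{kk} = \delta\sum_\xi A\,\dot n$. Inserting the equation $\dot n = \dq_\delta^-(\nu^+\dq_\delta^+A)$ and performing discrete summation by parts, i.e. using $(\dq_\delta^-)^T = -\dq_\delta^+$ with respect to $(\cdot,\cdot)$, gives
\begin{align*}
  \frac{\dd}{\dn t}\dent(n(t)) = \delta\sum_\xi A\,\dq_\delta^-\big(\nu^+\dq_\delta^+A\big) = -\delta\sum_\xi \nu^+\big(\dq_\delta^+A\big)^2 \le 0,
\end{align*}
where the sign follows from $\nu^+>0$ (Lemma \ref{lem:nu}); this proves monotonicity of $t\mapsto\dent(n(t))$. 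Integrating over $[0,t]$ for $t<T$ yields $\int_0^t\delta\sum_\xi\nu^+(\dq_\delta^+A)^2 = \dent(n(0)) - \dent(n(t))$, and since $\dent(n(t)) = \eng[\mm_\delta[n(t)]] \ge \min_{R\in\dens}\eng[R] \ge \underline{\dent}_{\hbar^2}$ by \eqref{eq:dentbelow}, the right-hand side is at most $\dent(n(0)) - \underline{\dent}_{\hbar^2}$ for every $t<T$; letting $t\nearrow T$ with monotone convergence of the nonnegative integrand gives \eqref{eq:dentdiss}.

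I do not expect a serious obstacle here; the only points requiring care are the justification of the trace chain rule along a $C^1$ curve of positive definite matrices (elementary via Hellmann--Feynman, $\dot\lambda_k = v_k^*\dot R\,v_k$, using positive definiteness of Maxwellians from Proposition \ref{th:quantmax}) and consistently tracking the factor $\delta$ between $R_{kk}$ and $n(k\delta)$. As an alternative to differentiating $\tr{R\log R}$, one may start from $\dent(n) = \delta\sum_\xi nA$ directly: then $\frac{\dd}{\dn t}\dent = \tr{\dot R\,A} + \tr{R\,\dot A}$, and the second term vanishes because $\tr{R\,\dot A} = \frac{\dd}{\dn t}\tr{\exp(\hbar^2\Delta_\delta + A)} = \frac{\dd}{\dn t}\tr{R} = 0$, which recovers the same identity.
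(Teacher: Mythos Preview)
Your proof is correct and follows essentially the same route as the paper: differentiate $\eng[\mm_\delta[n]]$ using the trace chain rule, reduce $\log R - \hbar^2\Delta_\delta$ to the diagonal multiplier $A$, pass to $\delta\sum_\xi A\,\dot n$, and sum by parts against \eqref{eq:DQDD}. The only cosmetic difference is that the paper carries the ``$+1$'' from $\log R+\one$ through to the summation by parts (where $\dq_\delta^+1=0$ kills it), whereas you drop it earlier via $\tr{\dot R}=0$; your alternative via $\tr{R\dot A}=\frac{\dd}{\dn t}\tr{R}=0$ is also valid and amounts to the same computation.
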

\begin{proof}
  Substitute $\mm_\delta[n]=\exp(\hbar^2\Delta_\delta+A)$ into $\dent$:
  \begin{align*}
    \frac{\dd}{\dn t}\dent(n)
    &= \frac{\dd}{\dn t}\tr{\mm_\delta[n]\log\mm_\delta[n] -\hbar^2\Delta_\delta\mm_\delta[n] } \\
    &= \tr{(1+\log\mm_\delta[n]-\hbar^2\Delta_\delta)\partial_t\mm_\delta[n]}
      = \tr{(1+A)\partial_t\mm_\delta[n]}.
  \end{align*}
  Since $(1+A)$ is a diagonal matrix, and $\mm_\delta[n]$ has the values $\delta n$ on its diagonal, the trace simplifies as follows:
  \begin{align*}  
    - \frac{\dd}{\dn t}\dent(n)
    &= -\delta\sum_\xi(1+A)\dot n \\
    &= -\delta\sum_\xi(1+A)\,\dq_\delta^-(\nu_\delta^+\,\dq^+A)
    = \delta\sum_\xi\dq_\delta^+(1+A)\,\nu\,\dq_\delta^+A
    = \delta\sum_\xi \nu\,\big(\dq_\delta^+A\big)^2,
  \end{align*}
  where we have used the mutual anti-adjointness of $\dq_\delta^+$ and $\dq_\delta^-$ in the $\Lp 2$ scalar product.
  It now follows that $t\mapsto\dent(n(t))$ is non-increasing.
  Moreover, integration in $t\in[0,T]$ and an application of the lower bound \eqref{eq:dentbelow} yields \eqref{eq:dentdiss}.
\end{proof}
\begin{corollary}
  \label{cor:DQDDhoelder}
  A local solution $n:[0,T)\to\dprbp$ to \eqref{eq:DQDD} is H\"older continuous, uniformly in time.
\end{corollary}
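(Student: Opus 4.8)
The plan is to estimate the time increments of $n$ directly from the entropy dissipation bound \eqref{eq:dentdiss}, using the Cauchy--Schwarz inequality in time to trade one power of $\dq_\delta^+A$ against a square root of the dissipation. Since the mesh is fixed, it suffices to bound $|n(t;\xi)-n(s;\xi)|$ for each $\xi\in\crcN$ and all $0\le s<t<T$; the resulting bound will be of the form $C_\delta(t-s)^{1/2}$ with $C_\delta$ depending only on $\delta$ and $\dent(n(0))$. A local solution is $C^1$ in time, since by Lemma \ref{lem:mmisdifferentiable} the right-hand side of \eqref{eq:DQDD} depends differentiably on $n$ on $\dprbp$, so integrating \eqref{eq:DQDD} in time and expanding $\dq_\delta^-$ gives
\begin{align*}
  n(t;\xi)-n(s;\xi)
  = \frac1\delta\int_s^t\Big(\nu^+(r;\xi)\,\dq_\delta^+A(r;\xi) - \nu^+(r;\xi-\delta)\,\dq_\delta^+A(r;\xi-\delta)\Big)\dd r,
\end{align*}
so that $|n(t;\xi)-n(s;\xi)|$ is at most $\delta^{-1}$ times the sum, over $\eta\in\{\xi,\xi-\delta\}$, of the integrals $\int_s^t\nu^+(r;\eta)\,|\dq_\delta^+A(r;\eta)|\dd r$.

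For each such $\eta$ I would write $\nu^+|\dq_\delta^+A| = \sqrt{\nu^+}\cdot\sqrt{\nu^+(\dq_\delta^+A)^2}$ --- legitimate since $\nu^+>0$ by Lemma \ref{lem:nu} --- and apply Cauchy--Schwarz over $r\in[s,t]$. The first resulting factor is handled by Lemma \ref{lem:nu}, which gives $\nu^+(r;\eta)\le\tfrac12\big(n(r;\eta)+n(r;\eta+\delta)\big)\le\delta^{-1}$, since $n(r;\cdot)\in\dprbp$ forces $\delta\,n(r;\eta)\le1$; hence $\int_s^t\nu^+(r;\eta)\dd r\le(t-s)/\delta$. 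The second factor is controlled by Proposition \ref{prp:dentdiss}: discarding the nonnegative contributions of the other sites and of the times outside $[s,t]$,
\begin{align*}
  \int_s^t\nu^+(r;\eta)\big(\dq_\delta^+A(r;\eta)\big)^2\dd r
  \le \frac1\delta\int_0^T\delta\sum_{\xi'\in\crcN}\nu^+\big(\dq_\delta^+A\big)^2\dd r
  \le \frac1\delta\big(\dent(n(0))-\underline{\dent}_\hbar\big).
\end{align*}

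Combining the two factors yields $\int_s^t\nu^+(r;\eta)|\dq_\delta^+A(r;\eta)|\dd r\le\delta^{-1}\big(\dent(n(0))-\underline{\dent}_\hbar\big)^{1/2}(t-s)^{1/2}$ for $\eta\in\{\xi,\xi-\delta\}$, and therefore
\begin{align*}
  |n(t;\xi)-n(s;\xi)| \le \frac2{\delta^2}\big(\dent(n(0))-\underline{\dent}_\hbar\big)^{1/2}(t-s)^{1/2}
\end{align*}
for all $\xi\in\crcN$ and all $0\le s<t<T$; since the finite-dimensional norms $\|\cdot\|_{\Lp p}$ are mutually equivalent, the same $\tfrac12$-Hölder estimate holds in any $\Lp p$, with a constant independent of $s$ and $t$. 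There is no genuine obstacle here --- the proof is just a careful application of Cauchy--Schwarz --- and the only mildly delicate points are the correct bookkeeping of the powers of $\delta$ (the weight $\nu^+$ being of size $\delta^{-1}$ and $\dq_\delta^-$ contributing another $\delta^{-1}$) and recognizing that the space-time integral of $\nu^+(\dq_\delta^+A)^2$ arising after the split is exactly the quantity bounded in \eqref{eq:dentdiss}.
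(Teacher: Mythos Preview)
Your proof is correct and follows essentially the same approach as the paper: integrate \eqref{eq:DQDD} in time, split $\nu^+|\dq_\delta^+A|=\sqrt{\nu^+}\cdot\sqrt{\nu^+(\dq_\delta^+A)^2}$, and invoke the entropy dissipation estimate \eqref{eq:dentdiss} after Cauchy--Schwarz. The only difference is cosmetic: the paper first sums over $\xi$ and then applies Cauchy--Schwarz in space and time jointly, obtaining an $\Lp1$-H\"older bound with constant $2/\delta$, whereas you work pointwise and pick up an extra factor $\delta^{-1}$ from the crude bound $\nu^+\le\delta^{-1}$ (and another from dropping the spatial sum in the dissipation), ending with $2/\delta^2$ in $\Lp\infty$. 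Since the corollary is only an auxiliary result for fixed $\delta$ --- the paper explicitly defers the $\delta$-uniform bound to later --- the weaker constant is immaterial.
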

This is only an auxiliary result needed for the proof of Proposition \ref{prp:positivity} below. A $\delta$-uniform H\"older bound is given later in the proof of Proposition \ref{prp:nconv}.
\begin{proof}
  For arbitrary $t_2>t_1\ge0$, and at any $\xi\in\crcN$,
  \begin{align*}
    \big|n(t_2;\xi)-n(t_1;\xi)\big|
    &= \left|\int_{t_1}^{t_2}\dot n(t;\xi)\dd t\right| \\
    & \le \int_{t_1}^{t_2} \big|\dq_\delta^-\big(\nu^+(\dq_\delta^+A)\big)\big|(t;\xi)\dd t \\
    & \le \frac1\delta \int_{t_1}^{t_2} \Big(\nu^+\big|\dq_\delta^+A\big|(t;\xi) + \nu^-\big|\dq_\delta^-A\big|(t;\xi)\Big)\dd t.
  \end{align*}
  Summation over $\xi$ and an estimate by the Cauchy-Schwarz inequality yield
  \begin{align*}
    \|n(t_2)-n(t_1)\|_{\Lp 1}
    &\le \frac2\delta \int_{t_1}^{t_2} \delta\sum_\xi \nu^+\big|\dq_\delta^+A\big|\dd t \\
    & \le \frac2\delta \left(\int_0^T\delta\sum_\xi \nu^+\big(\dq_\delta^+A\big)^2\dd t\right)^{1/2}\left(\int_{t_1}^{t_2}\delta\sum_\xi \nu^+\dd t\right)^{1/2} \\
    &\le \frac2\delta \left(\dent(n(0))-\underline{\dent}_{\hbar^2}\right)^{1/2}|t_2-t_1|^{1/2},
  \end{align*}
  where we have used \eqref{eq:nusum} and \eqref{eq:dentdiss} in the last estimate.
\end{proof}

\subsection{Positivity preservation}
This is section is devoted to proving that solutions to the discrete nlQDD system \eqref{eq:DQDD} preserve positivity. We start with an auxiliary result.
\begin{lemma}
  \label{lem:n2A}
  Consider a sequence $A_k$ for which $n_k=\nop_\delta[A_k]\in\dprbp$ converges to $n_*\in\dprb$. For each $\xi_*\in\crcN$, we have that $n_*(\xi_*)=0$ if only if $A_k(\xi_*)\to-\infty$. 
\end{lemma}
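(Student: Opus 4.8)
The plan is to treat the two implications separately, using only estimates already at hand (here $\delta=1/N$ is fixed, so $2(\hbar/\delta)^2$ and $\underline{\dent}_{\hbar^2}$ are honest constants). Fix a site $\xi_*=j_*\delta\in\crcN$, write $H_k:=\hbar^2\Delta_\delta+A_k$ with $A_k$ viewed as a diagonal matrix, and recall $\mm_\delta[n_k]=\exp(H_k)$ together with $\delta\,n_k(j\delta)=\{\exp(H_k)\}_{jj}$; since $\crcN$ is finite, $n_k\to n_*$ is just pointwise convergence at each site.

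First I would show that $A_k(\xi_*)\to-\infty$ forces $n_*(\xi_*)=0$, by contradiction. If $n_*(\xi_*)=c>0$, then $n_k(\xi_*)\ge c/2$ for large $k$, and starting from the identity $\dent(n_k)=\delta\sum_\xi n_kA_k$ of \eqref{eq:nA} I split off the term at $\xi_*$, estimate the remaining sum by $A_k\le 2(\hbar/\delta)^2$ (from \eqref{eq:Abelow}) together with $\delta\sum_{\xi\neq\xi_*}n_k\le 1$, and obtain $\dent(n_k)\le\delta\,(c/2)\,A_k(\xi_*)+2(\hbar/\delta)^2\to-\infty$. This contradicts the uniform lower bound $\dent(n_k)=\eng[\mm_\delta[n_k]]\ge\underline{\dent}_{\hbar^2}$ of \eqref{eq:dentbelow}.

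For the converse, the one point that needs an argument is the pointwise lower bound
\begin{align*}
  \delta\,n_k(\xi_*)=\{\exp(H_k)\}_{j_*j_*}\ \ge\ \exp\big(\{H_k\}_{j_*j_*}\big)=\exp\!\Big(A_k(\xi_*)-\frac{2\hbar^2}{\delta^2}\Big).
\end{align*}
This holds because $A_k$ only changes the diagonal of $\hbar^2\Delta_\delta$, so the off-diagonal entries of $H_k$ are non-negative; hence, shifting $H_k$ by a multiple of the identity to make all entries non-negative and arguing as in the proof of Lemma \ref{lem:nu}, $\exp(sH_k)$ has non-negative entries for every $s\ge 0$. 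Splitting $H_k=D_k+B_k$ into its diagonal part $D_k$ and its entrywise non-negative off-diagonal part $B_k$, Duhamel's formula $\exp(H_k)=\exp(D_k)+\int_0^1\exp((1-s)D_k)\,B_k\,\exp(sH_k)\,\mathrm ds$ then displays $\exp(H_k)-\exp(D_k)$ as entrywise non-negative, which is exactly the displayed bound. Granting it, if $A_k(\xi_*)\not\to-\infty$ then along a subsequence $A_k(\xi_*)$ is bounded (above by \eqref{eq:Abelow}, and below, say, by $-C$), so $n_k(\xi_*)\ge\delta^{-1}\exp(-C-2\hbar^2/\delta^2)>0$ along that subsequence, contradicting $n_k(\xi_*)\to n_*(\xi_*)=0$; hence $n_*(\xi_*)=0$ implies $A_k(\xi_*)\to-\infty$.

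The main (mild) obstacle is precisely this lower bound — extracting an exponential lower bound on a single diagonal entry of a matrix exponential — and the Metzler/Duhamel observation is the device I would use. An alternative would be the Feynman--Kac identity $\{\exp(H_k)\}_{j_*j_*}=\mathbb E^{j_*}\big[\exp(\int_0^1 A_k(X_t)\,\mathrm dt)\,;\,X_1=j_*\big]$ for the continuous-time Markov chain $X$ on $\crcN$ generated by $\hbar^2\Delta_\delta$: restricting to the event $\{X_t\equiv j_*\}$ reproduces the lower bound, and the uniform bound $A_k\le 2(\hbar/\delta)^2$ then serves as an integrable majorant, yielding the first implication by dominated convergence; but the entropy argument above is shorter for that direction.
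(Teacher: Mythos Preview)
Your proof is correct, and for the direction $A_k(\xi_*)\to-\infty\Rightarrow n_*(\xi_*)=0$ it is exactly the paper's argument: the entropy lower bound \eqref{eq:dentbelow} together with the uniform upper bound \eqref{eq:Abelow} on $A_k$ forces $n_k(\xi_*)A_k(\xi_*)$ to stay bounded below, which is impossible if both $A_k(\xi_*)\to-\infty$ and $n_k(\xi_*)\not\to0$.

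For the converse direction both you and the paper ultimately rely on the same scalar inequality
\[
  \{\exp(H_k)\}_{j_*j_*}\ \ge\ \exp\big(\{H_k\}_{j_*j_*}\big),
\]
but you verify it differently. The paper uses the spectral decomposition $\mm_\delta[n_k]=\sum_\ell\rho_{\ell,k}v_{\ell,k}v_{\ell,k}^T$: since $\sum_\ell[v_{\ell,k}]_{j_*}^2=1$, Jensen's inequality for the concave logarithm gives $\{\log\mm_\delta[n_k]\}_{j_*j_*}=\sum_\ell(\log\rho_{\ell,k})[v_{\ell,k}]_{j_*}^2\le\log\big(\sum_\ell\rho_{\ell,k}[v_{\ell,k}]_{j_*}^2\big)=\log(\delta n_k(\xi_*))$, and the left-hand side equals $A_k(\xi_*)-2(\hbar/\delta)^2$. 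Your route instead exploits that $H_k$ is a Metzler matrix (non-negative off-diagonal, inherited from $\Delta_\delta$) and uses Duhamel to compare $\exp(H_k)$ entrywise with $\exp(D_k)$. The Jensen argument is slightly more economical and works for any self-adjoint $H_k$, not just Metzler ones; your argument trades generality for concreteness, reusing the positivity mechanism already invoked in Lemma~\ref{lem:nu}. Either way the conclusion follows immediately.
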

\begin{proof}
  Concerning the \emph{if}:
  recall from the uniform lower bound in \eqref{eq:Abelow} that $A_k(\xi)\le 2(\hbar/\delta)^2$ for all $\xi\in\crcN$. Therefore, and since $n_k\in\dprbp$, the sum of $\delta n_k(\xi)A_k(\xi)$ over any selection of indices $\xi\in\crcN$ is less or equal to $2(\hbar/\delta)^2$ as well. By the lower bound on the entropy from \eqref{eq:dentbelow}, we have for every $\xi_*\in\crcN$ that
  \begin{align*}
    \underline{\dent}_{\hbar^2}\le \eng[n_k] = \delta\sum_\xi n_k(\xi)A_k(\xi) \le n_k(\xi_*)A_k(\xi_*) + 2(\hbar\delta)^2.
  \end{align*}
  And so, in particular, if $A_k(\xi_*)\to-\infty$ as $k\to\infty$, then also $n_k(\xi_*)\to0$.
 
  Concerning the \emph{only if}:
  choose $j\in\{1,\ldots,N\}$ such that $n_*(\xi_*)\to0$ for $\xi_*=j\delta$. Since $\dens$ is a compact subset of $\setC^{N\times N}$, there is a (non-relabeled) subsequence such that $\mm_\delta[n_k]\to \tilde\mm$ for some $\tilde\mm\in\dens$. By hypothesis, we have that
  \[ \tilde\mm_{jj}=\lim_{k\to\infty}\cmp{\mm_\delta[n_k]}{jj}=\lim_{k\to\infty}n_k(\xi_*) = n_*(\xi_*) = 0. \]
  For each $k$, let $\rho_{1,k},\ldots,\rho_{N,k}\in(0,1)$ be the eigenvalues of $\mm_\delta[n_k]$, and $v_{1,k},\ldots,v_{N,k}\in\setR^N$ be an associated orthonormal system of real eigenvectors, i.e., $v_{\ell,k}^Tv_{\ell',k}=0$ for $\ell'\neq\ell$ and $v_{\ell,k}^Tv_{\ell,k}=1$. Recall that orthonormality implies that $\sum_{\ell=1}^N\cmp{v_{\ell,k}}j^2=1$ for every component $j=1,\ldots,N$. Now since
  \begin{align*}
    \sum_{\ell=1}^N\rho_{\ell,k}\big[v_{\ell,k}\big]_j^2 = e_j^T\mm_\delta[n_k]e_j = \delta n_k(\xi_*) \to 0
  \end{align*}
  as $k\to\infty$, it follows by Jensen's inequality that
  \begin{align*}
    -2(\hbar/\delta)^2 + A_k(\xi_*) 
    &= \cmp{\hbar^2\Delta_\delta+A_k}{jj} \\
    & = \cmp{\log\mm_\delta[n_k]}{jj} \\
    & = \sum_{\ell=1}^N\log\rho_{\ell,k}\cmp{v_{\ell,k}}j^2
    \le \log\left[\sum_{\ell=1}^N\rho_{\ell,k}\cmp{v_{\ell,k}}j^2\right]
    = \log\big(\delta n_k(\xi_*)\big) \to -\infty,
  \end{align*}
  which implies in particular that $A_k(\xi_*)\to-\infty$.
\end{proof}
\begin{proposition}
  \label{prp:positivity}
   For any initial datum $n^\text{in}\in\dprbp$, the corresponding solution to \eqref{eq:DQDD} is positive at all times and hence global.
\end{proposition}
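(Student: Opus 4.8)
\emph{Proof plan.} The plan is a standard ODE escape argument combined with a flux-sign analysis near the would-be blow-up time. By Lemma \ref{lem:mmisdifferentiable}, the map $n\mapsto\mm_\delta[n]$ is $C^1$ on the relatively open subset $\dprbp$ of the affine hyperplane $\mathcal H:=\{f:\crcN\to\setR\mid\delta\sum_\xi f=1\}$; hence so are $n\mapsto A$ (the diagonal of $\log\mm_\delta[n]-\hbar^2\Delta_\delta$), the coefficients $\nu^\pm$ of \eqref{eq:nu}, and the right-hand side $\dq_\delta^-\!\big(\nu^+\dq_\delta^+A\big)$ of \eqref{eq:DQDD}, which is moreover tangent to $\mathcal H$ since $\sum_\xi\dq_\delta^-(\cdot)=0$ telescopes around the cycle. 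So Picard--Lindelöf yields a unique maximal solution $n:[0,T)\to\dprbp$ with $n(0)=n^\text{in}$, and I only need to rule out $T<\infty$. Assuming $T<\infty$, the $\delta$-uniform Hölder bound of Corollary \ref{cor:DQDDhoelder} lets $n$ extend continuously to $t=T$ with a limit $n_*\in\dprb$; were $n_*\in\dprbp$, the curve $n(t)$ would eventually remain in a compact subset of $\dprbp$, contradicting maximality. Thus the \emph{vanishing set} $Z:=\{\xi\mid n_*(\xi)=0\}$ is non-empty, and $Z\ne\crcN$ because $\delta\sum_\xi n_*=1$.

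The heart of the argument is to show that the discrete flux $\nu^+\dq_\delta^+A$ across each edge joining $Z$ to its complement points strictly \emph{into} $Z$ for $t$ near $T$; then $m(t):=\delta\sum_{\xi\in Z}n(t;\xi)$ is eventually strictly increasing, which is incompatible with $m(t)\to\delta\sum_{\xi\in Z}n_*=0$. To get the signs I would apply Lemma \ref{lem:n2A} along arbitrary sequences $t_k\nearrow T$ (legitimate because $n(t_k)\to n_*$): this gives $A(t;\xi)\to-\infty$ as $t\nearrow T$ for every $\xi\in Z$, whereas for $\xi\notin Z$ the multipliers $A(t;\xi)$ do not go to $-\infty$ and, being bounded above by $2(\hbar/\delta)^2$ via \eqref{eq:Abelow}, stay bounded near $T$. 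Hence on an edge $(\xi,\xi+\delta)$ with exactly one endpoint in $Z$, the quotient $\dq_\delta^+A(t;\xi)=\delta^{-1}(A(t;\xi+\delta)-A(t;\xi))$ diverges --- to $+\infty$ if $\xi\in Z$, to $-\infty$ if $\xi+\delta\in Z$ --- and since $\nu^+(t;\xi)=\delta^{-1}\cmp{\mm_\delta[n(t)]}{\xi,\xi+\delta}>0$ by Lemma \ref{lem:nu}, the flux $g(t;\xi):=\nu^+(t;\xi)\,\dq_\delta^+A(t;\xi)$ is $>0$ on edges with $\xi\in Z,\,\xi+\delta\notin Z$ and $<0$ on edges with $\xi\notin Z,\,\xi+\delta\in Z$, for $t$ close enough to $T$.

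Then I would split $Z$ into its maximal arcs of consecutive sites; for an arc $\{p\delta,\dots,q\delta\}$ (with $(p-1)\delta,(q+1)\delta\notin Z$), telescoping the discrete divergence gives, for $t$ near $T$,
\[
  \delta\sum_{j=p}^{q}\dot n(j\delta)\;=\;g(q\delta)-g\big((p-1)\delta\big)\;>\;0,
\]
since $g(q\delta)>0$ and $g((p-1)\delta)<0$ by the edge-sign analysis above. Summing over all arcs of $Z$ yields $\dot m(t)>0$ on some interval $(T-\eta_0,T)$, so $m$ increases there and cannot tend to $0$ as $t\nearrow T$; contradiction. Hence $T=+\infty$, so the solution is global and stays in $\dprbp$.

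The step I expect to be the main obstacle is promoting the \emph{subsequential} alternative in Lemma \ref{lem:n2A} into honest one-sided limits of $t\mapsto A(t;\xi)$ as $t\nearrow T$. This is precisely where the $\delta$-uniform Hölder continuity (forcing $n(t)$ to have a genuine limit $n_*$, not merely cluster points) and the one-sided bound \eqref{eq:Abelow} (keeping the non-vanishing multipliers from escaping to $+\infty$) are indispensable; granted that, the remaining flux computation is a one-line telescoping on the cycle.
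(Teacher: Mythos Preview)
Your argument is correct and follows essentially the same approach as the paper: a contradiction via flux analysis across the boundary of the vanishing set, driven by Lemma~\ref{lem:n2A}, the positivity of $\nu^+$ from Lemma~\ref{lem:nu}, and the H\"older continuity of Corollary~\ref{cor:DQDDhoelder}. The only notable difference is the direction in which the contradiction is closed: you first establish that $A(t;\xi)$ remains bounded for $\xi\notin Z$ (using the ``if'' direction of Lemma~\ref{lem:n2A} along arbitrary sequences, together with \eqref{eq:Abelow}), then conclude $\dot m(t)>0$ on a full left-neighborhood of $T$, contradicting $m\to0$; the paper instead selects a single maximal arc $\Xi$, extracts a sequence with $\dot m(t_k)<0$, and uses this to force $A(t_k;\cdot)\to-\infty$ at a boundary point outside $\Xi$, contradicting maximality. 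Your version yields the slightly stronger intermediate statement (eventual strict monotonicity of $m$), at the cost of needing both implications of Lemma~\ref{lem:n2A}; the paper's version gets away with only the ``if'' direction at the boundary point.
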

\begin{proof}
  For a contradiction assume that there is some solution $n:[0,T)\to\dprbp$ to the discrete QDD that does not remain positive, i.e., $\lim_{t\nearrow T}\min n(t)=0$.
  Note that this is a genuine limit for $t\nearrow T$, not only along a sequence $t_n\nearrow T$, by the uniform H\"older regularity $n$ in time, see Corollary \ref{cor:DQDDhoelder}.
  For the same reason, $n(t)$ possesses a limit $n_*$ for $t\nearrow T$. Since $n_*\in\dprb$ cannot be identically zero, there must exist a set
  \begin{align*}
    \Xi=\{\underline\xi,\underline\xi+\delta,\ldots,\overline\xi-\delta,\overline\xi\}\subset\crcN
  \end{align*}
  of points $\xi\in\Xi$ for which $n(t;\xi)\searrow0$ as $t\nearrow T$, but both $n(t;\overline\xi+\delta)$ and $n(t;\underline\xi-\delta)$ remain bounded away from zero.
  Lemma \ref{lem:n2A} above implies that
  \begin{align}
    \label{eq:A2minus}
    A(t;\xi)\to-\infty\quad\text{for all $\xi\in\Xi$}.
  \end{align}
  Notice that there might be several such sets $\Xi$, and that $\underline\xi-\delta$ and $\overline\xi+\delta$ might represent the same point in $\crcN$. In any case, define
  \begin{align*}
    m(t):=\delta^2\sum_{\xi\in\Xi} n(t;\xi).
  \end{align*}
  By definition of $\Xi$, we have that $m(t)>0$ for $t<T$ and that $m(t)\searrow0$ as $t\nearrow T$. Consequently, there exists an increasing sequence $(t_k)$ of times $t_k\in(0,T)$ that converges to $T$, and which is such that $\dot m(t_k)<0$. Since $n$ is a solution to \eqref{eq:DQDD}, 
  \begin{align*}
    \dot m
    &= \delta^2\sum_{\xi\in\Xi}\dot n(\xi)
    = \sum_{\xi\in\Xi} \big[\nu^+(\xi)\big(A(\xi+\delta)-A(\xi)\big)-\nu^+(\xi-\delta)\big(A(\xi)-A(\xi-\delta)\big)\big] \\
    &= \nu^+(\overline\xi) \big(A(\overline\xi+\delta)-A(\overline\xi)\big)
    + \nu^+(\underline\xi-\delta) \big(A(\underline\xi-\delta)-A(\underline\xi)\big).
  \end{align*}
  Since $\nu^+$ has positive values by Lemma \ref{lem:nu}, the negativity of $\dot m(t_k)$ implies that for each $k$, at least one of the following inequalities is true:
  \begin{align*}
    A(t_k;\overline\xi+\delta)\le A(t_k;\overline\xi),
    \quad\text{or}\quad
    A(t_k;\underline\xi-\delta)\le A(t_k;\underline\xi).    
  \end{align*}
  Without loss of generality, we may assume that the first is true for all $t_k$. Since \eqref{eq:A2minus} holds in particular for $\xi=\overline\xi$, it follows that $A(t_k;\overline\xi+\delta)\to-\infty$. This, in turn, implies by means of Lemma \ref{lem:n2A} that $n(t_k; \overline\xi+\delta)\to0$. It follows that $n_*(\overline\xi+\delta)=0$, contradicting the definition of $\Xi$.
\end{proof}

\section[Quantum exponentials]{The continuous and the discrete quantum exponentials}
\label{sct:hell}
%
We shall now study in more detail the discrete quantum Maxwellians and their continuous limit. Our quantitative comparison between the two is a substantial ingredient for the convergence proof in Section \ref{sct:convergence}.

\subsection{The continuous quantum exponential}
The main goal of the considerations below is to define a map $\nop$ from potentials $A\in L^1(\crc)$ to densities $n\in L^\infty(\crc)$ that can be understood as the limit for $\delta\searrow0$ of the discrete quantum exponential maps $\nop_\delta$ introduced in Proposition \ref{th:quantmax}. With that goal in mind, we use an approach that is complementary to the original definition of $\nop[A]$ in \cite{Pinaud19}: we introduce, seemingly ad hoc, a kernel function as fixed point of a integral operator and then verify a posteriori that the associated Hilbert-Schmidt operator is $\exp(\hbar^2\Delta+A)$. 
This indirect definition gives more direct access to the approximation error of $\nop$ by $\nop_\delta$ in a hands-on way.
\smallskip

In the following, we consider potentials
\begin{align}
  \label{eq:CA}
  A\in \admA:=\big\{ A\in L^1(\crc)  \,\big|\, \|A\|_{L^1}\le C_A \big\},
\end{align}
where $C_A>0$ is fixed.
\begin{definition}
    At each $t>0$, define the \emph{heat kernel} $\htkrnl_t\in C^\infty(\crc)$ by
    \begin{align}
    	\label{eq:dfnhtkrnl}
        \htkrnl^t(z) = \sum_{k\in\setZ} e^{-(2\pi k)^2\hbar^2 t}e^{2\pi ik z}.
    \end{align}
\end{definition}
\begin{lemma}
	$\htkrnl$ is the rescaled one-periodic fundamental solution to the heat equation, that is,
	\begin{align}
          \label{eq:eqhtkrnl}
          \htkrnl^t(z) = \sum_{m\in\setZ} \heat^{\hbar^2 t}(z-m)
          \quad\text{for}\quad
          \heat^\sigma(y) = \frac1{\sqrt{4\pi \sigma}}\exp\left(-\frac{y^2}{4\sigma}\right). 
	\end{align}
	In particular, one has, at every $t>0$,
	\begin{align}
		\label{eq:pde-htkrnl}
		\partial_t\htkrnl^t = \hbar^2 \partial_{zz}\htkrnl^t,
	\end{align}
	and moreover, for every $\varphi\in C(\crc)$, 
	\begin{align}
          \label{eq:ic-htkrnl}
          \int_\crc \varphi(x)\htkrnl^t(x-y)\dd x \to \varphi(y) \quad\text{uniformly in $y\in\crc$ as $t\searrow0$}.          
	\end{align}
	Finally, $\htkrnl^t$ is positive and of unit mass for each $t>0$,
        and there are finite constants $C_\hbar$ and $C_\hbar'$ such that
	\begin{align}
          \label{eq:bd-htkrnl}
          \big\|\htkrnl^t\big\|_{L^\infty(\crc)} \le C_\hbar t^{-1/2} ,
          \quad
          \big\|\partial_z\htkrnl^t\big\|_{L^2(\crc)} \le C'_\hbar t^{-3/4}
          \qquad \text{for all $t>0$ and $z\in\crc$}.
	\end{align}
\end{lemma}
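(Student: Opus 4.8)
The plan is to derive everything from the Poisson summation formula, which identifies the Fourier series in \eqref{eq:dfnhtkrnl} with the periodized Gaussian in \eqref{eq:eqhtkrnl}, and then to read off each property from whichever of the two representations is more convenient.

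\emph{Step 1 (periodization via Poisson summation).} I would apply the Poisson summation formula to the Schwartz function $y\mapsto\heat^{\hbar^2t}(y)$. With the convention $\hat g(\xi)=\int_\setR g(y)e^{-2\pi i\xi y}\dd y$ one computes $\widehat{\heat^\sigma}(\xi)=e^{-4\pi^2\sigma\xi^2}$, hence $\sum_{m\in\setZ}\heat^{\hbar^2t}(z-m)=\sum_{k\in\setZ}\widehat{\heat^{\hbar^2t}}(k)e^{2\pi ikz}=\sum_{k\in\setZ}e^{-(2\pi k)^2\hbar^2t}e^{2\pi ikz}=\htkrnl^t(z)$, which is \eqref{eq:eqhtkrnl}; rapid decay of the Gaussian and of its Fourier coefficients justifies the interchange, and I would simply cite the standard Poisson summation formula for Schwartz functions. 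From \eqref{eq:eqhtkrnl} the positivity of $\htkrnl^t$ is immediate, and the unit-mass property follows since $\int_\crc\htkrnl^t=\int_\setR\heat^{\hbar^2t}=1$ (equivalently, the $k=0$ coefficient in \eqref{eq:dfnhtkrnl} equals one).

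\emph{Step 2 (heat equation).} On every half-line $\{t\ge t_0>0\}$ the series \eqref{eq:dfnhtkrnl} together with all its $t$- and $z$-derivatives converges absolutely and uniformly, because $|(2\pi k)^{2j}e^{-(2\pi k)^2\hbar^2t}|\le(2\pi k)^{2j}e^{-(2\pi k)^2\hbar^2t_0}$ is summable in $k$. Termwise differentiation is therefore legitimate, and since $\partial_t\big(e^{-(2\pi k)^2\hbar^2t}e^{2\pi ikz}\big)=-(2\pi k)^2\hbar^2e^{-(2\pi k)^2\hbar^2t}e^{2\pi ikz}=\hbar^2\partial_{zz}\big(e^{-(2\pi k)^2\hbar^2t}e^{2\pi ikz}\big)$, the identity \eqref{eq:pde-htkrnl} follows. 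For the approximate-identity property \eqref{eq:ic-htkrnl} I would use \eqref{eq:eqhtkrnl}: for fixed $\eta\in(0,1/2)$, $\int_{\{\eta\le|z|\le1/2\}}\htkrnl^t=1-\int_{\{|z|<\eta\}}\htkrnl^t$ and $\int_{\{|z|<\eta\}}\htkrnl^t\ge\int_{\{|z|<\eta\}}\heat^{\hbar^2t}=\tfrac1{\sqrt\pi}\int_{\{|u|<\eta/(2\hbar\sqrt t)\}}e^{-u^2}\dd u\to1$ as $t\searrow0$, so the mass of $\htkrnl^t$ outside $\{|z|<\eta\}$ vanishes. Together with positivity and unit mass this makes $\htkrnl^t$ a periodic approximate identity, and the standard three-$\varepsilon$ estimate — splitting $\int_\crc(\varphi(x)-\varphi(y))\htkrnl^t(x-y)\dd x$ into the region $|x-y|<\eta$ (controlled by the uniform continuity of $\varphi$ on the compact torus, uniformly in $y$) and $|x-y|\ge\eta$ (controlled by the mass decay times $2\|\varphi\|_{L^\infty}$) — yields \eqref{eq:ic-htkrnl}.

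\emph{Step 3 (the quantitative bounds).} For the sup bound I would again use \eqref{eq:eqhtkrnl}: for $z\in(-1/2,1/2]$, $\htkrnl^t(z)\le\heat^{\hbar^2t}(0)+\sum_{m\neq0}\heat^{\hbar^2t}(|m|-\tfrac12)\le(4\pi\hbar^2t)^{-1/2}+C_\hbar$, the tail sum being bounded uniformly in $t>0$ by comparison with $\int_\setR\heat^{\hbar^2t}$; for $t$ in the bounded range relevant below this gives $\|\htkrnl^t\|_{L^\infty(\crc)}\le C_\hbar t^{-1/2}$. For the derivative bound I would use the Fourier representation and Parseval, $\|\partial_z\htkrnl^t\|_{L^2(\crc)}^2=\sum_{k\in\setZ}(2\pi k)^2e^{-2(2\pi k)^2\hbar^2t}$, and since $\xi\mapsto\xi^2e^{-2\xi^2\hbar^2t}$ is even and unimodal, bound this sum by its integral plus its maximum, $\int_\setR\xi^2e^{-2\xi^2\hbar^2t}\dd\xi+\max_\xi\xi^2e^{-2\xi^2\hbar^2t}=c\,(\hbar^2t)^{-3/2}+c'\,(\hbar^2t)^{-1}\le C''_\hbar t^{-3/2}$, whence $\|\partial_z\htkrnl^t\|_{L^2(\crc)}\le C'_\hbar t^{-3/4}$. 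I do not expect a serious obstacle in this lemma; the only points needing a little care are the uniform-in-$t$ control of the Gaussian tail sum and of the series–integral comparison, and the bookkeeping that the blow-up rates come solely from the $m=0$ term, respectively from the frequencies $k$ near $(\hbar^2t)^{-1/2}$.
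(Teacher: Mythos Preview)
Your proof is correct and follows essentially the same route as the paper: Poisson summation for \eqref{eq:eqhtkrnl}, then the periodized-Gaussian representation for positivity, unit mass, and the $L^\infty$ bound, with the remaining items deferred to ``classical properties of the heat kernel'' that you spell out more carefully than the paper does. The only notable deviation is in the $L^2$ bound on $\partial_z\htkrnl^t$: you use Parseval on the Fourier side and an integral comparison for $\sum_k(2\pi k)^2e^{-2(2\pi k)^2\hbar^2t}$, whereas the paper stays in real space via $\|\partial_z\htkrnl^t\|_{L^2(\crc)}\le\sum_{m}\|\partial_y\heat^{\hbar^2t}(\cdot-m)\|_{L^2([0,1))}$ and then invokes the whole-line estimate $\|\partial_y\heat^\sigma\|_{L^2(\setR)}\sim\sigma^{-3/4}$ --- both yield the same $t^{-3/4}$ rate, and your Parseval argument is arguably the cleaner of the two.
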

\begin{proof}
  The expression on the right-hand side of \eqref{eq:dfnhtkrnl} is the Fourier series representation of the expression given in \eqref{eq:eqhtkrnl}. Indeed, for the $k$th Fourier coefficient, we obtain
  \begin{align*}
    \int_0^1 e^{-2\pi ikz}\sum_{m\in\setZ} \heat^{\hbar^2 t}(z-m)\dd z
    &= \int_\setR e^{-2\pi iky} \heat^{\hbar^2 t}(y)\dd y \\
    &= \frac1{\sqrt{4\pi\hbar^2 t}}\int_\setR  \exp\left(-\frac{y^2}{4\hbar^2 t}-2\pi iky\right) \dd y \\
    &= \exp\left(-\frac{(4\pi\hbar^2 t k)^2}{4\hbar^2 t} \right) \int_\setR \heat^{\hbar^2 t}\left(y+4\pi\hbar^2 t k i\right)\dd y
      = \exp(-(2\pi k)^2\hbar^2 t).
  \end{align*}
  The other claims follow from classical properties of the heat kernel:
 \begin{align*}
    \big\|\htkrnl^t\big\|_{L^\infty(\crc)}
    &\le \sum_{m\in\setZ} \big\|\heat^{\hbar^2 t}(\cdot-m)\big\|_{L^\infty([0,1[)}
      \le (4\pi\hbar^2t)^{-1/2}\left[1+2\sum_{m=1}^\infty \exp\left(-\frac{m^2}{4\hbar^2t}\right)\right], \\
    \big\|\partial_z\htkrnl^t\big\|_{L^2(\crc)}
    &\le \sum_{m\in\setZ} \big\|\partial_y\heat^{\hbar^2 t}(\cdot-m)\big\|_{L^2([0,1[)}
    = \big\|\partial_y\heat^{\hbar^2 t}\big\|_{L^2(\setR)},
  \end{align*}
  so in particular, the estimate on $\partial_y\heat^\sigma$ carries over to $\partial_z\htkrnl^t$.
\end{proof}
\begin{lemma}
  With $C_A$ and $C_\hbar$ from \eqref{eq:CA} and \eqref{eq:bd-htkrnl}, respectively, there is a $\gamma\ge0$ such that, for all $t\in(0,1]$,
  \begin{align}
    \label{eq:gamma}
    C_A C_\hbar \int_0^t(t-s)^{-1/2}s^{-1/4}e^{\gamma s}\dd s \le \frac{e^{\gamma t}}{2}.
  \end{align}
\end{lemma}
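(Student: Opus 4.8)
The plan is to reduce \eqref{eq:gamma} to an estimate that is manifestly small when $\gamma$ is large, so that a suitable choice of $\gamma$ finishes the proof. First I would use the elementary identity $e^{\gamma s}=e^{\gamma t}e^{-\gamma(t-s)}$, valid for $s\le t$, to rewrite the left-hand side of \eqref{eq:gamma} as $e^{\gamma t}$ times
\[
  C_A C_\hbar\int_0^t(t-s)^{-1/2}s^{-1/4}e^{-\gamma(t-s)}\dd s .
\]
Hence it suffices to bound this integral by $1/(2C_A C_\hbar)$, uniformly in $t\in(0,1]$, for $\gamma$ chosen large enough.

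The only nuisance is the factor $s^{-1/4}$, which blows up near $s=0$ and so cannot be pulled out of the integral; on the other hand it is integrable, which is exactly what makes the statement true. I would absorb it using H\"older's inequality with conjugate exponents $p=\tfrac32$ and $q=3$, chosen so that $s\mapsto s^{-1/4}$ lies in $L^q(0,t)$ (because $q/4<1$) and $s\mapsto(t-s)^{-1/2}$ lies in $L^p(0,t)$ (because $p/2<1$). The integral then splits into the product of $\big(\int_0^t s^{-3/4}\dd s\big)^{1/3}=4^{1/3}t^{1/12}$, which is at most $4^{1/3}$ since $t\le1$, and $\big(\int_0^t (t-s)^{-3/4}e^{-\frac{3\gamma}{2}(t-s)}\dd s\big)^{2/3}$; substituting $r=t-s$ and enlarging the domain of integration to $(0,\infty)$, this second factor equals $(3\gamma/2)^{-1/6}\Gamma(\tfrac14)^{2/3}$, i.e.\ a fixed constant times $\gamma^{-1/6}$.

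Putting the pieces together yields a bound of the form $C(C_A,C_\hbar)\,\gamma^{-1/6}$ for the integral, uniformly in $t\in(0,1]$, and since this tends to $0$ as $\gamma\to\infty$, I would simply fix any $\gamma\ge 0$ with $C(C_A,C_\hbar)\,\gamma^{-1/6}\le 1/(2C_A C_\hbar)$; this establishes \eqref{eq:gamma}. I do not expect a genuine obstacle: the lemma merely records that the extra integrable singularity $s^{-1/4}$ does not spoil the familiar mechanism by which a large exponential rate $\gamma$ dominates a weakly singular, heat-type convolution kernel. The one point that needs a little care is the choice of H\"older exponents --- they must lie strictly between the thresholds $\tfrac43$ and $2$, and any admissible choice works, producing some (non-sharp) negative power of $\gamma$; the hypothesis $t\le1$ enters only to bound the harmless factor $t^{1/12}$ by $1$. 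An alternative route leading to the same conclusion is to substitute $s=t\sigma$, which exhibits the integral as $t^{1/4}e^{\gamma t}$ times a function of the single variable $\gamma t$, and then to estimate that function by splitting its domain near $\sigma=1$.
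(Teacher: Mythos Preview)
Your argument is correct. The key step --- pulling out $e^{\gamma t}$ and then showing that $\int_0^t(t-s)^{-1/2}s^{-1/4}e^{-\gamma(t-s)}\dd s$ is small uniformly in $t\in(0,1]$ for large $\gamma$ --- is exactly what is needed, and your H\"older argument with exponents $(3/2,3)$ cleanly separates the two singularities and produces the explicit bound $C\gamma^{-1/6}$.

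The paper's own proof takes a slightly different route: it performs the substitution $s=t\sigma$ that you mention only as an alternative at the end, obtaining
\[
  \int_0^t(t-s)^{-1/2}s^{-1/4}e^{-\gamma(t-s)}\dd s
  = t^{1/4}\int_0^1(1-\sigma)^{-1/2}\sigma^{-1/4}e^{-\gamma t(1-\sigma)}\dd\sigma,
\]
and then uses the elementary pointwise bound $t^{1/4}e^{-\gamma t(1-\sigma)}\le(4e\gamma(1-\sigma))^{-1/4}$ (the maximum of $r\mapsto r^{1/4}e^{-ar}$), which yields an integrable majorant of order $\gamma^{-1/4}$. So the paper gets a sharper power of $\gamma$ with a one-line optimization, while your H\"older approach is a bit less sharp ($\gamma^{-1/6}$) but more systematic and would adapt immediately to other combinations of exponents without having to guess the right pointwise bound. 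Both arguments are short and entirely adequate for the purpose.
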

\begin{proof}
  To ensure the existence of $\gamma$, it suffices to observe that
  \begin{align*}
    \int_0^t(t-s)^{-1/2}s^{-1/4}e^{-\gamma(t-s)}\dd s = \int_0^1 (1-\sigma)^{-1/2}\sigma^{-1/4} t^{1/4}e^{-t(1-\sigma)}\dd\sigma
  \end{align*}
  converges to zero as $\gamma\to\infty$ because $t^{1/4}e^{-\gamma t(1-\sigma)}\le(4e\gamma(1-\sigma))^{-1/4}$ converges pointwise monotonically to zero.
\end{proof}
For the following, choose $\gamma>0$ in accordance with \eqref{eq:gamma}. Let $X$ be the space of bounded continuous functions on $(0,1]\times\crc\times\crc$; we write elements $G\in X$ in the form $G^t(x,y)$. Note that $G\in X$ need not possess a limit for $t\searrow0$. Introduce the norm
\begin{align*}
  \tnrm{G} = \sup_{t\in(0,1]} e^{-\gamma t}\max_{x,y\in\crc}|G^t(x,y)|,
\end{align*}
which makes $X$ a Banach space. Further, for a given $A\in\admA$, define the fixed point operator $\Phi_A:X\to X$ with
\begin{equation}
  \label{eq:dfn-fixedpoint}
  \begin{split}
    \big(\Phi_A[G]\big)^t(x,y) 
    &= \int_0^t \int_\crc \htkrnl^{t-s}(x-z)A(z)G^s(z,y)\dd z\dd s \\ 
    &\qquad + \int_0^t\int_\crc \htkrnl^{t-s}(x-z)A(z)\htkrnl^s(z-y) \dd z\dd s.
  \end{split}
\end{equation}
\begin{lemma}
  $\Phi_A$ is a well-defined uniform contraction on $X$, for any $A\in\admA$.
\end{lemma}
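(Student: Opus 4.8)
The plan is to show that $\Phi_A$ maps $X$ into itself and is a contraction, both via the weighted sup-norm $\tnrm{\cdot}$, using the heat-kernel estimate \eqref{eq:bd-htkrnl} together with the integrability of $s \mapsto (t-s)^{-1/2}$ and the smoothing provided by the second term. First I would check that $\Phi_A[G]$ is bounded and continuous on $(0,1]\times\crc\times\crc$ for $G\in X$: continuity in $(x,y)$ follows from continuity and boundedness of $\htkrnl^{t-s}$, $A\in L^1$, and dominated convergence (noting $\htkrnl^{t-s}$ is bounded by $C_\hbar(t-s)^{-1/2}$ which is integrable in $s$ near $s=t$ after multiplying by $A$ in $z$ and $s$); continuity in $t$ is similar, splitting the $s$-integral. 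Boundedness with respect to $\tnrm{\cdot}$ is the quantitative core and is handled in the next step.

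Next I would estimate $\tnrm{\Phi_A[G]}$. For the first term in \eqref{eq:dfn-fixedpoint}, bound $|G^s(z,y)|\le e^{\gamma s}\tnrm{G}$ and $\|\htkrnl^{t-s}\|_{L^\infty}\le C_\hbar(t-s)^{-1/2}$, then integrate $|A(z)|$ over $z$ to pick up $C_A$ (from \eqref{eq:CA}), giving a contribution bounded by
\begin{align*}
  C_A C_\hbar \tnrm{G}\int_0^t (t-s)^{-1/2}e^{\gamma s}\dd s.
\end{align*}
Since $(t-s)^{-1/2}\le (t-s)^{-1/2}s^{-1/4}$ is false in general, I would instead observe directly that $\int_0^t(t-s)^{-1/2}e^{\gamma s}\dd s \le e^{\gamma t}\int_0^t (t-s)^{-1/2}\dd s = 2t^{1/2}e^{\gamma t}$ and, more to the point, that the constant $\gamma$ from \eqref{eq:gamma} was chosen precisely so that the relevant weighted convolution is controlled; for the first term one gets a factor $C_A C_\hbar\int_0^t(t-s)^{-1/2}e^{-\gamma(t-s)}\dd s$ times $e^{\gamma t}$, and this integral tends to $0$ as $\gamma\to\infty$, so enlarging $\gamma$ if necessary makes it $\le\tfrac12$. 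For the second term, I would bound $\htkrnl^s(z-y)$ in $L^\infty_z$ by $C_\hbar s^{-1/2}$ — or, to match \eqref{eq:gamma} exactly, use $\|\htkrnl^s\|_{L^\infty}\le C_\hbar s^{-1/2}$ and note the $s$-integrand $(t-s)^{-1/2}s^{-1/2}$ is integrable; this gives a bound independent of $G$, hence $\Phi_A[G]\in X$. (If one wants the precise constant $s^{-1/4}$ appearing in \eqref{eq:gamma}, one estimates $\int_\crc|A(z)|\htkrnl^s(z-y)\dd z \le \|A\|_{L^1}\|\htkrnl^s\|_{L^\infty}$, or alternatively uses an $L^2$–$L^2$ pairing with the $\partial_z$ bound $\le C'_\hbar t^{-3/4}$; either way the exponents $-1/2$ and $-1/4$ arising are those in \eqref{eq:gamma}.)

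For the contraction property, I would apply the same estimate to $\Phi_A[G_1]-\Phi_A[G_2]$: only the first (linear) term in \eqref{eq:dfn-fixedpoint} depends on $G$, so
\begin{align*}
  e^{-\gamma t}\max_{x,y}\big|\Phi_A[G_1]^t(x,y)-\Phi_A[G_2]^t(x,y)\big|
  \le C_A C_\hbar\,\tnrm{G_1-G_2}\, e^{-\gamma t}\int_0^t(t-s)^{-1/2}e^{\gamma s}\dd s \le \tfrac12\,\tnrm{G_1-G_2},
\end{align*}
where the last inequality is exactly \eqref{eq:gamma} (up to the $s^{-1/4}$ refinement, which only helps). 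Taking the supremum over $t\in(0,1]$ yields $\tnrm{\Phi_A[G_1]-\Phi_A[G_2]}\le\tfrac12\tnrm{G_1-G_2}$, so $\Phi_A$ is a uniform (in $A\in\admA$) contraction. The main obstacle I anticipate is the bookkeeping needed to make the singular $s$-integral match \eqref{eq:gamma} exactly — in particular justifying the precise pair of exponents $(-1/2,-1/4)$ and confirming that the constant $\tfrac12$ there is the one controlling \emph{both} the self-map bound and the Lipschitz bound — rather than any deep difficulty; everything else is dominated convergence and Tonelli.
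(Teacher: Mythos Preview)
Your approach is essentially the same as the paper's: bound the heat kernel in $L^\infty$ by $C_\hbar(t-s)^{-1/2}$, integrate $|A|$ over $z$ to pick up $C_A$, and invoke \eqref{eq:gamma} to get the contraction constant $\tfrac12$; well-definedness is dominated convergence (the paper streamlines this by the substitution $s=t\sigma$ so that the integration limits are fixed). One small correction: the inequality $(t-s)^{-1/2}\le (t-s)^{-1/2}s^{-1/4}$ \emph{is} true here, since $s\in(0,t]\subset(0,1]$ forces $s^{-1/4}\ge 1$; this is exactly how the paper applies \eqref{eq:gamma} without any enlargement of $\gamma$, and it makes your workaround unnecessary.
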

\begin{proof}
  We rewrite the integrals in the definition of $\Phi_A$ with fixed integral limits,
  \begin{align*}
    \big(\Phi_A[G]\big)^t(x,y) 
    = \int_0^1 \int_\crc t\htkrnl^{t(1-\sigma)}(x-z)A(z)\big[G^{t\sigma}(z,y)+\htkrnl^{t\sigma}(z-y)\big] \dd z\dd \sigma.
  \end{align*}
  For well-definedness, first note that for each $\sigma\in(0,1)$ and $z\in\crc$, the integrand is continuous with respect to $t\in(0,1]$ and $x,y\in\crc$. Moreover, recalling the bound \eqref{eq:bd-htkrnl}, one has
  \begin{align*}
    \big|t\htkrnl^{t(1-\sigma)}(x-z)A(z) G^{t\sigma}(z,y)\big|&\le t^{\frac12} C_\hbar \tnrm{G} e^{\gamma t \sigma} (1-\sigma)^{-\frac12}|A(z)|, \\
    \big|t\htkrnl^{t(1-\sigma)}(x-z)A(z)\htkrnl^{t\sigma}(z,y)\big| &\le C_\hbar^2 (1-\sigma)^{-\frac12}\sigma^{-\frac12}|A(z)|.
  \end{align*}
  Since $A\in L^1(\crc)$, and since $(1-\sigma)^{-\frac12}\sigma^{-\frac12}$ is integrable on $(0,1)$, dominated convergence provides continuity of the integral with respect to $t$ and $(x,y)$. Also, the integral bounds are uniform with respect to $t\in(0,1]$. This proves well-definedness of $\Phi_A$ as a map from $X$ to $X$.

  For proving contractivity, let $G_1,G_2\in X$. Then, for any $t\in(0,1)$ and $x,y\in\crc$,
  \begin{align*}
    \big|\Phi_A[G_1]^t(x,y)-\Phi_A[G_2]^t(x,y)\big|
    &\le \int_0^t \int_\crc \htkrnl^{t-s}(x-z)|A(z)||G_1^s(z,y)-G_2^s(z,y)|\dd z \dd s \\
    &\le \int_0^t C_\hbar(t-s)^{-1/2}\|A\|_{L^1}e^{\gamma s}\tnrm{G_1-G_2} \dd s \\
    &\le C_A C_\hbar \int_0^t (t-s)^{-1/2}e^{\gamma s}\dd s\, \tnrm{G_1-G_2} \\
    &\le \frac12e^{\gamma t}\tnrm{G_1-G_2},
  \end{align*}
  where we have used that $\gamma$ satisfies \eqref{eq:gamma}. Multiply by $e^{-\gamma t}$ and take the supremum to conclude that $\tnrm{\Phi_A[G_1]-\Phi_A[G_2]}\le\frac12\tnrm{G_1-G_2}$.
\end{proof}
\begin{lemma}
  \label{lem:fixed}
  For every $A\in\adm$, there is precisely one fixed point $G_A$ of $\Phi_A$, i.e., precisley one $G_A\in X$ with the property
  \begin{align}
    \label{eq:fixed}
    G_A^t(x,y) = \int_0^t \int_\crc \htkrnl^{t-s}(x-z)A(z)\big[G_A^s(z,y)+\htkrnl^s(z-y)\big]\dd z\dd s 
    \quad\text{for all $x,y\in\crc$, $t\in(0,1]$}.
  \end{align}
  We call $G_A$ the \emph{auxiliary kernel} for $A$.
\end{lemma}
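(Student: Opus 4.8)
The statement is an immediate consequence of the Banach fixed point theorem. The preceding two lemmas have done all the work: the space $X$ of bounded continuous functions on $(0,1]\times\crc\times\crc$, equipped with the weighted supremum norm $\tnrm{\cdot}$, is a Banach space, and $\Phi_A\colon X\to X$ is a well-defined map that is a contraction with Lipschitz constant $\tfrac12<1$, i.e. $\tnrm{\Phi_A[G_1]-\Phi_A[G_2]}\le\tfrac12\tnrm{G_1-G_2}$ for all $G_1,G_2\in X$, uniformly in $A\in\admA$. Hence $\Phi_A$ has a unique fixed point $G_A\in X$.

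Concretely, I would fix any starting element $G^{(0)}\in X$ (for instance $G^{(0)}\equiv0$) and set $G^{(n+1)}=\Phi_A[G^{(n)}]$; contractivity gives $\tnrm{G^{(n+1)}-G^{(n)}}\le 2^{-n}\tnrm{G^{(1)}-G^{(0)}}$, so $(G^{(n)})$ is Cauchy in $X$ and converges to some $G_A\in X$, which satisfies $G_A=\Phi_A[G_A]$ by continuity of $\Phi_A$. Writing this identity out using the definition \eqref{eq:dfn-fixedpoint} of $\Phi_A$ is precisely \eqref{eq:fixed}. Uniqueness is clear: if $G_A,\tilde G_A$ are two fixed points then $\tnrm{G_A-\tilde G_A}=\tnrm{\Phi_A[G_A]-\Phi_A[\tilde G_A]}\le\tfrac12\tnrm{G_A-\tilde G_A}$, forcing $G_A=\tilde G_A$. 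There is no real obstacle here; the only substantive content — well-definedness and contractivity of $\Phi_A$ on the correctly weighted Banach space — has already been secured in the previous lemmas, in particular via the choice of $\gamma$ satisfying \eqref{eq:gamma}.
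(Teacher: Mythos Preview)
Your proposal is correct and matches the paper's approach exactly: the paper's proof is the single sentence ``This is a consequence of the contraction mapping principle.'' Your elaboration of the Picard iteration and uniqueness argument is fine but unnecessary for the level of detail the paper provides.
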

\begin{proof}
  This is a consequence of the contraction mapping principle.
\end{proof}
\begin{lemma}
  \label{lem:KG}
  There some constant $K_G$ ---  uniform with respect to $A\in\adm$ --- such that the auxiliary kernel $G_A$ satisfies
  \begin{align}
    \label{eq:KG}
    \sup_{t\in(0,1]}\|G_A^t\|_{L^\infty(\crc\times\crc)} \le K_G.
  \end{align}
\end{lemma}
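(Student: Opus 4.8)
The plan is to extract the bound from the fixed-point equation \eqref{eq:fixed} rather than from the contraction estimate, since the latter only controls $\tnrm{G_A}$ up to the exponential weight $e^{\gamma t}$ on $(0,1]$, and we want a genuine $t$-uniform $L^\infty$-bound. First I would plug the fixed point $G_A$ back into \eqref{eq:fixed} and estimate directly. Using $\|A\|_{L^1}\le C_A$, the bound $\|\htkrnl^{t-s}\|_{L^\infty}\le C_\hbar(t-s)^{-1/2}$ from \eqref{eq:bd-htkrnl}, and splitting off the two terms in the bracket, one gets for every $t\in(0,1]$ and all $x,y\in\crc$
\begin{align*}
  |G_A^t(x,y)|
  &\le C_A C_\hbar \int_0^t (t-s)^{-1/2}\,\|G_A^s\|_{L^\infty(\crc\times\crc)}\dd s
     + C_A C_\hbar \int_0^t (t-s)^{-1/2}\|\htkrnl^s\|_{L^\infty}\dd s.
\end{align*}
The second integral is bounded, uniformly in $t\in(0,1]$, by $C_A C_\hbar^2 \int_0^1 (1-\sigma)^{-1/2}\sigma^{-1/2}\dd\sigma = C_A C_\hbar^2 B(\tfrac12,\tfrac12) = \pi C_A C_\hbar^2$; call this constant $K_0$. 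Setting $g(t):=\sup_{0<s\le t}\|G_A^s\|_{L^\infty(\crc\times\crc)}$ — which is finite since $G_A\in X$ — one obtains the Gronwall-type inequality $g(t)\le K_0 + C_A C_\hbar\int_0^t (t-s)^{-1/2}g(s)\dd s$ for all $t\in(0,1]$.

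Next I would close this by a singular Gronwall (Henry) lemma: an inequality of the form $g(t)\le K_0 + \beta\int_0^t(t-s)^{-1/2}g(s)\dd s$ on a finite interval $[0,1]$ implies $g(t)\le K_0 E(t)$ where $E$ is an explicit function (essentially a Mittag–Leffler-type function $\sum_{n\ge0}(\beta\Gamma(1/2))^n t^{n/2}/\Gamma(1+n/2)$) that is bounded on $[0,1]$. Concretely, iterating the inequality once gives a kernel $(t-s)^{-1/2}\ast(t-s)^{-1/2}$ which is a bounded constant (again a Beta integral), so the iterated kernels become less singular and the Neumann series converges; the resulting bound on $[0,1]$ is uniform, depending only on $K_0$, $\beta=C_A C_\hbar$, and the length $1$ of the interval — hence only on $C_A$ and $C_\hbar$, which is precisely the claimed uniformity over $A\in\adm$. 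One then sets $K_G := K_0\, E(1)$, which is finite and independent of $A$.

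Alternatively — and perhaps more cheaply, avoiding any explicit special-function bookkeeping — I would reuse the weight machinery already in place: from $g(t)\le K_0 + \beta\int_0^t(t-s)^{-1/2}g(s)\dd s$, multiply through by $e^{-\gamma t}$ with the \emph{same} $\gamma$ chosen in \eqref{eq:gamma} (recall $\gamma$ was picked so that $C_AC_\hbar\int_0^t(t-s)^{-1/2}s^{-1/4}e^{\gamma s}\dd s\le \frac12 e^{\gamma t}$; the analogous estimate with exponent $-1/2$ in place of $-1/4$ works with possibly a larger $\gamma$, since the same argument applies). This gives $\tilde g(t):=e^{-\gamma t}g(t)\le K_0 + \frac12\sup_{0<s\le t}\tilde g(s)$, whence $\sup_{0<s\le 1}\tilde g(s)\le 2K_0$, and therefore $g(t)\le 2K_0 e^{\gamma t}\le 2K_0 e^\gamma$ for $t\in(0,1]$. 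Taking $K_G:=2K_0 e^{\gamma}$ — all of $K_0$, $\gamma$ depend only on $C_A$, $C_\hbar$ — yields \eqref{eq:KG}.

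The only genuine subtlety is the sup-over-$s$ step: because the kernel is singular, one must be slightly careful that $g(t)=\sup_{0<s\le t}\|G_A^s\|_{L^\infty}$ is finite to begin with (it is, since $G_A\in X$ is bounded, though not uniformly in $A$ a priori) and that the monotone quantity $\tilde g$ is used on the right-hand side rather than $\tilde g(t)$ itself; both are standard. No fundamentally new estimate beyond \eqref{eq:bd-htkrnl} and the Beta-integral $\int_0^1(1-\sigma)^{-1/2}\sigma^{-1/2}\dd\sigma=\pi$ is needed, so I expect this lemma to be essentially routine; the main work is merely organizing the Gronwall argument so that the resulting constant is manifestly independent of $A$.
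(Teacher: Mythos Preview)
Your proposal is correct, and your second ``alternative'' route is essentially the paper's own argument: estimate inside the fixed-point relation, bound $\|G_A^s\|_{L^\infty}\le e^{\gamma s}\tnrm{G_A}$, use the $\gamma$-property \eqref{eq:gamma} to get $|G_A^t(x,y)|\le \tfrac12 e^{\gamma t}\tnrm{G_A}+\pi C_AC_\hbar^2$, and absorb to obtain $\tnrm{G_A}\le 2\pi C_AC_\hbar^2$, hence $K_G=2\pi e^\gamma C_AC_\hbar^2$. One small simplification over what you wrote: you do \emph{not} need a new or larger $\gamma$. The integral you need, $C_AC_\hbar\int_0^t(t-s)^{-1/2}e^{\gamma s}\dd s$, is dominated by the one in \eqref{eq:gamma} because $1\le s^{-1/4}$ for $s\in(0,1]$; so the same $\gamma$ already gives the factor $\tfrac12 e^{\gamma t}$. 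Your first route via the Henry/Mittag--Leffler singular Gronwall lemma is also valid but heavier than necessary here --- the weighted-norm trick that the paper (and you, in your alternative) use is the clean way, since all the machinery ($\gamma$, $\tnrm{\cdot}$) is already in place.
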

\begin{proof}
  We estimate inside the fixed point equation \eqref{eq:fixed}, using the property \eqref{eq:gamma} of $\gamma$, 
  \begin{align*}
    |G_A^t(x,y)|
    &\le \int_0^tC_\hbar(t-s)^{-1/2}\|A\|_{L^1(\crc)}\big\|G_A^s\big\|_{L^\infty(\crc\times\crc)} \dd s
      + \int_0^tC_\hbar(t-s)^{-1/2}\|A\|_{L^1(\crc)}C_\hbar s^{-1/2}\dd s \\
    &\le C_AC_\hbar\int_0^t(t-s)^{-1/2}e^{\gamma s}\dd s\ \tnrm{G_A} + C_AC_\hbar^2\int_0^1(1-\sigma)^{-1/2}\sigma^{-1/2}\dd\sigma \\
    &\le \frac{e^{\gamma t}}2\tnrm{G_A} + \pi C_A C_\hbar^2.
  \end{align*}
  It thus follows that $\tnrm{G_A} \le \frac12\tnrm{G_A} + \pi C_AC_\hbar^2$, which implies \eqref{eq:KG} with $K_G=2e^\gamma\pi C_AC_\hbar^2$.
\end{proof}
\begin{definition}
  We call the unique fixed point $G_A\in X$ \emph{auxiliary kernel} for $A$.
\end{definition}
\begin{lemma}
  \label{lem:max-hoelder}
  The auxiliary kernel $G_A\in X$ is H\"older continuous in the first spatial variable $x$. More precisely, there is a constant $K_H$ --- uniform with respect to $A\in\adm$ --- such that
  \begin{align}
    \label{eq:max-hoelder}
    \big|G_A^t(x',y) - G_A^t(x,y)\big| \le K_Ht^{-1/4}|x'-x|^{1/2} \quad \text{for all $x,x',y\in\crc$ and $0<t\le1$}.
  \end{align}
\end{lemma}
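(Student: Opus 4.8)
The plan is to estimate directly inside the fixed-point identity \eqref{eq:fixed}: the first spatial variable $x$ enters its right-hand side only through the factor $\htkrnl^{t-s}(x-z)$, so everything reduces to a Hölder bound for the heat kernel itself, carrying along the singular time weights.

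First I would record the heat-kernel estimate
\begin{align*}
  \big|\htkrnl^\sigma(x'-z) - \htkrnl^\sigma(x-z)\big| \le C'_\hbar\,\sigma^{-3/4}\,|x'-x|^{1/2}
  \qquad\text{for all } x,x',z\in\crc \text{ and } \sigma\in(0,1],
\end{align*}
with $C'_\hbar$ from \eqref{eq:bd-htkrnl} and $|x'-x|$ the distance on $\crc$. This is nothing but the one-dimensional Morrey embedding $H^1\hookrightarrow C^{0,1/2}$ with an explicit constant: writing the left-hand side as $\int(\partial_z\htkrnl^\sigma)(a-z)\dd a$ along the shorter arc from $x$ to $x'$, applying the Cauchy--Schwarz inequality on that arc, and using one-periodicity of $\htkrnl^\sigma$, one bounds it by $|x'-x|^{1/2}\,\|\partial_z\htkrnl^\sigma\|_{L^2(\crc)}$, and then invokes the second inequality in \eqref{eq:bd-htkrnl}.

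Second, I would substitute this into \eqref{eq:fixed}. Subtracting the identities at $x'$ and at $x$, pulling the (now $z$-independent) heat-kernel bound out of the $z$-integral, and estimating the remaining $z$-integral by $\int_\crc |A(z)|\big(|G_A^s(z,y)| + \htkrnl^s(z-y)\big)\dd z \le C_A\big(K_G + C_\hbar s^{-1/2}\big)$ --- using $\|A\|_{L^1}\le C_A$, the uniform bound \eqref{eq:KG}, and the first inequality in \eqref{eq:bd-htkrnl} --- I arrive at
\begin{align*}
  \big|G_A^t(x',y) - G_A^t(x,y)\big|
  \le C'_\hbar C_A\,|x'-x|^{1/2}\int_0^t (t-s)^{-3/4}\big(K_G + C_\hbar s^{-1/2}\big)\dd s.
\end{align*}
The time integral is elementary: $\int_0^t(t-s)^{-3/4}\dd s = 4t^{1/4}$ and $\int_0^t(t-s)^{-3/4}s^{-1/2}\dd s = B(\tfrac14,\tfrac12)\,t^{-1/4}$, so for $t\in(0,1]$ (bounding $t^{1/4}\le t^{-1/4}$) the integral is at most $\big(4K_G + C_\hbar B(\tfrac14,\tfrac12)\big)\,t^{-1/4}$. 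This gives \eqref{eq:max-hoelder} with $K_H := C'_\hbar C_A\big(4K_G + C_\hbar B(\tfrac14,\tfrac12)\big)$, which is uniform in $A\in\adm$ because $K_G$ is.

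The main point to get right --- and the closest thing to an obstacle --- is the bookkeeping of the singular time weights: the product $(t-s)^{-3/4}s^{-1/2}$ sits exactly at the edge of integrability on $(0,t)$, and it is precisely the resulting negative power $t^{-1/4}$ that accounts for the degeneration of \eqref{eq:max-hoelder} as $t\searrow0$. Beyond that, the argument uses only Cauchy--Schwarz and the already established bounds \eqref{eq:bd-htkrnl} and \eqref{eq:KG}.
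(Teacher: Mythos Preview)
Your proposal is correct and follows essentially the same route as the paper's proof: both derive the Hölder bound for $\htkrnl^\sigma$ via Cauchy--Schwarz and \eqref{eq:bd-htkrnl}, substitute into the fixed-point identity \eqref{eq:fixed}, bound the $z$-integral by $C_A(K_G + C_\hbar s^{-1/2})$, and evaluate the resulting time integral to extract the $t^{-1/4}$ factor. The only differences are notational (you name the Beta function explicitly).
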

\begin{remark}
  The approximation result below will show that $G_A^t$ is symmetric, i.e., $G_A^t(x,y)=G_A^t(x,y)$. Thus, a posteriori, the bound \eqref{eq:max-hoelder} is even true in both spatial variables.
\end{remark}
\begin{proof}[Proof of Lemma \ref{lem:max-hoelder}]
  Observe that, by H\"older's inequality and the bound\eqref{eq:bd-htkrnl}, we have for any $\sigma>0$ and $r,r'\in\crc$:
  \begin{align*}
    \big|\htkrnl^\sigma(r')-\htkrnl^\sigma(r)\big|
    \le \left|\int_r^{r'}\partial_z\htkrnl^\sigma(r'')\dd r''\right|
    \le |r'-r|^{1/2}\left(\int_\crc|\partial_z\htkrnl^\sigma(r'')|^2\dd r''\right)^{1/2}
    \le C_\hbar'\sigma^{-3/4}|r'-r|^{1/2}.
  \end{align*}
  Substitute this with $\sigma=t-s$ and $r=x-z$, $r'=x'-z$ into the fixed point relation \eqref{eq:fixed}:
  \begin{align*}
    \big|G_A^t(x',y) - G_A^t(x,y)\big|
    &\le \int_0^t\int_\crc \big|\htkrnl^{t-s}(x'-z)-\htkrnl^{t-s}(x-z)\big|\,|A(z)|\,\big[|G_A^s(z,y)|+|\htkrnl^s(z-y)|\big]\dd z\dd s \\
    &\le \int_0^t C_\hbar'C_A|(x'-z)-(x-z)|^{1/2}(t-s)^{-3/4}\big[K_G+C_\hbar s^{-1/2}\big]\dd s \\
    &\le C_\hbar'C_A\left[4K_Gt^{1/4}+C_\hbar t^{-1/4}\int_0^1(1-\sigma)^{-3/4}\sigma^{-1/2}\dd\sigma\right]\,|x'-x|^{1/2},
  \end{align*}
  where we have used the heat kernel bound \eqref{eq:bd-htkrnl} again, and the bound \eqref{eq:KG} on $G_A$.
  Since $t\in(0,1]$, one can choose $K_H$ such that \eqref{eq:max-hoelder} holds.
\end{proof}
\begin{lemma}
  \label{lem:exp-regular}
  The map $A\mapsto G_A^1$ is Lipschitz continuous from $\adm$, with the $L^1(\crc)$-norm, into $C(\crc\times\crc)$, with the uniform norm. 
\end{lemma}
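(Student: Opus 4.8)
The plan is to estimate $G_{A_1}-G_{A_2}$ as a perturbation \emph{within} the very contraction scheme that defines the auxiliary kernels, reusing the weighted norm $\tnrm{\cdot}$ and the constant $\gamma$ fixed through \eqref{eq:gamma}. Fix $A_1,A_2\in\adm$, write $W_i^s(z,y):=G_{A_i}^s(z,y)+\htkrnl^s(z-y)$, subtract the two instances of the fixed-point identity \eqref{eq:fixed}, and split $A_1W_1-A_2W_2=(A_1-A_2)W_1+A_2(W_1-W_2)$. This gives, for all $x,y\in\crc$ and $t\in(0,1]$,
\begin{align*}
  G_{A_1}^t(x,y)-G_{A_2}^t(x,y)
  &= \int_0^t\!\int_\crc \htkrnl^{t-s}(x-z)\,(A_1-A_2)(z)\,W_1^s(z,y)\dd z\dd s \\
  &\quad + \int_0^t\!\int_\crc \htkrnl^{t-s}(x-z)\,A_2(z)\,\big(G_{A_1}^s-G_{A_2}^s\big)(z,y)\dd z\dd s.
\end{align*}

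For the first (source) integral I would pair the $L^\infty$-bound $\|\htkrnl^{t-s}\|_{L^\infty}\le C_\hbar(t-s)^{-1/2}$ from \eqref{eq:bd-htkrnl} against $\|A_1-A_2\|_{L^1(\crc)}$, and bound $|W_1^s(z,y)|\le K_G+C_\hbar s^{-1/2}$ via \eqref{eq:KG} and \eqref{eq:bd-htkrnl}. Since $\int_0^t(t-s)^{-1/2}\dd s\le 2$ and $\int_0^t(t-s)^{-1/2}s^{-1/2}\dd s=\pi$ for $t\in(0,1]$, the source integral is bounded in modulus by $C_1\|A_1-A_2\|_{L^1(\crc)}$ with $C_1$ depending only on $C_\hbar$ and $K_G$ (hence only on $\hbar$ and $C_A$). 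For the second (transport) integral I would use $\|A_2\|_{L^1(\crc)}\le C_A$ to get the modulus bounded by $C_AC_\hbar\int_0^t(t-s)^{-1/2}g(s)\dd s$, where $g(s):=\max_{x,y\in\crc}|G_{A_1}^s(x,y)-G_{A_2}^s(x,y)|$ is finite because $G_{A_1},G_{A_2}\in X$.

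Now I would write $g(s)=e^{\gamma s}\big(e^{-\gamma s}g(s)\big)$ and observe that on $(0,1]$ one has $s^{-1/4}\ge 1$, so \eqref{eq:gamma} yields $C_AC_\hbar\int_0^t(t-s)^{-1/2}e^{\gamma s}\dd s\le\tfrac12 e^{\gamma t}$; hence the transport integral is at most $\tfrac12 e^{\gamma t}\tnrm{G_{A_1}-G_{A_2}}$. Combining the two estimates, multiplying by $e^{-\gamma t}$ and taking the supremum over $t\in(0,1]$ gives $\tnrm{G_{A_1}-G_{A_2}}\le C_1\|A_1-A_2\|_{L^1(\crc)}+\tfrac12\tnrm{G_{A_1}-G_{A_2}}$, i.e.\ $\tnrm{G_{A_1}-G_{A_2}}\le 2C_1\|A_1-A_2\|_{L^1(\crc)}$. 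Specializing at $t=1$, $\|G_{A_1}^1-G_{A_2}^1\|_{L^\infty(\crc\times\crc)}\le e^{\gamma}\tnrm{G_{A_1}-G_{A_2}}\le 2e^{\gamma}C_1\|A_1-A_2\|_{L^1(\crc)}$, which is the asserted Lipschitz bound with a constant depending only on $\hbar$ and $C_A$; continuity of $G_A^1$ on $\crc\times\crc$ is already contained in $G_A\in X$.

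The argument carries no essential difficulty — it is a routine perturbation of the contraction estimate. The only things to watch are the bookkeeping of the weakly singular kernels $(t-s)^{-1/2}$ and $s^{-1/2}$ and their time-integrability on $(0,1]$, and, more importantly, that every constant stays uniform over $\adm$: this is precisely why the estimates must be run through $\|A_i\|_{L^1}\le C_A$ and the $A$-independent bound $K_G$ of Lemma \ref{lem:KG}, rather than through any quantity depending on the particular $A_1$ or $A_2$.
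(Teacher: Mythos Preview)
Your proof is correct and follows essentially the same route as the paper: subtract the two fixed-point relations, split algebraically into a ``source'' term carrying $A_1-A_2$ and a ``transport'' term carrying $G_{A_1}-G_{A_2}$, absorb the latter via the $\gamma$-contraction estimate \eqref{eq:gamma}, and bound the former using $K_G$ and the heat-kernel bounds. The only cosmetic difference is that you split as $(A_1-A_2)W_1+A_2(W_1-W_2)$ and thereby keep the heat-kernel contribution $\htkrnl^s$ explicitly inside $W_1$, whereas the paper splits as $A(G_A-G_B)+(A-B)G_B$; your version is in fact slightly more careful in tracking that $\htkrnl^s$ term.
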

\begin{proof}
  Let $G_A$ and $G_B$ be fixed points of $\Phi_A$ and $\Phi_B$, respectively, for $A,B\in\adm$.
  Subtracting the respective fixed points relations \eqref{eq:fixed}, we obtain
  \begin{align*}
    \big[G_A^t-G_B^t\big](x,y)
    &= \int_0^t\int_\crc \htkrnl^{t-s}(x-z) \big[A(z)G_A^s(z,y)-B(z)G_B^s(z,y)\big]\dd z\dd s \\
    &=  \int_0^t\int_\crc \htkrnl^{t-s}(x-z)A(z)\big[G_A^s-G_B^s\big](z,y)\dd z\dd s \\
    &\qquad + \int_0^t\int_\crc\htkrnl^{t-s}(x-z)\big[A-B\big](z)G_B^s(z,y)\dd z\dd s.
  \end{align*}
  Therefore, using the property \eqref{eq:gamma} of $\gamma$ and the bound \eqref{eq:KG} on $G_A$,
  \begin{align*}
    \sup_{x,y\in\crc}\big|G_A^t-G_B^t\big|(x,y)
    & \le \int_0^t C_\hbar(t-s)^{-1/2} \|A\|_{L^1(\crc)}e^{\gamma s}\tnrm{G_A-G_B}\dd s \\
    &\qquad + \int_0^t C_\hbar(t-s)^{-1/2}\|A-B\|_{L^1(\crc)}\|G_B\|_{L^\infty(\crc\times\crc)}\dd s \\
    &\le \frac{e^{\gamma t}}2\tnrm{G_A-G_B} + 2C_\hbar K_Gt^{1/2}\,\|A-B\|_{L^1(\crc)}.
  \end{align*}
  Taking the supremum in $t\in(0,1]$, we obtain that
  \begin{align*}
    \frac12\tnrm{G_A-G_B} \le 2C_\hbar K_G\,\|A-B\|_{L^1(\crc)},
  \end{align*}
  and in particular, we have that
  \begin{align*}
    \sup_{x,y\in\crc}\big|G_A^1-G_B^1\big|(x,y) \le 4e^{\gamma}C_\hbar K_G\,\|A-B\|_{L^1(\crc)},
  \end{align*}
  which is the claimed Lipschitz dependence.
\end{proof}
\begin{definition}
  The \emph{quantum exponential} map $\nop:\adm\to C(\crc)$ is given by
  \begin{align}
    \label{eq:qexp}
    \nop[A](x) = G_A^1(x,x) + \htkrnl^1(0),
    \quad \text{where} \quad
    \htkrnl^1(0) = \frac1{\sqrt{4\pi\hbar^2}}\sum_{m\in\setZ} \exp\left(-\left|\frac{m}{2\hbar}\right|^2\right).
  \end{align}
\end{definition}
\begin{lemma}
  \label{lem:nopLip}
  $\nop$ is Lipschitz continuous from $\adm$, with the $L^1(\crc)$-norm, to $C(\crc)$, with the uniform norm.
\end{lemma}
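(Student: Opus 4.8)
The plan is to bootstrap the Lipschitz dependence of $\nop$ directly from the corresponding property of the auxiliary kernel map $A\mapsto G_A^1$, which is exactly the content of Lemma~\ref{lem:exp-regular}. Since by definition \eqref{eq:qexp} we have $\nop[A](x)=G_A^1(x,x)+\htkrnl^1(0)$, and the additive constant $\htkrnl^1(0)$ does not depend on $A$, the difference $\nop[A]-\nop[B]$ is just the restriction of $G_A^1-G_B^1$ to the diagonal. Thus I would write, for $A,B\in\adm$ and any $x\in\crc$,
\begin{align*}
  \big|\nop[A](x)-\nop[B](x)\big| = \big|G_A^1(x,x)-G_B^1(x,x)\big| \le \sup_{x,y\in\crc}\big|G_A^1(x,y)-G_B^1(x,y)\big|.
\end{align*}

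Then I would invoke Lemma~\ref{lem:exp-regular}, which gives precisely the bound $\sup_{x,y\in\crc}\big|G_A^1-G_B^1\big|(x,y)\le 4e^{\gamma}C_\hbar K_G\,\|A-B\|_{L^1(\crc)}$, with $K_G$ the uniform bound from Lemma~\ref{lem:KG} and $\gamma$ the constant fixed in \eqref{eq:gamma}. Taking the supremum over $x\in\crc$ on the left-hand side, one obtains $\|\nop[A]-\nop[B]\|_{L^\infty(\crc)}\le 4e^{\gamma}C_\hbar K_G\,\|A-B\|_{L^1(\crc)}$, which is exactly the claimed Lipschitz continuity from $(\adm,\|\cdot\|_{L^1})$ to $(C(\crc),\|\cdot\|_\infty)$. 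One should also note, as a preliminary, that $\nop[A]$ genuinely lands in $C(\crc)$: continuity of $x\mapsto G_A^1(x,x)$ follows from joint continuity of $G_A^1$ on $\crc\times\crc$ (as an element of $X$ restricted to $t=1$), or alternatively from the H\"older estimate in the first variable from Lemma~\ref{lem:max-hoelder} combined with continuity in the second variable.

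There is essentially no obstacle here — this lemma is a one-line corollary of Lemma~\ref{lem:exp-regular}, the only content being the trivial observation that restriction to the diagonal $y=x$ does not increase the sup-norm and that the $A$-independent summand $\htkrnl^1(0)$ cancels. The only mild subtlety worth a sentence is making sure $\nop[A]$ is well-defined as a continuous function before asserting a Lipschitz bound in $C(\crc)$; this I would handle by the continuity remark above. If desired one could make the constant explicit as $L_\nop=4e^{\gamma}C_\hbar K_G$, inheriting uniformity in $A,B$ from the uniformity of $K_G$ and $\gamma$ over $\adm$.
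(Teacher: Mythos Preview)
Your proposal is correct and follows exactly the paper's approach: the paper's proof is the single sentence ``This is a direct consequence of Lemma~\ref{lem:exp-regular} above,'' and you have spelled out precisely why, namely that the $A$-independent constant $\htkrnl^1(0)$ cancels and restriction to the diagonal does not increase the sup-norm.
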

\begin{proof}
  This is a direct consequence of Lemma \ref{lem:exp-regular} above.  
\end{proof}

\subsection{Relation to the operator exponential}
In this subsection, we establish the relation between the auxiliary kernel $G_A$ and the operator exponential of $\hbar^2\Delta+A$. This is to bridge the gap between the definitions of $\nop$ in \eqref{eq:qexp} above, and in \cite{Pinaud19}, respectively. We emphasize that these considerations are not relevant for the further analysis, but for the physical interpretation of our results.

\begin{definition}
  We define the \emph{solution kernel} $H_A\in C((0,1]\times\crc\times\crc)$ for $A\in\adm$ by
  \begin{align}
    \label{eq:HfromG}
    H_A^t(x,y):=G_A^t(x,y)+\htkrnl^t(x-y).
  \end{align}
\end{definition}
Formally, we have that $H^t_A$ is integral kernel for the Hilbert Schmidt operator $\exp(t[\hbar^2\Delta+A])$ on $L^2(\crc)$. While $\htkrnl\in C^\infty(\crc)$ for each $t>0$, the solution kernel $H_A^t$ is typically continuous, but not differentiable. Nonetheless, it satisfies the initial value problem
\begin{align*}
  \partial_tH^t(x,y) = \big(\hbar^2\Delta_x+A(x)\big)H^t(x,y),\quad H^0(x,y)=\delta(x-y)
\end{align*}
in distributional sense. More precisely:
\begin{corollary}
  \label{cor:pinaud}
  For each $\varphi\in C^2(\crc)$, the function $h\in C((0,1]\times\crc)$ with
  \begin{align*}
    h^t(y) = \int_\crc \varphi(x)H_A^t(x,y)\dd x
  \end{align*}
  is continuously differentiable in $t\in(0,1]$, for each $y\in\crc$, and satisfies
  \begin{align}
    \label{eq:weakpde}
    \partial_th^t(y) = \int_\crc \big(\partial_{xx}\varphi(x)+A(x)\varphi(x)\big)H_A^t(x,y)\dd x,
    \quad
    \lim_{t\searrow0} h^t(y) = \varphi(y).
  \end{align}
\end{corollary}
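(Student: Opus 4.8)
\textbf{Proof plan for Corollary \ref{cor:pinaud}.} The plan is to derive a closed integral equation for $h^t(y)$ directly from the fixed-point relation \eqref{eq:fixed} for $G_A$, and then read off smoothness in $t$ and the PDE from that equation. First I would insert the decomposition \eqref{eq:HfromG} into the definition of $h^t(y)$ and split accordingly:
\begin{align*}
  h^t(y) = \int_\crc \varphi(x)\htkrnl^t(x-y)\dd x + \int_\crc\varphi(x)G_A^t(x,y)\dd x =: h_0^t(y) + g^t(y).
\end{align*}
The first term $h_0^t(y)$ is the classical heat semigroup applied to $\varphi$; by \eqref{eq:pde-htkrnl} and \eqref{eq:ic-htkrnl} it is smooth in $t>0$ with $\partial_t h_0^t(y)=\hbar^2\int_\crc \partial_{xx}\varphi(x)\,\htkrnl^t(x-y)\dd x$ (integrating the $z$-derivatives by parts onto $\varphi$, using periodicity) and $h_0^t(y)\to\varphi(y)$ uniformly as $t\searrow0$.

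For $g^t(y)$, substitute the fixed-point equation \eqref{eq:fixed} and exchange the order of integration (justified by the bounds $\|G_A^s\|_{L^\infty}\le K_G$ from \eqref{eq:KG}, $\|\htkrnl^s\|_{L^\infty}\le C_\hbar s^{-1/2}$ from \eqref{eq:bd-htkrnl}, and $A\in L^1$, so the triple integral is absolutely convergent):
\begin{align*}
  g^t(y) = \int_0^t\int_\crc \left(\int_\crc \varphi(x)\htkrnl^{t-s}(x-z)\dd x\right) A(z)\big[G_A^s(z,y)+\htkrnl^s(z-y)\big]\dd z\dd s.
\end{align*}
The inner $x$-integral is exactly $h_0^{t-s}(z)$, which converges uniformly to $\varphi(z)$ as $s\nearrow t$ and is bounded by $\|\varphi\|_\infty$ uniformly. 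Hence $g^t(y)=\int_0^t\int_\crc h_0^{t-s}(z)\,A(z)\,H_A^s(z,y)\dd z\dd s$. This is a convolution-type Volterra integral in $t$ with a kernel that is continuous in $t\in(0,1]$ and integrable near the endpoint $s=t$ (the singularity $\htkrnl^s(z-y)\sim s^{-1/2}$ sits at $s=0$, away from the differentiation point, and near $s=t$ the factor $h_0^{t-s}(z)$ is bounded); therefore $g$ is continuously differentiable in $t$ with
\begin{align*}
  \partial_t g^t(y) = \int_\crc \varphi(z)A(z)H_A^t(z,y)\dd z + \int_0^t\int_\crc \partial_t h_0^{t-s}(z)\,A(z)H_A^s(z,y)\dd z\dd s.
\end{align*}
Using $\partial_t h_0^{t-s}(z)=\hbar^2\int_\crc\partial_{xx}\varphi(x)\htkrnl^{t-s}(x-z)\dd x$ and unwinding the same manipulations backwards (now with $\partial_{xx}\varphi$ in place of $\varphi$), the second term equals $\int_\crc \partial_{xx}\varphi(x)\,G_A^t(x,y)\dd x$. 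Adding $\partial_t h_0^t(y)=\hbar^2\int_\crc\partial_{xx}\varphi(x)\htkrnl^t(x-y)\dd x$ reconstitutes $\int_\crc(\partial_{xx}\varphi(x)+A(x)\varphi(x))H_A^t(x,y)\dd x$, which is \eqref{eq:weakpde}. For the initial condition, note $g^t(y)\to0$ as $t\searrow0$ because the integrand is bounded by $C_\hbar(t-s)$-free quantities times an integrable singularity, so the $\int_0^t$ vanishes; combined with $h_0^t(y)\to\varphi(y)$ this gives $\lim_{t\searrow0}h^t(y)=\varphi(y)$.

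\textbf{Main obstacle.} The delicate point is the differentiation under the integral sign in $\partial_t g^t(y)$: the upper limit $t$ appears both as an endpoint and inside the integrand via $\htkrnl^{t-s}$, and one must separate the endpoint contribution (yielding the $\int_\crc\varphi A H_A^t$ term, via the approximate-identity property \eqref{eq:ic-htkrnl}) from the genuine interior derivative, while checking that the $s^{-1/2}$-type singularity of $\htkrnl^s(z-y)$ at $s=0$ does not interfere (it doesn't, being at the opposite endpoint). A clean way to handle this rigorously is to change variables $s\mapsto t\sigma$ as done in the proof that $\Phi_A$ is a contraction, moving the $t$-dependence entirely into the integrand over the fixed domain $(0,1)\times\crc$, and then apply dominated convergence to differentiate; the H\"older estimate \eqref{eq:max-hoelder} on $G_A$ and the bound \eqref{eq:bd-htkrnl} supply the needed domination. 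Everything else is bookkeeping with Fubini and integration by parts against the smooth heat kernel.
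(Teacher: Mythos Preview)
Your proposal is correct and follows essentially the same route as the paper: the paper also splits $h^t=h_0^t+g^t$, rewrites $g^t$ via the fixed-point relation \eqref{eq:fixed} in terms of what it calls $\psi^{t-s}(z)$ (your $h_0^{t-s}(z)$), differentiates in $t$ to obtain the boundary term $\int_\crc\varphi A H_A^t$ plus an interior term, and then recognizes the latter as $\int_\crc\partial_{xx}\varphi\,\Phi_A[G_A]^t\dd x=\int_\crc\partial_{xx}\varphi\,G_A^t\dd x$ --- precisely your ``unwinding backwards'' step. Your identification of the main obstacle (justifying the Leibniz-type differentiation with the moving endpoint) matches the paper's implicit reliance on the bounds \eqref{eq:bd-htkrnl}, \eqref{eq:KG} and the approximate-identity property \eqref{eq:ic-htkrnl}.
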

\begin{proof}
  Consider the auxiliary function $g\in C((0,1]\times\crc)$ given by
  \begin{align*}
    g^t(y) = \int_\crc \varphi(x)G_A^t(x,y)\dd x,
  \end{align*}
  which, by the fixed point relation \eqref{eq:fixed}, can be written as
  \begin{align*}
    g^t(y) = \int_0^t\int_\crc \psi^{t-s}(z)A(z)\big[G_A^s(z,y)+\htkrnl^s(z-y)\big]\dd z\dd s,
    \quad\text{with}\quad
    \psi^{t-s}(z) = \int_\crc \varphi(x)\htkrnl^{t-s}(z-x)\dd x.
  \end{align*}
  SInce $\htkrnl^{t-s}(\cdot)$ is of unit mass, $|\psi^{t-s}(z)|\le\max|\varphi|$ for all $z\in\crc$, $s\in[0,t)$ and $t\in(0,1]$. From property \eqref{eq:ic-htkrnl}, we further know that $\psi^{t-s}(z)\to\varphi(z)$ as $s\nearrow t$, uniformly in $z\in\crc$. Moreover, by \eqref{eq:pde-htkrnl},
  \begin{align*}
    \partial_t\psi^{t-s}(z) = \int_\crc \varphi(x)\,\partial_{xx}\htkrnl^{t-s}(z-x)\dd x = \int_\crc \partial_{xx}\varphi(x)\,\htkrnl^{t-s}(z-x)\dd x.
  \end{align*}
  Now by boundedness \eqref{eq:KG} of $G_A$ and by the estimates \eqref{eq:bd-htkrnl} on $\htkrnl^s$, it follows that $t\mapsto g^t(y)$ is differentiable with respect to $t\in(0,1]$ at any $y\in\crc$, with
  \begin{align*}
    \partial_tg^t(y)
    &= \int_\crc\varphi(z)A(z)\big[G_A^t(z,y)+\htkrnl^t(z-y)\big]\dd z \\
    &\qquad + \int_0^t\int_\crc\left(\int_\crc \partial_{xx}\varphi(x)\,\htkrnl^{t-s}(z-x)\dd x\right)\,\big[G_A^s(z,y) + \htkrnl^s(z-y)\big]\dd z\dd s \\
    &= \int_\crc \varphi(z)A(z)H_A^t(z,y)\dd z
      + \int_\crc \partial_{xx}\varphi(x)\,\Phi_A^t\big[G_A\big](x,y)\dd x \\
    &= \int_\crc \big(\partial_{xx}\varphi(x)+A(x)\varphi(x)\big)H_A^t(x,y)\dd x - \int_\crc\partial_{xx}\varphi(x)\,\htkrnl^t(y-x)\dd x,
  \end{align*}
  where we have used the fixed point property \eqref{eq:fixed} of $G_A$.
  Addition of
  \begin{align*}
    \partial_t\int_\crc\varphi(x) \htkrnl^t(x-y)\dd x = \int_\crc\partial_{xx}\varphi(x)\,\htkrnl^t(x-y)\dd y,
  \end{align*}
  which follows from \eqref{eq:pde-htkrnl}, we finally obtain the differential equation in \eqref{eq:weakpde}. To prove attainment of the initial condition as well, it suffices to show that $g^t(y)\to 0$ as $t\searrow0$, for every $y\in\crc$. By uniform boundedness of $\psi$ and $G_A$, and the estimates \eqref{eq:bd-htkrnl} on $\htkrnl$, we obtain
  \begin{align*}
    |g^t(y)| \le \int_0^t \max|\varphi|\,\|A\|_{L^1(\crc)}\big(K_G+C_\hbar(t-s)^{-1/2}\big)\dd s \le Ct^{1/2},
  \end{align*}
  proving convergence of $g^t(y)$ to zero. Combine this with \eqref{eq:ic-htkrnl} to finish the proof.
\end{proof}

\subsection{Approximation by the discrete quantum exponential}
Recall the definition of the discrete quantum exponential map $\nop_\delta$ in Proposition \ref{th:quantmax}: up to scaling, $\nop_\delta[A]$ is the diagonal of the discrete quantum Maxwellian matrix $\exp(\hbar^2\Delta_\delta+A)$, which parallels the original definition of $\nop$ in \cite{Pinaud19}. For a quantitative comparision of $\nop_\delta$ to $\nop$, we first define a discrete analogue $G^t_{A,\delta}$ of $G_A^t$, show that it satisfies a fixed point relation similar to \eqref{eq:fixed}, and then estimate the difference between $G^t_{A,\delta}$ and $G_A^t$.
\smallskip

Below, we consider potentials $A$ in the intersection of $\adm$ with $\Lp 1$, e.g., in
\begin{align*}
  \adm_\delta := \left\{ A\in \Lp 1\,\middle|\, \delta\sum_{\xi\in\crcN}|A(\xi)| \le C_A \right\},
\end{align*}
with the same constant $C_A$ as above.
Further, we shall use the following sloppy notation for sums of precisely $N$ terms:
\begin{align}
  \label{eq:sloppy}
  \sum_{|k|\lesssim N/2} a_k =
  \begin{cases}
    \sum_{k=-(N/2-1)}^{N/2} a_k & \text{if $N$ is even}, \\
    \sum_{k=-(N-1)/2}^{(N-1)/2} a_k & \text{if $N$ is odd}.
  \end{cases}
\end{align}
In analogy to \eqref{eq:dfnhtkrnl}, we define the discrete heat kernel as follows.
\begin{definition}
  At each $t>0$, the \emph{discrete heat kernel} $\htkrnl_\delta^t$ is given by
  \begin{align}
    \label{eq:dfndhtkrnl}
    \htkrnl_\delta^t(\zeta) = \sum_{|k|\lesssim N/2} e^{-\omega_k\hbar^2 t}e^{2\pi i k\zeta}
    \quad \text{for all $\zeta\in\crcN$},
  \end{align}
  where the coefficients $\omega_k$ are the eigenvalues of $-\Delta_\delta$, see \eqref{eq:DeltaEV}.
\end{definition}
\begin{lemma}
  $\htkrnl_\delta$ is the solution kernel for the one-periodic heat equation, i.e., 
  \begin{align}
    \label{eq:pde-dhtkrnl}
    \partial_t\htkrnl_{\delta}^t(\zeta)
    = \hbar^2\Delta_\delta \htkrnl_\delta^t(\zeta),
    \qquad
    \htkrnl_{\delta}^0(\zeta)
    = \begin{cases} \delta^{-1} & \text{for $\zeta=0$}, \\ 0 & \text{for $\zeta\neq0$}. \end{cases}
  \end{align}
  Moreover, there is a $\delta$-uniform constant $C_\hbar$ such that, for all $t\in(0,1]$ and $\zeta\in\crcN$,
  \begin{align}
    \label{eq:bd-dhtkrnl}
    \big|\htkrnl_\delta^t(\zeta)\big| \le C_\hbar t^{-1/2}.
  \end{align}
\end{lemma}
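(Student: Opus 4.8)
The plan is to read \eqref{eq:dfndhtkrnl} as the spectral solution formula for the discrete heat semigroup $e^{\hbar^2 t\Delta_\delta}$ applied to the discrete Dirac mass at the origin. The key fact is that the exponentials $e_k(\zeta)=e^{2\pi i k\zeta}$, $\zeta\in\crcN$, diagonalize $\Delta_\delta$: a one-line computation with the three-point stencil gives $\Delta_\delta e_k = -\omega_k e_k$ with $\omega_k=\tfrac4{\delta^2}\sin^2(\pi k\delta)$, which is the content of \eqref{eq:DeltaEV}. Since \eqref{eq:dfndhtkrnl} is a \emph{finite} linear combination of the $e_k$, one may differentiate it term by term in $t$ and apply $\Delta_\delta$ term by term; the eigenrelation then yields $\partial_t\htkrnl_\delta^t = -\hbar^2\sum_k\omega_k e^{-\omega_k\hbar^2 t}e_k = \hbar^2\Delta_\delta\htkrnl_\delta^t$ at once.

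For the initial datum I would evaluate \eqref{eq:dfndhtkrnl} at $t=0$ and $\zeta=j\delta$, obtaining $\htkrnl_\delta^0(j\delta)=\sum_{|k|\lesssim N/2}e^{2\pi i kj/N}$. By the prescription \eqref{eq:sloppy}, the index set is a complete residue system modulo $N$, so this is a sum of $N$ consecutive powers of the root of unity $\omega:=e^{2\pi i j/N}$; it equals $N=\delta^{-1}$ when $\omega=1$, i.e.\ $\zeta=0$, and vanishes otherwise by the geometric-sum identity together with $\omega^N=1$. This is exactly the claimed discrete Dirac mass in \eqref{eq:pde-dhtkrnl}.

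For the $\delta$-uniform bound \eqref{eq:bd-dhtkrnl} I would estimate crudely $|\htkrnl_\delta^t(\zeta)|\le\sum_{|k|\lesssim N/2}e^{-\omega_k\hbar^2 t}$ and bound $\omega_k$ below in a mesh-independent way: for $|k|\le N/2$ one has $\pi|k|\delta\le\pi/2$, so Jordan's inequality $\sin s\ge\frac2\pi s$ on $[0,\pi/2]$ gives $\omega_k=\tfrac4{\delta^2}\sin^2(\pi k\delta)\ge 16k^2$. Hence, comparing the resulting Gaussian sum with its integral,
\[
  |\htkrnl_\delta^t(\zeta)| \le \sum_{k\in\setZ} e^{-16\hbar^2 t\,k^2}
  \le 1 + 2\int_0^\infty e^{-16\hbar^2 t\,x^2}\dd x
  = 1 + \frac{\sqrt\pi}{4\hbar}\,t^{-1/2},
\]
and since $t^{-1/2}\ge1$ for $t\in(0,1]$ this is at most $C_\hbar t^{-1/2}$ with $C_\hbar=1+\tfrac{\sqrt\pi}{4\hbar}$ (enlarging the constant from \eqref{eq:bd-htkrnl} if one wishes to keep the same symbol), manifestly independent of $\delta$.

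The argument is elementary throughout; the only place needing a little care is the bookkeeping in \eqref{eq:sloppy} — one must check in both parities of $N$ that the prescribed range of $k$ is genuinely a full set of residues modulo $N$, since this is what makes the root-of-unity cancellation produce precisely the discrete Dirac mass. The quantitative estimate is then no more than the Gaussian-sum-versus-integral comparison already used in the proof of \eqref{eq:bd-htkrnl}.
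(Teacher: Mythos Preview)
Your proof is correct and follows essentially the same approach as the paper: the paper invokes Lemma~\ref{lem:DeltaEV} to identify $\htkrnl_\delta$ as the kernel of the operator exponential of $\hbar^2\Delta_\delta$ (yielding both the evolution equation and the initial condition at once), and then cites the ready-made estimate \eqref{eq:LT} for the bound; you simply unpack both of these ingredients explicitly, including the root-of-unity cancellation for the initial datum and the Jordan-inequality step $\omega_k\ge16k^2$ underlying \eqref{eq:LT}.
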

\begin{remark}
  Additionally, one has that  $\htkrnl^t_\delta$ is non-negative and of unit mass, i.e., $\delta\sum_{\xi\in\crcN}\htkrnl^t_\delta(\xi)=1$. The former is difficult conclude from \eqref{eq:dfndhtkrnl}, the latter simply follows from $\omega_0=0$. These properties are not needed for our analysis.
\end{remark}
\begin{proof}
  Recall that the $\omega_k$ are eigenvalues of $-\Delta_\delta$, with associated eigenvectors $w_k$ such that $\cmp{w_k}j=e^{2\pi i k j\delta}$, see Lemma \ref{lem:DeltaEV} in Appendix \ref{app:Laplace}. It then follows that $\htkrnl_\delta^{(\cdot)}$ is the kernel for the operator exponential of $\hbar^2\Delta_\delta$. This implies \eqref{eq:pde-dhtkrnl}.
  Concerning the estimate \eqref{eq:bd-dhtkrnl}, simply observe that by means of estimate \eqref{eq:LT}, we have that
  \begin{align*}
    \big|\htkrnl_\delta^t(\zeta)\big|
    \le \sum_{|k|\lesssim N/2} e^{-\omega_k\hbar^2 t}\big|e^{2\pi i k\zeta}\big|
    = \sum_{|k|\lesssim N/2} e^{-\omega_k\hbar^2 t}
    \le 1+ \frac{\sqrt\pi}4(\hbar^2 t)^{-1/2}.
  \end{align*}
  Choosing $C_\hbar$ appropriately, \eqref{eq:bd-dhtkrnl} follows for all $t\in(0,1]$.
\end{proof}
\begin{definition}
  For each $t\in[0,1]$, the \emph{discrete solution kernel} $H_{A,\delta}$ for $A\in\adm_\delta$ is given by $\delta H_{A,\delta}^t(k\delta,\ell\delta)=\exp(t[\hbar^2\Delta_\delta+A])_{k,\ell}$.
\end{definition}
\begin{lemma}
  The discrete solution kernel satisfies 
  \begin{align}
    \label{eq:ode1-sol}
    \partial_tH_{A,\delta}^t(\xi,\eta)
    &= \hbar^2\Delta_\delta H_{A,\delta}^t(\xi,\eta) + A(\xi)H_{A,\delta}^t(\xi,\eta), \\
    \label{eq:ode2-sol}
    H_{A,\delta}^0(\xi,\eta)
    &= \begin{cases} \delta^{-1} & \text{for $\xi=\eta$}, \\ 0 & \text{for $\xi\neq\eta$}. \end{cases}
  \end{align}
  Consequently, the discrete quantum exponential $\nop_\delta[A]\in \Lp 2$ of $A\in\adm_\delta$ is given by
  \begin{align}
    \label{eq:dqexp}
    \nop_\delta[A](\xi) = H_{A,\delta}^1(\xi,\xi) .
  \end{align}
\end{lemma}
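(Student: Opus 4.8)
The plan is to read everything off the defining matrix ODE of the exponential and then translate between the matrix picture and the kernel picture via the scaling \eqref{eq:AK}. Put $M(t):=\exp\big(t[\hbar^2\Delta_\delta+A]\big)\in\setR^{N\times N}$, with $A$ regarded as a diagonal matrix, so that by definition $\delta\,H_{A,\delta}^t(k\delta,\ell\delta)=M(t)_{k\ell}$. Since $M$ solves the linear matrix ODE $\dot M=(\hbar^2\Delta_\delta+A)M$ with $M(0)=\one_N$, inspecting the $(k,\ell)$-entry gives $\dot M(t)_{k\ell}=\hbar^2[\Delta_\delta M(t)]_{k\ell}+[A\,M(t)]_{k\ell}$, and it only remains to see what these two terms mean for the kernel.

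First I would use that $A$ is diagonal, so $[A\,M(t)]_{k\ell}=A(k\delta)\,M(t)_{k\ell}=\delta\,A(k\delta)\,H_{A,\delta}^t(k\delta,\ell\delta)$. Next, writing $\Delta_\delta=\delta^{-2}(L+L^{T}-2\one_N)$ exactly as in the proof of Lemma \ref{lem:thisisDQDD}, one has $[\Delta_\delta M(t)]_{k\ell}=\delta^{-2}\big(M(t)_{k+1,\ell}+M(t)_{k-1,\ell}-2M(t)_{k\ell}\big)$ with indices taken cyclically; substituting $M(t)_{k'\ell}=\delta\,H_{A,\delta}^t(k'\delta,\ell\delta)$ identifies this with $\delta\,\Delta_\delta H_{A,\delta}^t(k\delta,\ell\delta)$, where the discrete Laplacian now acts on the first spatial argument. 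Dividing the entrywise ODE by $\delta$ then yields \eqref{eq:ode1-sol}, while \eqref{eq:ode2-sol} is immediate from $M(0)_{k\ell}=\delta_{k\ell}$.

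For the final identity, recall that for $A\in\adm_\delta\subset\Lp 1$ the discrete quantum exponential is, by the construction in Proposition \ref{th:quantmax} together with the remark following Definition \ref{dfn:nop}, characterized by $\delta\,\nop_\delta[A](k\delta)=\exp(\hbar^2\Delta_\delta+A)_{kk}=M(1)_{kk}$; since $M(1)_{kk}=\delta\,H_{A,\delta}^1(k\delta,k\delta)$ by the defining relation of $H_{A,\delta}$, cancelling $\delta$ gives \eqref{eq:dqexp}.

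The argument is essentially bookkeeping, and the only place to be careful is keeping track of the factor $\delta$ in \eqref{eq:AK} between matrix entries and kernel values, together with the fact that $\Delta_\delta$ in \eqref{eq:ode1-sol} acts on the first argument of $H_{A,\delta}^t$ whereas in the matrix product it multiplies $M$ from the left; no genuine obstacle arises.
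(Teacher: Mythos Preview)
Your proof is correct and is exactly what the paper has in mind: the paper's own proof is the one-liner ``This is a direct consequence from the definition of the matrix exponential, and the definition of $\nop_\delta$ in Proposition \ref{th:quantmax},'' and you have simply written out that bookkeeping in full, keeping careful track of the $\delta$-scaling \eqref{eq:AK} and the fact that left multiplication by $\Delta_\delta$ corresponds to the discrete Laplacian in the first kernel variable.
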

\begin{proof}
  This is a direct consequence from the definition of the matrix exponential, and the definition on $\nop_\delta$ in Proposition \ref{th:quantmax}.
\end{proof}
\begin{lemma}
  \label{lem:nopmonotone}
  $\nop_\delta$ is a monotone map from $\Lp 2$ to itself.
\end{lemma}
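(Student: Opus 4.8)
The plan is to recognize $\delta\,\nop_\delta$ as the gradient of a convex scalar function, after which monotonicity drops out. First I would identify the map concretely. A real potential $A\colon\crcN\to\setR$ is at the same time a vector in $\setR^N$ (the list of its values) and the diagonal matrix appearing in \eqref{eq:qM}. By \eqref{eq:dqexp} and the definition of the discrete solution kernel $H_{A,\delta}$, the diagonal entries of the matrix exponential are $\big[\exp(\hbar^2\Delta_\delta+A)\big]_{kk}=\delta\,\nop_\delta[A](k\delta)$. Now set $g(A):=\tr{\exp(\hbar^2\Delta_\delta+A)}$, regarded as a function of the vector $A\in\setR^N$. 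Since $\partial_{A_k}\big(\hbar^2\Delta_\delta+A\big)=e_ke_k^*$, cyclicity of the trace gives
\[
  \partial_{A_k}g(A)=\tr{e_ke_k^*\exp(\hbar^2\Delta_\delta+A)}=\big[\exp(\hbar^2\Delta_\delta+A)\big]_{kk}=\delta\,\nop_\delta[A](k\delta).
\]
Thus $\delta\,\nop_\delta=\nabla g$ as a map from $\setR^N$ to $\setR^N$.

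The second step is the convexity of $g$ on $\setR^N$. The map $A\mapsto\hbar^2\Delta_\delta+A$ is affine into the symmetric matrices, and $M\mapsto\tr{\exp M}$ is convex there; alternatively --- and this is essentially the same computation --- the Hessian of $g$ at any $A$ is positive semidefinite by the calculation already carried out in the proof of Lemma \ref{lem:mmisdifferentiable}: writing $Q=\operatorname{diag}(q)$ and diagonalizing $\hbar^2\Delta_\delta+A=\sum_k\lambda_kv_kv_k^*$ with real orthonormal $v_k$,
\[
  q^*\,\mathrm{Hess}\,g(A)\,q=\tr{Q\,Df(A)[Q]}=\sum_{k,\ell}\frac{e^{\lambda_k}-e^{\lambda_\ell}}{\lambda_k-\lambda_\ell}\,\big|v_k^*Qv_\ell\big|^2\ge0.
\]
Consequently $\nabla g$, hence $\delta\,\nop_\delta$ and $\nop_\delta$ itself, is a monotone map on $\setR^N$: $(\nabla g(A)-\nabla g(B))\cdot(A-B)\ge0$ for all $A,B\in\setR^N$.

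It remains to pass to the $\Lp 2$ scalar product, where the built-in weight $\delta$ cancels the prefactor. For real $A,B$,
\[
  \big(\nop_\delta[A]-\nop_\delta[B],\,A-B\big)=\delta\sum_k\big(\nop_\delta[A](k\delta)-\nop_\delta[B](k\delta)\big)\big(A(k\delta)-B(k\delta)\big)=\big(\nabla g(A)-\nabla g(B)\big)\cdot(A-B)\ge0,
\]
which is exactly the asserted monotonicity (understood on the real subspace of $\Lp 2$, matching the setting of Proposition \ref{th:quantmax}). I do not expect a genuine obstacle here: the only non-routine ingredient is the convexity of $A\mapsto\tr{\exp(\hbar^2\Delta_\delta+A)}$, which is classical and in any case contained in Lemma \ref{lem:mmisdifferentiable}, and the only thing to keep careful track of is the factor $\delta$ relating the matrix-entry normalization $\delta H_{A,\delta}^1(k\delta,k\delta)=[\exp(\hbar^2\Delta_\delta+A)]_{kk}$ to the $\delta$-weighted inner product on $\Lp 2$.
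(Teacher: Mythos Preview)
Your proof is correct. Both your route and the paper's rest on the trace convexity of the matrix exponential, but the paper packages it more directly: it simply writes the $\Lp2$-pairing as a trace,
\[
  \big(\nop_\delta[A]-\nop_\delta[A'],\,A-A'\big)
  =\tr{\big(\exp(\hbar^2\Delta_\delta+A)-\exp(\hbar^2\Delta_\delta+A')\big)\big((\hbar^2\Delta_\delta+A)-(\hbar^2\Delta_\delta+A')\big)},
\]
and then invokes Corollary~\ref{cor:exppos} (Klein's inequality for $\phi=\exp$) to conclude non-negativity in one line. Your approach instead identifies $\delta\,\nop_\delta=\nabla g$ for the convex potential $g(A)=\tr{\exp(\hbar^2\Delta_\delta+A)}$ and deduces monotonicity from the general fact that gradients of convex functions are monotone; this is a touch more conceptual and makes the link to the Hessian computation in Lemma~\ref{lem:mmisdifferentiable} explicit, but it is the same inequality at bottom. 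The paper's version has the minor advantage of not needing to justify the gradient formula $\partial_{A_k}\tr{\exp(M+A)}=[\exp(M+A)]_{kk}$ separately.
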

\begin{proof}
  We need to show that, for $A,A'\in \Lp 2$,
  \begin{align*}
    J(A,A'):=\int_\crc \big(\nop_\delta[A]-\nop_\delta[A']\big)(A-A')\dd x \ge 0.
  \end{align*}
  By definition of $\nop_\delta$ as the diagonal elements of the matrix exponential of $\hbar^2\Delta_\delta+A$, this expression can be rewritten as the trace of a matrix,
  \begin{align*}
    J(A,A') = \tr{\big(\exp(\hbar^2\Delta_\delta+A)-\exp(\hbar^2\Delta_\delta+{A'})\big)\big((\hbar^2\Delta_\delta+A)-(\hbar^2\Delta_\delta+{A'})\big)}.
  \end{align*}
  This expression is non-negative because of Corollary \ref{cor:exppos}.
\end{proof}
\begin{definition}
  We define the \emph{discrete auxiliary kernel} $G_{A,\delta}$ for $A\in\adm_\delta$ by
  \begin{align*}
    G_{A,\delta}^t(\xi,\eta):=H_{A,\delta}^t(\xi,\eta)-\htkrnl_\delta^t(\xi-\eta).
  \end{align*}
\end{definition}
\begin{lemma}
  The auxiliary kernel  $G_{A,\delta}$ satisfies, at each $t>0$,
  \begin{align}
    \label{eq:dfixed}
    G_{A,\delta}^t(\xi,\eta)
    = \int_0^t \delta\sum_{\zeta\in\crcN}\htkrnl_\delta^{t-s}(\xi-\zeta)A(\zeta)\big[G_{A,\delta}^s(\zeta,\eta)+\htkrnl_\delta^s(\zeta-\eta)\big]\dd s.
  \end{align}
\end{lemma}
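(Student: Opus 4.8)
The plan is to recognize \eqref{eq:dfixed} as the Duhamel (variation-of-constants) representation for the finite-dimensional linear ODE that sits behind the matrix exponential $M(t):=\exp(t[\hbar^2\Delta_\delta+A])$, with $A$ read as a diagonal matrix. Since everything lives in $\setC^{N\times N}$, the whole argument is purely algebraic and free of analytic subtleties; the only thing to keep track of is the scaling factor $\delta$ from \eqref{eq:AK} that relates a matrix to its kernel.

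First I would put the relevant objects on a common footing. By \eqref{eq:ode1-sol}--\eqref{eq:ode2-sol}, $M(t)$ solves $\dot M=(\hbar^2\Delta_\delta+A)M$ with $M(0)=\one_N$, and $\delta H_{A,\delta}^t(\xi,\eta)$ is its entry at position $(\xi/\delta,\eta/\delta)$. Setting $P(t):=\exp(t\hbar^2\Delta_\delta)$, one has $\dot P=\hbar^2\Delta_\delta P$, $P(0)=\one_N$, and $\delta\htkrnl_\delta^t(\xi-\eta)$ is the corresponding entry of $P(t)$ --- this is exactly the spectral computation, based on Lemma \ref{lem:DeltaEV}, that identifies $\htkrnl_\delta$ as the discrete heat kernel. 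In particular $M$ and $P$ share the initial value $\one_N$.

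Next I would introduce $\Gamma(t):=M(t)-P(t)$, whose $(\xi/\delta,\eta/\delta)$-entry is $\delta G_{A,\delta}^t(\xi,\eta)$. Subtracting the two ODEs gives the inhomogeneous linear system $\dot\Gamma=\hbar^2\Delta_\delta\,\Gamma+A\,M$ with $\Gamma(0)=0$, whose homogeneous evolution is left multiplication by $P$. Variation of constants then yields $\Gamma(t)=\int_0^t P(t-s)\,A\,M(s)\dd s$. Reading off the $(\xi/\delta,\eta/\delta)$-entry of this identity --- using that $A$ is diagonal, so each matrix product becomes a single $\delta$-weighted sum over the intermediate mesh point --- and dividing by $\delta$ gives
\[
  G_{A,\delta}^t(\xi,\eta)=\int_0^t\delta\sum_{\zeta\in\crcN}\htkrnl_\delta^{t-s}(\xi-\zeta)\,A(\zeta)\,H_{A,\delta}^s(\zeta,\eta)\dd s ,
\]
and substituting $H_{A,\delta}^s=G_{A,\delta}^s+\htkrnl_\delta^s$ inside the integral produces exactly \eqref{eq:dfixed}.

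There is no genuine obstacle here: the statement is the spatially discrete --- hence automatically rigorous --- analogue of the Duhamel identity \eqref{eq:fixed} that motivated the definition of the continuous auxiliary kernel $G_A$. The only mildly delicate step is keeping the powers of $\delta$ consistent when translating between matrices and kernels, and this is purely mechanical.
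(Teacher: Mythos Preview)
Your proposal is correct and follows exactly the approach indicated in the paper: the identity \eqref{eq:dfixed} is the variation-of-constants formula for the matrix ODE satisfied by $\Gamma(t)=\exp(t[\hbar^2\Delta_\delta+A])-\exp(t\hbar^2\Delta_\delta)$, translated into kernel language via the scaling \eqref{eq:AK}. Your write-up is in fact considerably more detailed than the paper's one-line proof, and the bookkeeping with the factors of $\delta$ is handled correctly.
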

\begin{proof}
  Comparing \eqref{eq:ode1-sol}\&\eqref{eq:ode2-sol} to \eqref{eq:pde-dhtkrnl}, one easily verifies that \eqref{eq:dfixed} is a consequence of the variation of constant formula for ODEs.
\end{proof}
\begin{lemma}
  \label{lem:auxkernelbound}
  There a constant $K_G$, which is uniform in $\delta$ and $A\in\adm_\delta$, such that
  \begin{align*}
    \sup_{t\in(0,1]}\max_{\xi,\eta\in\crcN}|G_{A,\delta}^t(\xi,\eta)|\le K_G.
  \end{align*}
\end{lemma}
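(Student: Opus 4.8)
The plan is to mimic the proof of Lemma~\ref{lem:KG} almost verbatim, now working with the $\delta$-dependent analogue of the weighted supremum norm,
\[
  \tnrm{G}_\delta := \sup_{t\in(0,1]} e^{-\gamma t}\max_{\xi,\eta\in\crcN}\big|G^t(\xi,\eta)\big|,
\]
where $\gamma>0$ is the constant fixed in accordance with \eqref{eq:gamma}. Before running the fixed-point estimate I would first record that $\tnrm{G_{A,\delta}}_\delta<\infty$ for each \emph{fixed} $\delta$, so that the forthcoming absorption step is legitimate: since the matrix exponential $t\mapsto\exp(t[\hbar^2\Delta_\delta+A])$ is smooth, and since $H_{A,\delta}^0$ and $\htkrnl_\delta^0$ both equal $\delta^{-1}$ on the diagonal $\xi=\eta$ and vanish off it, the difference $G_{A,\delta}^t(\xi,\eta)$ extends continuously to $t=0$ with value $0$, and hence is bounded on the compact set $[0,1]\times\crcN\times\crcN$.

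Next I would insert the $\delta$-uniform heat-kernel bound \eqref{eq:bd-dhtkrnl}, the constraint $\delta\sum_{\zeta\in\crcN}|A(\zeta)|\le C_A$, and the definition of $\tnrm{\cdot}_\delta$ into the fixed-point relation \eqref{eq:dfixed}, which yields
\[
  \big|G_{A,\delta}^t(\xi,\eta)\big|
  \le C_AC_\hbar\,\tnrm{G_{A,\delta}}_\delta\int_0^t(t-s)^{-1/2}e^{\gamma s}\dd s
  + C_AC_\hbar^2\int_0^t(t-s)^{-1/2}s^{-1/2}\dd s.
\]
For the first integral I would use that $s^{-1/4}\ge1$ on $(0,1]$ together with the defining property \eqref{eq:gamma} of $\gamma$ to get $C_AC_\hbar\int_0^t(t-s)^{-1/2}e^{\gamma s}\dd s\le \tfrac12 e^{\gamma t}$; the second integral equals $\int_0^1(1-\sigma)^{-1/2}\sigma^{-1/2}\dd\sigma=\pi$ after the substitution $s=t\sigma$. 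Thus $|G_{A,\delta}^t(\xi,\eta)|\le\tfrac12 e^{\gamma t}\tnrm{G_{A,\delta}}_\delta+\pi C_AC_\hbar^2$; multiplying by $e^{-\gamma t}$ and taking the supremum over $t\in(0,1]$ and $\xi,\eta\in\crcN$ gives $\tnrm{G_{A,\delta}}_\delta\le\tfrac12\tnrm{G_{A,\delta}}_\delta+\pi C_AC_\hbar^2$, hence $\tnrm{G_{A,\delta}}_\delta\le 2\pi C_AC_\hbar^2$, and finally $\sup_{t\in(0,1]}\max_{\xi,\eta\in\crcN}|G_{A,\delta}^t(\xi,\eta)|\le e^\gamma\tnrm{G_{A,\delta}}_\delta\le 2e^\gamma\pi C_AC_\hbar^2=:K_G$.

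I do not expect a genuine obstacle here: the computation is the same Gronwall-type one-liner as in the continuous case of Lemma~\ref{lem:KG}, and the only point that needs care is the a priori finiteness of $\tnrm{G_{A,\delta}}_\delta$ at fixed $\delta$, which is what makes it legitimate to move the first term on the right-hand side over to the left. The structural ingredient that does the real work is the $\delta$-uniformity of the constant $C_\hbar$ in \eqref{eq:bd-dhtkrnl}: it is exactly this that renders the resulting $K_G$ uniform in $\delta$, and since $C_A$ is the very constant used in the definition of $\adm_\delta$, uniformity in $A\in\adm_\delta$ comes for free.
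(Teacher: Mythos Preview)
Your proof is correct and follows exactly the approach the paper indicates: the paper omits the proof and refers back to Lemma~\ref{lem:KG}, saying to use the relation \eqref{eq:dfixed} in place of \eqref{eq:fixed} and the discrete heat-kernel bound \eqref{eq:bd-dhtkrnl} in place of \eqref{eq:bd-htkrnl}, which is precisely what you do. Your additional remark on the a priori finiteness of $\tnrm{G_{A,\delta}}_\delta$ is a useful point of care, since in the discrete setting $G_{A,\delta}$ is defined directly via matrix exponentials rather than obtained from a contraction argument.
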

\begin{proof}
  We omit the proof here, which can be peformed in complete analogy to the one for Lemma \ref{lem:KG} in the non-discretized setting.
  One uses \eqref{eq:dfixed} above in place of the fixed point relation \eqref{eq:fixed}.
  Note that the auxiliary quantity $\gamma$ can be introduced in the same way as in \eqref{eq:gamma}, using the discrete heat kernel estimate in \eqref{eq:bd-dhtkrnl} instead of the continuous one in \eqref{eq:bd-htkrnl}.
\end{proof}
Our key estimate for the subsequent approximation is:
\begin{proposition}
  \label{prp:prekernel}
  There is a constant $C_K>0$ such that
  \begin{align}
    \label{eq:prekernel2}
    \max_{\zeta\in\crcN}\left|\fint_{I_\zeta}\htkrnl^t(z)\dd z - \htkrnl^t(\zeta)\right|\le C_K\delta^{1/4}t^{-3/4},
  \end{align}
  uniformly in $\delta$ and $t\in(0,1]$.  
\end{proposition}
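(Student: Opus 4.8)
The estimate is purely a Poincaré/oscillation bound: it measures how far the smooth function $\htkrnl^t$ can stray from its average over a cell $I_\zeta$ of length $\delta$, and the plan is to feed in directly the heat-kernel gradient bound \eqref{eq:bd-htkrnl}.

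First I would fix $\zeta\in\crcN$ and $t\in(0,1]$ and use that $\htkrnl^t\in C^\infty(\crc)$, so that for every $z\in I_\zeta$ one may write $\htkrnl^t(z)-\htkrnl^t(\zeta)=\int_\zeta^z\partial_r\htkrnl^t(r)\dd r$, where the segment joining $\zeta$ to $z$ stays inside $I_\zeta$ (the cell is too short to wrap around $\crc$). Estimating the path integral by the Cauchy--Schwarz inequality on $I_\zeta$ and then enlarging the domain of integration to all of $\crc$ yields
\[
  \big|\htkrnl^t(z)-\htkrnl^t(\zeta)\big|
  \le |I_\zeta|^{1/2}\,\big\|\partial_z\htkrnl^t\big\|_{L^2(\crc)}
  \le C'_\hbar\,\delta^{1/2}\,t^{-3/4}
  \qquad\text{for all }z\in I_\zeta,
\]
the last inequality being exactly \eqref{eq:bd-htkrnl}. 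Averaging this uniform pointwise bound over $z\in I_\zeta$ and using that $\fint_{I_\zeta}\htkrnl^t(\zeta)\dd z=\htkrnl^t(\zeta)$ gives
\[
  \left|\fint_{I_\zeta}\htkrnl^t(z)\dd z-\htkrnl^t(\zeta)\right|
  =\left|\fint_{I_\zeta}\big(\htkrnl^t(z)-\htkrnl^t(\zeta)\big)\dd z\right|
  \le C'_\hbar\,\delta^{1/2}\,t^{-3/4}.
\]
Taking the maximum over $\zeta\in\crcN$ and noting $\delta=1/N\le\tfrac12<1$, so $\delta^{1/2}\le\delta^{1/4}$, this yields \eqref{eq:prekernel2} with $C_K:=C'_\hbar$.

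I do not expect any genuine obstacle here; the only point worth a word is uniformity as $t\searrow0$. The bound $C_K\delta^{1/4}t^{-3/4}$ does blow up as $t\to0$, but precisely at the rate supplied by \eqref{eq:bd-htkrnl}, and that rate is enough: in the concentration regime $t\lesssim\delta^2$ it already dominates the pointwise size $\htkrnl^t(\zeta)\sim t^{-1/2}$ itself, whereas for larger $t$ the heat kernel varies slowly across a single cell and the oscillation term $\delta^{1/2}\|\partial_z\htkrnl^t\|_{L^2(\crc)}$ is genuinely small. Note that the argument in fact delivers the sharper exponent $\delta^{1/2}$; the statement only claims $\delta^{1/4}$ because that is all that is needed later when it is matched against the corresponding discrete estimate.
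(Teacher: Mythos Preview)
Your argument is correct for the inequality as literally stated, and it is more direct than the paper's: where you invoke the fundamental theorem of calculus, Cauchy--Schwarz, and the ready-made gradient bound \eqref{eq:bd-htkrnl}, the paper instead expands $\fint_{I_\zeta}\htkrnl^t$ as a Fourier series, picks up the sinc factor $\sin(\pi k\delta)/(\pi k\delta)$ from the cell average, bounds $|1-\sin u/u|\le 2|u|^{1/2}$, and controls the resulting sum by a Gaussian tail estimate. Both routes produce the intermediate rate $\delta^{1/2}t^{-3/4}$ before weakening to $\delta^{1/4}t^{-3/4}$.

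That said, you should be aware that the displayed statement almost certainly contains a typo: the second term inside the absolute value is meant to be the \emph{discrete} heat kernel $\htkrnl_\delta^t(\zeta)$, not $\htkrnl^t(\zeta)$. Two pieces of evidence: (i) every later use of \eqref{eq:prekernel2} --- in the proof of Proposition~\ref{prp:holy} and in Corollary~\ref{cor:staticconvergence} --- requires the version with $\htkrnl_\delta^t(\zeta)$; (ii) the appendix proof opens with ``With estimate \eqref{eq:prekernel1} at hand, it suffices to prove\ldots'', i.e.\ it first invokes the pointwise comparison $|\htkrnl^t(\zeta)-\htkrnl_\delta^t(\zeta)|\le A_1\delta^{1/4}t^{-3/4}$ of Lemma~\ref{lem:prekernel1} and only then reduces to the oscillation bound you established. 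Your argument covers that second half cleanly, but not the first: the gradient estimate \eqref{eq:bd-htkrnl} says nothing about $\htkrnl_\delta^t$, whose Fourier coefficients carry the perturbed frequencies $\omega_k$ from \eqref{eq:DeltaEV} rather than $(2\pi k)^2$. For the intended statement you would still need Lemma~\ref{lem:prekernel1}, which is a genuinely separate Fourier-side estimate and is the reason the final exponent is $\delta^{1/4}$ rather than the $\delta^{1/2}$ your step alone delivers.
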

The proof is given in Appendix \ref{app:A}. Our main result in this section is the following.
\begin{proposition}
  \label{prp:holy}
  There is a constant $K_*$, such that
  \begin{align}
    \label{eq:holyest}
    \big|G_A^t(\xi,\eta)-G_{A,\delta}^t(\xi,\eta)\big| \le K_*\delta^{1/4}t^{-1/4}
    \quad \text{for all $\xi,\eta\in\crcN$ and $t\in(0,1]$}.
  \end{align}
  The constant $K_*$ is uniform with respect to $A\in\adm$ and the discretization $\delta$.
\end{proposition}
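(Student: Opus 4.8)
The plan is a Duhamel/fixed-point comparison between the integral equations \eqref{eq:fixed} and \eqref{eq:dfixed}, arranged so that the weak time singularities are tracked with the same weighted norm that already controlled $G_A$. Throughout I would take $A\in\adm_\delta$, so that $A$ is piecewise constant on the cells $I_\zeta$ (and lies in $\adm$); evaluating \eqref{eq:fixed} at mesh points $x=\xi$, $y=\eta\in\crcN$ and using $A(z)=A(\zeta)$ for $z\in I_\zeta$, it rewrites as
\[
  G_A^t(\xi,\eta) = \int_0^t \delta\!\!\sum_{\zeta\in\crcN} A(\zeta)\,\fint_{I_\zeta}\htkrnl^{t-s}(\xi-z)\big[G_A^s(z,\eta)+\htkrnl^s(z-\eta)\big]\dd z\,\dd s .
\]
Subtracting \eqref{eq:dfixed} and setting $D^t(\xi,\eta):=G_A^t(\xi,\eta)-G_{A,\delta}^t(\xi,\eta)$ produces a linear Volterra identity for $D$ whose kernel is weakly singular, with singularity exponents matching \eqref{eq:bd-htkrnl}, \eqref{eq:bd-dhtkrnl}.

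First I would split the $\zeta$-th summand of the integrand as $(\text{a})+(\text{b})+(\text{c})$, obtained by adding and subtracting the cell-centre value $G_A^s(\zeta,\eta)+\htkrnl^s(\zeta-\eta)$ inside $\fint_{I_\zeta}$ and then replacing $\fint_{I_\zeta}\htkrnl^{t-s}(\xi-\cdot)\dd z$ by $\htkrnl_\delta^{t-s}(\xi-\zeta)$:
\begin{align*}
  \text{(a)} &= \fint_{I_\zeta}\htkrnl^{t-s}(\xi-z)\Big[\big(G_A^s(z,\eta)+\htkrnl^s(z-\eta)\big)-\big(G_A^s(\zeta,\eta)+\htkrnl^s(\zeta-\eta)\big)\Big]\dd z,\\
  \text{(b)} &= \Big(\fint_{I_\zeta}\htkrnl^{t-s}(\xi-z)\dd z-\htkrnl_\delta^{t-s}(\xi-\zeta)\Big)\big(G_A^s(\zeta,\eta)+\htkrnl^s(\zeta-\eta)\big),\\
  \text{(c)} &= \htkrnl_\delta^{t-s}(\xi-\zeta)\Big[D^s(\zeta,\eta)+\big(\htkrnl^s(\zeta-\eta)-\htkrnl_\delta^s(\zeta-\eta)\big)\Big].
\end{align*}
In (a), the bracket is $O(\delta^{1/2}s^{-3/4})$ uniformly: the $G_A$-part by the Hölder bound \eqref{eq:max-hoelder} with $|z-\zeta|\le\delta/2$, the $\htkrnl$-part by the heat-kernel Hölder estimate $|\htkrnl^\sigma(r')-\htkrnl^\sigma(r)|\le C_\hbar'\sigma^{-3/4}|r'-r|^{1/2}$ established inside the proof of Lemma~\ref{lem:max-hoelder}; since $\htkrnl\ge0$ and $\|\htkrnl^{t-s}\|_{L^\infty}\le C_\hbar(t-s)^{-1/2}$, bounding $\delta\sum_\zeta|A(\zeta)|\le C_A$ and integrating in $s$ (a Beta integral $\int_0^t(t-s)^{-1/2}s^{-3/4}\dd s=B(\tfrac12,\tfrac14)t^{-1/4}$) gives a contribution $\le C\delta^{1/2}t^{-1/4}$. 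In (b), the factor $G_A^s(\zeta,\eta)+\htkrnl^s(\zeta-\eta)$ is bounded by $K_G+C_\hbar s^{-1/2}$ via \eqref{eq:KG} and \eqref{eq:bd-htkrnl}, while the quadrature error is controlled by Proposition~\ref{prp:prekernel} (after the substitution $w=\xi-z$, which maps $I_\zeta$ onto $I_{\xi-\zeta}$) together with the heat-kernel comparison $|\htkrnl^t(\zeta)-\htkrnl_\delta^t(\zeta)|\le C\delta^{1/4}t^{-3/4}$ discussed below; this yields $\le C\delta^{1/4}t^{-1/4}$. In (c), the summand containing $\htkrnl^s-\htkrnl_\delta^s$ is again $O(\delta^{1/4})$ and, crucially, the Duhamel convolution with the $(t-s)^{-1/2}$ singularity of $\htkrnl_\delta^{t-s}$ upgrades the $s^{-3/4}$ singularity to $t^{-1/4}$, so it too contributes $\le C\delta^{1/4}t^{-1/4}$.

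To close the estimate I would introduce the weighted norm $\bnrm{D}:=\sup_{0<t\le1}t^{1/4}e^{-\gamma t}\max_{\xi,\eta\in\crcN}|D^t(\xi,\eta)|$, finite because $|D^t|\le 2K_G$ by Lemmas~\ref{lem:KG} and \ref{lem:auxkernelbound}, with $\gamma$ the constant fixed in \eqref{eq:gamma}. The remaining, genuinely recursive summand of (c) contributes at most $C_AC_\hbar\bnrm{D}\int_0^t(t-s)^{-1/2}s^{-1/4}e^{\gamma s}\dd s\le\tfrac12 e^{\gamma t}\bnrm{D}$ by the very choice \eqref{eq:gamma} of $\gamma$. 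Collecting the pieces, $|D^t(\xi,\eta)|\le C\delta^{1/4}t^{-1/4}e^{\gamma t}+\tfrac12 e^{\gamma t}\bnrm{D}$ for all $\xi,\eta\in\crcN$ and $t\in(0,1]$; multiplying by $t^{1/4}e^{-\gamma t}$ and taking the supremum yields $\bnrm{D}\le C\delta^{1/4}+\tfrac12\bnrm{D}$, hence $\bnrm{D}\le 2C\delta^{1/4}$ and therefore \eqref{eq:holyest} with $K_*=2Ce^{\gamma}$. As a byproduct, since each $G_{A,\delta}^t$ is symmetric in its two arguments and the $G_{A,\delta}^t$ converge to $G_A^t$ on the (asymptotically dense) mesh, one recovers the symmetry of $G_A^t$ announced after Lemma~\ref{lem:max-hoelder}.

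The step I expect to be the main obstacle is the heat-kernel comparison $\max_{\zeta\in\crcN}|\htkrnl^t(\zeta)-\htkrnl_\delta^t(\zeta)|\le C\delta^{1/4}t^{-3/4}$ used in (b) and (c); it is not among the stated results but is of the same flavour as Proposition~\ref{prp:prekernel} and is most naturally proved by Fourier analysis on $\crcN$. With $\omega_k=4\delta^{-2}\sin^2(\pi k\delta)$ the eigenvalues of $-\Delta_\delta$ (see \eqref{eq:DeltaEV}), one has $\omega_k\asymp k^2$ and $0\le(2\pi k)^2-\omega_k\le C k^4\delta^2$ for $|k|\lesssim N/2$, so the non-aliased part $\sum_{|k|\lesssim N/2}\big(e^{-(2\pi k)^2\hbar^2 t}-e^{-\omega_k\hbar^2 t}\big)e^{2\pi i k\zeta}$ is bounded by $C\hbar^2 t\,\delta^2\sum_k k^4 e^{-c\hbar^2 k^2 t}\le C\delta^2 t^{-3/2}$, and the aliasing tail $\sum_{|k|>N/2}e^{-(2\pi k)^2\hbar^2 t}$ is bounded by $Ct^{-1/2}e^{-c t/\delta^2}\le C\delta^2 t^{-3/2}$ using $e^{-x}\le C x^{-1}$. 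Interpolating this preliminary $O(\delta^2 t^{-3/2})$ bound against the trivial $|\htkrnl^t-\htkrnl_\delta^t|\le 2C_\hbar t^{-1/2}$ (using the trivial one when $t\lesssim\delta$ and the polynomial one otherwise) delivers $\delta^{1/4}t^{-3/4}$ uniformly in $t\in(0,1]$ and $\delta\le\tfrac12$. Once this comparison and Proposition~\ref{prp:prekernel} are in hand, everything else — the Beta-function time integrals, the uniform kernel bounds, and the absorption argument — is routine.
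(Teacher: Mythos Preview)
Your proposal is correct and follows essentially the same route as the paper: the same weighted norm $\bnrm{D}=\sup_{t\in(0,1]}t^{1/4}e^{-\gamma t}\max_{\xi,\eta}|D^t(\xi,\eta)|$, the same Duhamel subtraction and telescoping (your (a)--(c) is just a regrouping of the paper's $Q_1,Q_2$ splitting), and the same contraction via \eqref{eq:gamma}. The pointwise heat-kernel comparison you flag as ``not among the stated results'' is in fact Lemma~\ref{lem:prekernel1} in Appendix~\ref{app:A}, proved there by the frequency-splitting argument you sketch; the paper uses it as an intermediate step toward Proposition~\ref{prp:prekernel}, which then covers both your (b) and (c) at once.
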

\begin{proof}
  Throughout this proof, the discretization parameter $\delta$ and the potential $A\in\adm_\delta$ are fixed; we shall omit the sub-index $A$ on $G_A$ and $G_{\delta,A}$ from now on. By the properties of $G$ and $G_\delta$, the quantity
  \begin{align*}
    \bnrm{G-G_\delta} = \sup_{t\in(0,1]} t^{1/4}e^{-\gamma t}\max_{\xi,\eta\in\crcN}\big|G^t(\xi,\eta)-G_\delta^t(\xi,\eta)\big|
  \end{align*}
  is clearly finite. The goal of the following is to derive a bound on $\bnrm{G-G_\delta}$ that is independent of $A\in\adm_\delta$ and $\delta$.
  
  To begin with, subtract the variation of constant formula \eqref{eq:dfixed} for $G_\delta$ from the fixed point relation \eqref{eq:fixed} for $G$, both evaluated at the same $t\in(0,1]$ and $\xi,\eta\in\crcN$. Recalling that $A$ is constant on the intervals $I_\zeta$, one obtains:
  \begin{align}
    \label{eq:holydiff}
    G^t(\xi,\eta) - G_\delta^t(\xi,\eta) = \int_0^t\delta\sum_\zeta A(\zeta) \big[Q_1^{s,t}(\xi,\eta;\zeta)+Q_2^{s,t}(\xi,\eta;\zeta)\big]\dd s,
  \end{align}
  where
  \begin{align*}
    Q_1^{s,t}(\xi,\eta;\zeta) &= \fint_{I_\zeta}\htkrnl^{t-s}(\xi-z)G^s(z,\eta)\dd z - \htkrnl_\delta^{t-s}(\xi-\zeta)G_\delta^s(\zeta,\eta), \\
    Q_2^{s,t}(\xi,\eta;\zeta) &= \fint_{I_\zeta}\htkrnl^{t-s}(\xi-z)\htkrnl^s(z-\eta)\dd z - \htkrnl_\delta^{t-s}(\xi-\zeta)\htkrnl_\delta^s(\zeta,\eta).
  \end{align*}
  For further estimation of $Q_1$ and $Q_2$, rewrite these as follows:
  \begin{align*}
    Q_1^{s,t}(\xi,\eta;\zeta)
    &= \fint_{I_\zeta}\big[\htkrnl^{t-s}(\xi-z)-\htkrnl_\delta^{t-s}(\xi-\zeta)\big]G^s(z,\eta)\dd z \\
    &\qquad + \htkrnl_\delta^{t-s}(\xi-\zeta) \left(\fint_{I_\zeta}G^s(z,\eta)\dd z-G^s(\zeta,\eta) \right) \\
    &\qquad + \htkrnl_\delta^{t-s}(\xi-\zeta)\big[G^s(\zeta,\eta)-G^s_\delta(\zeta,\eta)\big]
    \\
    Q_2^{s,t}(\xi,\eta;\zeta)
    &= \fint_{I_\zeta}\big[\htkrnl^{t-s}(\xi-z)-\htkrnl_\delta^{t-s}(\xi-\zeta)\big]\htkrnl^s(z-\eta)\dd z \\
    &\qquad + \htkrnl_\delta^{t-s}(\xi-\zeta)\left(\fint_{I_\zeta}\htkrnl^s(z-\eta)\dd z-\htkrnl_\delta^s(\zeta-\eta)\right) .
  \end{align*}
  With the aid of \eqref{eq:bd-htkrnl}, \eqref{eq:KG}, \eqref{eq:max-hoelder}, and \eqref{eq:prekernel2}, any by definition of $\bnrm{G-G_\delta}$, it follows that
  \begin{align*}
    |Q_1^{s,t}(\xi,\eta;\zeta)|
    &\le C_\hbar(t-s)^{-1/2}\sup_{z\in I_\zeta}\left|G^s(z,\eta)\dd z-G^s(\zeta,\eta)\right| \\
    &\qquad + \left|\fint_{I_\zeta}\htkrnl^{t-s}(\xi-z)\dd z - \htkrnl_\delta^{t-s}(\xi-\zeta)\right|\sup_{z\in I_\zeta}|G^s(z,\eta)| \\
    &\qquad + C_\hbar(t-s)^{-1/2}\big|G^s(\zeta,\eta)-G^s_\delta(\zeta,\eta)\big| \\
    &\le C_\hbar(t-s)^{-1/2}\,K_H\delta^{1/2}s^{-1/4}
      + C_K\delta^{1/4}(t-s)^{-3/4}\,K_G \\
    &\qquad + C_\hbar(t-s)^{-1/2}\,\bnrm{G-G_\delta}s^{-1/4}e^{\gamma s}.
  \end{align*}
  In a similar manner, now using just \eqref{eq:bd-htkrnl} and \eqref{eq:prekernel2}, it follows that
  \begin{align*}
    |Q_2^{s,t}(\xi,\eta;\zeta)|
    &\le  \left|\fint_{I_\zeta}\htkrnl^{t-s}(\xi-z)\dd z - \htkrnl_\delta^{t-s}(\xi-\zeta)\right|\,C_\hbar s^{-1/2} \\
    &\qquad + C_\hbar(t-s)^{-1/2}\left|\fint_{I_\zeta}\htkrnl^s(\zeta-z)\dd z-\htkrnl_\delta^s(\zeta-\eta)\right| \\
    &\le C_K\delta^{1/4}(t-s)^{-3/4}\,C_\hbar s^{-1/2} + C_\hbar(t-s)^{-1/2}\,C_K\delta^{1/4}s^{-3/4}.
  \end{align*}
  Substitution into \eqref{eq:holydiff} and recalling the relation \eqref{eq:gamma} for $\gamma$ yields
  \begin{align*}
    \big|G^t(\xi,\eta) - G_\delta^t(\xi,\eta) \big|
    &\le \frac{e^{\gamma t}}{2} K_H \delta^{1/2}
      + 4C_KK_G\ \delta^{1/4}t^{1/4}
      + \frac{e^{\gamma t}}{2} \,\bnrm{G-G_\delta} \\
    & + 2C_A C_\hbar M\left(\int_0^1\sigma^{-1/2}(1-\sigma)^{-3/4}\dd\sigma\right)\ \delta^{1/4}t^{-1/4},
  \end{align*}
  and so, finally
  \begin{align*}
    \bnrm{G-G_\delta} \le K\delta^{1/4} + \frac12 \bnrm{G-G_\delta},
  \end{align*}
  with a constant $K$ that is uniform in $A\in\adm_\delta$ and $\delta\in(0,1)$. This implies \eqref{eq:holyest}.
\end{proof}

\subsection{Continuous limit of the kernels}
Below, we summarize implications of Proposition \ref{prp:holy} above. Throughout this section, we assume that a sequence of step width $\delta$ is given and study the limit $\delta\searrow0$ along that sequence. By abuse of notation, we identify functions $f:\crcN\to\setR$ with their piecewise constant reconstruction $\tilde f:\crc\to\setR$, i.e., $\tilde f(x)=f(\xi)$ for all $x\in I_\xi$.
\begin{corollary}
  \label{cor:staticconvergence}
  Let a sequence of potentials $A_\delta\in \Lp 1$ be given.
  If $A_\delta$ is bounded in $\Lp 1$, then $\nop_\delta[A_\delta]$ is bounded in $\Lp\infty$.
  And if $A_\delta\to A_*$ in $L^1(\crc)$, then
  \begin{align}
    \label{eq:nop2nop}
    \nop_{\delta}[A_\delta] \to \nop[A_*] \quad \text{uniformly on $\crc$}.
  \end{align}
\end{corollary}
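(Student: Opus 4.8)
The plan is to read both statements off the preceding lemmas, the only step requiring an argument of its own being the convergence of the diagonal heat-kernel values $\htkrnl_\delta^1(0)\to\htkrnl^1(0)$. Throughout I fix the constant in \eqref{eq:CA} by putting $C_A:=\max\bigl(\sup_\delta\|A_\delta\|_{\Lp1},\,\|A_*\|_{L^1(\crc)}\bigr)$, which is finite in either part of the statement; then $A_\delta\in\adm_\delta$ for every $\delta$, its piecewise-constant reconstruction lies in $\adm$, and (in the second part) $A_*\in\adm$. By \eqref{eq:dqexp} and the definition of the discrete auxiliary kernel, $\nop_\delta[A_\delta](\xi)=G_{A_\delta,\delta}^1(\xi,\xi)+\htkrnl_\delta^1(0)$ for each $\xi\in\crcN$. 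For the boundedness claim it then suffices to bound $|G_{A_\delta,\delta}^1(\xi,\xi)|$ by the $\delta$-uniform constant $K_G$ of Lemma~\ref{lem:auxkernelbound} and $|\htkrnl_\delta^1(0)|$ by $C_\hbar$ from \eqref{eq:bd-dhtkrnl}, which gives $\|\nop_\delta[A_\delta]\|_{\Lp\infty}\le K_G+C_\hbar$, uniformly along the sequence.

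For the convergence claim I would first estimate at the grid points. Writing $G_{A_\delta}$ for the continuous auxiliary kernel of the $L^1$-reconstruction of $A_\delta$, and using \eqref{eq:dqexp} and \eqref{eq:qexp},
\begin{align*}
  \nop_\delta[A_\delta](\xi)-\nop[A_*](\xi)
  &= \bigl(G_{A_\delta,\delta}^1(\xi,\xi)-G_{A_\delta}^1(\xi,\xi)\bigr)
   + \bigl(G_{A_\delta}^1(\xi,\xi)-G_{A_*}^1(\xi,\xi)\bigr) \\
  &\quad + \bigl(\htkrnl_\delta^1(0)-\htkrnl^1(0)\bigr).
\end{align*}
Proposition~\ref{prp:holy} with $t=1$ bounds the first bracket by $K_*\delta^{1/4}$, and Lemma~\ref{lem:exp-regular} bounds the second by $L_G\|A_\delta-A_*\|_{L^1(\crc)}$, with $L_G$ its Lipschitz constant --- both uniformly in $\xi$ --- so
\[
  \max_{\xi\in\crcN}\bigl|\nop_\delta[A_\delta](\xi)-\nop[A_*](\xi)\bigr|
  \le K_*\delta^{1/4}+L_G\|A_\delta-A_*\|_{L^1(\crc)}+\bigl|\htkrnl_\delta^1(0)-\htkrnl^1(0)\bigr|.
\]
To upgrade this to uniform convergence on all of $\crc$, for $x\in I_\xi$ I would add and subtract $\nop[A_*](\xi)$ and invoke that $\nop[A_*]\in C(\crc)$ by Lemma~\ref{lem:nopLip}, hence is uniformly continuous, so that $|\nop[A_*](\xi)-\nop[A_*](x)|\to0$ uniformly because $|x-\xi|<\delta/2$. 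Combined with the previous estimate, this gives the uniform bound, provided the heat-kernel term vanishes.

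It remains to show $\htkrnl_\delta^1(0)\to\htkrnl^1(0)$. By \eqref{eq:dfndhtkrnl} and \eqref{eq:dfnhtkrnl} this amounts to $\sum_{|k|\lesssim N/2}e^{-\omega_k\hbar^2}\to\sum_{k\in\setZ}e^{-(2\pi k)^2\hbar^2}$, which I would obtain by dominated convergence: for each fixed $k$ the eigenvalue $\omega_k$ of $-\Delta_\delta$ converges to $(2\pi k)^2$ as $\delta\searrow0$, and there is a $\delta$-uniform lower bound $\omega_k\ge ck^2$ with some $c>0$, valid for $|k|\lesssim N/2$; both facts follow from the explicit eigenvalue formula recalled in Appendix~\ref{app:Laplace}. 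Assembling the four estimates yields $\nop_\delta[A_\delta]\to\nop[A_*]$ uniformly on $\crc$.

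I do not expect a serious obstacle: the corollary is essentially an assembly of Proposition~\ref{prp:holy}, Lemma~\ref{lem:exp-regular}, and the uniform continuity of the fixed limit $\nop[A_*]$. The one point that genuinely needs its own (elementary) argument is the diagonal heat-kernel limit $\htkrnl_\delta^1(0)\to\htkrnl^1(0)$: Proposition~\ref{prp:holy} compares only the \emph{corrections} $G_{A,\delta}$ and $G_A$, each taken relative to its own heat kernel, and so carries no information about $\htkrnl_\delta-\htkrnl$; this is why the spectral representation of the heat kernels has to be brought in separately. A secondary point, handled above by comparing at grid points first, is that one should route the passage to off-grid $x$ through the regularity of the single limit function $\nop[A_*]$ rather than through any regularity of the family of discrete approximants.
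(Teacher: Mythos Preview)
Your proposal is correct and follows the same decomposition as the paper: split $\nop_\delta[A_\delta]-\nop[A_*]$ into the auxiliary-kernel piece (handled by Proposition~\ref{prp:holy} and Lemma~\ref{lem:exp-regular}) and the heat-kernel piece $\htkrnl_\delta^1(0)-\htkrnl^1(0)$. The one difference is in that last term: the paper does not argue afresh by dominated convergence on the spectral sums, but simply invokes the already-proven kernel approximation (Proposition~\ref{prp:prekernel} together with Lemma~\ref{lem:prekernel1}), which gives $|\htkrnl^t(\xi)-\htkrnl_\delta^t(\xi)|\le A_1\delta^{1/4}t^{-3/4}$ directly; your argument is valid but re-derives a special case of what is on hand. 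Your explicit treatment of off-grid points via the uniform continuity of $\nop[A_*]$ is more careful than the paper, which tacitly works with the piecewise-constant identification.
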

\begin{proof}
  Recall the definitions of the non-discrete and discrete quantum exponentials in \eqref{eq:qexp} and \eqref{eq:dqexp}, respectively.
  The claim about boundedness is a consequence of Lemma \ref{lem:auxkernelbound}.
  For the proof of convergence \eqref{eq:nop2nop}, it suffices to verify that $\htkrnl_{\delta}^1(0)\to\htkrnl^1(0)$ --- which is a direct consequence of \eqref{eq:prekernel2} --- on the one hand, and that $G_{\delta,A_\delta}^1\to G_{A_*}^1$ uniformly on $\crc\times\crc$ on the other hand. Concerning the latter:
  \begin{align*}
    \big\|G^1_{A_\delta,\delta}-G^1_{A_*}\big\|_{L^\infty(\crc)} \le \big\|G^1_{A_\delta,\delta}-G^1_{A_\delta}\big\|_{L^\infty(\crc)} + \big\|G^1_{A_\delta}-G^1_{A_*}\big\|_{L^\infty(\crc)} .
  \end{align*}
  Both differences on the right-hand side converge to zero, the first because of \eqref{eq:holyest} for $t=1$, the second because of the Lipschitz continuity stated in Lemma \ref{lem:nopLip}.
\end{proof}
The next result is the key ingredient in the proof of Proposition \ref{prp:theendofQDD} in the next section.
\begin{corollary}
  \label{cor:goal}
  Let a sequence of time-dependent densities $n_\delta:[0,T]\to \Lp \infty$ with associated time-dependent potentials $A_\delta:[0,T]\to \Lp 1$ be given, i.e., $n_\delta(t;\cdot)=\nop_\delta[A_\delta(t;\cdot)]$ for almost every $t\in[0,T]$. Assume that, as $\delta\searrow0$,
  \begin{itemize}
  \item $n_\delta$ converges to a limit $n_*$ uniformly on $[0,T]\times\crc$,
  \item $A_\delta$ converges to a limit $A_*$ weakly in $L^2([0,T]\times\crc)$.
  \end{itemize}
  Then $n_*=\nop[A_*]$.
\end{corollary}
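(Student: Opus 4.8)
The plan is to identify the weak limit by a Minty--Browder monotonicity argument, built on the fact that each $\nop_\delta$ is a monotone map from $\Lp{2}$ to itself (Lemma~\ref{lem:nopmonotone}) and that $\nop_\delta$ converges to $\nop$ in the strong (uniform-in-space) sense of Corollary~\ref{cor:staticconvergence}. The reason a detour through monotonicity is needed is that $A_\delta$ converges only \emph{weakly} in $L^2$, so the nonlinear identity $n_\delta(t;\cdot)=\nop_\delta[A_\delta(t;\cdot)]$ cannot be passed to the limit by any continuity statement alone; monotonicity is precisely what substitutes for the missing strong compactness of the potentials. Throughout, $\nop[B]$ for a space--time function $B$ denotes $(t,x)\mapsto\nop[B(t;\cdot)](x)$.

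\emph{Step 1 (a limiting variational inequality).} For every $\delta$, a.e.\ $t\in[0,T]$, and every $B\in\Lp{2}$, Lemma~\ref{lem:nopmonotone} gives
\[
  \int_\crc\big(\nop_\delta[A_\delta(t;\cdot)]-\nop_\delta[B]\big)\big(A_\delta(t;\cdot)-B\big)\dd x\ge0,
\]
and integration over $t$ preserves the sign. Fix now a bounded $B\in L^2([0,T]\times\crc)$ and let $B_\delta(t;\cdot)$ denote its cell average; then $B_\delta\to B$ strongly in $L^2([0,T]\times\crc)$ and $B_\delta(t;\cdot)\to B(t;\cdot)$ in $L^1(\crc)$ for a.e.\ $t$, while $\|B_\delta(t;\cdot)\|_{\Lp{1}}\le\|B(t;\cdot)\|_{L^1(\crc)}$ stays uniformly bounded. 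I would pass to the limit $\delta\searrow0$ in
\[
  \int_0^T\!\!\int_\crc\big(n_\delta-\nop_\delta[B_\delta]\big)\big(A_\delta-B_\delta\big)\dd x\dd t\ge0
\]
term by term: $n_\delta=\nop_\delta[A_\delta]\to n_*$ uniformly, hence in $L^2$; by Corollary~\ref{cor:staticconvergence} applied slice by slice, $\nop_\delta[B_\delta(t;\cdot)]\to\nop[B(t;\cdot)]$ uniformly on $\crc$ for a.e.\ $t$, with a bound on $\|\nop_\delta[B_\delta(t;\cdot)]\|_{\Lp{\infty}}$ uniform in $\delta$ and (by boundedness of $B$) in $t$, so dominated convergence gives $\nop_\delta[B_\delta]\to\nop[B]$ in $L^2([0,T]\times\crc)$; consequently $n_\delta-\nop_\delta[B_\delta]\to n_*-\nop[B]$ strongly in $L^2$ while $A_\delta-B_\delta\wto A_*-B$ weakly in $L^2$, so the strong--weak pairing converges. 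This yields
\[
  \int_0^T\!\!\int_\crc\big(n_*-\nop[B]\big)\big(A_*-B\big)\dd x\dd t\ge0
  \qquad\text{for all bounded }B\in L^2([0,T]\times\crc).
\]

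\emph{Step 2 (Minty's trick).} Given a bounded $W\in L^2([0,T]\times\crc)$ and $\lambda>0$, apply the inequality of Step~1 with $B=A_*-\lambda W$ and divide by $\lambda>0$:
\[
  \int_0^T\!\!\int_\crc\big(n_*-\nop[A_*-\lambda W]\big)\,W\dd x\dd t\ge0 .
\]
Let $\lambda\searrow0$: since $\lambda W(t;\cdot)\to0$ in $L^1(\crc)$ for a.e.\ $t$, the Lipschitz continuity of $\nop$ (Lemma~\ref{lem:nopLip}) gives $\nop[A_*-\lambda W](t;\cdot)\to\nop[A_*(t;\cdot)]$ uniformly on $\crc$ for a.e.\ $t$ with a uniform bound, and dominated convergence leaves $\int_0^T\!\!\int_\crc(n_*-\nop[A_*])\,W\dd x\dd t\ge0$. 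As $W$ is arbitrary, replacing $W$ by $-W$ forces this integral to vanish for every $W$, hence $n_*=\nop[A_*]$ a.e.\ on $[0,T]\times\crc$.

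I expect the main obstacle to be Step~1, and inside it the convergence $\nop_\delta[B_\delta]\to\nop[B]$: this is exactly where the quantitative comparison of the discrete and continuous quantum exponentials from Section~\ref{sct:hell} (Corollary~\ref{cor:staticconvergence}, ultimately Proposition~\ref{prp:holy}) is indispensable, and one must upgrade the per-time-slice $L^1$-to-uniform convergence to $L^2$-convergence in space--time by a dominated-convergence argument in $t$; the $\delta$-uniform $\Lp{\infty}$-bound on $\nop_\delta[B_\delta(t;\cdot)]$ supplies the envelope, which is why the test potentials $B$ (hence $W$) are taken bounded and why one uses the $L^\infty$-type a priori control on the $A_\delta$ that is available wherever this corollary is applied (namely on a time interval on which the limiting density stays positive), or, equivalently, localizes in time. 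Keeping the test potentials inside the admissible classes $\adm_\delta$, $\adm$ is a minor bookkeeping matter, handled by fixing $C_A$ large enough and noting that the construction of $\nop$ and its Lipschitz bound carry over verbatim to any ball of $L^1(\crc)$.
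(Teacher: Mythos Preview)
Your proposal is correct and follows essentially the same Minty--Browder monotonicity strategy as the paper. Two minor differences are worth noting. First, the paper carries out the time-localization explicitly: it restricts to sets $J=\{t:\|A_*(t;\cdot)\|_{L^1(\crc)}\le C_A\}$ before invoking monotonicity, whereas you mention this only as a ``bookkeeping matter'' at the end. This localization is not optional, since your Step~1 inequality is established only for test potentials $B$ whose $L^1(\crc)$-slices are uniformly bounded, and $B=A_*-\lambda W$ fails this globally when $A_*$ is merely in $L^2$; so Step~2 as written does not quite follow from Step~1 without first passing to $J$. Second, instead of proving the general variational inequality and then running Minty's trick with arbitrary bounded $W$, the paper substitutes the specific choice $B=A_*+h(n_*-\nop[A_*])$ directly, which after division by $-h$ and the limit $h\searrow0$ yields $\int_J\int_\crc(n_*-\nop[A_*])^2\le0$ in one stroke. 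This is the standard ``shortcut'' version of Minty and is entirely equivalent to your two-step formulation.
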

\begin{proof}
  Fix some $C_A>0$. Let
  \begin{align*}
    J:= \big\{t\in[0,T]\,\big|\,\|A_*(t;\cdot)\|_{L^1(\crc)}\le C_A\big\}.
  \end{align*}
  Below, we are going to show that
  \begin{align}
    \label{eq:preidentify}
    n_*(t;\cdot)=\nop[A_*(t;\cdot)] \quad \text{for a.e. $t\in J$}.
  \end{align} 
  Since the sets $J$ are increasing as $C_A$ grows and exhaust almost all of $[0,T]$ as $C_A\nearrow\infty$, the claim of the corollary follows.

  First observe that $\nop[A_*]\in L^\infty(J\times\crc)$, see Lemma \ref{lem:KG}.
  For each $h\in(0,1)$, define $A^h\in L^2(J\times\crc)$ by
  \begin{align}
    \label{eq:Ah}
    A^h(t;\cdot) := A_*(t;\cdot) + h\big(n_*(t;\cdot)-\nop[A_*(t;\cdot)]\big)
    \quad \text{for a.e. $t\in J$}.
  \end{align}
  Next, for each $\delta$, define a (piecewise constant in space) potential $A^h_\delta\in L^2(J\times\crcN)$ such that $A^h_\delta(t;\cdot)\in \Lp 2$ is obtained from $A^h(t;\cdot)$ by  averaging over the respective intervals in $\crcN$. Observe that $\|A^h_\delta(t;\cdot)\|_{\Lp 2}\le\|A^h(t;\cdot)\|_{L^2(\crc)}$ by Jensen's inequality, and that $A^h_\delta(t;\cdot)\to A^h(t;\cdot)$ in $L^2(\crc)$ as $\delta\searrow0$, for a.e. $t\in J$. On the one hand, by dominated converges, it follows that $A^h_\delta\to A^h$ in $L^2(J\times\crc)$. On the other hand, it follows by means of \eqref{eq:nop2nop} that also $\nop_\delta[A^h_\delta(t;\cdot)]\to\nop[A^h(t;\cdot)]$ uniformly on $\crc$, for almost every $t\in J$. In particular, $\nop_\delta[A^h_\delta]$ converges to $\nop[A^h]$ pointwise almost everywhere on $J\times\crc$. Since $\|\nop_\delta[A^h_\delta]\|_{L^\infty(J\times\crcN)}$ is $\delta$-uniformly bounded, see Corollary \ref{cor:staticconvergence} above, dominated convergence implies  $\nop_\delta[A^h_\delta]\to\nop[A^h]$ in $L^2(J\times\crc)$.

  Recall that the operators $\nop_\delta$ are monotone, see Lemma \ref{lem:nopmonotone}, and therefore
  \begin{align*}
    \int_J\int_{\crc} \big(n_\delta-\nop_\delta[A^h_\delta]\big)\big(A_\delta-A^h_\delta)\dd x\dd t \ge 0,
  \end{align*}
  for each $\delta$. By the hypotheses and the considerations above, $n_\delta-\nop_\delta[A^h_\delta]$ converges to $n_*-\nop[A^h]$ strongly in $L^2(J\times\crc)$, and $A_\delta-A^h_\delta$ converges to $A_*-A^h=-h(n_*-\nop[A_*])$ weakly in $L^2(J\times\crc)$, repectively. Therefore, after division by $-h<0$,
  \begin{align*}
    \int_J\int_\crc \big(n_*-\nop[A^h]\big)\big(n_*-\nop[A_*]\big)\dd x\dd t \le 0.
  \end{align*}
  It remains to perform the limit $h\searrow0$. As $A^h(t;\cdot)\to A_*(t;\cdot)$ in $L^2(\crc)$ for almost every $t\in J$ by definition in \eqref{eq:Ah}, it follows by Lipschitz continuity of $\nop$, see Lemma \ref{lem:nopLip}, that also $\nop[A^h(t;\cdot)]\to\nop[A_*(t;\cdot)]$ in $L^\infty(\crc)$, and in particular that $\nop[A^h]$ converges to $\nop[A_*]$ pointwise almost everywhere on $J\times\crc$. And since $\|\nop[A^h]\|_{L^\infty(J\times\crc)}$ is $h$-uniformly bounded, one concludes by dominated convergence that $\nop[A^h]\to\nop[A_*]$ in $L^2(J\times\crc)$. Thus
  \begin{align*}
    \int_J\int_\crc \big(n_*(t;x)-\nop\big[A_*(t;x)\big]\big)^2\dd x\dd t \le 0.    
  \end{align*}
  This implies \eqref{eq:preidentify}.
\end{proof}

\section{The continuous limit}
\label{sct:convergence}

In this section we prove Theorem \ref{thm:3} about the continuous limit $\delta\to0$ of the spatially discrete nlQDD \eqref{eq:DQDD}.
Below, we shall again abuse notation and identify functions $f:\crcN\to\setR$ with their piecewise constant reconstruction $\tilde f:\crc\to\setR$, i.e., $\tilde f(x)=f(\xi)$ for all $x\in I_\xi$.

\subsection{Setup}

Throughout this section, let a sequence $\delta=1/N\searrow0$ of discretizations is fixed.
Further, fix a positive initial datum $n_0^\text{in}\in L^1(\crc)$ for \eqref{eq:QDD}, which is of unit mass and finite Fisher information $\|\partial_x\sqrt{n_0^\text{in}}\|_{L^2(\crc)}<\infty$.
For each $\delta$ under consideration, define a corresponding discretized intial datum $n_\delta^\text{in}\in\dprbp$ for \eqref{eq:DQDD0}
such that $n_\delta^\text{in}\to n_0^\text{in}$ in $L^1(\crc$), and such that the initial entropy is $\delta$-uniformly bounded,
\begin{align}
  \label{eq:overdent}
  \overline{\dent}:=  \sup_\delta \dent(n_\delta^\text{in}) - \underline{\dent}_{\hbar^2/2} < \infty.
\end{align}
Above, $\underline{\dent}_{\hbar^2/2}$ is the lower bound on $\dent$ from \eqref{eq:dentbelow}.
\begin{lemma}
  Under the above hypotheses on $n_0^\text{in}$, define the sequence $n_\delta^\text{in}\in\dprbp$ by averaging over cells,
  \begin{align*}
    n_\delta^\text{in}(\xi) = \fint_{I_\xi}n_0^\text{in}(x)\dd x.
  \end{align*}  
  Then \eqref{eq:overdent} holds.
\end{lemma}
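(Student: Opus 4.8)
The plan is to use the variational characterization of the discrete quantum Maxwellian together with a well-chosen trial state. By \eqref{eq:nA} we have $\dent(n_\delta^\text{in})=\eng\big[\mm_\delta[n_\delta^\text{in}]\big]$, and since $\mm_\delta[n_\delta^\text{in}]$ is the minimizer in \eqref{eq:minprob} over \emph{all} of $\dens$ subject to the diagonal constraint, it suffices to exhibit a single $R\in\dens$ with $R_{kk}=\delta\,n_\delta^\text{in}(k\delta)$ whose free energy $\eng[R]$ is bounded uniformly in $\delta$. The crucial observation — and the only genuinely clever point — is that one should test with the rank-one orthogonal projection $R:=\psi_\delta\psi_\delta^*$ onto the unit vector $\psi_\delta\in\setC^N$ with components $[\psi_\delta]_k=\sqrt{\delta\,n_\delta^\text{in}(k\delta)}$: this choice annihilates the genuinely quantum entropy term $\tr{R\log R}$ entirely. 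Note $\psi_\delta$ has unit euclidean norm because $\delta\sum_\xi n_\delta^\text{in}(\xi)=1$, and $R\in\densr$ has the prescribed diagonal by construction.

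For this $R$ one has $\tr{R\log R}=\sum_j\rho_j\log\rho_j=0$ (eigenvalues $1$ and $0$, with the convention $0\log0=0$ used throughout the paper), so $\eng[R]=-\hbar^2\tr{\Delta_\delta R}=\hbar^2\,\psi_\delta^*(-\Delta_\delta)\psi_\delta$. Using the mutual anti-adjointness $(\dq_\delta^+)^T=-\dq_\delta^-$, hence $-\Delta_\delta=(\dq_\delta^+)^T\dq_\delta^+$, this equals $\hbar^2|\dq_\delta^+\psi_\delta|^2$; unwinding the $\sqrt\delta$-scaling in the definition of $\psi_\delta$ identifies this with the discrete Fisher information, $\eng[R]=\hbar^2\,\big\|\dq_\delta^+\sqrt{n_\delta^\text{in}}\big\|_{\Lp 2}^2$. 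Thus the claim is reduced to the $\delta$-uniform bound
\begin{align*}
  \big\|\dq_\delta^+\sqrt{n_\delta^\text{in}}\big\|_{\Lp 2}^2 \le \big\|\partial_x\sqrt{n_0^\text{in}}\big\|_{L^2(\crc)}^2,
\end{align*}
i.e.\ that cell averaging does not increase the Fisher information.

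To prove this last estimate, write $v:=\sqrt{n_0^\text{in}}\in H^1(\crc)$ (indeed $\|v\|_{L^2(\crc)}^2=1$ and $\|v'\|_{L^2(\crc)}<\infty$ by hypothesis), so that $\sqrt{n_\delta^\text{in}(\xi)}=\delta^{-1/2}\|v\|_{L^2(I_\xi)}$. Writing $\|v\|_{L^2(I_\xi)}=\|v(\cdot-\delta)\|_{L^2(I_{\xi+\delta})}$ and applying the triangle inequality for the $L^2(I_{\xi+\delta})$-norm gives
\begin{align*}
  \big(\sqrt{n_\delta^\text{in}(\xi+\delta)}-\sqrt{n_\delta^\text{in}(\xi)}\big)^2 \le \frac1\delta\int_{I_{\xi+\delta}}\big|v(x)-v(x-\delta)\big|^2\dd x .
\end{align*}
Summing $\delta^{-1}$ times this over $\xi\in\crcN$, and using that the intervals $I_{\xi+\delta}$ tile $\crc$, yields $\big\|\dq_\delta^+\sqrt{n_\delta^\text{in}}\big\|_{\Lp 2}^2\le\delta^{-2}\int_\crc|v(x)-v(x-\delta)|^2\dd x$. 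Finally $v(x)-v(x-\delta)=\int_{x-\delta}^x v'(y)\dd y$, so Cauchy--Schwarz and Fubini give $\delta^{-2}\int_\crc|v(x)-v(x-\delta)|^2\dd x\le\delta^{-1}\int_\crc\int_{x-\delta}^x|v'(y)|^2\dd y\dd x=\|v'\|_{L^2(\crc)}^2$. Combining the three steps, $\dent(n_\delta^\text{in})\le\hbar^2\,\|\partial_x\sqrt{n_0^\text{in}}\|_{L^2(\crc)}^2$ for every $\delta$; since $\underline{\dent}_{\hbar^2/2}$ is a finite constant independent of $\delta$ by \eqref{eq:dentbelow}, the bound \eqref{eq:overdent} follows. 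The only real obstacle is spotting the rank-one trial state; both the free-energy computation and the Fisher-information estimate are then routine.
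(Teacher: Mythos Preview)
Your proof is correct and follows the same strategy as the paper: use the variational characterization \eqref{eq:dentR} of $\dent$ and test with the rank-one projection $R=\psi_\delta\psi_\delta^*$, which kills the von Neumann term and reduces the problem to a discrete Fisher information bound. The only difference is in that last step: the paper invokes Lemma~\ref{lem:fisher}, whose pointwise mean-value argument gives the constant $8$, whereas your reverse-triangle-inequality-in-$L^2$ argument (writing $\sqrt{n_\delta^\text{in}(\xi)}=\delta^{-1/2}\|v\|_{L^2(I_\xi)}$ and comparing $\|v\|_{L^2(I_{\xi+\delta})}$ with $\|v(\cdot-\delta)\|_{L^2(I_{\xi+\delta})}$) is both cleaner and sharper, yielding the optimal constant $1$.
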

\begin{proof}
  The representation \eqref{eq:nA} of the entropy can be rephrased as follows:
  \begin{align}
    \label{eq:dentR}
    \dent(n_\delta^\text{in}) = \min\left\{ \tr{R\log R-\hbar^2\Delta_\delta R}\,\middle|\,R\in\dens,\,R_{jj}=\delta\, n_\delta^\text{in}(j\delta)\right\}.
  \end{align}
  The idea is to choose an admissible trial matrix $\hat R\in\dens$ that produces an $\delta$-uniform bound on the right-hand side above.
  Specifically, let $w\in\setR^N$ be given by $w_j:=\sqrt{n_\delta^\text{in}(j\delta)}$, and define $\hat R:=\delta ww^T\in\dens$, i.e., $\hat R_{jk}=\delta w_jw_k$.
  Now, on the one hand, $\hat R\log\hat R=0$ because $\hat R$ has zero and one as its only eigenvalues.
  And on the other hand, using Lemma \ref{lem:fisher} from the appendix,
  \begin{align*}
    -\tr{\Delta_\delta\hat R}
    = -\delta\tr{\dq_\delta^-\dq_\delta^+ww^T} 
    &= \delta\tr{(\dq_\delta^+w)(\dq_\delta^+w)^T} \\
    &= \delta\sum_\xi\big(\dq_\delta^+\sqrt{n_\delta^\text{in}}\big)^2 
    \le 8 \big\|\partial_x\sqrt{n_0^\text{in}}\big\|_{L^2(\crc)}^2.
  \end{align*}
  Substitute $\hat R$ as trial matrix in the right-hand side of \eqref{eq:dentR} to obtain:
  \begin{align*}
    \dent(n_\delta^\text{in})
    \le \tr{\hat R\log\hat R-\hbar^2\Delta_\delta\hat R}
    \le 0 + 8\hbar^2\big\|\partial_x\sqrt{n_0^\text{in}}\big\|_{L^2(\crc)}^2,
  \end{align*}
  which is independent of $\delta$.
\end{proof}
Finally, let $n_\delta:\setRnn\to\dprbp$ be the (global and positive) solutions to \eqref{eq:DQDD} with initial datum $n_\delta(0)=n_\delta^\text{in}$.

\subsection{A priori estimates}
There are two main estimates for the passage to the continuum limit. The first is the $L^2$-bound on the product $\sqrt{\nu_\delta^+}\,\dq_\delta^+A_\delta$ in space and time from \eqref{eq:dentdiss}, which has been obtained directly from the dissipation of the entropy $\dent(n)$. The second estimate is on $n$ in $H^1$, uniformly in time, and follows directly from the monotonicity of the entropy $\dent(n)$, which controls in a subtle way non-local features of $n$. Note that this is in sharp contrast to the formal semi-classical limit $\partial_tn_\delta=\Delta_\delta n_\delta$, in which $\dent$ turns into the much simple functional $\delta\sum_\xi n\log n$, which only provides a control in $L\log L$. For later reference, we summarize the two estimates below.
\begin{proposition}\label{prop:diss}
    For any $T>0$,
    \begin{align}
      \label{est:dissip}
      \int_0^T \delta\sum_\xi \nu_\delta^+\, \big(\dq_\delta^+A_\delta\big)^2 \dd t &\le \overline{\dent}, \\
      \label{est:mono}
      \|\dq_\delta^+n_\delta\|_{L^2_\delta(\crc)}(T)&\le C(1+\overline{\dent}).
    \end{align}
\end{proposition}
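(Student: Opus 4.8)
The estimate \eqref{est:dissip} will be essentially a restatement of the entropy dissipation bound. Applying \eqref{eq:dentdiss} of Proposition \ref{prp:dentdiss} to the global solution $n_\delta$ on the interval $[0,T]$ gives $\int_0^T\delta\sum_\xi\nu_\delta^+(\dq_\delta^+A_\delta)^2\dd t\le\dent(n_\delta(0))-\underline{\dent}_{\hbar^2}$; since $n_\delta(0)=n_\delta^\text{in}$ and $-\underline{\dent}_{\hbar^2}\le-\underline{\dent}_{\hbar^2/2}$ (the latter being the more negative of the two lower bounds from \eqref{eq:dentbelow}), the right-hand side is at most $\sup_\delta\dent(n_\delta^\text{in})-\underline{\dent}_{\hbar^2/2}=\overline{\dent}$, see \eqref{eq:overdent}.

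For \eqref{est:mono} I would start from the monotonicity assertion in Proposition \ref{prp:dentdiss}: $\dent(n_\delta(T))\le\dent(n_\delta(0))=\dent(n_\delta^\text{in})$, so that $\dent(n_\delta(T))-\underline{\dent}_{\hbar^2/2}\le\overline{\dent}$ uniformly in $\delta$ and $T$. Using the identification $\dent(n)=\eng[\mm_\delta[n]]$ from \eqref{eq:nA} and feeding $R=\mm_\delta[n_\delta(T)]$ into \eqref{eq:H1byH}, this becomes a control on the kinetic energy of the Maxwellian:
\[
-\tr{\Delta_\delta\mm_\delta[n_\delta(T)]}\le\frac{2}{\hbar^2}\big(\dent(n_\delta(T))-\underline{\dent}_{\hbar^2/2}\big)\le\frac{2}{\hbar^2}\,\overline{\dent}.
\]

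The remaining and genuinely substantial step is to turn this estimate for $\mm_\delta[n_\delta(T)]$ into an $H^1$-bound for its diagonal $n_\delta(T)$. The plan is to establish, for every $n\in\dprbp$, the elementary Hoffmann--Ostenhof-type inequality
\[
\|\dq_\delta^+n\|_{L^2_\delta(\crc)}^2\le 4\,\|n\|_{L^\infty_\delta(\crc)}\,\big(-\tr{\Delta_\delta\mm_\delta[n]}\big).
\]
To prove it, diagonalize the real positive-definite matrix $\mm_\delta[n]=\sum_k\lambda_kv_kv_k^T$ with $\lambda_k>0$ and a real orthonormal basis $v_1,\dots,v_N$, so that $\delta\,n(j\delta)=\sum_k\lambda_kv_{k,j}^2$; then combine the identity $v_{k,j+1}^2-v_{k,j}^2=(v_{k,j+1}+v_{k,j})(v_{k,j+1}-v_{k,j})$, Cauchy--Schwarz in $k$, the crude bound $\sum_k\lambda_k(v_{k,j+1}+v_{k,j})^2\le2\delta(n((j+1)\delta)+n(j\delta))\le4\delta\|n\|_{L^\infty_\delta(\crc)}$, and the summation-by-parts identity $\sum_j(v_{k,j+1}-v_{k,j})^2=\delta^2v_k^T(-\Delta_\delta)v_k$, summing over $j$ and $k$ at the end. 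Applying this with $n=n_\delta(T)$, together with the one-dimensional interpolation inequality $\|n\|_{L^\infty_\delta(\crc)}\le\|n\|_{L^1_\delta(\crc)}+\|\dq_\delta^+n\|_{L^2_\delta(\crc)}=1+\|\dq_\delta^+n\|_{L^2_\delta(\crc)}$ (valid since $\|n\|_{L^1_\delta(\crc)}=1$; one estimates $n$ by its mean plus $\|\dq_\delta^+n\|_{L^1_\delta(\crc)}$ and then uses Cauchy--Schwarz on the torus of unit length), leaves a closed quadratic inequality $D^2\le4(1+D)\frac{2}{\hbar^2}\overline{\dent}$ for $D:=\|\dq_\delta^+n_\delta(T)\|_{L^2_\delta(\crc)}$. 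Solving it and using $\sqrt x\le1+x$ yields $D\le C(1+\overline{\dent})$ with $C$ depending only on $\hbar$. I expect the only real work is checking the Hoffmann--Ostenhof-type inequality and the interpolation inequality with the correct powers of $\delta$ in the $L^p_\delta$-norms; everything else merely recombines constants already produced in Sections \ref{sct:QL} and \ref{sct:nlQDD}.
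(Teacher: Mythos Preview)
Your argument is correct. Both parts are sound: \eqref{est:dissip} is indeed a direct consequence of Proposition~\ref{prp:dentdiss} together with the definition of $\overline{\dent}$, and your Hoffmann--Ostenhof-type inequality $\|\dq_\delta^+n\|_{L^2_\delta}^2\le4\|n\|_{L^\infty_\delta}\big(-\tr{\Delta_\delta\mm_\delta[n]}\big)$ holds with exactly the scaling you indicate, after which the bootstrap via the embedding $\|n\|_{L^\infty_\delta}\le1+\|\dq_\delta^+n\|_{L^2_\delta}$ closes.

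The paper uses the same ingredients --- the spectral decomposition of $\mm_\delta[n]$, the kinetic-energy bound \eqref{eq:H1byH}, and the discrete $H^1$--$L^\infty$ embedding of Lemma~\ref{lem:H1Linfty} --- but combines them in a slightly different order. Instead of applying Cauchy--Schwarz in $k$ and then bounding $\|n\|_{L^\infty_\delta}$, the paper applies the embedding directly to each \emph{normalized} eigenfunction $\phi_k$ (so $\|\phi_k\|_{L^\infty_\delta}\le1+\|\dq_\delta^+\phi_k\|_{L^2_\delta}$), and then takes the $L^2_\delta$-norm via Minkowski. This yields the linear bound $\|\dq_\delta^+n\|_{L^2_\delta}\le4+8\hbar^{-2}\big(\dent(n)-\underline{\dent}_{\hbar^2/2}\big)$ directly, without the quadratic inequality and bootstrap step. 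Your route is the classical Hoffmann--Ostenhof argument and is perfectly valid; the paper's version is marginally slicker in that it avoids the closing quadratic, at the cost of applying the embedding $N$ times rather than once.
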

The first estimate \eqref{est:dissip} has already been obtained in Proposition \ref{prp:dentdiss}.
In preparation of the proof of \eqref{est:mono}, we introduce an alternative representation of a quantum Maxwellian $\mm_\delta[n]$ for a given $n\in\dprbp$:
first, let $v_1,\ldots,v_N\in\setR^N$ be an orthonormal basis of real eigenvectors, i.e., $v_k^Tv_\ell=0$ for $k\neq\ell$ and $v_k^Tv_k=1$, with corresponding eigenvalues $\rho_k\in(0,1)$, that is
\begin{align}
  \label{eq:thisismm}
  \mm_\delta[n] = \sum_{k=1}^N\rho_kv_kv_k^T. 
\end{align}
Since $\mm_\delta[n]$ has the values of $\delta n$ on its diagonal, it follows that
\begin{align}
  \label{eq:thisisalmostn}
  \delta n(j\delta) = \mm_\delta[n]_{jj} = \sum_{k=1}^N\rho_k \cmp{v_k}j^2 .
\end{align}
Now, introduce an orthonormal basis of real eigenfunctions $\phi_1,\ldots,\phi_N\in\Lp 2$, by $\phi_k(j\delta)=\delta^{-1/2}\cmp{v_k}j$, i.e., $\delta\sum_\xi\phi_k\phi_\ell=0$ for $k\neq\ell$ and $\|\phi_k\|_{\Lp 2}=1$. This turns \eqref{eq:thisisalmostn} into
\begin{align}
  \label{eq:thisisn}
  n(\xi) = \sum_{k=1}^N\rho_k\phi_k(\xi)^2.
\end{align}
The key observation for the proof of\eqref{est:mono} is the following.
\begin{lemma}
  \label{lem:phiH1up}
  The normalized eigenfunctions $\phi_k$ in the representation \eqref{eq:thisisn} are controlled as follows:
  \begin{align}
    \label{eq:phiH1up}
    \sum_{k=1}^N\rho_k\big\|\dq^+\phi_k\big\|_{\Lp 2}^2
    \le \frac{2}{\hbar^2}\big(\dent(n)-\underline\dent_{\hbar^2/2}\big).
  \end{align}
\end{lemma}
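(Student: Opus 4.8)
The plan is to recognise that the left-hand side of \eqref{eq:phiH1up} is exactly $-\tr{\Delta_\delta\mm_\delta[n]}$ and then to invoke the bound \eqref{eq:H1byH} together with the identity \eqref{eq:nA} that were already established above. First I would use the spectral representation \eqref{eq:thisismm} of $\mm_\delta[n]$ and the elementary identity $\tr{M\,vv^T}=v^TMv$ to write
\[
  -\tr{\Delta_\delta\mm_\delta[n]} = -\sum_{k=1}^N\rho_k\,v_k^T\Delta_\delta v_k,
\]
where $\Delta_\delta$ on the right-hand side denotes the discrete Laplacian viewed as a matrix in $\setR^{N\times N}$.

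Next I would pass from the vectors $v_k$ to the eigenfunctions $\phi_k$. By definition $\phi_k(j\delta)=\delta^{-1/2}\cmp{v_k}j$, so under the canonical correspondence \eqref{eq:AK} between the matrix $\Delta_\delta$ and the operator $\Delta_\delta$ on $\Lp 2$ one has $v_k^T\Delta_\delta v_k=(\phi_k,\Delta_\delta\phi_k)$ in the $\Lp 2$ scalar product. Since $\Delta_\delta=\dq_\delta^-\dq_\delta^+$ and $\dq_\delta^+$, $\dq_\delta^-$ are mutually anti-adjoint on $\Lp 2$ (Section \ref{sct:basics}), this equals $-\|\dq_\delta^+\phi_k\|_{\Lp 2}^2$. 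Summing against $\rho_k$ and combining with the previous display yields the equality $\sum_{k=1}^N\rho_k\|\dq_\delta^+\phi_k\|_{\Lp 2}^2=-\tr{\Delta_\delta\mm_\delta[n]}$. Finally I would apply \eqref{eq:H1byH} with $R=\mm_\delta[n]\in\densp$ and replace $\eng[\mm_\delta[n]]$ by $\dent(n)$ via \eqref{eq:nA}, which gives precisely \eqref{eq:phiH1up}.

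The only delicate point is the $\delta$-bookkeeping in the second step, i.e.\ verifying that $\phi_k$ is exactly the rescaling of $v_k$ which is $\Lp 2$-normalised and consistent with the matrix/operator correspondence \eqref{eq:AK}, so that the factors of $\delta$ cancel as claimed. Beyond that I expect no genuine obstacle: the statement reduces entirely to the corollary \eqref{eq:H1byH} proved earlier.
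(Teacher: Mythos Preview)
Your proposal is correct and follows essentially the same route as the paper: both arguments establish the identity $\sum_{k=1}^N\rho_k\|\dq_\delta^+\phi_k\|_{\Lp 2}^2=-\tr{\Delta_\delta\mm_\delta[n]}$ and then invoke \eqref{eq:H1byH} together with $\eng[\mm_\delta[n]]=\dent(n)$. The paper computes the trace identity via cyclicity applied directly to $\tr{\dq_\delta^-\dq_\delta^+(\delta\phi_k\phi_k^T)}$, while you pass through the quadratic form $v_k^T\Delta_\delta v_k=(\phi_k,\Delta_\delta\phi_k)$ and the anti-adjointness of $\dq_\delta^\pm$; these are equivalent one-line manipulations, and your $\delta$-bookkeeping is correct.
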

\begin{proof}[Proof of Lemma \ref{lem:phiH1up}]
  By cyclicity of the trace, recalling that $(\dq_\delta^-)^T=-\dq_\delta^+$, one obtains that
  \begin{align*}
    -\tr{\Delta_\delta\mm_\delta[n]}
    &= -\sum_{k=1}^N\rho_k\tr{\dq_\delta^-\dq_\delta^+(\delta\phi_k\phi_k^T)} \\
    &= \sum_{k=1}^N\rho_k\delta\tr{(\dq_\delta^+\phi_k)(\dq_\delta^+\phi_k)^T}
    = \sum_{k=1}^N\rho_k\big\|\dq_\delta^+\phi_k\big\|_{\Lp 2}^2.
  \end{align*}
  Now observe that the left-hand side above is controlled by means of \eqref{eq:H1byH}, leading directly to \eqref{eq:phiH1up}.
\end{proof}
\begin{proof}[Proof of estimate \eqref{est:mono}]
  By taking difference quotients in the representation \eqref{eq:thisisn}, one obtains that
  \begin{align*}
    \dq_\delta^+n(\xi)
    = \sum_{k=1}^N\rho_k \frac{\phi_k(\xi+\delta)^2-\phi_k(\xi)^2}\delta
    = \sum_{k=1}^N\rho_k\big(\phi_k(\xi+\delta)+\phi_k(\xi)\big)\dq_\delta^+\phi_k(\xi),
  \end{align*}
  and therefore, with the aid of the discrete $H^1$-$L^\infty$-bound from Lemma \ref{lem:H1Linfty} in the appendix,
  \begin{align*}
    |\dq_\delta^+n\big|
    \le 2\sum_{k=1}^N\rho_k \|\phi_k\|_{L^\infty_\delta(\crc)}\big|\dq_\delta^+\phi_k\big|
    \le 2\sum_{k=1}^N\rho_k \big(1+\|\dq_\delta^+\phi_k\|_{L^2_\delta(\crc)}\big)\,\big|\dq_\delta^+\phi_k\big|.
  \end{align*}
  So finally, in combination with Minkowski's inequality, the bound \eqref{eq:phiH1up} implies that
  \begin{align*}
    \big\|\dq_\delta^+n\big\|_{L^2_\delta(\crc)}
    \le 4 \sum_{k=1}^N\rho_k \big(1+\|\dq_\delta^+\phi_k\|_{L^2_\delta(\crc)}\big)
    \le 4 + \frac{8}{\hbar^2}\big(\dent(n)-\underline{\dent}_{\hbar^2/2}\big).
  \end{align*}
  Now apply this to estimate to $n=n_\delta(T;\cdot)$, using that $\dent(n_\delta(T))\le\dent(n(0))$ and \eqref{eq:overdent}.
  This yields \eqref{est:mono}.
\end{proof}
For later reference, we draw some further conclusions from the considerations above.
\begin{lemma}
  There is a $\delta$-uniform constant $C$ such that for any $T>0$:
  \begin{align}
    \label{est:ninfty}
    \|n_\delta(T)\|_{\Lp\infty} &\le C\big(1+\overline\dent\big), \\
    \label{est:nebendiag}
    \|\nu_\delta^+-n_\delta\|_{\Lp\infty} &\le C\big(1+\overline\dent\big)\delta^{1/2}.
  \end{align}
\end{lemma}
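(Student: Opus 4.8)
The plan is to read off both bounds from the spectral representation \eqref{eq:thisismm}--\eqref{eq:thisisn} of the discrete quantum Maxwellian, combined with the a priori control of Lemma \ref{lem:phiH1up}. Fix $T>0$ and set $n:=n_\delta(T;\cdot)$; write $\rho_k\in(0,1)$ and $\phi_k\in\Lp2$ for the eigenvalues and normalized eigenfunctions from \eqref{eq:thisisn}, and recall $\sum_{k=1}^N\rho_k=\tr{\mm_\delta[n]}=1$. Since the solution is global and positive by Proposition \ref{prp:positivity}, the entropy monotonicity of Proposition \ref{prp:dentdiss} gives $\dent(n)\le\dent(n_\delta^\text{in})$, and together with \eqref{eq:overdent} and Lemma \ref{lem:phiH1up} this yields the single estimate that drives everything below,
\begin{align*}
  \sum_{k=1}^N\rho_k\big\|\dq_\delta^+\phi_k\big\|_{\Lp2}^2 \le \frac2{\hbar^2}\big(\dent(n)-\underline{\dent}_{\hbar^2/2}\big) \le \frac2{\hbar^2}\,\overline{\dent}.
\end{align*}

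For \eqref{est:ninfty}, I would insert the discrete $H^1$--$L^\infty$ inequality of Lemma \ref{lem:H1Linfty} --- exactly as already used in the proof of \eqref{est:mono} --- namely $\|\phi_k\|_{\Lp\infty}\le1+\|\dq_\delta^+\phi_k\|_{\Lp2}$, into the representation $n(\xi)=\sum_k\rho_k\phi_k(\xi)^2$, and then use $(1+a)^2\le2+2a^2$, $\sum_k\rho_k=1$ and the estimate above:
\begin{align*}
  n(\xi) \le \sum_{k=1}^N\rho_k\|\phi_k\|_{\Lp\infty}^2 \le 2 + 2\sum_{k=1}^N\rho_k\big\|\dq_\delta^+\phi_k\big\|_{\Lp2}^2 \le 2 + \frac4{\hbar^2}\,\overline{\dent},
\end{align*}
which is \eqref{est:ninfty} with $C=\max(2,4/\hbar^2)$.

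For \eqref{est:nebendiag}, I would use that $\delta\,\nu_\delta^+(j\delta)$ is the next-to-diagonal entry $\cmp{\mm_\delta[n]}{j,j+1}$ while $\delta\,n(j\delta)=\cmp{\mm_\delta[n]}{jj}$; inserting \eqref{eq:thisismm} and $\phi_k(j\delta)=\delta^{-1/2}\cmp{v_k}{j}$ gives the pointwise identity
\begin{align*}
  \nu_\delta^+(j\delta)-n(j\delta) = \sum_{k=1}^N\rho_k\,\phi_k(j\delta)\big(\phi_k((j+1)\delta)-\phi_k(j\delta)\big) = \delta\sum_{k=1}^N\rho_k\,\phi_k(j\delta)\,\dq_\delta^+\phi_k(j\delta).
\end{align*}
Then I apply the Cauchy--Schwarz inequality in $k$ with weights $\rho_k$: the first factor equals $n(j\delta)^{1/2}$, controlled by \eqref{est:ninfty}; for the second factor I use the crude bound $\delta\,(\dq_\delta^+\phi_k(j\delta))^2\le\|\dq_\delta^+\phi_k\|_{\Lp2}^2$ (a single summand of the discrete $L^2$-norm) followed by the a priori estimate above, so that $\sum_k\rho_k(\dq_\delta^+\phi_k(j\delta))^2\le\frac2{\hbar^2}\delta^{-1}\overline{\dent}$. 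Multiplying the two factors by the prefactor $\delta$ yields
\begin{align*}
  \big|\nu_\delta^+(j\delta)-n(j\delta)\big| \le \delta\cdot\big(C(1+\overline{\dent})\big)^{1/2}\cdot\Big(\frac2{\hbar^2}\,\delta^{-1}\,\overline{\dent}\Big)^{1/2} \le C'\,\delta^{1/2}(1+\overline{\dent}),
\end{align*}
which is \eqref{est:nebendiag}.

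I expect no genuine obstacle here. The one bookkeeping point worth flagging is that passing from the pointwise value $\dq_\delta^+\phi_k(j\delta)$ to the full $\Lp2$-norm costs a factor $\delta^{-1/2}$, which cancels exactly one half of the $\delta$-prefactor produced by writing the off-diagonal-minus-diagonal difference as a difference quotient --- this is precisely why the gain in \eqref{est:nebendiag} is $\delta^{1/2}$ and not $\delta$. The only other thing to be careful about is that the entropy monotonicity must be invoked for the genuinely global solution (Proposition \ref{prp:positivity}), so that the a priori estimate, and hence \eqref{est:ninfty}--\eqref{est:nebendiag}, hold for every $T>0$.
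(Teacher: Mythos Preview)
Your proof is correct and follows essentially the same strategy as the paper: both rely on the spectral representation \eqref{eq:thisisn}, Lemma~\ref{lem:phiH1up}, and Lemma~\ref{lem:H1Linfty}, and for \eqref{est:nebendiag} you start from the identical pointwise identity $\nu_\delta^+-n_\delta=\delta\sum_k\rho_k\phi_k\,\dq_\delta^+\phi_k$ together with the crude bound $\delta^{1/2}|\dq_\delta^+\phi_k(\xi)|\le\|\dq_\delta^+\phi_k\|_{\Lp2}$. The only tactical differences are that for \eqref{est:ninfty} the paper simply cites the already-proven \eqref{est:mono} and applies Lemma~\ref{lem:H1Linfty} to $n_\delta$ itself (using $\|n_\delta\|_{\Lp1}=1$), whereas you bound the spectral sum directly by applying Lemma~\ref{lem:H1Linfty} to each $\phi_k$; and for \eqref{est:nebendiag} the paper estimates termwise via $\|\phi_k\|_{\Lp\infty}\le1+\|\dq_\delta^+\phi_k\|_{\Lp2}$, while you instead use weighted Cauchy--Schwarz in $k$ to factor out $n(\xi)^{1/2}$ --- both assemblies yield the same $\delta^{1/2}(1+\overline\dent)$ bound.
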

\begin{proof}
  The first claim \eqref{est:ninfty} is a combination of the key estimate \eqref{est:mono} above with the the $H^1$-$L^\infty$-bound from Lemma \ref{lem:H1Linfty} in the appendix.
  For the proof of \eqref{est:nebendiag}, observe that at time $t=T$:
  \begin{align*}
    \nu^+(\xi)-n(\xi)
    = \sum_{k=1}^N \rho_k\big[\phi_k(\xi)\phi_k(\xi+\delta)-\phi_k(\xi)^2\big]
    = \delta\sum_{k=1}^N\rho_k\phi_k(\xi)\,\dq^+\phi_k(\xi).
  \end{align*}
  Directly from the definition of the $\Lp 2$-norm, it follows that that $\delta^{1/2}|\dq^+\phi_k(\xi)|\le\|\dq^+\phi_k\|_{\Lp 2}$. In combination with the $H^1$-$L^\infty$-bound form Lemma \ref{lem:H1Linfty}, we obtain that
  \begin{align*}
    \big|\nu^+(\xi)-n(\xi)\big|
    \le \delta^{1/2}\sum_{k=1}^N\rho_k\big(1+\|\dq^+\phi_k\|_{\Lp 2}\big)\,\|\dq^+\phi_k\|_{\Lp 2}
    \le 2\delta^{1/2}\sum_{k=1}^N\rho_k\big(1+\|\dq^+\phi_k\|_{\Lp 2}^2\big).
 \end{align*}
  By means of \eqref{eq:phiH1up}, the claim \eqref{est:nebendiag} follows.  
\end{proof}

\subsection{Convergence}
To define appropriate spatial interpolations, introduce the following families of hat functions $\Lambda_\delta\in C^{0,1}(\crc)$ and bump functions $\Phi_\delta\in C^{1,1}(\crc)$:
\begin{align*}
  \Lambda_\delta^\zeta(x)
  &=
  \begin{cases}
    1-(\zeta-x)/\delta & \text{for $\zeta-\delta\le x\le\zeta$}, \\
    1-(x-\zeta)/\delta & \text{for $\zeta\le x\le\zeta+\delta$}, \\
    0 & \text{otherwise},
  \end{cases}
  \\
  \Phi_\delta^\zeta(x) &= \delta^{-1}\int_{\zeta-\delta}^x\big(\Lambda_\delta^\zeta(x')-\Lambda_\delta^{\zeta+\delta}(x')\big)\dd x',
\end{align*}
for $\zeta\in\crcN$. By definition, we have that
\begin{align}
  \label{eq:Phidx}
  \partial_x\Phi_\delta^\zeta(x)
  = \big(\Lambda_\delta^\zeta(x)-\Lambda_\delta^{\zeta+\delta}(x)\big)/\delta
  = \big(\Lambda_\delta^\zeta(x)-\Lambda_\delta^\zeta(x+\delta)\big)/\delta
  = - \dq^+_\delta\Lambda_\delta^\zeta(x).
\end{align}
Notice further that $\Phi_\delta^\zeta$ is non-negative, is positive on $(\zeta-\delta,\zeta+2\delta)$, is bounded by one, and has integral equal to $\delta$.
Define the piecewise linear interpolation $\hat n_\delta$ of the density $n_\delta$ and the piecewise quadratic interpolation $F_\delta$ of the flux accordingly by
\begin{align}
  \label{eq:inter-n}
  \hat n_\delta(t;x) = \sum_\zeta n_\delta(t;\zeta)\Lambda_\delta^\zeta(x),
  \quad
  F_\delta(t;x) = \sum_\zeta \big(\nu_\delta^+\dq_\delta^+A_\delta\big)(t;\zeta) \,\Phi_\delta^\zeta(x),
\end{align}
\begin{lemma}
  \label{lem:F}
  For all $t>0$ and $x\in\crc$,
  \begin{align}
    \label{eq:nF}
    \partial_t\hat n_\delta(t;x) = \partial_xF_\delta(t;x).
  \end{align}
  Moreover, $F_\delta$ is $\delta$-uniformly bounded in $L^2\big((0,\infty)\times\crc\big)$:
  \begin{align}
    \label{eq:F22}
    \int_0^T\|F(s)\|_{L^2(\crc)}^2\dd s \le C(1+\sqrt\delta) \,(1+\overline\dent)^2,
  \end{align}
  with a $C$ that may depend on $T$ but not on $\delta$.
\end{lemma}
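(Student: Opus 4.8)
The plan is to verify the continuity identity \eqref{eq:nF} and the $L^2$-bound \eqref{eq:F22} separately, both by elementary computation starting from the explicit definitions of the interpolants $\hat n_\delta$ and $F_\delta$.

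For \eqref{eq:nF}: since the right-hand side of \eqref{eq:DQDD} depends differentiably on $n_\delta$ (Proposition \ref{th:quantmax} together with Lemma \ref{lem:mmisdifferentiable}), the solution $n_\delta$ is a classical, $C^1$ curve, so for each fixed $x$ we may differentiate the piecewise linear interpolation termwise and insert the equation,
\[
  \partial_t\hat n_\delta(t;x) = \sum_\zeta\dot n_\delta(t;\zeta)\,\Lambda_\delta^\zeta(x) = \sum_\zeta\big[\dq_\delta^-\big(\nu_\delta^+\dq_\delta^+A_\delta\big)\big](t;\zeta)\,\Lambda_\delta^\zeta(x).
\]
On the other hand, differentiating $F_\delta$ in $x$ and using the relation \eqref{eq:Phidx} between $\partial_x\Phi_\delta^\zeta$ and a difference quotient of the hat functions gives $\partial_xF_\delta(t;x)=\sum_\zeta\big(\nu_\delta^+\dq_\delta^+A_\delta\big)(t;\zeta)\,\partial_x\Phi_\delta^\zeta(x)$; a cyclic shift of the summation index on $\crcN$ (discrete summation by parts, using periodicity and $\Lambda_\delta^\zeta(\cdot\pm\delta)=\Lambda_\delta^{\zeta\mp\delta}(\cdot)$) transfers the difference quotient from the hat functions onto the coefficients, turning this into exactly $\sum_\zeta\big[\dq_\delta^-\big(\nu_\delta^+\dq_\delta^+A_\delta\big)\big](t;\zeta)\,\Lambda_\delta^\zeta(x)$, which matches $\partial_t\hat n_\delta$.

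For \eqref{eq:F22}: I would first observe that the bump functions form a partition of unity, $\sum_\zeta\Phi_\delta^\zeta\equiv1$; indeed by \eqref{eq:Phidx} and $\sum_\zeta\Lambda_\delta^\zeta\equiv1$ the function $\sum_\zeta\Phi_\delta^\zeta$ has vanishing derivative, hence is constant, and it integrates to $N\delta=1$ over $\crc$. Since moreover each $\Phi_\delta^\zeta$ is non-negative, bounded by one, with $\int_\crc\Phi_\delta^\zeta=\delta$, Jensen's inequality applied to the convex combination $F_\delta(t;x)=\sum_\zeta\big(\nu_\delta^+\dq_\delta^+A_\delta\big)(t;\zeta)\Phi_\delta^\zeta(x)$ gives $|F_\delta(t;x)|^2\le\sum_\zeta\big(\nu_\delta^+\dq_\delta^+A_\delta\big)^2(t;\zeta)\,\Phi_\delta^\zeta(x)$, and integrating in $x$,
\[
  \|F_\delta(t)\|_{L^2(\crc)}^2 \le \delta\sum_\zeta\big(\nu_\delta^+\dq_\delta^+A_\delta\big)^2(t;\zeta) \le \big\|\nu_\delta^+(t)\big\|_{\Lp\infty}\,\delta\sum_\zeta\nu_\delta^+\,\big(\dq_\delta^+A_\delta\big)^2(t;\zeta),
\]
where the last step uses $\nu_\delta^+\ge0$ (Lemma \ref{lem:nu}). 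Bounding $\|\nu_\delta^+(t)\|_{\Lp\infty}\le\|n_\delta(t)\|_{\Lp\infty}+\|\nu_\delta^+(t)-n_\delta(t)\|_{\Lp\infty}\le C(1+\overline\dent)(1+\sqrt\delta)$ uniformly in $t$ and $\delta$ by \eqref{est:ninfty} and \eqref{est:nebendiag}, integrating over $t\in[0,T]$, and invoking the entropy dissipation estimate \eqref{est:dissip} (whose right-hand side is $\overline\dent\ge0$), one arrives at $\int_0^T\|F_\delta(t)\|_{L^2(\crc)}^2\dd t\le C(1+\sqrt\delta)(1+\overline\dent)\,\overline\dent\le C(1+\sqrt\delta)(1+\overline\dent)^2$, which is \eqref{eq:F22}.

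There is no genuine obstacle here; the only point requiring care is the index bookkeeping in the summation by parts for \eqref{eq:nF}, where one must match the forward/backward difference quotients to the grid shifts of $\Lambda_\delta^\zeta$ consistently with the periodic indexing on $\crcN$, so that precisely the backward quotient $\dq_\delta^-$ from \eqref{eq:DQDD} — and not $\dq_\delta^+$ — appears on both sides of the identity.
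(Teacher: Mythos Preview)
Your argument is correct. The derivation of \eqref{eq:nF} via termwise differentiation and summation by parts is exactly what the paper does (the paper runs the chain in the order $\partial_t\hat n_\delta\to\partial_xF_\delta$, you run it backwards, but the computation is identical).

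For \eqref{eq:F22} you take a slightly different and cleaner route: you observe that $\sum_\zeta\Phi_\delta^\zeta\equiv1$ and apply Jensen's inequality pointwise to the convex combination, then integrate using $\int_\crc\Phi_\delta^\zeta=\delta$. The paper instead expands $\|F_\delta\|_{L^2}^2$ as a double sum, uses that $\Phi_\delta^\zeta\Phi_\delta^{\zeta'}\equiv0$ for $|\zeta-\zeta'|>2\delta$, and bounds the surviving cross terms by $2ab\le a^2+b^2$ together with $\int(\Phi_\delta^\zeta)^2\le3\delta$. Both arguments land on $\|F_\delta(t)\|_{L^2}^2\le C\|\nu_\delta^+(t)\|_{\Lp\infty}\,\delta\sum_\zeta\nu_\delta^+(\dq_\delta^+A_\delta)^2$ and then finish identically via \eqref{est:ninfty}, \eqref{est:nebendiag}, \eqref{est:dissip}; your Jensen argument simply gives the sharper constant.
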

\begin{remark}
  \eqref{eq:nF} above can be obtained from the discrete nlQDD equation \eqref{eq:DQDD} by convolution in space with a rectangular mollifier supported on $[-\delta/2,\delta/2]$.  
\end{remark}
\begin{proof}
  By definition of $\hat n_\delta$ in \eqref{eq:inter-n} and by relation \eqref{eq:Phidx}, using a summation by parts, we obtain
  \begin{align*}
    \partial_t\hat n_\delta(t;x)
    = \sum_\zeta \dot n_\delta(t;\zeta)\,\Lambda_\delta^\zeta(x) 
    &= \sum_\zeta\dq_\delta^-\big(\nu_\delta^+\dq_\delta^+A\big)(t;\zeta)\,\Lambda_\delta^\zeta(x) \\
    &= -\sum_\zeta \big(\nu_\delta^+\dq_\delta^+A\big)(t;\zeta)\,\dq_\delta^+\Lambda_\delta^\zeta(x) \\
    &= \sum_\zeta \big(\nu_\delta^+\dq_\delta^+A\big)(t;\zeta)\,\partial_x\Phi_\delta^\zeta(x) 
   = \partial_xF_\delta(t;x).
  \end{align*}
  This proves \eqref{eq:nF}. 
  Next, since $\Phi_\delta^\zeta\Phi_\delta^{\zeta'}\equiv 0$ if $|\zeta-\zeta'|>2\delta$, one concludes --- using the elementary estimate $2ab\le a^2+b^2$ --- that
  \begin{align*}
    \|F\|_{L^2}^2
    &= \sum_{\zeta,\zeta'} \int_\crc\big(\nu_\delta^+\dq_\delta^+A\big)(t;\zeta)\Phi_\delta^\zeta(x)\, \big(\nu_\delta^+\dq_\delta^+A\big)(t;\zeta')\Phi_\delta^{\zeta'}(x)\dd x\\
    &\le 3\sum_\zeta \big(\nu_\delta^+\dq_\delta^+A\big)^2(t;\zeta)\,\left(\int_{\crc}\big(\Phi_\delta^\zeta(x)\big)^2\dd x\right) 
    \le 9 \|\nu_\delta^+(t)\|_{L^\infty_\delta(\crc)}\,\delta\sum_\zeta \nu_\delta^+\big(\dq_\delta^+A(t;\zeta)\big)^2,
  \end{align*}
  where we have used that $\Phi_\delta^\zeta$ is supported on an interval of length $3\delta$, with $0\le\Phi_\delta^\zeta\le1$, for the last estimate above.
  From here, recalling \eqref{est:ninfty}, \eqref{est:nebendiag} and \eqref{est:dissip}, we finally arrive at \eqref{eq:F22}.
\end{proof}
The first step in our convergence proof is to show uniform convergence of $\hat n_\delta$.
\begin{proposition}
  \label{prp:nconv}
  There are a function $n_*\in L^\infty\big((0,\infty);H^1(\crc)\big)\cap C^{1/10}_\loc(\setRnn\times\crc)$ such that  $\hat n_{\delta}\to n_*$ locally uniformly on $\setRnn\times\crc$ along a suitable sub-sequence $\delta\searrow0$.
\end{proposition}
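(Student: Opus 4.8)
The plan is to obtain Proposition~\ref{prp:nconv} by an Arzelà--Ascoli argument on each compact set $[0,T]\times\crc$, for which the essential point is a $\delta$-uniform joint Hölder estimate on the piecewise linear interpolants $\hat n_\delta$ from \eqref{eq:inter-n}; the limit's $H^1$-regularity will then come for free by lower semicontinuity.

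\textbf{Spatial regularity.} Since $\hat n_\delta$ is the piecewise linear interpolant of $n_\delta$, its spatial derivative on each $I_\zeta$ equals $\dq_\delta^+n_\delta(\zeta)$, so $\|\partial_x\hat n_\delta(t)\|_{L^2(\crc)}=\|\dq_\delta^+n_\delta(t)\|_{\Lp 2}$, which is $\delta$-uniformly bounded by \eqref{est:mono}; together with \eqref{est:ninfty} this gives a $\delta$-uniform bound of $\hat n_\delta$ in $L^\infty((0,\infty);H^1(\crc))$, hence by the one-dimensional Sobolev embedding a $\delta$-uniform $C^{0,1/2}(\crc)$-bound on $\hat n_\delta(t;\cdot)$, uniform in $t$.

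\textbf{Temporal regularity.} Fix $T>0$ and $0\le t_1<t_2\le T$, write $\tau=t_2-t_1\le 1$ and $h:=\hat n_\delta(t_2;\cdot)-\hat n_\delta(t_1;\cdot)$. By Lemma~\ref{lem:F}, $h=\partial_x g$ with $g:=\int_{t_1}^{t_2}F_\delta(s;\cdot)\dd s$, and Cauchy--Schwarz together with \eqref{eq:F22} give $\|g\|_{L^2(\crc)}\le\tau^{1/2}\big(\int_0^T\|F_\delta(s)\|_{L^2(\crc)}^2\dd s\big)^{1/2}\le C\tau^{1/2}$. Integrating by parts on the torus and using \eqref{est:mono} once more, $\|h\|_{L^2(\crc)}^2=-\int_\crc(\partial_xh)\,g\le\|\partial_xh\|_{L^2}\|g\|_{L^2}\le C\tau^{1/2}$. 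Finally the elementary one-dimensional interpolation inequality $\|h\|_{L^\infty(\crc)}^2\le\|h\|_{L^2(\crc)}^2+2\|h\|_{L^2(\crc)}\|\partial_xh\|_{L^2(\crc)}$ yields $\|h\|_{L^\infty(\crc)}\le C\tau^{1/8}$, i.e.\ a $\delta$-uniform $C^{0,1/8}$-bound in time. (This also supplies the $\delta$-uniform Hölder-in-time bound promised in the remark after Corollary~\ref{cor:DQDDhoelder}.)

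\textbf{Compactness and identification of the limit.} On each bounded set $[0,T]\times\crc$, the two previous steps combine to a $\delta$-uniform bound of $\hat n_\delta$ in $C^{1/10}([0,T]\times\crc)$ (in fact with exponent $1/8$); together with the $L^\infty$-bound \eqref{est:ninfty}, the Arzelà--Ascoli theorem and a diagonal extraction over $T=1,2,\dots$ produce a subsequence $\delta\searrow0$ along which $\hat n_\delta\to n_*$ locally uniformly on $\setRnn\times\crc$, with $n_*\in C^{1/10}_\loc(\setRnn\times\crc)$. That $n_*\in L^\infty((0,\infty);H^1(\crc))$ then follows by passing to the limit in $\big|\int_\crc\hat n_\delta(t)\,\partial_x\phi\dd x\big|=\big|\int_\crc(\partial_x\hat n_\delta(t))\,\phi\dd x\big|\le C(1+\overline{\dent})\|\phi\|_{L^2(\crc)}$ for every $\phi\in C^\infty(\crc)$ and $t>0$, using the locally uniform convergence on the left and the $\delta$-uniform bound \eqref{est:mono} on the right.

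\textbf{Main obstacle.} The soft parts (Arzelà--Ascoli, the $H^1$ lower semicontinuity) are routine; the real work is the temporal estimate. The flux bound \eqref{eq:F22} by itself only controls $\partial_t\hat n_\delta$ in $L^2\big(0,T;H^{-1}(\crc)\big)$, which is not an equicontinuity statement; one has to trade this against the $H^1$-in-space control \eqref{est:mono}, and it is precisely this trade-off --- carried out above via one integration by parts and the $L^2$--$L^\infty$ interpolation --- that both makes the argument work and accounts for the reduced Hölder exponent.
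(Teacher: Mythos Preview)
Your proof is correct and follows essentially the same route as the paper's: both rely on the $\delta$-uniform $H^1$-in-space bound \eqref{est:mono} and the $L^2$-in-time bound \eqref{eq:F22} on the flux $F_\delta$, and both trade the $H^{-1}$-type temporal control against the spatial $H^1$ bound via the same integration-by-parts identity $\|h\|_{L^2}^2=-\int(\partial_xh)\,g$ to reach an Arzelà--Ascoli argument.

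The only cosmetic difference is in packaging. The paper bundles the interpolation step into a single appendix lemma (Lemma~\ref{lem:C110}), which for a mean-zero $f\in H^1(\crc)$ with primitive $F$ gives $\|f\|_{C^{1/10}(\crc)}\le C\|\partial_xf\|_{L^2}^{4/5}\|F\|_{L^2}^{1/5}$; applied to $f=\hat n_\delta(t+\tau)-\hat n_\delta(t)$ this yields directly a joint space--time $C^{1/10}$ bound. You instead treat space and time separately, obtaining $C^{1/2}$ in space and $C^{1/8}$ in time, hence a slightly better joint exponent~$1/8$. Your route is marginally more elementary and has the additional virtue of spelling out the $L^\infty((0,\infty);H^1(\crc))$ regularity of the limit via lower semicontinuity, a point the paper asserts in the statement but does not address in its proof.
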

\begin{proof}
  Below, we show that $\hat n_\delta$ is $\delta$-uniformly bounded in $C^{1/10}([0,T]\times\crc)$, for any $T>0$.
  The claim about uniform convergence then follows directly from the Arzela-Ascoli theorem.

  Pick $t\ge0$ and $\tau>0$ with $t+\tau\le T$. From the evolution equation \eqref{eq:nF}, we conclude that
  \begin{align*}
    \hat n_\delta(t+\tau)-\hat n_\delta(t)
    = \int_t^{t+\tau}\partial_t\hat n_\delta(s)\dd s = \partial_x\int_t^{t+\tau}F_\delta(s)\dd s.
  \end{align*}
  We apply the interpolation estimate from Lemma \ref{lem:C110} in the appendix:
  \begin{align*}
    \big\|\hat n_\delta(t+\tau)-\hat n_\delta(t)\big\|_{C^{1/10}(\crc)}
    &\le C\big\|\partial_x\hat n_\delta(t+\tau)-\partial_x\hat n_\delta(t)\big\|_{L^2(\crc)}^{4/5}
    \left\|\int_t^{t+\tau}F_\delta(s)\dd s\right\|_{L^2(\crc)}^{1/5} \\
    &\le C'\sup_{s\in[0,T]}\|\partial_x\hat n_\delta(s)\|_{L^2}^{4/5}\left(\int_t^{t+\tau}\|F_\delta(s)\|_{L^2(\crc)}\dd s\right)^{1/5} \\
    &\le C''(1+\overline\dent)^{4/5}\|F_\delta\|_{L^2([0,T]\times\crc)}^{1/5} h^{1/10}.
  \end{align*}
  Substitute \eqref{eq:F22} above, divide by $h^{1/10}$, and take the supremum with respect to $t$ and $\tau$.
  This shows boundedness of $\hat n_\delta$ in $C^{1/10}([0,T]\times\crc)$.
\end{proof}
The second step is the passage with the continuity equation \eqref{eq:nF} to the limit.
\begin{proposition}
  \label{prp:abstract}
  There is a $v_*\in L^2((0,\infty)\times\crc;n_*)$ such that $F_{\delta}\rightharpoonup n_*v_*$ weakly in $L^2((0,\infty)\times\crc)$ along a suitable sub-sequence $\delta\searrow0$, and
  \begin{align}
    \label{eq:abstract}
    \partial_tn_* = \partial_x(n_*v_*)
  \end{align}
  holds in the sense of distributions.
\end{proposition}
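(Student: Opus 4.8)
The plan is to extract a weak $L^2$-limit $F_*$ of the interpolated fluxes $F_\delta$, pass to the limit in the continuity equation \eqref{eq:nF} to get $\partial_t n_* = \partial_x F_*$ in the distributional sense, and then use the entropy-dissipation bound \eqref{est:dissip} to show that $F_*$ is absolutely continuous with respect to $n_*$ with an $L^2(n_*)$-density $v_*$, which yields \eqref{eq:abstract}. I would carry this out along a suitable subsequence of the one already fixed in Proposition \ref{prp:nconv}, checking at the end that all convergences hold along one common subsequence.

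\textbf{Weak limit and limit continuity equation.} First I would record that $\{\Phi_\delta^\zeta\}_{\zeta\in\crcN}$ is a partition of unity, which follows from $\partial_x\Phi_\delta^\zeta=-\dq_\delta^+\Lambda_\delta^\zeta$ together with $\sum_\zeta\Lambda_\delta^\zeta\equiv1$ and $\int_\crc\Phi_\delta^\zeta=\delta$. Applying the Cauchy--Schwarz inequality in $\zeta$ to $F_\delta(t;x)=\sum_\zeta(\nu_\delta^+\dq_\delta^+A_\delta)(t;\zeta)\Phi_\delta^\zeta(x)$ then gives the pointwise bound $|F_\delta|^2\le M_\delta D_\delta$, where $M_\delta:=\sum_\zeta\nu_\delta^+(\cdot;\zeta)\Phi_\delta^\zeta$ and $D_\delta:=\sum_\zeta\big(\nu_\delta^+(\dq_\delta^+A_\delta)^2\big)(\cdot;\zeta)\Phi_\delta^\zeta$. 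Since $\int_\crc\Phi_\delta^\zeta=\delta$, the space--time integral of $D_\delta$ over $(0,\infty)\times\crc$ equals $\int_0^\infty\delta\sum_\xi\nu_\delta^+(\dq_\delta^+A_\delta)^2\,dt\le\overline\dent$ by \eqref{est:dissip}, and $M_\delta$ is $\delta$-uniformly bounded by \eqref{est:ninfty} and \eqref{est:nebendiag}; hence $F_\delta$ is $\delta$-uniformly bounded in $L^2((0,\infty)\times\crc)$ and, along a subsequence, $F_\delta\wto F_*$ weakly there. Testing \eqref{eq:nF} against $\varphi\in C_c^\infty((0,\infty)\times\crc)$ gives $\int\!\int\hat n_\delta\,\partial_t\varphi=\int\!\int F_\delta\,\partial_x\varphi$; letting $\delta\searrow0$, using $\hat n_\delta\to n_*$ locally uniformly from Proposition \ref{prp:nconv} and $F_\delta\wto F_*$ weakly in $L^2$, yields $\partial_t n_*=\partial_x F_*$ in $\mathcal D'$.

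\textbf{Absolute continuity of the limit flux.} The remaining, and main, task is to identify $F_*=n_*v_*$ with $v_*\in L^2((0,\infty)\times\crc;n_*)$. From $|F_\delta|^2\le M_\delta D_\delta$ and the computation of $\int D_\delta$ above one gets $\int_0^\infty\!\int_\crc|F_\delta|^2/M_\delta\le\overline\dent$. Next, $M_\delta\to n_*$ locally uniformly: $M_\delta(t;x)$ is a convex combination of the values $\nu_\delta^+(t;\zeta)$ over cells $\zeta$ within distance $2\delta$ of $x$, and these equal $n_\delta(t;\zeta)$ up to an $O(\delta^{1/2})$ error by \eqref{est:nebendiag}, while $n_\delta(t;\zeta)=\hat n_\delta(t;\zeta)\to n_*(t;x)$ by the uniform convergence $\hat n_\delta\to n_*$ and the continuity of $n_*$. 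The functional $(\mu,G)\mapsto\int|G|^2/\mu$ (with the usual convention) is jointly convex and lower semicontinuous for $\mu_\delta\to\mu$ locally uniformly and $G_\delta\wto G$ weakly in $L^2$ --- most easily via the dual representation $\int|G|^2/\mu=\sup_{a\in C_c}\{2\int aG-\int a^2\mu\}$ --- so $\int_0^\infty\!\int_\crc|F_*|^2/n_*\le\liminf_\delta\int_0^\infty\!\int_\crc|F_\delta|^2/M_\delta\le\overline\dent$. Finiteness of this integral forces $F_*=0$ a.e.\ on $\{n_*=0\}$; hence $v_*:=F_*/n_*$, set to $0$ where $n_*=0$, obeys $\int n_*v_*^2\le\overline\dent$, i.e.\ $v_*\in L^2((0,\infty)\times\crc;n_*)$ and $F_*=n_*v_*$. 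Inserting this into $\partial_t n_*=\partial_x F_*$ gives \eqref{eq:abstract}.

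\textbf{Main obstacle.} The steps towards the weak limit and the distributional continuity equation are routine. The crux is the last step: the comparison density $M_\delta$ must simultaneously dominate $|F_\delta|^2/D_\delta$ pointwise and converge to $n_*$, which is exactly what the Cauchy--Schwarz/partition-of-unity trick together with \eqref{est:nebendiag} provides, and one must then invoke the lower semicontinuity of this Benamou--Brenier-type functional in the correct topology --- locally uniform for $M_\delta$, weak $L^2$ for $F_\delta$ --- and keep everything on one common subsequence.
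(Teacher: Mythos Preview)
Your proof is correct and follows essentially the same route as the paper: extract a weak $L^2$-limit $F_*$, pass to the limit in \eqref{eq:nF}, and then factor $F_*=n_*v_*$ by introducing the auxiliary density $M_\delta=\sum_\zeta\nu_\delta^+\Phi_\delta^\zeta$, bounding $\int|F_\delta|^2/M_\delta$ via Cauchy--Schwarz and \eqref{est:dissip}, and invoking lower semicontinuity of the Benamou--Brenier action. The only difference is presentational: the paper cites the abstract result \cite[Theorem~5.4.4]{AGS} as a black box for the factorization step, whereas you unfold that argument directly via the dual representation $\int|G|^2/\mu=\sup_a\{2\int aG-\int a^2\mu\}$; your version is more self-contained but otherwise identical in substance.
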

\begin{proof}
  Recalling Lemma \ref{lem:F}, one can select a sub-sequence $\delta\searrow0$ such that $F_{\delta}$ converges weakly in $L^2((0,\infty)\times\crc)$ to a limit $F_*$, and the linear relation \eqref{eq:nF} passes to limit, $\partial_tn_*=\partial_xF_*$. It therefore only remains to be shown that the weak limit $F_*$ factors in the given way. For that, we invoke the abstract convergence result from \cite[Theorem 5.4.4]{AGS}: let $T>0$ be given and consider
  \begin{align*}
    \mu_\delta(t;x) := \frac1{Z_{T,\delta}}\sum_\zeta \nu_\delta^+(t;\zeta)\Phi_\delta^\zeta(x),
  \end{align*}
  where the normalizing constant $Z_{T,\delta}$ is chosen such that $\mu_\delta$ is a probability density on $[0,T]\times\crc$. By the uniform convergence of $\hat n_{\delta}$ to $n_*$, and by the proximity \eqref{est:nebendiag} of $\nu_\delta^+$, it follows that $\mu_\delta$ converges in $L^1([0,T]\times\crc)$ to $\mu_*=T^{-1}n_*$. Accordingly, we define the velocity field
  \begin{align*}
    v_\delta(t;x) = \frac{F_\delta(t;x)}{\mu_\delta(t;x)} = Z_{T,\delta} \frac{\sum_\zeta \big(\nu_\delta^+\dq_\delta^+A\big)(t;\zeta)\Phi_\delta^\zeta(x)}{\sum_\zeta \nu_\delta^+(t;\zeta)\Phi_\delta^\zeta(x)}.
  \end{align*}
  At each fixed $(t;x)\in[0,T]\times\crc$, we have by the Cauchy-Schwarz inequality for sums that
  \begin{align*}
    |v_\delta(t;x)|^2\mu_\delta(t;x)
    = Z_{T,\delta} \frac{\sum_\zeta \big(\nu_\delta^+\dq_\delta^+A\big)(t;\zeta)\Phi_\delta^\zeta(x)}{\sum_\zeta \nu_\delta^+(t;\zeta)\Phi_\delta^\zeta(x)}
    \le Z_{T,\delta} \sum_\zeta \nu_\delta^+(t;\zeta)\,\big[\dq_\delta^+A(t;\zeta)\big]^2\Phi_\delta^\zeta(x).
  \end{align*}
  Arguing as in the proof of Proposition \ref{prp:nconv} above, we obtain from \eqref{est:dissip} the bound
  \begin{align*}
    \int_0^T\int_\crc |v_\delta|^2\mu_\delta\dd x\dd t
    \le CZ_{T,\delta} \int_0^T\sum_\zeta \nu_\delta^+(t;\zeta)\,\big[\dq_\delta^+A(t;\zeta)\big]^2\dd t
    \le C'Z_{T,\delta}(1+\overline\dent).
  \end{align*}
  The hypotheses of \cite[Theorem 5.4.4]{AGS} are met, an application of that result yields the desired factorization.
\end{proof}
In the third and final step, we identify the velocity field $v_*$ in \eqref{eq:abstract}.
In analogy to $\hat n_\delta$, introduce the linear interpolation $\hat A_\delta$ of $A_\delta$, that is
\begin{align*}
  \hat A_\delta(t;x) = \sum_\zeta A_\delta(t;\zeta)\Phi_\delta^\zeta(x).
\end{align*}
\begin{proposition}
  \label{prp:theendofQDD}
  In addition to the hypotheses of Proposition \ref{prp:abstract}, assume that $n_*$ is positive on $[0,T]\times\crc$. Then the linearly interpolated potentials $\hat A_\delta$ converge weakly in $L^2([0,T];H^1(\crc))$ to a limit $A_*$ as $\delta\searrow0$, after passing to a further sub-sequence if necessary. That limit satisfies the constitutive relation $n_*=\nop[A_*]$, and the velocity field in \eqref{eq:abstract} is given by $v_*=\partial_xA_*$.
\end{proposition}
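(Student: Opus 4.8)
The plan is to first establish the $L^2([0,T];H^1(\crc))$-bound on the interpolated potentials $\hat A_\delta$, extract a weakly convergent subsequence with limit $A_*$, and then identify $A_*$ via the quantum exponential relation and the velocity via the dissipation estimate. For the $H^1$-bound in space, I would use the dissipation estimate \eqref{est:dissip}: since $n_*$ is positive and uniformly continuous on $[0,T]\times\crc$ by Proposition \ref{prp:nconv}, there is some $\underline n>0$ with $n_\delta(t;\xi)\ge\underline n/2$ for all small $\delta$ (using the uniform convergence $\hat n_\delta\to n_*$). By the proximity estimate \eqref{est:nebendiag}, we also get $\nu_\delta^+(t;\xi)\ge\underline n/4$ for $\delta$ small. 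Then
\begin{align*}
  \int_0^T\delta\sum_\xi\big(\dq_\delta^+A_\delta\big)^2\dd t
  \le \frac4{\underline n}\int_0^T\delta\sum_\xi\nu_\delta^+\big(\dq_\delta^+A_\delta\big)^2\dd t
  \le \frac{4\overline\dent}{\underline n},
\end{align*}
which via $\partial_x\hat A_\delta$ being (close to) the piecewise interpolation of $\dq_\delta^+A_\delta$ gives a $\delta$-uniform bound on $\|\partial_x\hat A_\delta\|_{L^2([0,T]\times\crc)}$. For an $L^2$-bound on $\hat A_\delta$ itself (needed for full $H^1$), I would combine the spatial gradient bound with control of the spatial average $\delta\sum_\xi A_\delta(t;\cdot)$; the latter follows from the two-sided bound \eqref{eq:Aupdown}, $\min A_\delta\le-\log Z_{\hbar^2,\delta}\le\max A_\delta$, together with the $H^1$-control (Poincaré on the torus), yielding $\|A_\delta(t;\cdot)\|_{\Lp2}\le C(1+\|\dq_\delta^+A_\delta(t;\cdot)\|_{\Lp2})$, integrable in $t$. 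Hence $\hat A_\delta$ is bounded in $L^2([0,T];H^1(\crc))$ and, passing to a subsequence, converges weakly to some $A_*\in L^2([0,T];H^1(\crc))$.

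Next I would identify $A_*$ through the constitutive relation. We know $n_\delta(t;\cdot)=\nop_\delta[A_\delta(t;\cdot)]$ for a.e.\ $t$, that $\hat n_\delta\to n_*$ uniformly on $[0,T]\times\crc$, and that $\hat A_\delta\rightharpoonup A_*$ weakly in $L^2([0,T]\times\crc)$. To apply Corollary \ref{cor:goal}, I should make sure the hypotheses are satisfied: uniform convergence of the densities holds, and weak $L^2$ convergence of the potentials holds. The piecewise-constant reconstruction of $A_\delta$ differs from $\hat A_\delta$ (piecewise linear) by something that tends to zero in $L^2$ because $\|A_\delta - \hat A_\delta\|$ is controlled by $\delta\|\dq_\delta^+A_\delta\|_{\Lp2}$; so the piecewise-constant version also converges weakly to $A_*$. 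Corollary \ref{cor:goal} then directly gives $n_*=\nop[A_*]$, for a.e.\ $t\in[0,T]$.

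Finally, the velocity field: recall from Proposition \ref{prp:abstract} that $F_\delta\rightharpoonup n_*v_*$, and $F_\delta(t;x)=\sum_\zeta(\nu_\delta^+\dq_\delta^+A_\delta)(t;\zeta)\Phi_\delta^\zeta(x)$. I want to show $F_\delta\rightharpoonup n_*\partial_xA_*$. Write $F_\delta$ as a product of a factor converging (strongly in $L^2$ on $[0,T]\times\crc$, using positivity and \eqref{est:nebendiag}) to $n_*$ times the piecewise-constant interpolation of $\dq_\delta^+A_\delta$, which converges weakly in $L^2$ to $\partial_xA_*$ (since $\partial_x\hat A_\delta$ is exactly $-\dq_\delta^+\Lambda_\delta^\zeta$-weighted, i.e.\ the piecewise-constant reconstruction of $\dq_\delta^+A_\delta$, and $\hat A_\delta\rightharpoonup A_*$ in $L^2([0,T];H^1)$ forces $\partial_x\hat A_\delta\rightharpoonup\partial_xA_*$). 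The product of a strongly convergent and a weakly convergent sequence in $L^2$ converges weakly (in $L^1$, say, and then one checks it is actually the weak $L^2$ limit since $F_\delta$ is $L^2$-bounded). Hence $n_*v_*=n_*\partial_xA_*$, and since $n_*>0$, we conclude $v_*=\partial_xA_*$, so \eqref{eq:abstract} becomes $\partial_tn_*=\partial_x(n_*\partial_xA_*)$ with $n_*=\nop[A_*]$, i.e.\ the original nlQDD \eqref{eq:QDD}.

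The main obstacle I expect is the careful bookkeeping needed to turn the dissipation inequality into a \emph{uniform} $H^1([0,T]\times\crc)$-bound on $\hat A_\delta$: one must establish the positive lower bound $\nu_\delta^+\ge\underline n/4$ uniformly in $\delta$ (which relies on combining the uniform convergence from Proposition \ref{prp:nconv} with the approximation estimate \eqref{est:nebendiag}), and one must handle the difference between the piecewise-linear interpolant $\hat A_\delta$ and the piecewise-constant function $A_\delta$ carefully so that the weak limits and the gradient relation $\partial_x\hat A_\delta=-\sum_\zeta A_\delta(t;\zeta)\dq_\delta^+\Lambda_\delta^\zeta$ are consistent. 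The identification of $v_*$ via the product of strong and weak convergence is then essentially routine given Corollary \ref{cor:goal}.
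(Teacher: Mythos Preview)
Your proposal is correct and follows essentially the same route as the paper's proof: obtain a uniform lower bound on $\nu_\delta^+$ from positivity of $n_*$ and \eqref{est:nebendiag}, combine this with \eqref{est:dissip} and \eqref{eq:Aupdown} to bound $\hat A_\delta$ in $L^2([0,T];H^1)$, invoke Corollary~\ref{cor:goal} for the constitutive relation, and identify $v_*$ via a strong-times-weak convergence argument. The only refinement the paper adds is an explicit test-function computation showing that the $\Phi_\delta^\zeta$-interpolated flux $F_\delta$ has the same weak limit as the piecewise-constant product $\nu_\delta^+\dq_\delta^+A_\delta$, which is precisely the bookkeeping you anticipated in your final paragraph.
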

\begin{proof}
  Since $n_*$ is continuous on $[0,T]\times\crc$, positivity implies the existence of a positive lower bound $\underline n>0$ such that $n_*\ge3\underline n$. Since $n_\delta$ converges uniformly to $n_*$ on $[0,T]\times\crc$, we have $n_\delta\ge2\underline n$ for all sufficiently small  $\delta>0$. It now follows from estimate \eqref{est:nebendiag} that $\nu_\delta^+\ge\underline n$ for all sufficiently small $\delta>0$. The dissipation estimate \eqref{est:dissip} then implies that $\dq_\delta^+A_\delta$ is $\delta$-uniformly bounded in $L^2([0,T];L^2_\delta(\crc))$. At almost every $t\in[0,T]$ we have that
  \begin{align*}
    \max A_\delta(t;\cdot) - \min A_\delta(t;\cdot) \le \frac\delta2\sum_\xi \big|\dq_\delta^+A_\delta(t;\xi)\big| \le \frac12\big\|\dq_\delta^+A(t;\cdot)\big\|_{L^2(\crc)}.
  \end{align*}
  In combination with \eqref{eq:Aupdown}, we obtain
  \begin{align*}
    \big\|A_\delta(t;\cdot)\big\|_{L^\infty_\delta(\crc)} \le \log Z_{\hbar^2,\delta} + \frac12\big\|\dq_\delta^+A(t;\cdot)\big\|_{L^2(\crc)}.
  \end{align*}
  Since $\log Z_{\hbar^2,\delta}\le \sqrt{\pi/\hbar^2}/4$, we conclude that $\hat A_\delta$ is uniformly bounded in $L^2([0,T];H^1(\crc))$. Thus $\hat A_\delta$ converges weakly to some $A_*$ in that space, along a suitable sub-sequence $\delta\searrow0$.
  
  Next, we conclude that $n_*=\nop[A_*]$: it is easily seen that the (spatially piecewise constant) functions $A_\delta$ converge weakly in $L^2((0,T)\times\crc)$ to the same limit $A_*$. Hence, we can invoke the identification result in Corollary \ref{cor:goal}; note that the uniform convergence of $n_\delta$ to $n_*$ obtained in Proposition \ref{prp:nconv} clearly implies strong convergence in $L^2((0,T)\times\crc)$. 

  Finally, we show that the limiting velocity field $v_*$, implicitly defined in \eqref{eq:abstract}, see Proposition \ref{prp:abstract}, equals $\partial_xA_*$. Note that we are only concerned about the identification of the limit, convergence of the $F_\delta$ follows from Lemma \ref{lem:F}. Since $n_\delta$ converges uniformly to $n_*$, also $\nu_\delta^+$ converges to $n_*$ due to the estimate \eqref{est:nebendiag}. By weak convergence of $\partial_x\hat A_\delta$ to $\partial_x A_*$, also $\dq_\delta^+A_\delta$ converges to $\partial_xA_*$, weakly in $L^2([0,T]\times\crc)$. In conclusion, $\nu_\delta^+ \dq_\delta^+A_\delta\rightharpoonup n_*\,\partial_xA_*$ in $L^1((0,T)\times\crc)$. Now let $\varphi\in C([0,T]\times\crc)$ be a continuous test function. We wish to show that the following two integrals $I_\delta$ and $J_\delta$ converge to the same limit:
  \begin{align*}
    I_\delta &= \int_0^T\int_\crc \nu^+_\delta\dq^+_\delta A_\delta\,\varphi\dd x\dd t
   =  \int_0^T \delta \sum_\zeta \nu_\delta^+(t;\zeta)\dq_\delta^+A(t;\zeta) \left(\fint_{I_\zeta}\varphi(t;x) \dd x\right) \dd t \\
    J_\delta &= \int_0^T\int_\crc F_\delta\,\varphi\dd x\dd t
   =  \int_0^T \delta \sum_\zeta \nu_\delta^+(t;\zeta)\dq_\delta^+A(t;\zeta) \left(\frac1\delta\int_\crc\Phi^\zeta_\delta(t;x)\varphi(t;x) \dd x\right) \dd t .
  \end{align*}
  Since $\Phi^\zeta$ integrates to $\delta$, since $\varphi$ is continuous, and since $\nu_\delta^+\dq_\delta^+A_\delta$ is $\delta$-uniformly bounded in $L^1([0,T]\times\crc)$, the limit of $J_\delta$ must indeed coincide with that of $I_\delta$, which is $\int_0^T\int_\crc n_*\partial_xA_*\,\varphi\dd x\dd t$.
\end{proof}
Theorem \ref{thm:3} now follows from the above: Propositions \ref{prp:nconv} and \ref{prp:theendofQDD} yield uniform convergence $n_\delta\to n_*$ and weak convergence $A_\delta\to A_*$, respectively, and Propositions \ref{prp:abstract} and \ref{prp:theendofQDD} yield in combination that $n_*$ and $A_*$ satisfy the nlQDD equation \eqref{eq:QDD}

\begin{appendix}

  \section{Trace convexity}
  \begin{lemma}
    \label{lem:tracemono}
    Assume $\phi:J\to\setR$ is a continuous increasing function on the interval $J\subseteq\setR$, and $A,A'\in\setC^{N\times N}$ are self-adjoint with $A\le A'$ and $\sigma(A)\cup\sigma(A')\subset J$.
    Then $\tr{\phi(A)}\le\tr{\phi(A')}$.
  \end{lemma}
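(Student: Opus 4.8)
The plan is to reduce the trace inequality to a pointwise comparison of eigenvalues via the Courant--Fischer min-max principle. First I would fix the convention of listing the eigenvalues of a self-adjoint matrix $B\in\setC^{N\times N}$ in non-decreasing order, $\lambda_1(B)\le\lambda_2(B)\le\cdots\le\lambda_N(B)$, repeated according to multiplicity, and recall the variational characterization
\[
  \lambda_k(B) = \min_{\substack{V\subseteq\setC^N\\ \dim V = k}}\ \max_{\substack{x\in V\\ |x|=1}} \langle Bx,x\rangle .
\]

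The key step is eigenvalue monotonicity: if $A\le A'$, i.e.\ $\langle Ax,x\rangle\le\langle A'x,x\rangle$ for all $x\in\setC^N$, then $\lambda_k(A)\le\lambda_k(A')$ for every $k=1,\ldots,N$. This is immediate from the displayed formula, since for each subspace $V$ the inner maximum taken with $A$ is dominated by the one taken with $A'$, and the subsequent minima over $k$-dimensional subspaces inherit the inequality.

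Next I would use the hypothesis $\sigma(A)\cup\sigma(A')\subset J$, which guarantees that $\phi$ is defined at each $\lambda_k(A)$ and $\lambda_k(A')$; combining monotonicity of $\phi$ with the eigenvalue comparison gives $\phi(\lambda_k(A))\le\phi(\lambda_k(A'))$ for each $k$. Finally, since $\tr{\phi(B)}=\sum_{k=1}^N\phi(\lambda_k(B))$ for any self-adjoint $B$ with $\sigma(B)\subset J$ by the spectral theorem, summing the $N$ inequalities yields $\tr{\phi(A)}\le\tr{\phi(A')}$.

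I do not expect a genuine obstacle here: the argument is a routine application of min-max. The only points requiring a little care are keeping the ordering of the eigenvalues consistent on both sides when passing from the min-max inequality to the sum, and noting that continuity of $\phi$ enters merely to make $\phi(B)$ well-defined via the functional calculus, while monotonicity alone drives the estimate.
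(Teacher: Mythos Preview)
Your proof is correct and complete. The route differs from the paper's: there the author assumes additionally that $\phi$ is differentiable, sets $B:=A'-A\ge0$, and writes
\[
  \tr{\phi(A')}-\tr{\phi(A)} = \int_0^1 \tr{B^{1/2}\phi'(A+tB)B^{1/2}}\dd t,
\]
concluding from $\phi'\ge0$ that the integrand is non-negative. Your min-max argument is more elementary and applies directly to a merely continuous increasing $\phi$, so it avoids the approximation step the paper's approach would still need to cover the stated hypotheses; it also makes transparent why $\sigma(A+tB)\subset J$ along the interpolation, a point the integral argument uses implicitly. The paper's calculus-based proof, in turn, is stylistically aligned with the other trace-inequality proofs in the same appendix.
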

  \begin{proof}
    Assume $\phi$ is also differentiable, hence $\phi'\ge0$ by monotonicity. Since $B:=A'-A$ is positive semi-definite, we have by the fundamental theorem of calculus that
    \begin{align*}
      \tr{\phi(A')}-\tr{\phi(A)}
      & = \int_0^1\frac{\dn}{\dd t}\tr{\phi(A+tB)}\dd t \\
      &= \int_0^1\tr{\phi'(A+tB)B}\dd t
      = \int_0^1\tr{B^{1/2}\phi'(A+tB)B^{1/2}}\dd t.
    \end{align*}
    Since $\phi'(A+tB)$ is positive semi-definite, the integrand above is non-negative.
  \end{proof}
  \begin{proposition}
    \label{lem:carlen}
    If $\phi:J\to\setR$ is a convex function on the interval $J\subseteq\setR$, and $A,A'\in\setC^{N\times N}$ are self-adjoint with $A\le A'$ and $\sigma(A)\cup\sigma(A')\subset J$.
    Then the map $[0,1]\ni s\mapsto \tr{(1-s)A+sA'}$ is convex.
    Moreover, Klein's inequality holds:
    \begin{align}
      \label{eq:klein}
      \tr{\phi(A')-\phi(A)-(A'-A)\phi'(A)} \ge 0.
    \end{align}
  \end{proposition}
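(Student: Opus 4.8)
The plan is to prove the two claims separately, reducing each to the corresponding scalar fact about the convex function $\phi$ by means of the spectral theorem. (I read the first assertion as convexity of $s\mapsto\tr{\phi((1-s)A+sA')}$; the bare map $s\mapsto\tr{(1-s)A+sA'}$ is affine, hence trivially convex.)

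\textbf{Trace convexity.} First I would record the elementary Peierls inequality: for any self-adjoint $X$ with $\sigma(X)\subset J$ and any orthonormal basis $u_1,\dots,u_N$ of $\setC^N$,
\[
  \tr{\phi(X)}\ \ge\ \sum_{i=1}^N\phi\big(\langle u_i,Xu_i\rangle\big),
\]
with equality whenever the $u_i$ are eigenvectors of $X$. This follows by writing $X=\sum_k\lambda_kw_kw_k^*$ in an orthonormal eigenbasis, noting $\langle u_i,\phi(X)u_i\rangle=\sum_k\phi(\lambda_k)|\langle u_i,w_k\rangle|^2$, applying scalar Jensen to the probability weights $|\langle u_i,w_k\rangle|^2$ (which sum to one over $k$), and summing in $i$. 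Now put $X_s:=(1-s)A+sA'$. Since $J$ is an interval and the eigenvalues of a convex combination lie between $\min(\lambda_{\min}(A),\lambda_{\min}(A'))$ and $\max(\lambda_{\max}(A),\lambda_{\max}(A'))$, we have $\sigma(X_s)\subset J$ for all $s\in[0,1]$, so $\phi(X_s)$ is defined. Given $s_0=(1-t)s_1+ts_2$ with $t\in[0,1]$, choose an eigenbasis $\{u_i\}$ of $X_{s_0}$; by affinity of $s\mapsto X_s$ one has $\langle u_i,X_{s_0}u_i\rangle=(1-t)\langle u_i,X_{s_1}u_i\rangle+t\langle u_i,X_{s_2}u_i\rangle$, and then scalar convexity of $\phi$ followed by the Peierls inequality at $s_1$ and $s_2$ gives
\[
  \tr{\phi(X_{s_0})}=\sum_i\phi\big(\langle u_i,X_{s_0}u_i\rangle\big)\le(1-t)\sum_i\phi\big(\langle u_i,X_{s_1}u_i\rangle\big)+t\sum_i\phi\big(\langle u_i,X_{s_2}u_i\rangle\big)\le(1-t)\tr{\phi(X_{s_1})}+t\tr{\phi(X_{s_2})}.
\]
If $\phi$ is strictly convex, tracking the equality cases (equality in scalar convexity forces $\langle u_i,X_{s_1}u_i\rangle=\langle u_i,X_{s_2}u_i\rangle$ for each $i$, and equality in the Peierls step forces $\{u_i\}$ to diagonalize both $X_{s_1}$ and $X_{s_2}$, hence $X_{s_1}=X_{s_2}$) shows the functional is strictly convex unless $A=A'$ — which is the version implicitly used later for the strict convexity of $\eng$.

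\textbf{Klein's inequality.} Here I would take spectral decompositions $A=\sum_ia_ip_i$ and $A'=\sum_ja'_jq_j$ into rank-one spectral projections and set $c_{ij}:=\tr{p_iq_j}\ge0$; since $\sum_ip_i=\sum_jq_j=\one_N$, the array $(c_{ij})$ is doubly stochastic. Expanding each trace in this notation and using $\sum_ic_{ij}=\sum_jc_{ij}=1$ together with $\phi'(A)=\sum_i\phi'(a_i)p_i$, one gets $\tr{\phi(A')}=\sum_{i,j}c_{ij}\phi(a'_j)$, $\tr{\phi(A)}=\sum_{i,j}c_{ij}\phi(a_i)$, and $\tr{(A'-A)\phi'(A)}=\sum_{i,j}c_{ij}\phi'(a_i)(a'_j-a_i)$, whence
\[
  \tr{\phi(A')-\phi(A)-(A'-A)\phi'(A)}=\sum_{i,j}c_{ij}\big(\phi(a'_j)-\phi(a_i)-\phi'(a_i)(a'_j-a_i)\big)\ge0,
\]
since each bracket is nonnegative because the graph of the convex function $\phi$ lies above its tangent at $a_i$; if $\phi$ is merely convex, the same works with $\phi'(a_i)$ replaced by any subgradient.

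\textbf{Main obstacle.} Neither part is deep. The one point that needs care is in the first part: one must expand the trace in an eigenbasis of the convex combination $X_{s_0}$ — not of $A$ or $A'$ — in order to exploit affinity of $s\mapsto X_s$ and convexity of $\phi$ simultaneously; the remaining ingredients are the Peierls inequality and the bookkeeping check that $\sigma(X_s)\subset J$, which only uses that $J$ is an interval. Note in passing that the hypothesis $A\le A'$ is in fact not needed for either assertion.
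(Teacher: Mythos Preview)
Your proof is correct. The paper itself does not give an argument here but simply cites Carlen's lecture notes, where the two results appear as Theorems~2.9 and~2.10; the arguments you wrote out --- the Peierls--Bogoliubov inequality for trace convexity, and the doubly-stochastic expansion for Klein's inequality --- are precisely the standard proofs one finds there. Your two side observations are also well taken: the statement should indeed read $s\mapsto\tr{\phi((1-s)A+sA')}$, and the hypothesis $A\le A'$ plays no role in either assertion (it appears to be a leftover from the preceding monotonicity lemma).
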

  \begin{proof}
    These results are proven e.g. in \cite{carlenbook}. The convexity property and Klein's inequality, respectively, are contained in Theorems 2.9 and 2.10 therein.
  \end{proof}
  \begin{corollary}
    \label{cor:exppos}
    For all self-adjoint $A,A'\in\setC^{N\times N}$,
    \begin{align}
      \label{eq:exppos}
      \tr{\big(\exp(A')-\exp(A)\big)(A'-A)}\ge 0.
    \end{align}
  \end{corollary}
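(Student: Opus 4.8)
The plan is to derive \eqref{eq:exppos} from the convexity of $t\mapsto\tr{\exp((1-t)A+tA')}$, which is the first statement of Proposition \ref{lem:carlen} applied to the convex function $\exp$ (taking $J=\setR$, so that no restriction on the spectra enters). Set $B:=A'-A$ and define $g:[0,1]\to\setR$ by $g(t):=\tr{\exp(A+tB)}$. Using cyclicity of the trace together with Duhamel's formula $\frac{\dn}{\dn t}\exp(X(t))=\int_0^1\exp(sX(t))X'(t)\exp((1-s)X(t))\dd s$, one finds $g'(t)=\tr{B\exp(A+tB)}$, and hence
\begin{align*}
  g'(1)-g'(0)=\tr{B\big(\exp(A')-\exp(A)\big)}=\tr{\big(\exp(A')-\exp(A)\big)(A'-A)}.
\end{align*}
So \eqref{eq:exppos} is equivalent to $g'(1)\ge g'(0)$, which holds as soon as $g$ is convex.

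To see that $g$ is convex I would either quote Proposition \ref{lem:carlen} directly, or argue by hand: differentiating once more gives $g''(t)=\int_0^1\tr{\exp(sC)\,B\,\exp((1-s)C)\,B}\dd s$ with $C:=A+tB$ self-adjoint, and after a spectral decomposition $C=\sum_k\lambda_kv_kv_k^*$ with a real orthonormal basis of eigenvectors, the integrand becomes $\sum_{k,\ell}e^{s\lambda_k}e^{(1-s)\lambda_\ell}\,|v_k^*Bv_\ell|^2$, which is non-negative (here self-adjointness of $B$ is used, via $(v_k^*Bv_\ell)(v_\ell^*Bv_k)=|v_k^*Bv_\ell|^2$). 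Thus $g''\ge0$, $g$ is convex, $g'$ is non-decreasing, and the claim follows. This computation parallels the one in the proof of Lemma \ref{lem:mmisdifferentiable}, so it fits the style of the paper.

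An alternative, shorter route that stays closer to the statements already collected in the appendix is to apply Klein's inequality \eqref{eq:klein} with $\phi=\exp$ twice, once to the pair $(A,A')$ and once to the pair $(A',A)$:
\begin{align*}
  \tr{\exp(A')-\exp(A)-(A'-A)\exp(A)} &\ge 0, \\
  \tr{\exp(A)-\exp(A')-(A-A')\exp(A')} &\ge 0.
\end{align*}
Adding the two inequalities cancels the $\tr{\exp(A')-\exp(A)}$ terms and leaves $\tr{(A'-A)\big(\exp(A')-\exp(A)\big)}\ge0$, which is \eqref{eq:exppos} by cyclicity of the trace.

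There is no real obstacle here; the corollary is a one-line consequence of trace convexity. The only point worth a remark is that Proposition \ref{lem:carlen} is stated with the hypothesis $A\le A'$, which is not assumed in the corollary. This is immaterial: for $\phi=\exp$ one takes $J=\setR$, and both the convexity of $t\mapsto\tr{\exp((1-t)A+tA')}$ and Klein's inequality \eqref{eq:klein} are valid for arbitrary self-adjoint $A,A'$, exactly as in the reference \cite{carlenbook}; in any case the self-contained computation $g''\ge0$ above uses no ordering whatsoever. I would present the $g$-convexity argument as the proof and relegate the two-fold use of Klein's inequality to a remark.
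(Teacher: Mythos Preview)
Your proposal is correct. In fact, your ``alternative, shorter route'' via two applications of Klein's inequality is precisely the paper's own proof: the paper applies \eqref{eq:klein} with $\phi=\exp$, interchanges the roles of $A$ and $A'$, and adds. Your primary argument via convexity of $g(t)=\tr{\exp(A+tB)}$ is also valid and self-contained, but the paper opts for the Klein route you relegated to a remark; I would simply swap your presentation order. Your observation about the superfluous hypothesis $A\le A'$ in Proposition~\ref{lem:carlen} is accurate and worth noting.
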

  \begin{proof}
    Consider the convex function $\phi:\setR\to\setR$ with $\phi(r)=\exp(r)$, with $\phi'(r)=\exp(r)$.
    Klein's inequality \eqref{eq:klein} yields
    \begin{align*}
      -\tr{(A'-A)\exp(A)}\ge-\tr{\exp(A')-\exp(A)}.
    \end{align*}
    Interchange the roles of $A$ and $A'$, and add the two inequalities to obtain \eqref{eq:exppos}.
  \end{proof}
  \begin{corollary}
    \label{lem:kleincor}
    For all $R,S\in\dens$,
    \begin{align}
      \label{eq:kleincor}
      \tr{(S-R)(\log S-\log R)} \ge \tr{(R-S)^2}.
    \end{align}
  \end{corollary}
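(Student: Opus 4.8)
The plan is to deduce \eqref{eq:kleincor} from the integral representation of the matrix logarithm, which turns the left-hand side into a manifestly controllable expression. First I would observe that both sides are symmetric under exchange of $R$ and $S$ and that $\tr{(R-S)^2}=\tr{(S-R)^2}$, and I would restrict to $R,S\in\densp$, which is the only situation in which \eqref{eq:kleincor} is invoked (and in which the left-hand side is well-defined, since $\log R$ and $\log S$ must be finite). Set $X:=S-R=X^*$.

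Next I would recall the standard representation $\log M=\int_0^\infty\big[(1+t)^{-1}\one_N-(M+t\one_N)^{-1}\big]\dd t$, valid for any positive definite $M$ by functional calculus, apply it to $M=S$ and to $M=R$, and subtract: the $(1+t)^{-1}\one_N$ terms cancel, and by the resolvent identity $A^{-1}-B^{-1}=A^{-1}(B-A)B^{-1}$ one gets
\begin{align*}
  \log S-\log R=\int_0^\infty\big[(R+t\one_N)^{-1}-(S+t\one_N)^{-1}\big]\dd t=\int_0^\infty(R+t\one_N)^{-1}\,X\,(S+t\one_N)^{-1}\dd t.
\end{align*}
Multiplying by $X$ and taking the trace yields
\begin{align*}
  \tr{(S-R)(\log S-\log R)}=\int_0^\infty\tr{X(R+t\one_N)^{-1}X(S+t\one_N)^{-1}}\dd t .
\end{align*}

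The key estimate is a pointwise-in-$t$ lower bound on this integrand. Since $R,S\in\dens$ have unit trace and are positive semi-definite, all their eigenvalues lie in $[0,1]$, so $R+t\one_N\le(1+t)\one_N$ and likewise for $S$; hence $(R+t\one_N)^{-1}=(1+t)^{-1}\one_N+P_t$ and $(S+t\one_N)^{-1}=(1+t)^{-1}\one_N+Q_t$ with $P_t,Q_t\ge0$. Expanding $\tr{X\big[(1+t)^{-1}\one_N+P_t\big]X\big[(1+t)^{-1}\one_N+Q_t\big]}$, the leading term is $(1+t)^{-2}\tr{X^2}$, and each remaining term is of the form $\tr{(\text{PSD})(\text{PSD})}$, using that $X^2$, $XP_tX$ and $XQ_tX$ are positive semi-definite together with the elementary fact that $\tr{AB}\ge0$ for positive semi-definite $A,B$; hence all those terms are non-negative. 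This gives $\tr{X(R+t\one_N)^{-1}X(S+t\one_N)^{-1}}\ge(1+t)^{-2}\tr{X^2}$, and integrating over $t\in(0,\infty)$ with $\int_0^\infty(1+t)^{-2}\dd t=1$ produces precisely $\tr{(S-R)(\log S-\log R)}\ge\tr{X^2}=\tr{(R-S)^2}$.

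I do not expect a genuine obstacle here; the only points needing a line of justification are the validity of the integral representation for non-commuting $R$ and $S$ (handled by treating each logarithm separately before subtracting), the absolute convergence of the integral (the integrand is $O(t^{-2})$ as $t\to\infty$ and bounded near $t=0$), and the positivity facts for traces of products of positive semi-definite matrices. I would also remark in passing that Klein's inequality \eqref{eq:klein}, already available above, only yields the weaker bound $\tr{(S-R)(\log S-\log R)}\ge0$, so the quadratic improvement genuinely requires exploiting the strong convexity of $-\log$, as the integral-representation computation above does.
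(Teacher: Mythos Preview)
Your argument is correct and complete: the integral representation of the logarithm, the resolvent identity, and the decomposition $(R+t\one_N)^{-1}=(1+t)^{-1}\one_N+P_t$ with $P_t\ge0$ (using $\sigma(R)\subset[0,1]$) all hold, and the four-term expansion of the integrand is handled cleanly since $X^2$, $XP_tX$, $XQ_tX$ are all positive semi-definite and the trace of a product of two PSD matrices is non-negative. The restriction to $R,S\in\densp$ is harmless and, as you note, necessary for the statement to make sense.

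Your route is genuinely different from the paper's. The paper applies Klein's inequality \eqref{eq:klein} not with $\phi(r)=r\log r$ but with the auxiliary function $\phi(r)=r\log r-r-\tfrac12 r^2$, which is convex on $[0,1]$ precisely because $\phi''(r)=1/r-1\ge0$ there; then symmetrizing in $R,S$ yields \eqref{eq:kleincor} directly. So your closing remark is slightly off: Klein's inequality \emph{does} give the full quadratic improvement, provided one chooses a $\phi$ that encodes the strong convexity of $r\mapsto r\log r$ relative to $r\mapsto\tfrac12 r^2$ on $[0,1]$. Your approach makes this strong convexity explicit through the pointwise bound $(R+t\one_N)^{-1}\ge(1+t)^{-1}\one_N$, at the cost of the integral representation; the paper's approach hides the same idea inside the choice of $\phi$ and is somewhat shorter, but relies on the non-obvious trick of subtracting $\tfrac12 r^2$. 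Both arguments ultimately exploit the spectral constraint $\sigma(R),\sigma(S)\subset[0,1]$ in the same essential way.
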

  \begin{proof}
    Consider $\phi:[0,1]\to\setR$ given by
    \begin{align*}
      \phi(r) = r\log r-r-\frac12r^2, \quad \text{hence} \quad \phi'(r) = \log r - r,\quad \phi''(r)=\frac1r-1.
    \end{align*}
    Note that $\phi$ is obviously convex on its domain $[0,1]$ because $\phi''$ is positive there, and that $R,S\in\dens$ have their spectra in $[0,1]$.
    Klein's inequality \eqref{eq:klein} yields
    \begin{align*}
      0 &\le \tr{S\log S-R\log R-(S-R)\log R}-\tr{R-S}-\frac12\tr{R^2-S^2-2R(S-R)} \\
      &= \tr{S(\log R-\log S)} + \frac12\tr{(R-S)^2}.
    \end{align*}
    Now interchange the roles of $R$ and $S$, and add the two inequalities to conclude \eqref{eq:kleincor}.
  \end{proof}

\section{Interpolation estimates}
\begin{lemma}
  \label{lem:H1Linfty}
  Let $p\ge 1$.
  Every $\psi\in\Lp p$ with $\|\psi\|_{\Lp p}=1$ satisfies:
  \begin{align*}
    \|\psi\|_{\Lp\infty} \le 1+\|\dq^+\psi\|_{\Lp p}.
  \end{align*}
\end{lemma}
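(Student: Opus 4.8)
The plan is to mimic the one-dimensional Sobolev embedding $H^1\hookrightarrow L^\infty$ on the unit-length torus, transcribed to the grid $\crcN$. The starting observation is that $\delta\sum_\xi$ is an averaging weight over $N=1/\delta$ sites, i.e.\ a probability average, so discrete Jensen (equivalently, the monotonicity of $\Lp q$-norms on a probability space) gives $\|\psi\|_{\Lp 1}\le\|\psi\|_{\Lp p}=1$. Since the minimum of $|\psi|$ over the grid is bounded by its average, there exists a site $\eta\in\crcN$ with $|\psi(\eta)|\le\|\psi\|_{\Lp 1}\le1$.

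Next I would pick a site $\xi_*\in\crcN$ with $|\psi(\xi_*)|=\|\psi\|_{\Lp\infty}$ and connect $\eta$ to $\xi_*$ by a monotone chain of consecutive grid points $\eta=\zeta_0,\zeta_1,\ldots,\zeta_m=\xi_*$, with $\zeta_{j+1}=\zeta_j+\delta$ and $m\le N$; in particular the sites $\zeta_0,\ldots,\zeta_{m-1}$ are pairwise distinct. Telescoping and using $\psi(\zeta_{j+1})-\psi(\zeta_j)=\delta\,\dq^+\psi(\zeta_j)$ gives
\[
  \psi(\xi_*)-\psi(\eta) = \delta\sum_{j=0}^{m-1}\dq^+\psi(\zeta_j),
\]
so that, by the triangle inequality and the disjointness of the $\zeta_j$,
\[
  \|\psi\|_{\Lp\infty} = |\psi(\xi_*)| \le |\psi(\eta)| + \delta\sum_{j=0}^{m-1}\big|\dq^+\psi(\zeta_j)\big| \le |\psi(\eta)| + \delta\sum_{\zeta\in\crcN}\big|\dq^+\psi(\zeta)\big| = |\psi(\eta)| + \|\dq^+\psi\|_{\Lp 1}.
\]

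Finally, a second application of discrete Jensen on the probability average yields $\|\dq^+\psi\|_{\Lp 1}\le\|\dq^+\psi\|_{\Lp p}$, and combining this with the bound $|\psi(\eta)|\le1$ established above gives $\|\psi\|_{\Lp\infty}\le1+\|\dq^+\psi\|_{\Lp p}$, as claimed. There is no genuine obstacle in this argument; the only points that require a moment's care are that $\delta\sum_\xi$ is a probability average (which is exactly what makes both $\Lp 1\le\Lp p$ comparisons valid) and that the connecting chain visits each grid site at most once, so that its partial sum of $|\dq^+\psi|$ is dominated by the full $\Lp 1$-norm of $\dq^+\psi$.
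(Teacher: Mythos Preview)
Your proof is correct and follows essentially the same route as the paper's: locate a grid point where $|\psi|\le1$ via the normalization and Jensen, telescope to reach the maximizing point, and apply Jensen once more to pass from the $\Lp1$- to the $\Lp p$-norm of $\dq^+\psi$. The paper's version is simply terser (and in fact slightly less careful about absolute values).
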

\begin{proof}
  By normalization of $\psi$, there is some $\xi_*\in \crcN$ such that $\psi(\xi_*)\le1$. And then, for any $\xi\in\crcN$,
  \begin{align*}
    |\psi(\xi)| \le |\psi(\xi_*)| + \delta\sum_{\xi'\in\crcN}|\dq^+\psi(\xi')| \le 1 +  \|\dq^+\|_{\Lp p},
  \end{align*}
  by Jensen's inequality.
\end{proof}
\begin{lemma}
  \label{lem:fisher}
  Let $n\in C(\crc)$ be positive and of finite Fisher information, $F:=\|\partial_x\sqrt n\|_{L^2(\crc)}<\infty$.
  For given $\delta=1/N$, define $n_\delta\in\Lp 1$ by averaging over the respective intervals $I_\xi$,
  \begin{align*}
    n_\delta(\xi) = \fint_{I_\xi} n\dd x.
  \end{align*}
  Then
  \begin{align}
    \label{eq:fisher}
    \delta\sum_\xi \big(\dq_\delta^+\sqrt{n_\delta}\big)^2 \le 8F^2.
  \end{align}  
\end{lemma}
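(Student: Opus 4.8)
The plan is to reduce everything to a one-variable $H^1$ estimate. Set $u:=\sqrt n$; since $n\in C(\crc)$ is strictly positive it is bounded below by a positive constant, so together with the hypothesis $\partial_x\sqrt n\in L^2(\crc)$ one gets $u\in H^1(\crc)$, and in particular $u$ admits an absolutely continuous representative with $u(x)-u(x-\delta)=\int_{x-\delta}^x\partial_x u(s)\dd s$. Rewriting the cell average in terms of $u$ gives $n_\delta(\xi)=\delta^{-1}\int_{I_\xi}u^2$, i.e.\ $\sqrt{n_\delta(\xi)}=\delta^{-1/2}\|u\|_{L^2(I_\xi)}$, so that
\begin{align*}
  \delta\sum_\xi\big(\dq_\delta^+\sqrt{n_\delta}\big)^2
  = \frac1{\delta^2}\sum_\xi\Big(\|u\|_{L^2(I_{\xi+\delta})}-\|u\|_{L^2(I_\xi)}\Big)^2 .
\end{align*}
The task is thus to estimate the difference of consecutive local $L^2$-norms of $u$.

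The decisive — but elementary — step is to linearise this difference. Since the translation $x\mapsto x-\delta$ maps $I_{\xi+\delta}$ onto $I_\xi$, one has $\|u\|_{L^2(I_\xi)}=\big\|u(\cdot-\delta)\big\|_{L^2(I_{\xi+\delta})}$, whence the reverse triangle inequality in the Hilbert space $L^2(I_{\xi+\delta})$ gives
\begin{align*}
  \Big|\,\|u\|_{L^2(I_{\xi+\delta})}-\|u\|_{L^2(I_\xi)}\,\Big|
  \le \big\|u-u(\cdot-\delta)\big\|_{L^2(I_{\xi+\delta})} .
\end{align*}
On the right-hand side I would write $u(x)-u(x-\delta)=\int_{x-\delta}^x\partial_x u$, apply the Cauchy--Schwarz inequality on the interval of length $\delta$, and then use Fubini to obtain $\big\|u-u(\cdot-\delta)\big\|_{L^2(I_{\xi+\delta})}^2\le\delta^2\int_{I_\xi\cup I_{\xi+\delta}}(\partial_x u)^2$; here one uses that, for fixed $s$, the set of $x\in I_{\xi+\delta}$ with $s\in[x-\delta,x]$ has length at most $\delta$ and is empty unless $s\in I_\xi\cup I_{\xi+\delta}$.

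Combining the last two displays yields $\big(\|u\|_{L^2(I_{\xi+\delta})}-\|u\|_{L^2(I_\xi)}\big)^2\le\delta^2\int_{I_\xi\cup I_{\xi+\delta}}(\partial_x u)^2$, and since every point of $\crc$ belongs to exactly two of the sets $I_\xi\cup I_{\xi+\delta}$,
\begin{align*}
  \delta\sum_\xi\big(\dq_\delta^+\sqrt{n_\delta}\big)^2
  \le \sum_\xi\int_{I_\xi\cup I_{\xi+\delta}}(\partial_x u)^2
  = 2\int_\crc(\partial_x\sqrt n)^2
  = 2F^2 ,
\end{align*}
which is even stronger than the claimed \eqref{eq:fisher} (its constant $8$ being deliberately generous). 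I do not expect a genuine obstacle here; the only point calling for care is the regularity justification of the identity $u(x)-u(x-\delta)=\int_{x-\delta}^x\partial_x u$, which rests on $\sqrt n\in H^1(\crc)$ --- guaranteed by continuity and positivity of $n$ together with finiteness of the Fisher information. If one preferred to avoid the reverse triangle inequality, an alternative is to split $u$ on each cell $I_\xi$ into its mean $\bar u_\xi$ and fluctuation and combine the two-sided bound $\bar u_\xi\le\sqrt{n_\delta(\xi)}\le\bar u_\xi+\big(\fint_{I_\xi}(u-\bar u_\xi)^2\big)^{1/2}$ with the Poincar\'e--Wirtinger inequality on $I_\xi$; this works as well, at the price of a messier constant.
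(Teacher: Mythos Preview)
Your argument is correct and in fact yields the sharper bound $2F^2$ in place of $8F^2$. The approach, however, is genuinely different from the paper's.

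The paper exploits the continuity of $n$ via the intermediate value theorem: for each cell $I_\xi$ it picks a point $x_*\in I_\xi$ at which $n(x_*)=n_\delta(\xi)$, so that $\sqrt{n_\delta(\xi)}=\sqrt{n}(x_*)$. The difference $\sqrt{n_\delta}(\xi+\delta)-\sqrt{n_\delta}(\xi)$ is then split, via the two grid points $\xi$ and $\xi+\delta$, into three pieces of the form $\sqrt{n}(b)-\sqrt{n}(a)$ with $|b-a|\lesssim\delta$, each of which is estimated by the fundamental theorem of calculus and Cauchy--Schwarz.

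You instead observe that $\sqrt{n_\delta(\xi)}$ is, up to scaling, the $L^2$-norm of $u=\sqrt{n}$ on $I_\xi$, and then use the reverse triangle inequality followed by the standard $H^1$ translation estimate. This bypasses the choice of an intermediate point entirely and does not use the continuity of $n$ beyond what is already implied by $\sqrt{n}\in H^1(\crc)$; the argument would work verbatim for merely measurable $n\ge0$ with $\sqrt{n}\in H^1$. The price is that the reverse triangle inequality is a slightly less transparent step than the paper's pointwise decomposition, but the gain --- a shorter proof with a smaller constant and weaker hypotheses --- seems well worth it.
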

\begin{proof}
  Fix some $\xi\in\crcN$.
  Since $n$ is continuous, there is some $\xi^*\in I_\xi$ such that $n(x_*)=n_\delta(\xi)$.
  It follows that 
  \begin{align*}
   \left|\sqrt n(\xi) - \sqrt n(x_*)\right|
    =\left| \int_{x_*}^{\xi}\partial_x\sqrt n\dd x\right|
    \le |\xi-x_*|^{1/2}\left(\int_{I_\xi}\big(\partial_x\sqrt n\big)^2\right)^{1/2}
    \le \delta \big\|\partial_x\sqrt n\big\|_{L^2(I_\xi)},
  \end{align*}
  and further that
  \begin{align*}
    \big|\dq_\delta^+\sqrt{n_\delta}(\xi)\big|
    &= \frac1{\delta}\left|\sqrt{n_\delta}(\xi+\delta)-\sqrt{n_\delta}(\xi)\right| \\
    &\le \frac1\delta\left(
      \left|\sqrt n(\xi+\delta) - \sqrt{n_\delta}(\xi+\delta)\right|
      + \left|\sqrt n(\xi+\delta)-\sqrt n(\xi)\right|
      + \left|\sqrt n(\xi) - \sqrt{n_\delta}(\xi)\right|
      \right) \\
    &\le \big\|\partial_x\sqrt n\big\|_{L^2(I_\xi\cup I_{\xi+\delta})}
      + \fint_\xi^{\xi+\delta}\big|\partial_x\sqrt n\big|\dd x
      \le 2 \big\|\partial_x\sqrt n\big\|_{L^2(I_\xi\cup I_{\xi+\delta})}.
  \end{align*}
  Now take the square and sum over $\xi\in\crcN$ to obtain \eqref{eq:fisher}.
\end{proof}
\begin{lemma}
  \label{lem:C110}
  There is a universal constant $C$ such that for any real $f\in H^1(\crc)$ of zero average,
  \begin{align}
    \label{eq:C110}
    \|f\|_{C^{1/10}} \le C \|\partial_xf\|_{L^2}^{4/5}\|F\|_{L^2}^{1/5},
  \end{align}
  where $F$ in any primitive of $f$, i.e., $\partial_xF=f$.
\end{lemma}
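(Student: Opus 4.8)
The plan is to work on the Fourier side and read the estimate as an interpolation inequality. Write $f(x)=\sum_{k\in\setZ\setminus\{0\}}c_k e^{2\pi i kx}$, the zero-average hypothesis removing the $k=0$ mode, and put $A:=\|\partial_x f\|_{L^2(\crc)}$, so that $A^2=\sum_{k\neq0}(2\pi k)^2|c_k|^2$. Any primitive $F$ of $f$ has Fourier coefficients $c_k/(2\pi i k)$ at $k\neq0$ and an arbitrary constant at $k=0$, so by Parseval $\|F\|_{L^2(\crc)}^2\ge B^2:=\sum_{k\neq0}(2\pi k)^{-2}|c_k|^2$; it therefore suffices to prove $\|f\|_{C^{1/10}}\le C\,A^{4/5}B^{1/5}$. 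The weights $4/5+1/5=1$ together with the target Hölder exponent $1/10$ single out the intermediate space $H^{3/5}(\crc)$, since $3/5-1/2=1/10$. So the scheme is: first bound $\|f\|_{H^{3/5}(\crc)}$ by $A^{4/5}B^{1/5}$, then invoke — or, better, re-prove by hand — the one-dimensional Sobolev embedding $H^{3/5}(\crc)\hookrightarrow C^{0,1/10}(\crc)$.

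For the first step, H\"older's inequality for series with conjugate exponents $5/4$ and $5$ gives
\[
  \sum_{k\neq0}(2\pi|k|)^{6/5}|c_k|^2
  =\sum_{k\neq0}\big((2\pi k)^2|c_k|^2\big)^{4/5}\big((2\pi k)^{-2}|c_k|^2\big)^{1/5}
  \le \big(A^2\big)^{4/5}\big(B^2\big)^{1/5}=A^{8/5}B^{2/5}.
\]
Since $f$ carries only nonzero frequencies on the unit torus, $(1+k^2)^{3/5}\le 2^{3/5}|k|^{6/5}$ there, whence $\|f\|_{H^{3/5}(\crc)}^2\lesssim A^{8/5}B^{2/5}$, i.e.\ $\|f\|_{H^{3/5}(\crc)}\lesssim A^{4/5}B^{1/5}$.

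For the second step I would re-prove the embedding directly. For the supremum, Cauchy--Schwarz gives $|f(x)|\le\sum_{k\neq0}|c_k|\le\|f\|_{H^{3/5}}\big(\sum_{k\neq0}(1+k^2)^{-3/5}\big)^{1/2}\lesssim\|f\|_{H^{3/5}}$, the last series converging because $6/5>1$. For the H\"older seminorm, with $h:=|x-y|\le1$,
\[
  |f(x)-f(y)|\le\sum_{k\neq0}|c_k|\,\min\{2,2\pi|k|h\}
  \le\|f\|_{H^{3/5}}\Big(\sum_{k\neq0}(1+k^2)^{-3/5}\min\{2,2\pi|k|h\}^2\Big)^{1/2},
\]
and splitting the remaining sum at $|k|\sim 1/h$ — bounding the $\min$ by $2\pi|k|h$ for $|k|\le 1/h$ and by $2$ for $|k|>1/h$ — one gets $\lesssim h^{2(3/5)-1}=h^{1/5}$, hence $|f(x)-f(y)|\lesssim\|f\|_{H^{3/5}}\,h^{1/10}$. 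Combining the two parts with the first step and with $\|F\|_{L^2}\ge B$ yields $\|f\|_{C^{1/10}}\lesssim A^{4/5}\|F\|_{L^2}^{1/5}=\|\partial_x f\|_{L^2}^{4/5}\|F\|_{L^2}^{1/5}$, with a universal constant.

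There is no genuine obstacle here; this is routine Littlewood--Paley-type bookkeeping. The only points requiring care are getting the exponents to match — recognising that the pair $(4/5,1/5)$ and the Hölder exponent $1/10$ force the fractional order $3/5$ — and checking that the two dyadic sums in the embedding step converge with exactly the power $h^{1/5}$, which uses precisely $1/2<3/5<3/2$; plus the trivial observation that replacing the prescribed primitive by the zero-average one only decreases $\|F\|_{L^2}$.
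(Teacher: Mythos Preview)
Your argument is correct, but it takes a different route from the paper's. The paper avoids Fourier analysis entirely: it first obtains $\|f\|_{L^2}^2\le\|F\|_{L^2}\|\partial_xf\|_{L^2}$ by one integration by parts, combines this with the elementary bound $\|f\|_{L^\infty}^2\le 2\|f\|_{L^2}\|\partial_xf\|_{L^2}$ (using that $f$ vanishes somewhere) to get $\|f\|_{L^\infty}\lesssim\|\partial_xf\|_{L^2}^{3/4}\|F\|_{L^2}^{1/4}$, and then writes
\[
  |f(x+h)-f(x)|=|f(x+h)-f(x)|^{4/5}\,|f(x+h)-f(x)|^{1/5}
  \le (2\|f\|_{L^\infty})^{4/5}\big(h^{1/2}\|\partial_xf\|_{L^2}\big)^{1/5},
\]
substituting the $L^\infty$ bound to arrive at the claim. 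So the paper interpolates in real space between the trivial $C^{1/2}$ estimate from $H^1$ and the $L^\infty$ bound, whereas you interpolate on the Fourier side between $H^1$ and $H^{-1}$ to land in $H^{3/5}$ and then invoke the Sobolev embedding $H^{3/5}\hookrightarrow C^{1/10}$. Your approach makes the role of the exponents transparent and would adapt immediately to other interpolation pairs; the paper's is shorter, entirely elementary, and requires no series manipulations or dyadic splitting.
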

\begin{proof}
  First observe that
  \begin{align*}
    \|f\|_{L^2}^2 = \int_\crc (\partial_xF)^2\dd x 
    = - \int_\crc F\,\partial_{xx}F\dd x
    = -\int_\crc F\,\partial_xf\dd x
    \le \|F\|_{L^2}\|\partial_xf\|_{L^2}.
  \end{align*}
  Next, since $f$ is continuous and of zero average, there is some $x_*\in\crc$ with $f(x_*)=0$; by periodicity of $f$, we may assume $x_*=0$. Then, for any $x^*\in\crc$,
  \begin{align*}
    f(x^*)^2 = \int_0^{x^*} \partial_x(f^2)\dd x 
    = 2\int_0^{x^*}f\,\partial_xf\dd x
    \le 2\|f\|_{L^2}\|\partial_xf\|_{L^2}.
  \end{align*}
  In combination with the previous estimate, we obtain
  \begin{align}
    \label{eq:H1H-1}
    \|f\|_{L^\infty} \le \sqrt2\|\partial_xf|_{L^2}^{1/2}\|f\|_{L^2}^{1/2}\le\sqrt2\|\partial_xf\|_{L^2}^{3/4}\|F\|_{L^2}^{1/4}.
  \end{align}
  Let $x^*\in\crc$ and $h>0$ such that $x+h\in\crc$. Then
  \begin{align*}
    |f(x^*+h)-f(x^*)|
    \le \int_{x^*}^{x^*+h}\partial_xf\dd x \le h^{1/2}\|\partial_xf\|_{L^2},
  \end{align*}
  and therefore, using \eqref{eq:H1H-1},
  \begin{align*}
    |f(x^*+h)-f(x^*)|
    &= |f(x^*+h)-f(x^*)|^{4/5}|f(x^*+h)-f(x^*)|^{1/5} \\
    &\le \big(2\|f\|_{L^\infty}\big)^{4/5}h^{1/10}\|\partial_xf\|_{L^2}^{1/5}
      \le 2^{6/5} \|\partial_xf\|_{L^2}^{4/5}\|F\|_{L^2}^{1/5}h^{1/10}.
  \end{align*}
  Divide by $h^{1/10}$ and take the supremum with respect to $x^*$ and $h$ to obtain \eqref{eq:C110}.
\end{proof}

  \section{Properties of the discrete Laplacian}
  \label{app:Laplace}
  \begin{lemma}
    \label{lem:DeltaEV}
    Let $K=\{k_0,k_0+1,\ldots,k_0+N-1\}\subset\setZ$ be any set of $N$ consecutive integers. Then the eigenvalues and a corresponding basis of eigenvectors for $-\Delta_\delta$ are given by $(\omega_k)_{k\in J}$ and $(w_k)_{k\in K}$, respectively, with 
    \begin{align}
      \label{eq:DeltaEV}
      \omega_k=2\frac{1-\cos(2\pi k\delta)}{\delta^2},
      \quad
     \cmp{w_k}j = e^{2\pi i k\delta j}.
    \end{align}
  \end{lemma}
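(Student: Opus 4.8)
The plan is to verify the eigenrelation directly and then count: first show each $w_k$ is an eigenvector of $-\Delta_\delta$ with eigenvalue $\omega_k$, then argue that $N$ of them, indexed by consecutive integers, form a basis of $\setC^N$. Throughout, recall that indices are read cyclically on $\{1,\ldots,N\}$, which is consistent here because $N\delta=1$ forces $e^{2\pi i k\delta N}=1$; in particular $\cmp{w_k}{j}=e^{2\pi i k\delta j}$ does define a genuine function on $\crcN$, and both $\omega_k$ and $w_k$ depend on $k$ only through its residue modulo $N$.

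For the eigenrelation I would simply exploit the shift structure of $w_k$: since $\cmp{w_k}{j\pm1}=e^{\pm 2\pi i k\delta}\cmp{w_k}{j}$, the definition of the discrete Laplacian gives
\[
  -\Delta_\delta w_k
  = -\frac1{\delta^2}\big(e^{2\pi i k\delta}+e^{-2\pi i k\delta}-2\big)\,w_k
  = \frac{2\big(1-\cos(2\pi k\delta)\big)}{\delta^2}\,w_k
  = \omega_k\,w_k,
\]
which is exactly \eqref{eq:DeltaEV}. This holds for every $k\in\setZ$.

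It remains to check that $\{w_k\}_{k\in K}$ is a basis of $\setC^N$ whenever $K$ consists of $N$ consecutive integers. Since $|K|=N=\dim\setC^N$, it suffices to show linear independence, and for this I would compute the Hermitian inner products on $\setC^N$,
\[
  \langle w_k,w_{k'}\rangle = \sum_{j=1}^N e^{2\pi i(k'-k)\delta j}
  = \begin{cases} N, & k\equiv k' \pmod N, \\[2pt] 0, & \text{otherwise}, \end{cases}
\]
the second case being a geometric sum over a nontrivial $N$-th root of unity. As any $N$ consecutive integers form a complete residue system modulo $N$, the vectors $w_k$, $k\in K$, are pairwise orthogonal, hence linearly independent, hence a basis; consequently the $\omega_k$, $k\in K$, exhaust the spectrum of $-\Delta_\delta$ counted with multiplicity (a repeated eigenvalue, e.g.\ $\omega_k=\omega_{N-k}$, being accounted for by the several indices in $K$ producing it).

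\textbf{Main obstacle.} There is essentially none: the statement is a routine circulant-matrix computation. The only point deserving a line of care is the well-definedness of $w_k$ and of $\Delta_\delta$ on the cyclic mesh $\crcN$, i.e.\ the cyclic reading of indices, which is guaranteed by $N\delta=1$.
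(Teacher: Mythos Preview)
Your proof is correct and follows the same approach as the paper: a direct verification of $-\Delta_\delta w_k=\omega_k w_k$ via the shift property $\cmp{w_k}{j\pm1}=e^{\pm 2\pi i k\delta}\cmp{w_k}{j}$. The paper dismisses the basis claim as ``well-known,'' whereas you spell out the orthogonality via the geometric-sum identity, which is a fine elaboration but not a different idea.
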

  \begin{proof}
    Simply observe that
    \begin{align*}
      \cmp{\Delta_\delta w_k}j
      = \frac{e^{2\pi ik\delta(j+1)}+e^{2\pi ik\delta(j-1)}-2 e^{2\pi ik\delta j}}{\delta^2}
      = \frac{\big(e^{2\pi ik\delta}+e^{-2\pi ik\delta}-2\big)e^{2\pi ik\delta j}}{\delta^2}
      = -\omega_k \cmp{w_k}j.
    \end{align*}
    The basis property of the $w_k$ is well-known.
  \end{proof}
  \begin{lemma}
    The numbers $\omega_k$ given in \eqref{eq:DeltaEV} satisfy
    \begin{align}
      \label{eq:DeltaEVbelow}
      \omega_k \ge 16k^2,
    \end{align}
    for all $k\in\setZ$ with $|k|\le N/2$. Consequently, for any $\alpha>0$ and $k_0\in\setZ$,
    \begin{align}
      \label{eq:LT}
      \sum_{k=k_0}^{k_0+N-1}e^{-\alpha\omega_k}\le 1 + \frac{\sqrt\pi}4 \alpha^{-1/2}.
    \end{align}
  \end{lemma}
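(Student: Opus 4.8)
The plan is to prove the two displayed claims \eqref{eq:DeltaEVbelow} and \eqref{eq:LT} in sequence, deriving the second from the first by a comparison of the exponential sum with a Gaussian integral.

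\textbf{Step 1: the eigenvalue lower bound \eqref{eq:DeltaEVbelow}.} From \eqref{eq:DeltaEV} we have $\omega_k = 2(1-\cos(2\pi k\delta))/\delta^2 = 4\sin^2(\pi k\delta)/\delta^2$. For $|k|\le N/2$ the argument $\pi k\delta = \pi k/N$ lies in $[-\pi/2,\pi/2]$, and on that range one has the elementary concavity estimate $\sin\theta \ge (2/\pi)\,\theta$ for $\theta\in[0,\pi/2]$ (and by symmetry $|\sin\theta|\ge (2/\pi)|\theta|$ on $[-\pi/2,\pi/2]$). Hence $\sin^2(\pi k\delta) \ge (4/\pi^2)(\pi k\delta)^2 = 4 k^2\delta^2$, which gives $\omega_k \ge 16 k^2\delta^2/\delta^2 = 16k^2$. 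I would write this out in one short display, being careful that the bound $\sin\theta\ge (2/\pi)\theta$ is applied only where $|\pi k\delta|\le\pi/2$, i.e.\ exactly under the hypothesis $|k|\le N/2$.

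\textbf{Step 2: the exponential-sum bound \eqref{eq:LT}.} Let $K=\{k_0,\ldots,k_0+N-1\}$ be the given block of $N$ consecutive integers. Since $\omega_k$ depends only on $k\bmod N$ (indeed $\omega_k=4\sin^2(\pi k/N)/\delta^2$ is $N$-periodic in $k$), the sum $\sum_{k\in K} e^{-\alpha\omega_k}$ is independent of $k_0$; I may therefore replace $K$ by a block symmetric about $0$, namely the index set from \eqref{eq:sloppy} with $|k|\lesssim N/2$. In that symmetric block every index satisfies $|k|\le N/2$, so Step 1 applies to all of them and $e^{-\alpha\omega_k}\le e^{-16\alpha k^2}$. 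Split off the $k=0$ term (contributing $1$) and bound the rest by twice the positive part:
\begin{align*}
  \sum_{k\in K} e^{-\alpha\omega_k} \le 1 + 2\sum_{k\ge 1} e^{-16\alpha k^2} \le 1 + 2\int_0^\infty e^{-16\alpha x^2}\dd x = 1 + 2\cdot\frac12\sqrt{\frac{\pi}{16\alpha}} = 1 + \frac{\sqrt\pi}{4}\,\alpha^{-1/2},
\end{align*}
where the comparison of the sum with the integral uses that $x\mapsto e^{-16\alpha x^2}$ is decreasing on $[0,\infty)$, so $\sum_{k\ge1} e^{-16\alpha k^2}\le \int_0^\infty e^{-16\alpha x^2}\dd x$, and the Gaussian integral $\int_0^\infty e^{-cx^2}\dd x = \tfrac12\sqrt{\pi/c}$ is evaluated with $c=16\alpha$. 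This is \eqref{eq:LT}.

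\textbf{Anticipated obstacle.} There is no deep difficulty here; the only things to get right are bookkeeping points. First, one must justify that the sum over an \emph{arbitrary} consecutive block may be reduced to the symmetric block --- this is the $N$-periodicity of $k\mapsto\omega_k$, which is immediate from the $\cos(2\pi k\delta)=\cos(2\pi k/N)$ form but should be stated. Second, one must make sure the elementary inequality $|\sin\theta|\ge (2/\pi)|\theta|$ is invoked only on $[-\pi/2,\pi/2]$, which is precisely the range guaranteed by $|k|\le N/2$; for the borderline even-$N$ case the symmetric block in \eqref{eq:sloppy} includes $k=N/2$, where $\pi k\delta=\pi/2$ and the bound still holds with equality, so nothing breaks. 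With these two remarks in place the proof is a two-line computation.
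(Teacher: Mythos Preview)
Your proof is correct and follows essentially the same approach as the paper: the paper phrases Step~1 via $\cos u \le 1 - \tfrac{2}{\pi^2}u^2$ for $|u|\le\pi$, which is equivalent to your $|\sin\theta|\ge\tfrac{2}{\pi}|\theta|$ for $|\theta|\le\pi/2$ after the half-angle identity, and Step~2 is identical (split off $k=0$, compare with the Gaussian integral). Your explicit remark on $N$-periodicity to justify the shift of the index block is a helpful clarification that the paper leaves implicit.
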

  \begin{proof}
    For all $u\in\setR$ with $|u|\le\pi$,
    \begin{align*}
      \cos u \le 1-\frac2{\pi^2}u^2.
    \end{align*}
    If $|k|\le N/2$, then $\delta|k|\le \pi$, and therefore
    \begin{align*}
    \omega_k \ge \delta^{-2}\frac4{\pi^2}(2\pi k\delta)^2 \ge 16k^2,
    \end{align*}
    proving \eqref{eq:DeltaEVbelow}. Substitute this into the left-hand side of \eqref{eq:LT} to obtain
    \begin{align*}
      \sum_{k=k_0}^{k_0+N-1}e^{-\alpha\omega_k}
      \le \sum_{|k|\le N/2}e^{-\alpha\omega_k}
      \le 1 + 2\sum_{k=1}^\infty e^{-16\alpha k^2}
      \le 1 + 2\int_0^\infty e^{-16\alpha z^2}\dd z
      = 1 + \frac14\sqrt{\frac{\pi}{\alpha}},
    \end{align*}
    as had to be shown.
  \end{proof}

  \section{Discrete approximation of the heat kernel}
  \label{app:A}
  Here we prove Propostion \ref{prp:prekernel} on the approximation of the periodic heat kernel $\htkrnl$ by the spatially discrete ones $\htkrnl_\delta$,
  \begin{align*}
    \htkrnl(z) = \sum_{k\in\setZ} e^{-(2\pi k)^2\hbar^2 t}e^{2\pi ikz}, \quad
    \htkrnl_\delta(\zeta) = \sum_{|k|\lesssim N/2} e^{-\omega_k\hbar^2 t}e^{2\pi ikz},
  \end{align*}
  where $\omega_k$ are the eigenvalues of the discrete Laplacian, see \eqref{eq:DeltaEV}.
  First, we need two auxiliary elementary estimates.
  \begin{lemma}
    For each $p>0$,
    \begin{align}
      \label{eq:expest}
      K_p:=\sup_{z>0}z^pe^{-z} <\infty.
    \end{align}
  \end{lemma}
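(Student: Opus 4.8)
The plan is to exhibit the supremum explicitly via a one-variable calculus argument. Consider the function $f\colon(0,\infty)\to\setR$, $f(z)=z^pe^{-z}$, which is smooth, strictly positive, and satisfies $f(z)\to0$ as $z\searrow0$ (since $z^p\to0$ and $e^{-z}\to1$). To control the behaviour at $+\infty$, fix an integer $m>p$ and use the series bound $e^z\ge z^m/m!$ valid for all $z>0$; this gives $f(z)\le m!\,z^{p-m}\to0$ as $z\to\infty$. Hence $f$ extends to a continuous function on $[0,\infty]$ vanishing at both endpoints, so it attains a finite global maximum at some interior point.

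First I would locate that maximizer by differentiation: $f'(z)=pz^{p-1}e^{-z}-z^pe^{-z}=z^{p-1}e^{-z}(p-z)$, which is positive for $0<z<p$ and negative for $z>p$, so $z=p$ is the unique critical point and the global maximum. Therefore
\begin{align*}
  K_p=\sup_{z>0}z^pe^{-z}=f(p)=p^pe^{-p}<\infty,
\end{align*}
which proves the claim with an explicit value of the constant.

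There is essentially no obstacle here; the only point requiring a word of care is the decay of $f$ at infinity, which is why the elementary series estimate $e^z\ge z^m/m!$ is invoked rather than appealing vaguely to "exponential beats polynomial". (Alternatively one could skip the explicit maximizer and argue purely by compactness: $f$ is bounded on any $[\varepsilon,R]$, small near $0$, and small for $z\ge R$ by the same series bound, hence globally bounded; but the explicit formula $K_p=p^pe^{-p}$ is cleaner and costs nothing.)
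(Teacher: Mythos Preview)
Your proof is correct and follows essentially the same approach as the paper: both argue that the continuous non-negative function $z\mapsto z^pe^{-z}$ tends to zero as $z\searrow0$ and as $z\to\infty$, hence attains a finite maximum. You additionally locate the maximizer at $z=p$ and give the explicit value $K_p=p^pe^{-p}$, which the paper does not bother to compute.
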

  \begin{proof}
    This follows since the non-negative and continuous function $z\mapsto z^pe^{-z}$ has limit zero for $z\searrow0$ and for $z\to\infty$, respectively.
  \end{proof}
  \begin{lemma}
    For any real $m\ge0$, there is a constant $M_m$ such that, for any $\alpha>0$, 
    \begin{align}
      \label{eq:gaussest}
      \sum_{k\in\setZ} |k|^me^{-\alpha k^2} \le M_m\alpha^{-(m+1)/2}e^{\alpha/4}.
    \end{align}
  \end{lemma}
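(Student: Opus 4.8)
The plan is to prove the estimate \eqref{eq:gaussest} by splitting the sum according to whether $|k|$ is small or large relative to the natural length scale $\alpha^{-1/2}$, and then comparing each piece to a Gaussian integral. First I would observe that it suffices to treat the sum over $k\ge1$ (the $k=0$ term is zero for $m>0$ and equals $1\le M_m\alpha^{-1/2}e^{\alpha/4}$ anyway for $m=0$ whenever $\alpha$ is bounded, and for large $\alpha$ the single summand decays), then double the result to cover negative $k$. So the core task is to bound $\sum_{k\ge1}k^me^{-\alpha k^2}$.

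The key step is a monotonicity/integral-comparison argument. The function $x\mapsto x^me^{-\alpha x^2}$ is unimodal on $(0,\infty)$, increasing up to $x_0=\sqrt{m/(2\alpha)}$ and decreasing afterwards. Hence
\begin{align*}
  \sum_{k\ge1}k^me^{-\alpha k^2}
  \le \max_{x>0}x^me^{-\alpha x^2} + \int_0^\infty x^me^{-\alpha x^2}\dd x,
\end{align*}
where the extra maximum term accounts for the one ``step up'' before the peak. Both terms on the right are then computed or estimated explicitly: substituting $x=\alpha^{-1/2}y$ gives $\int_0^\infty x^me^{-\alpha x^2}\dd x = \alpha^{-(m+1)/2}\int_0^\infty y^me^{-y^2}\dd y = \tfrac12\alpha^{-(m+1)/2}\Gamma\!\big(\tfrac{m+1}2\big)$, and the maximum is $\big(m/(2\alpha e)\big)^{m/2}$, which we can bound by a constant times $\alpha^{-m/2}\le\alpha^{-(m+1)/2}e^{\alpha/4}$ — for the last inequality simply note $\alpha^{1/2}\le e^{\alpha/4}$ for all $\alpha>0$ (indeed $e^{\alpha/4}\ge 1+\alpha/4\ge \sqrt\alpha$). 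Collecting constants into a single $M_m$ depending only on $m$ then yields \eqref{eq:gaussest}. The factor $e^{\alpha/4}$ on the right is harmless slack that makes the bound uniform in $\alpha\in(0,\infty)$; for small $\alpha$ it is the integral term $\alpha^{-(m+1)/2}$ that dominates, while for large $\alpha$ the whole sum is exponentially small and trivially below $M_m\alpha^{-(m+1)/2}e^{\alpha/4}$.

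I do not expect a genuine obstacle here — the statement is a routine Gaussian tail estimate. The only mild care needed is the bookkeeping in the integral-comparison step: because the summand first increases then decreases, the naive bound $\sum_{k\ge1}g(k)\le\int_0^\infty g(x)\dd x$ fails near the peak, and one must add the single correction term $\max_x g(x)$ (or equivalently shift the integration variable and absorb the boundary contribution). Making sure that this correction term is itself controlled by $M_m\alpha^{-(m+1)/2}e^{\alpha/4}$, uniformly in $\alpha>0$, is the one point where the inequality $\sqrt\alpha\le e^{\alpha/4}$ is used, and is the reason the factor $e^{\alpha/4}$ appears in the final bound.
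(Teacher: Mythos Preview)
Your argument is correct. The unimodal integral-comparison bound
\[
  \sum_{k\ge1}g(k)\ \le\ \int_0^\infty g(x)\dd x + C\max_{x>0}g(x)
\]
for $g(x)=x^me^{-\alpha x^2}$ works as you describe (with $C=1$ or $2$, depending on how carefully one tracks the integer(s) nearest the peak; either way it is absorbed into $M_m$), and your handling of the peak term via $\sqrt\alpha\le e^{\alpha/4}$ is exactly what makes the bound uniform in $\alpha$. One minor slip in wording: for $m=0$ the $k=0$ summand does not ``decay for large $\alpha$'' --- it is identically $1$ --- but since $\alpha^{-1/2}e^{\alpha/4}\ge1$ for all $\alpha>0$ (your own inequality), the bound still holds with $M_0\ge1$.

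The paper's route is genuinely different: rather than splitting off the peak, it compares each summand to a \emph{relaxed} Gaussian integral over the adjacent unit interval,
\[
  |k|^m e^{-\alpha k^2}\ \le\ 2^m e^{\alpha/4}\int_{k-1/2}^{k+1/2}|x|^m e^{-\frac\alpha2 x^2}\dd x,
\]
and then sums. The factor $e^{\alpha/4}$ appears there from completing the square in $\tfrac\alpha2(k+\tfrac12)^2=\alpha k^2+\tfrac\alpha4-\tfrac\alpha2(k-\tfrac12)^2$; the halving of the exponent is what allows the pointwise value at $k$ to be dominated by the local integral uniformly. Your approach is the more standard integral-test argument and arguably more transparent; the paper's approach avoids any case distinction on the location of the peak and yields the explicit constant $M_m=2^m\int_\setR|z|^me^{-z^2/2}\dd z$ directly.
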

  \begin{proof}
    We prove that    
    \begin{align}
      \label{eq:gausshelp}
      \sum_{k\in\setZ} |k|^me^{-\alpha k^2} \le 2^me^{\alpha/4}\int_{\setR} |x|^{m}e^{-\frac\alpha2\,x^2}\dd x\,;
    \end{align}
    the claim \eqref{eq:gaussest} then follows by direct evaluation of the right-hand side, with
    \begin{align*}
      M_m := 2^m\int_{\setR} |z|^me^{-z^2/2}\dd z.
    \end{align*}
    For the proof of \eqref{eq:gausshelp}, it suffices to show that
    \begin{align}      
      \label{eq:gausshelp2}
      |k|^me^{-\alpha k^2} \le 2^me^{\alpha/4}\int_{k-\frac12}^{k+\frac12} |x|^{m}e^{-\alpha/2\,x^2}\dd x,
    \end{align}
    for each $k\in\setZ$. For $k=0$ and $m>0$, \eqref{eq:gausshelp2} is trivially true, since the left-hand side is zero. For $m=0$, it reduces to
    \begin{align*}
      1 \le e^{\alpha/4}\int_{-1/2}^{1/2}e^{-\alpha/2\,x^2}\dd x,
    \end{align*}
    which is also true, because the integrand is larger or equal to $e^{-\alpha/8}$ on the domain of integration. Now let $m\ge0$ and $k\ge1$ be arbitrary. Observe that
    \begin{align*}
      |x|^{m}e^{-\frac\alpha2\,x^2}
      \ge \left(k-\frac12\right)^me^{-\frac\alpha2\,(k+\frac12)^2}
      \ge \left(\frac k2\right)^me^{-\frac\alpha2(k^2+k+\frac14)}
    \end{align*}
    for all $x\in(k-\frac12,k+\frac12)$. And now, since
    \begin{align*}
      \frac\alpha2(k^2+k+\frac14) = \alpha k^2 + \frac\alpha 4 - \frac\alpha2(k^2-\frac12)^2 \le  \alpha k^2 + \frac\alpha 4,
    \end{align*}
    the estimate \eqref{eq:gausshelp2} follows directly.
  \end{proof}
  Next, we establish a \emph{pointwise} comparison between the original and the discretized kernels. 
  \begin{lemma}
    \label{lem:prekernel1}
    There is a constant $A_1>0$ such that
    \begin{align}
      \label{eq:prekernel1}
      \max_{\xi\in\crcN}\big|\htkrnl^t(\xi)-\htkrnl_\delta^t(\xi)\big|\le A_1\delta^{1/4}t^{-3/4}.
    \end{align}
    The constant $A_1$ is uniform in $N\in\{2,3,\ldots\}$ and $t>0$.  
\end{lemma}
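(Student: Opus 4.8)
The plan is to subtract the two kernels frequency by frequency. Let $W\subset\setZ$ denote the window of $N$ consecutive integers over which $\htkrnl_\delta^t$ is summed in \eqref{eq:dfndhtkrnl}, cf.\ \eqref{eq:sloppy}; note that every $k\in W$ satisfies $|k|\le N/2$ (so that \eqref{eq:DeltaEVbelow} applies) and every $k\in\setZ\setminus W$ satisfies $|k|\ge N/2$. For $\xi\in\crcN$ I split, using \eqref{eq:dfnhtkrnl} and \eqref{eq:DeltaEV},
\begin{align*}
  \htkrnl^t(\xi)-\htkrnl_\delta^t(\xi)
  = \underbrace{\sum_{k\in W}\big(e^{-(2\pi k)^2\hbar^2 t}-e^{-\omega_k\hbar^2 t}\big)e^{2\pi ik\xi}}_{=:E_1(\xi)}
  + \underbrace{\sum_{k\in\setZ\setminus W}e^{-(2\pi k)^2\hbar^2 t}e^{2\pi ik\xi}}_{=:E_2(\xi)},
\end{align*}
so that $\max_{\xi\in\crcN}|\htkrnl^t(\xi)-\htkrnl_\delta^t(\xi)|$ is bounded by $\sum_{k\in W}|e^{-(2\pi k)^2\hbar^2 t}-e^{-\omega_k\hbar^2 t}|+\sum_{k\notin W}e^{-(2\pi k)^2\hbar^2 t}$, both $\xi$-independent. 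Morally, $E_2$ is the tail of the continuous heat kernel that the discrete one simply misses, while $E_1$ is the error in the dispersion relation $\omega_k$ versus the exact symbol $(2\pi k)^2$ on the common window.

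For $E_2$ I use that $|k|\ge N/2$ implies $(2\pi k)^2\ge\pi^2N^2=\pi^2\delta^{-2}$; splitting $e^{-(2\pi k)^2\hbar^2 t}=e^{-(2\pi k)^2\hbar^2 t/2}e^{-(2\pi k)^2\hbar^2 t/2}$ and pulling the factor $e^{-\pi^2\hbar^2 t/(2\delta^2)}$ out of the first exponential, one is left with $\sum_{k\in\setZ}e^{-(2\pi k)^2\hbar^2 t/2}\le C_0(1+t^{-1/2})$ from an elementary Gaussian sum bound. Hence $|E_2|\le C_0(1+t^{-1/2})e^{-\pi^2\hbar^2 t/(2\delta^2)}$. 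Substituting $v:=t/\delta^2$ and using $\delta\le\tfrac12$ one checks that $t^{3/4}(1+t^{-1/2})\delta^{-1/4}e^{-\pi^2\hbar^2 t/(2\delta^2)}\le(v^{3/4}+v^{1/4})e^{-\pi^2\hbar^2 v/2}$, which is bounded for $v>0$ by \eqref{eq:expest}; this gives $|E_2|\le A_2\,\delta^{1/4}t^{-3/4}$.

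For $E_1$ I keep two estimates in reserve. The crude one: since $0\le e^{-(2\pi k)^2\hbar^2 t}\le e^{-\omega_k\hbar^2 t}$ and $\omega_k\ge16k^2$ on $W$ by \eqref{eq:DeltaEVbelow}, summation gives $|E_1|\le\sum_{k\in\setZ}e^{-16\hbar^2 tk^2}\le 1+\tfrac{\sqrt\pi}{4\hbar}t^{-1/2}$. The fine one: by the mean value theorem $|e^{-a}-e^{-b}|\le|a-b|e^{-\min(a,b)}$, and since $|2\pi k\delta|\le\pi$ on $W$ I may use the Taylor bound $0\le\theta^2-2(1-\cos\theta)\le\theta^4/12$ to get $0\le(2\pi k)^2-\omega_k\le\tfrac1{12}(2\pi k)^4\delta^2$, whence $|e^{-(2\pi k)^2\hbar^2 t}-e^{-\omega_k\hbar^2 t}|\le\tfrac{(2\pi)^4}{12}\delta^2(\hbar^2 t)k^4e^{-16\hbar^2 tk^2}$. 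Writing $s=\hbar^2 t$, the elementary inequality $sk^2e^{-8sk^2}\le\tfrac1{8e}$ reduces $sk^4e^{-16sk^2}$ to a multiple of $k^2e^{-8sk^2}$, and the residual Gaussian sum is controlled by \eqref{eq:gaussest}, with a factor $e^{-4sk^2}\le e^{-4s}$ peeled off first to neutralise the $e^{\alpha/4}$ appearing there; this yields $|E_1|\le C\delta^2 t^{-3/2}$.

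Finally I combine the two $E_1$-bounds according to the size of $t$ relative to $\delta^2$. If $t\ge\delta^{7/3}$, then $\delta^{7/4}\le t^{3/4}$, so the fine bound gives $|E_1|\le C\delta^2 t^{-3/2}=C\delta^{1/4}t^{-3/4}\cdot\delta^{7/4}t^{-3/4}\le C\delta^{1/4}t^{-3/4}$. If $t<\delta^{7/3}$, then in particular $t<\delta\le\tfrac12$, so $1+t^{-1/2}\le 2t^{-1/2}$ and $t^{1/4}\le\delta^{1/4}$; the crude bound then gives $|E_1|\le 2(1+\tfrac{\sqrt\pi}{4\hbar})t^{-1/2}\le C'\delta^{1/4}t^{-3/4}$. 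Adding the $E_1$- and $E_2$-estimates proves \eqref{eq:prekernel1}, with a constant depending only on $\hbar$. The one genuinely delicate point is exactly this matching of regimes: for $t\lesssim\delta^2$ both kernels are $O(t^{-1/2})$-large and only the crude bound survives, while for $t\gtrsim\delta^2$ the dispersion error of the discrete Laplacian drives the estimate; the exponent $1/4$ in \eqref{eq:prekernel1} is precisely what makes the two regimes overlap (around $t\sim\delta^{7/3}$).
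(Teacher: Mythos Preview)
Your proof is correct, but follows a genuinely different route from the paper's. The paper splits the frequency range at the intermediate threshold $|k|=\delta^{-1/2}$ (not at the window boundary $|k|\sim N/2$), giving three pieces: a low-frequency piece where the Taylor bound on $(2\pi k)^2-\omega_k$ is used together with $k^4\delta^2\le k^2\delta$, and two high-frequency tails (one discrete, one continuous) where the Gaussian decay absorbs a factor $e^{-c\hbar^2 t/\delta}$. This choice of cutoff is calibrated so that no case distinction in $t$ is needed; the low-frequency piece comes out directly as $O(\delta\, t^{-1/2})$, which is dominated by $\delta^{1/4}t^{-3/4}$ on $(0,1]$. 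Your approach instead keeps the natural window $W$, bounds the common-frequency error $E_1$ in two ways (a crude $O(t^{-1/2})$ and a fine $O(\delta^2 t^{-3/2})$), and then matches the two at $t\sim\delta^{7/3}$. What you gain is that your argument is valid for \emph{all} $t>0$, whereas the paper's estimate on the low-frequency piece explicitly uses $t\le1$ to bound the exponent in $e^{((2\pi k)^2-\omega_k)\hbar^2 t}-1$ (cf.\ \eqref{eq:thelp1}); this is harmless for the application to Proposition~\ref{prp:prekernel}, but your version actually matches the uniformity in $t>0$ stated in the lemma. What the paper's approach buys is avoiding the case split altogether: the frequency threshold $\delta^{-1/2}$ does the interpolation for you.
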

\begin{proof}[Proof of Lemma \ref{lem:prekernel1}]
  By Taylor expansion, for every $u\in\setR$,
  \begin{align*}
    0\le u^2-2(1-\cos u) \le \frac{u^4}{12}.
 \end{align*}
  This implies
  \begin{align*}
    0\le (2\pi k)^2-\omega_k\le \frac{(2\pi k\delta)^4}{12\delta^2} = Lk^4\delta^2 \quad\text{with}\quad L:=\frac{4\pi^4}{3}.
  \end{align*}
  In particular, for ``relatively small'' frequencies $|k|\le \delta^{-1/2}$, it follows that
  \begin{align*}
    0\le (2\pi k)^2-\omega_k\le \min{L,Lk^2\delta},
  \end{align*}
  and so, for those $k$, and for every $t\in(0,1]$,
  \begin{align}
    \label{eq:thelp1}
    0\le e^{((2\pi k)^2-\omega_k)\hbar^2 t} -1 \le L\hbar^2 e^{L\hbar^2}\delta k^2t.
  \end{align}
  For fixed $\xi\in\crcN$, 
  \begin{align}
    \nonumber
    \big|\htkrnl^t(\xi)-\htkrnl_\delta^t(\xi)\big|
    &= \left|
      \left[\sum_{|k|\le\delta^{-\frac12}}+\sum_{\delta^{-\frac12}<|k|}\right]e^{-(2\pi k)^2\hbar^2 t}e^{2\pi ik\xi}
      - \left[\sum_{|k|\le\delta^{-\frac12}}+\sum_{\delta^{-\frac12}<|k|\lesssim N/2}\right]e^{-\omega_k\hbar^2 t}e^{2\pi ik\xi}
      \right| \\
    \label{eq:t1}
    &\le \sum_{|k|\le\delta^{-\frac12}}\big|e^{-(2\pi k)^2\hbar^2 t}-e^{-\omega_k\hbar^2 t}\big| \\
    \label{eq:t2}
    &\qquad + \sum_{\delta^{-\frac12}<|k|\lesssim N/2} e^{-\omega_k\hbar^2 t} \\
    \label{eq:t3}
    &\qquad + \sum_{|k|>\delta^{-\frac12}} e^{-(2\pi k)^2t}.    
  \end{align}
  We treat the three terms \eqref{eq:t1}--\eqref{eq:t3} separately.
  For the first, we use \eqref{eq:thelp1} in combination with \eqref{eq:gaussest}:
  \begin{align*}
    \eqref{eq:t1} &= \sum_{|k|\le\delta^{-\frac12}}e^{-(2\pi k)^2\hbar^2 t}\big|1-e^{((2\pi k)^2-\omega_k)\hbar^2 t}\big| \\
    &\le L\hbar^2 e^{L\hbar^2}\sum_{|k|\le\delta^{-\frac12}}(\delta k^2t)e^{-(2\pi)^2\hbar^2 t\ k^2} \\
    &\le L\hbar^2 e^{L\hbar^2}\, M_2e^{\pi^2\hbar^2} \big(4\pi^2\hbar^2 t\big)^{-3/2}\delta t
      = (2\pi)^{-3}M_2L\hbar^{-1} e^{(L+\pi^2)\hbar^2} \,\delta t^{-1/2}.
  \end{align*}
  The second term is estimated with the help of \eqref{eq:DeltaEVbelow}, in combination with \eqref{eq:expest} and \eqref{eq:gaussest}. Using that for $k\in\setZ$ with $k>\delta^{-\frac12}$, we have that
  \[ k^2 = \big((k-\delta^{-\frac12})+\delta^{-\frac12}\big) \ge (k-\delta^{-\frac12})^2+\delta^{-1}, \]
  it follows with $\delta<1$ that
  \begin{align*}
    \eqref{eq:t2}
    \le \sum_{|k|>\delta^{-\frac12}} e^{-16k^2\hbar^2 t} 
    & \le e^{-16\delta^{-1}\hbar^2 t}\sum_{\ell\in\setZ}e^{-16\hbar^2 t\ \ell^2} 
    \le e^{-16\delta^{-1}\hbar^2 t}\;M_0e^{4\hbar^2 t}(16\hbar^2 t)^{-1/2} \\
    &\le \frac{M_0}4 (\hbar^2 t)^{-3/4}\delta^{1/4}\ (\hbar^2 t/\delta)^{1/4}e^{-\hbar^2 (t/\delta)}
      \le \frac{M_0K_{1/4}}{4\hbar} \delta^{1/4}t^{-3/4}.
  \end{align*}
  The third term is estimated like the second, the only difference is the prefactor $(4\pi)^2$ instead of $16$ in the exponent. We obtain accordingly
  \begin{align*}
    \eqref{eq:t3}
    \le \frac{M_0K_{1/4}}{4\pi\hbar} \delta^{1/4}t^{-3/4}.
  \end{align*}
  Summarizing the estimates on \eqref{eq:t1}--\eqref{eq:t3}, the claim \eqref{eq:prekernel1} follows.
\end{proof}
We are finally in the position to prove Propoisition \ref{prp:prekernel}.
\begin{proof}[Proof of Proposition \ref{prp:prekernel}]
  With estimate \eqref{eq:prekernel1} at hand, it suffices to prove the proximity of the values of $\htkrnl$ to the respective local averages. For each $\xi\in\crcN$, we have that
  \begin{align*}
    \fint_{I_\xi}\htkrnl^t(z)\dd z
    &= \delta^{-1}\int_{\xi-\delta/2}^{\xi+\delta/2} \sum_{k\in\setZ}e^{-(2\pi k)^2\hbar^2 t}e^{2\pi ikz}\dd z \\
    &= \sum_{k\in\setZ}e^{-(2\pi k)^2\hbar^2 t}\frac{e^{2\pi ik(\xi+\delta/2)}-e^{ik(\xi-\delta/2)}}{ik\delta}
    = \sum_{k\in\setZ}e^{-(2\pi k)^2\hbar^2 t}\frac{\sin(\pi k\delta)}{\pi k\delta}e^{2\pi ik\xi}.
  \end{align*}
  Once again, we use an elementary estimate:
  \begin{align*}
    0\le 1-\frac{\sin u}{u} \le \min(2,u^2/4) \le 2|u|^{1/2}
    \quad\text{  for all $u\in\setR$}.
  \end{align*}
  In combination with \eqref{eq:gaussest}, we conclude that  
  \begin{align*}
    \left|\fint_{I_\xi}\htkrnl^t(z)\dd z - \htkrnl^t(\xi)\right|
    &\le \sum_{k\in\setZ}e^{-(2\pi k)^2\hbar^2 t}\left|\frac{\sin(\pi k\delta)}{\pi k\delta}-1\right| \\
    &\le 2(\pi\delta)^{1/2}\sum_{k\in\setZ}|k|^{1/2} e^{-(2\pi)^2\hbar^2 t\ k^2} \\
    &\le 2\pi^{1/2}M_{1/2}\big((2\pi)^2\hbar^2 t\big)^{-3/4}\delta^{1/2}
      \le \frac{M_{1/2}}{\pi\hbar^{3/2}}\delta^{1/4}t^{-3/4},
  \end{align*}
  where we have used that $\delta\in(0,1)$.
\end{proof}

\end{appendix}


\bibliographystyle{plain}
\bibliography{biblio}

\end{document}